\g@addto@macro{\endabstract}{\@setabstract}
\newcommand{\authorfootnotes}{\renewcommand\thefootnote{\@fnsymbol\c@footnote}}%
\newtheorem{theorem*}{Theorem}
\newtheorem{corollary*}[theorem*]{Corollary}
\newtheorem{theorem}{Theorem}[section]
\newtheorem{corollary}[theorem]{Corollary}
\newtheorem{lemma}[theorem]{Lemma}
\newtheorem{proposition}[theorem]{Proposition}
\newtheorem{prop}[theorem]{Proposition}
\newtheorem{lem}[theorem]{Lemma}
\newtheorem{rem}[theorem]{Remark}
\newtheorem{dfn}[theorem]{Definition}
\begin{document}
\begin{center}
  \LARGE 
  Multiplicity-free representations of the principal $A_1$-subgroup in a simple algebraic group 
  \\[2ex]       
{\normalfont \normalsize \textcolor{black}{We dedicate this paper to the memory of the esteemed mathematician, Gary Seitz, whose work and mentorship have a continuing impact on the field and on our lives.}}    \par \bigskip

  \normalsize
  \authorfootnotes
  Aluna Rizzoli\textsuperscript{1}, Donna Testerman\footnote{Both authors acknowledge the support of the Swiss National Science Foundation grant number $200020\textunderscore207730$. In addition, the authors thank Gunter Malle, Mikko Korhonen and an anonymous referee for their helpful remarks on an earlier version of the manuscript.}\textsuperscript{2},
  \par \bigskip

  \textsuperscript{1}EPFL\\ \texttt{aluna.rizzoli@epfl.ch} \par
  \textsuperscript{2}EPFL\\ \texttt{donna.testerman@epfl.ch}\par \bigskip
\end{center}

%    Remove any unused author tags.

%    author one information
\thanks{Both authors acknowledge the support of the 1. In addition, the authors thank Gunter Malle, Mikko Korhonen and an anonymous for their helpful remarks on an earlier version of the mansucript.}

\keywords{}

\date{}

\begin{abstract}
\noindent Let $G$ be a simple algebraic group defined over an algebraically closed field $\mathcal{k}$ of characteristic $p>0$. For $p\geq h$, the Coxeter number of $G$, any regular unipotent element of $G$ lies in an $A_1$-subgroup of $G$; there is a unique $G$-conjugacy class of such subgroups and any member of this class is a so-called ``principal $A_1$-subgroup of $G$''. Here we classify all irreducible $\mathcal{k}G$-modules whose restriction to a principal $A_1$-subgroup of $G$ has no repeated composition factors, extending the work of Liebeck-Seitz-Testerman which treated the same question when $\mathcal{k}$ is replaced by an algebraically closed field of characteristic zero. 
\end{abstract}
\pagestyle{plain}
\section{Introduction}
In this paper, we consider a question in the representation theory and subgroup structure of simple algebraic groups defined over an algebraically closed
field $\mathcal{k}$ of characteristic $p>0$. The main aim of our work is to generalise the results of  \cite{LSTA1},  \cite{LST}, and \cite{LST2}, where the authors consider so-called ``multiplicity-free
subgroups'' of simple algebraic groups defined over an
algebraically closed field $K$ of characteristic 0. More precisely, 
the authors consider triples $(X,Y,V)$ where $X$ and $Y$ are simple algebraic groups defined over $K$ with $X$  a closed subgroup of $Y$, and $V$ is an
irreducible $KY$-module such that the $KX$-module $V$, obtained by restricting the action of $Y$ to the subgroup $X$,  is a sum of non-isomorphic irreducible
$KX$-modules  (a so-called ``multiplicity-free'' $KX$-module). The above cited articles provide a complete classification of such triples when either $X$
has rank 1 and does not lie in a proper parabolic subgroup of $Y$, or $Y$ is a classical group with natural module $W$ and  $X$ is of type $A_\ell$ acting
irreducibly on $W$. Note that the case where  $X$ acts irreducibly (and hence multiplicity freely) on $V$ was settled in work of
Dynkin \cite{dynkin}, in characteristic zero, and in work of Seitz and Testerman in positive characteristic \cite{seitz_mem_class, Tes_mem_exc}. 
\begin{comment}Further examples of such ``branching rules'' describing properties of the action of certain reductive subgroups of a reductive algebraic group on irreducible modules for the group are manifold in the literature. As examples, we mention  the work of Koike-Terada in \cite{KT}, where they establish explicit formulae giving the action of the symplectic or orthogonal group $H$ naturally embedded in $G = {\rm GL}_n(K)$ on irreducible representations of $G$, and the work of Brundan and Kleshchev in  \cite{BK}, where they describe the restriction of an irreducible $\mathcal{k}{\rm GL}_n(\mathcal{k})$-module to the subgroup $H = {\rm GL}_{n-1}(\mathcal{k})$.
\end{comment}

The ultimate far-reaching aim of what we undertake in this paper would be to investigate the ``multiplicity-free'' triples $(X,Y,V)$ as in \cite{LSTA1},  \cite{LST}, and \cite{LST2}, described above, replacing the field $K$ by the field $\mathcal k$ of positive characteristic $p$, and considering composition factors rather than summands. The proofs in \cite{LST,LST2} use induction on the rank of the group $X$; the case where $X$ is simple of rank 1 is considered in \cite{LSTA1}. 
 Here we treat the rank one case for the groups defined over $\mathcal{k}$, but consider a slightly more general setting than would strictly speaking be required for use in an inductive set-up.  Namely, we consider all
 simple algebraic groups $G$ (classical and exceptional), defined over $\mathcal {k}$, and $A$ a closed $A_1$-subgroup of $G$ containing a regular unipotent element of $G$, which
 we will call
a ``principal
$A_1$-subgroup of $G$''. (Such subgroups exist precisely when $p\geq h$, the Coxeter number of $G$; see \cite[Cor.~0.5 and Theorem~0.1]{A1_testerman}. In addition, there is at most one conjugacy class of principal $A_1$-subgroups in $G$; see \cite[Theorem 1.1]{seitz_tilting}.) We then determine
all irreducible $\mathcal{k}G$-modules $V$ such that the set of  composition factors of the $\mathcal{k}A$-module $V$ consists of non-isomorphic $\mathcal{k}A$-modules, and obtain a classification analogous to
\cite[Thm.~1]{LSTA1}.  Much of the analysis follows the same line of reasoning as that used in \cite{LSTA1}; the main differences and difficulties arise from the lack of 
precise knowledge about the dimensions of irreducible $\mathcal{k}G$-modules and the multiplicities of their weights. In addition, while irreducible 
$\mathcal{k}A_1$-modules are completely understood, the description of the set of weights is not as simple as in characteristic zero. In \cite{LST,LST2}, another essential ingredient of the proof is the work of Stembridge \cite{Stembridge}, where he determines when the tensor product of two irreducible modules for a simple algebraic group defined over the field $K$ is a direct sum of non-isomorphic irreducible modules. There has been recent progress on the analogous question for the simple groups defined over fields of positive characteristic in \cite{Gruber} and \cite{Gruber_Mancini}. The combination of the rank-$1$ theorem proven here and the work of  Gruber and Gruber-Mancini lays the foundation for the study of multiplicity-free subgroups  of higher rank for groups defined over fields of positive characteristic.
\bigbreak

In order to state our main result, we introduce some notation; further notation will be set up in Section~\ref{notation section}.   Fix $G$ a simply connected simple algebraic
group of rank $\ell\geq 2$
defined over the algebraically closed field $\mathcal{k}$.  We fix a maximal torus $T$ of $G$, a Borel subgroup $B$ of $G$ with $T\subset B$,
the root system $\Phi$ of $G$ with respect to $T$, and a base $\Pi=\{\alpha_1,\dots,\alpha_\ell\}$ of $\Phi$, associated with
the choice of Borel subgroup $B$. Let $\Phi^+$ be the associated set of positive roots. Let $X(T)$ denote the associated weight lattice, with fundamental dominant weights  $\{\omega_1,\dots,\omega_\ell\}$
defined by the choice of $\Pi$. (We label Dynkin diagrams as in \cite{bourbaki46}.) Throughout, we fix $\lambda\in X(T)$ a dominant weight and 
set $V = L(\lambda)$,
the irreducible $\mathcal{k}G$-module with highest weight $\lambda$. Assume that $p\geq h$, so that each regular unipotent
element of $G$ lies in an $A_1$-subgroup of $G$.  Let $A\subset G$ be a principal $A_1$-subgroup of
$G$. Fix a maximal torus $T_A$ of $A$ with $T_A\subset T$ and $T_AU_\alpha$, a Borel subgroup of $A$, with root group $U_\alpha$, lying in $B$. For
$\alpha$ the unique positive root of $A$ (with respect to the given choices), we have $T_A = \alpha^\vee(\mathcal{k}^*)$, the image of the co-root $\alpha^\vee$.  Henceforth, we will write $V\downarrow H$ for the $kH$-module obtained by restricting the action of $G$ to a subgroup $H$.
We say that \emph{$V\downarrow A$ is MF}
if all composition factors in the restriction are non-isomorphic.

We will also require a notation for the corresponding modules and subgroups for the groups defined over the algebraically closed field $K$ of characteristic $0$. We write $G_K$ for a simply connected
simple algebraic group
defined over the field $K$, with root system of type $\Phi$, and $A_K$ for a principal $A_1$-subgroup of $G_K$ (see \cite{Jacobson1951, Morozov1942} for the proofs of existence and conjugacy of $A_1$-subgroups of $G_K$ intersecting the class of regular unipotent elements). For the weight
$\lambda$ as above, we write $\Delta_K(\lambda)$ for the corresponding irreducible $G_K$-module. We will use the same terminology of ``MF'' for the action of $A_K$ on $\Delta_K(\lambda)$. Our main result is

\begin{theorem*}\label{main theorem} Suppose that $\lambda$ is $p$-restricted. Then $L(\lambda)\downarrow A$ is $MF$ if and only if one of the following holds.\begin{enumerate}[label=\textnormal{(\roman*)}]
  \item We have that $p>(\lambda\downarrow T_A)$ and $\Delta_K(\lambda)\downarrow A_K$ is MF.
    \item The group $G$ is of type $A_2$, $\lambda = \omega_1+\omega_2$ and $p=3$.
      \item The group $G$ is of type $B_2$, $\lambda = 2\omega_1$ and $p=5$.
     \end{enumerate}

  \end{theorem*}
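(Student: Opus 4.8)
The plan is to reduce the positive-characteristic problem to the characteristic-zero classification of \cite{LSTA1}, handling the ``small $p$'' discrepancies directly. Recall first that $T_A=\alpha^\vee(\mathcal k^*)$ with $\alpha^\vee$ the principal cocharacter $2\rho^\vee$, so that the restriction to $T_A$ of a $T$-weight $\mu$ is the integer $\langle\mu,2\rho^\vee\rangle$; since $\lambda-\mu\in\mathbb{Z}_{\geq 0}\Phi^+$ for every weight $\mu$ of $V$, the highest $T_A$-weight occurring in $V$ is $d:=\lambda\downarrow T_A=\langle\lambda,2\rho^\vee\rangle$, attained (up to $T$-weight) only at $\lambda$. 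Every composition factor of $V\downarrow A$ is an irreducible $\mathcal kA$-module $L_A(m)$, and by Steinberg's tensor product theorem $L_A(m)$ is determined by its highest weight $m$; hence $V\downarrow A$ is MF precisely when the multiset of highest weights of its $A$-composition factors has no repetition, and the factor $L_A(d)$ always occurs with multiplicity one. I would record two facts to anchor the comparison with characteristic $0$: first, the Weyl module $W(\lambda)$ of $G$ has the same formal character as $\Delta_K(\lambda)$, and $V=L(\lambda)$ is its simple head, so $V\downarrow A$ is an $A$-subquotient of $W(\lambda)\downarrow A$ and in particular $\operatorname{ch}(V\downarrow A)\le\operatorname{ch}(W(\lambda)\downarrow A)$ coefficientwise; second, since $p\geq h$, Premet's theorem applies, so $V$ and $W(\lambda)$ have the same set of $T$-weights, hence the same $T_A$-weight support as $\Delta_K(\lambda)$.

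For the sufficiency of (i), suppose $p>d$. Then every $A$-composition factor $L_A(m)$ of $W(\lambda)\downarrow A$ has $m\le d<p$, so each is a restricted, and hence irreducible Weyl, $\mathcal kA$-module; consequently $W(\lambda)\downarrow A$ is semisimple and its decomposition is read off from its character, which is that of $\Delta_K(\lambda)\downarrow A_K$. If the latter is MF then $W(\lambda)\downarrow A$ is a multiplicity-free semisimple $A$-module, and $V\downarrow A$, being an $A$-subquotient, is again MF. The sufficiency of (ii) and (iii) is then a finite check: in each case $V=L(\lambda)$ is a small, explicitly known module, and one computes $V\downarrow A$ directly and verifies that no highest weight is repeated.

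For the necessity I would argue the contrapositive: if $\lambda$ is neither of the two exceptional weights, then MF forces both $p>d$ and $\Delta_K(\lambda)\downarrow A_K$ MF. The backbone is the coefficientwise character comparison together with the char-$0$ classification of \cite{LSTA1}: the weights $\lambda$ for which $\Delta_K(\lambda)\downarrow A_K$ is MF form an explicit list, so it suffices to (a) rule out all $\lambda$ off this list and (b) for $\lambda$ on the list, rule out $p\le d$ apart from the two exceptions. In both tasks one must exhibit a repeated $A$-composition factor below the top weight $d$. Using Premet's theorem to fix the $T_A$-weight support $\{d,d-2,\dots,-d\}\cap(\text{weights of }V)$ and the Steinberg description of the $L_A(m)$, the strategy is to bound below the multiplicities of the $L_A(m)$ in terms of the $T_A$-weight multiplicities of $V$; when $\lambda$ is large, or when $p\le d$ so that the $p$-adic digits of the top weights force $L_A(d)$ (with $d\ge p$) to carry many lower $T_A$-weights, subtracting its character from that of $V\downarrow A$ is expected to leave a residual character in which some $L_A(m)$ occurs at least twice.

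The \emph{main obstacle} is precisely the multiplicity bookkeeping in this last step: converting the $T_A$-weight multiplicities of $V=L(\lambda)$ into $A$-composition-factor multiplicities requires lower bounds on the weight multiplicities of the irreducible $\mathcal kG$-modules, which — unlike the characteristic-zero situation governed by Weyl's and Freudenthal's formulas — are not known in general and may be strictly smaller than those of $W(\lambda)$. I expect to circumvent this by invoking only robust lower bounds (the saturation of the weight set from Premet's theorem, multiplicities inherited from restriction to Levi subgroups, and the large weight spaces near $0$), which should dispatch all $\lambda$ of sufficiently large size for every admissible $p$. This leaves a bounded set of small weights, for which the constraint $p\le d$ confines $p$ to a finite range; those finitely many remaining pairs are then settled by explicit computation, and it is exactly these computations that isolate the two genuine exceptions recorded in (ii) and (iii).
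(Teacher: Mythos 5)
Your outline captures the broad shape of the argument (reduce to the characteristic-zero classification of \cite{LSTA1}, then chase $T_A$-weight multiplicities), and your sufficiency argument for (i) is sound: every $A$-composition factor of $\Delta(\lambda)\downarrow A$ has highest weight at most $r=\lambda\downarrow T_A$ (your $d$), so for $p>r$ all factors are restricted, the linkage principle forces semisimplicity, and an $A$-subquotient of a multiplicity-free semisimple module is MF. But there are two genuine gaps. First, for the \emph{necessity} of (i) in the regime $p>r$ you must show that $L(\lambda)\downarrow A$ MF forces $\Delta_K(\lambda)\downarrow A_K$ MF, and your only comparison tool, the coefficientwise inequality $\operatorname{ch}(L(\lambda)\downarrow A)\leq\operatorname{ch}(\Delta(\lambda)\downarrow A)$, points the wrong way: a proper subquotient of a non-MF module can perfectly well be MF. What closes this is the fact that $\Delta(\lambda)$ is \emph{irreducible} whenever $p>r$ (Lemma~\ref{p>r irreducible Weyl module lemma}, via the Jantzen sum formula and an inequality of Serre), so that $L(\lambda)=\Delta(\lambda)$ and the two decompositions literally coincide. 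You never state or use this, and without it the case $p>r$ is only half done.

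Second, for $p\leq r$ your plan relies exclusively on producing repeated composition factors by weight-multiplicity bookkeeping, with ``finitely many remaining pairs settled by explicit computation.'' This underestimates the problem. There is an infinite supply of pairs $(\lambda,p)$ with $p\leq r$ for which $\Delta(\lambda)$ is irreducible and $\Delta_K(\lambda)\downarrow A_K$ is MF (for instance $\lambda=c\omega_1$ for $A_2$ with $p\leq 2c$, the minimal and adjoint modules of the exceptional groups, the spin modules); for these, \emph{every} $T_A$-weight multiplicity of $V$ agrees with characteristic zero, where the restriction is multiplicity-free, so no inequality of the form $n_d\leq d+1$ can detect the failure. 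One would have to exploit the gaps in the weight sets of the $L_A(m)$ for $m\geq p$ --- exactly the bookkeeping you flag as the main obstacle, and which the paper only controls for $d<p$ (Lemma~\ref{gaps containment up to r-2p}). The paper disposes of all these cases at a stroke with a structural ingredient absent from your plan (Lemma~\ref{CD lemma}): since $A$ is a good filtration subgroup, an irreducible Weyl module restricts to $A$ as a tilting module containing the indecomposable summand $T(r)$, which is reducible and self-dual when $r\geq p$ and $r\not\equiv -1\bmod p$, hence not MF. Finally, the residual families with $\Delta(\lambda)$ reducible (such as $A_2$ with $a+b=p-1$, or $B_2$ with $2a+b+2\equiv 0\bmod p$) are infinite families indexed by $p$, not a finite list, and each requires a bespoke analysis of the weight patterns and of $B(r)$-type dimension bounds rather than a terminating computation.
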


\begin{corollary*}\label{main corollary}
    Let $\lambda = \sum _{i=0}^t p^i \lambda_i$ where each $\lambda_i$ is a $p$-restricted dominant weight. Then $L(\lambda)\downarrow A$ is MF if and only if one of the following holds.
    \begin{enumerate}[label=\textnormal{(\roman*)}]
  \item The module $\Delta_K(\lambda_i)\downarrow A_K$ is MF and $p>(\lambda_i\downarrow A)$, for all $0\leq i\leq t$.
    \item The group $G$ is of type $A_2$, $p=3$ and there exists $0\leq i\leq t$ such that $\lambda_i = \omega_1+\omega_2$. Moreover, for all $0\leq j\leq t$ we have  $\lambda_j \in \{0,\omega_1+\omega_2,\omega_1,\omega_2\}$ and if $\lambda_j = \omega_1+\omega_2$ for some $0\leq j\leq t-1$, then $\lambda_{j+1} = 0$.
      \item The group $G$ is of type $B_2$, $p=5$ and there exists $0\leq i\leq t$ such that $\lambda_i = 2\omega_1$. Moreover, for all $0\leq j\leq t$ we have  $\lambda_j \in \{0,2\omega_1,\omega_1,\omega_2\}$ and if $\lambda_j = 2\omega_1$ for some $0\leq j\leq t-1$, then $\lambda_{j+1}\in \{0,\omega_2\} $.
     \end{enumerate}
    
\end{corollary*}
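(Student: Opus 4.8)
The proof of the Corollary will run off Steinberg's tensor product theorem, applied both to $G$ and to the principal $A_1$-subgroup $A$, together with the classification in the Theorem. Writing $\lambda = \sum_{i=0}^t p^i\lambda_i$ with each $\lambda_i$ $p$-restricted, Steinberg's theorem for $G$ gives $L(\lambda)\cong \bigotimes_{i=0}^t L(\lambda_i)^{[p^i]}$, and since the Frobenius morphism of $G$ restricts to the Frobenius morphism of $A$, the Frobenius twist commutes with restriction, so that $L(\lambda)\downarrow A \cong \bigotimes_{i=0}^t \big(L(\lambda_i)\downarrow A\big)^{[p^i]}$. The plan is to read off the composition factors of each tensor factor from the Theorem, record them through their base-$p$ expansions as $A_1$-weights, and then decide exactly when the full tensor product is MF by tracking overlaps of these expansions.

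First I would treat the clean case, in which every $\lambda_i$ falls under case (i) of the Theorem, i.e.\ $\Delta_K(\lambda_i)\downarrow A_K$ is MF and $p>(\lambda_i\downarrow A)$. The inequality forces every $T_A$-weight of $L(\lambda_i)\downarrow A$ to have absolute value $<p$, so each composition factor is a $p$-restricted $A_1$-module $L(a)$ with $0\le a<p$; twisting by $p^i$ places its single nonzero digit in position $i$. By the uniqueness of base-$p$ expansions of $A_1$-weights together with Steinberg's theorem for $A$, a choice of one composition factor from each tensor factor yields a single irreducible $A$-module, distinct tuples of choices yield distinct irreducibles, and the resulting multiplicity is the product of the factor multiplicities. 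Hence in this case $L(\lambda)\downarrow A$ is MF if and only if each $L(\lambda_i)\downarrow A$ is MF, which is precisely case (i) of the Corollary.

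The substance of the argument is the exceptional cases. For $G$ of type $A_2$ with $p=3$ (resp.\ $B_2$ with $p=5$), a digit equal to $\omega_1+\omega_2$ (resp.\ $2\omega_1$) gives, by case (ii) (resp.\ (iii)) of the Theorem, an MF restriction whose composition factors are no longer $p$-restricted for $A$: their base-$p$ digits occupy both position $i$ and position $i+1$. I would first use the Theorem to pin down which $p$-restricted weights can occur at all, computing $(\mu\downarrow A)$ for each and obtaining the admissible lists $\{0,\omega_1+\omega_2,\omega_1,\omega_2\}$ and $\{0,2\omega_1,\omega_1,\omega_2\}$; a digit $\lambda_j$ outside these lists makes the corresponding tensor factor non-MF by the Theorem, and one checks this non-MF factor propagates to the whole product. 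It then remains to analyse the single overlapping position: for each admissible candidate $\lambda_{i+1}$ I would compute, with multiplicity, the local product at position $i+1$ formed from the digit carried in by the exceptional factor and the digit contributed by $\lambda_{i+1}$. The stated neighbour-constraints ($\lambda_{i+1}=0$ for $A_2$; $\lambda_{i+1}\in\{0,\omega_2\}$ for $B_2$) are exactly those for which these two digits do not interact to produce a repeated $A$-composition factor; when the constraint holds, digit-disjointness restores the clean argument above and yields MF, while when it fails one exhibits an explicit repeat.

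The main obstacle is this last, local multiplicity computation in the exceptional cases. Concretely one must evaluate the relevant small tensor products of $A_1$-modules at the overlapping position and track how the carried digit from $\omega_1+\omega_2$ (resp.\ $2\omega_1$) combines with each possible neighbour, including the case of exceptional digits in nearby positions. For instance $L(1)\otimes L(2)$ in characteristic $3$ has composition factors $L(3),L(1),L(1)$ and so already carries a repeat, which is what forces $\lambda_{i+1}=0$ for $A_2$; by contrast, in the $B_2$ situation the neighbour $\omega_2$ survives because its local product $L(1)\otimes L(3)$ is multiplicity-free, whereas $\omega_1$ is excluded because $L(1)\otimes L(4)$ is not. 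Verifying simultaneously that every admissible configuration is genuinely MF and that every excluded neighbour forces a repeat, across both exceptional groups, is where the real case-checking lies; the clean case and the reduction via Steinberg's theorem are by comparison routine.
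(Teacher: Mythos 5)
Your proposal is correct and follows essentially the same route as the paper: Steinberg's tensor product theorem for $A$, digit-disjointness of base-$p$ expansions to handle the case where every $\lambda_i$ satisfies $p>(\lambda_i\downarrow T_A)$, and explicit small $A_1$ tensor-product computations (e.g.\ $L(1)\otimes L(2)$ in characteristic $3$, $L(1)\otimes L(4)$ versus $L(1)\otimes L(3)$ in characteristic $5$) to settle the overlapping digit created by the exceptional factors. The paper merely packages the digit-disjointness step as an induction on $t$ via a splitting lemma (Lemma~\ref{inductive lemma for tensor product}), but the substance of the argument is the same.
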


  For the reader's convenience and for completeness, we list in Table~\ref{tab:p=0 big table} below the non-zero weights $\lambda$ for which
  $\Delta_K(\lambda)\downarrow A_K$ is MF, as obtained in \cite{LSTA1}.\bigbreak

 We conclude the introduction with a few remarks about the proof. We first note that if $p>(\lambda\downarrow T_A)$, then one can show that the Weyl module with highest weight
$\lambda$ is an irreducible $\mathcal{k}G$-module (see \cite[Cor.2.7.6]{mikkoThesis}), and then the considerations of \cite{LSTA1} for the groups defined over $K$ yield the result (see Proposition~\ref{V MF p=0 p>r iff lemma}).
The arguments therefore focus on the cases where $p\leq (\lambda\downarrow T_A)$. 
Many aspects of the proof follow closely the arguments used in \cite{LSTA1}. In particular, we use the fact
  that all irreducible $\mathcal{k}A_1$-modules have multiplicity one weight spaces and therefore considering the set of $T_A$-weights and their multiplicities in $V$ can directly be used
  to detect multiplicities of composition factors of $V\downarrow A$. Moreover, there are certain dimension bounds which must be respected by an MF-module. Thus, many of our preliminary lemmas are inspired by the
  results in
  \cite[Section~2]{LSTA1}. In addition, we rely on a result from \cite{McHa} where the authors prove that certain tilting modules for
  $G$ have a filtration by tilting modules for a principal $A_1$-subgroup $A$ of $G$. Since reducible indecomposable tilting modules for groups of type $A_1$ necessarily have repeated composition factors, this result is quite useful for
  showing that many $\mathcal{k}G$-modules are not MF as $\mathcal{k}A$-modules (see Proposition~\ref{CD lemma}).

\begin{center}
\begin{xltabular}[h]{\textwidth}{l l }
\caption{Multiplicity-free restrictions in characteristic $0$} \label{tab:p=0 big table} \\
\hline \multicolumn{1}{l}{$G_K$} & \multicolumn{1}{l}{Weight $\lambda$}\\
\hline 
\endhead
$A_\ell$ & $\omega_1,\, \omega_2,\, 2\omega_1,\, \omega_1+\omega_\ell$\\
         & $\omega_3\, (5\leq\ell\leq 7)$\\
         & $3\omega_1\, (\ell\leq 5),\, 4\omega_1 \,(\ell\leq 3),\, 5\omega_1 \,(\ell\leq 3)$\\
$A_3$    & $110$\\
$A_2$    & $c1,\,c0$\\ \hline
$B_\ell$ & $\omega_1,\, \omega_2,\, 2\omega_1$\\
         & $\omega_\ell\, (\ell\leq 8)$\\
$B_3$    & $101,\,002,\,300$\\
$B_2$    & $b0,\,0b\,(1\leq b\leq 5),\, 11,\, 12,\, 21$\\ \hline
$C_\ell$ & $\omega_1,\, \omega_2,\, 2\omega_1$\\
         & $\omega_3\, (3\leq\ell\leq 5)$\\  
         & $\omega_\ell\, (\ell=4,\, 5)$\\
$C_3$    & $300$\\ \hline
$D_\ell \, (\ell\geq 4)$ & $\omega_1,\, \omega_2\, (\ell = 2m+1),\, 2\omega_1\, (\ell = 2m)$\\
         & $\omega_\ell\, (\ell\leq 9)$\\ \hline
$E_6$ & $\omega_1,\omega_2$ \\
$E_7$ & $\omega_1,\omega_7$ \\
$E_8$ & $\omega_8$ \\
$F_4$ & $\omega_1,\omega_4$ \\
$G_2$ & $10,\,01,\,11,\,20,\,02,\,30$ \\
 \hline
\end{xltabular}
\end{center}

\section{Preliminary lemmas}\label{notation section}
Let us fix the notation to be used throughout the paper, in addition to that which was introduced in the previous section.  

Recall that $G$ is a simply connected simple algebraic group with principal $A_1$-subgroup $A$. We assume throughout that $\ell\geq 2$, respectively $2,3,4$, for $G$ of type $A_\ell$, respectively $B_\ell, C_\ell, D_\ell$. For $1\leq i\leq \ell$, let $s_i$ denote the simple reflection associated to the root $\alpha_i$. For $\lambda\in X(T)$, a dominant weight, we write $\Delta_G(\lambda)$ for the Weyl module, and $L_G(\lambda)$ for the irreducible module, for $G$ of highest weight $\lambda$. We will suppress the $G$ in this notation if there is no ambiguity. For a $\mathcal{k}G$-module $V$ and $\mu\in X(T)$, we write $V_\mu$ for the $\mu$-weight space with respect to $T$ of the module $V$. When we say that roots are adjacent, or end-nodes, we mean with respect to the Dynkin diagram associated to the root system $\Phi$. 

For a group of type $A_1$, we identify the weight lattice of a fixed maximal torus with the ring $\mathbb Z$ and write $(s)$ for the irreducible $\mathcal{k}A_1$-module of highest weight $s$. If we want to underline that we are talking about the fixed principal $A_1$-subgroup $A$, we may write as well $L_A(s)$. Similarly, we write $\Delta(s)$ for the Weyl module of highest weight $s$ and $T(s)$ for the indecomposable tilting module of highest weight $s$.  For a $\mathcal{k}A$-module $(s)$, we write $(s)^{(p^i)}$ for the module whose structure is induced by the composition of the $p^i$-Frobenius map on $A$ and the morphism defining the module structure on $(s)$.  For $A_K$, the principal $A_1$-subgroup of $G_K$, and $s$ a non-negative integer, we will write $\Delta_{A_K}(s)$ for the irreducible $KA_K$-module of highest weight $s$.

Here and in Sections~\ref{rank 2 section} and \ref{rank at least 3 section}, we fix a $p$-restricted dominant weight $\lambda\in X(T)$ and set $V = L_G(\lambda)$. Throughout the paper, set $r = \lambda\downarrow T_A$, that is $\lambda(\alpha^\vee(c)) = c^r$, for all $c\in \mathcal{k}^*$. Note that the cocharacter $\alpha^\vee : \mathbb G_m\to T$ which defines the maximal torus of $A$ satisfies $\alpha_i(\alpha^\vee(c)) = c^2$ for all $c\in \mathcal{k}^*$; that is, $\alpha_i\downarrow T_A = 2$ for all $1\leq i \leq \ell$. The value for $r$ can then be determined by writing $\lambda$ as a linear combination of simple roots and then using that each simple root takes value $2$ on $T_A$. We list the values of $r$ in Table~\ref{tab:r values}. 

\begin{longtable}{l l}
\caption{Values of $r=\lambda\downarrow T_A$ for $\lambda = \sum_1^\ell{c_i}\omega_i$} \label{tab:r values} \\

\hline \multicolumn{1}{l}{\textbf{$G$}} & \multicolumn{1}{l}{{$r$}}  \\ \hline 
\endhead
$A_\ell$ & $\sum_1^\ell i(\ell+1-i)c_i$ \\
$B_\ell$ & $\sum_1^{\ell-1} i(2\ell+1-i)c_i +\frac{\ell(\ell+1)}{2}c_\ell$ \\
$C_\ell$ & $\sum_1^\ell i(2\ell-i)c_i$ \\
$D_\ell$ & $\sum_1^{\ell-2} i(2\ell-1-i)c_i +\frac{\ell(\ell-1)}{2}c_{\ell-1}+\frac{\ell(\ell-1)}{2}c_{\ell}$ \\ \hline
$G_2$ & $6c_1+10c_2$ \\
$F_4$ & $22c_1+42c_2+30c_3+16c_4$ \\
$E_6$ & $16c_1+22c_2+30c_3+42c_4+30c_5+16c_6$ \\
$E_7$ & $34c_1 +49c_2 +66c_3 +96c_4 +75c_5 +52c_6 +27c_7$ \\ 
$E_8$ & $92c_1 + 136c_2 + 182c_3 + 270c_4 + 220c_5 + 168c_6 + 114c_7 + 58c_8$ \\\hline
\end{longtable}

In addition, recall that the existence of a principal $A_1$-subgroup  in $G$ implies that $p\geq h$, the Coxeter number of $G$, a hypothesis which allows us to apply the following proposition, a consequence of \cite[Theorem 1]{premet}

\begin{prop}\label{prop:premet} Let $p\geq h$ and let $\mu$ be a $p$-restricted weight for $G$. Then  the irreducible $\mathcal{k}G$-module $L(\mu)$ has precisely the same set of weights as the $\mathcal{k}G$-module $\Delta(\mu)$.
    
\end{prop}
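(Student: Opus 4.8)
The plan is to prove the two inclusions of weight sets separately: one is formal, and the other is exactly the content of Premet's theorem.

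First I would record the easy inclusion. Since $\mu$ is dominant, $L(\mu)$ is the unique irreducible quotient of the Weyl module $\Delta(\mu)$, and in particular $L(\mu)$ is a homomorphic image of $\Delta(\mu)$ as a $\mathcal{k}G$-module. Hence every $T$-weight occurring in $L(\mu)$ already occurs in $\Delta(\mu)$, giving one containment of weight sets with no hypothesis on $p$. It remains to show that no weight of $\Delta(\mu)$ is lost in passing to the irreducible quotient.

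For the reverse containment I would invoke \cite[Theorem~1]{premet}. Premet's theorem, valid when $p$ is a good prime for $G$ and $\mu$ is $p$-restricted, describes the set of weights of $L(\mu)$ explicitly as the \emph{saturated set} associated to $\mu$: a weight $\nu\in X(T)$ occurs in $L(\mu)$ precisely when $\nu$ lies in the root-lattice coset $\mu+\mathbb{Z}\Phi$ and its dominant $W$-conjugate $\nu^+$ satisfies $\nu^+\leq\mu$ in the dominance order. This is exactly the set of weights of the characteristic-zero irreducible module of highest weight $\mu$, which by Weyl's character formula coincides with the set of weights of $\Delta(\mu)$, since the Weyl module has $p$-independent character. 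Combining this with the previous paragraph yields equality of the two weight sets.

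The only hypothesis that must be checked is that $p\geq h$ ensures $p$ is good for $G$, so that Premet's result applies. I would verify this directly against the Coxeter numbers recorded above and the standard list of good primes: in type $A_\ell$ all primes are good; for $B_\ell,C_\ell,D_\ell$ one has $h\geq 4$, so $p\geq h$ forces $p>2$; and for the exceptional types the bound $p\geq h$ is in every case strictly stronger than the good-prime bound (for instance $h=6$ for $G_2$ forces $p\geq 7>5$, and $h=30$ for $E_8$ forces $p\geq 31>7$). Thus $p\geq h$ always implies $p$ good, and no case is excluded. The main, and essentially only, obstacle is the correct invocation of Premet's theorem together with this good-prime verification; the identification of the saturated set with the weight support of $\Delta(\mu)$ is classical and requires no further computation.
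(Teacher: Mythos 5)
Your proposal is correct and follows essentially the same route as the paper: both reduce the statement to \cite[Theorem~1]{premet} plus a verification that $p\geq h$ puts one inside the hypotheses of that theorem. The only cosmetic difference is that the paper checks the hypothesis via the parameter $e(\Phi)$ (the maximum square of a ratio of root lengths, so at most $3$, hence $<h$), whereas you check the slightly stronger sufficient condition that $p$ is a good prime; both are immediate consequences of $p\geq h$.
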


\begin{proof} This follows from \cite[Theorem 1]{premet} since the parameter $e(\Phi)$ appearing in the statement of \emph{loc.cit.} is the maximum of the squares of the ratios of the lengths of the roots in $\Phi$.\end{proof}
We now introduce a shorthand notation for weights of $V$.
For $\lambda-\sum_{i=1}^\ell a_i\alpha_i$, we write $\lambda-i_1^{a_{i_1}}\cdots i_m^{a_{i_m}}$, where $a_j=0$ for $j\not\in\{i_1,\dots i_m\}$, and suppress those $a_j$ with $a_j=1$; for example, the weight $\lambda-\alpha_2-2\alpha_3-\alpha_5$ will be written as $\lambda-23^25$. For $G$ of rank 2, we write $\lambda-ab$ for the weight $\lambda-a\alpha_1-b\alpha_2$. 

The following result is Corollary 2.7.6 from  \cite{mikkoThesis}; we include a sketch of the proof for completeness.

\begin{lemma}\label{p>r irreducible Weyl module lemma}
    If $p>r$, then $\Delta(\lambda)$ is irreducible.
\end{lemma}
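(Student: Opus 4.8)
The plan is to derive irreducibility from the standard ``bottom alcove'' criterion and then to show that the single numerical hypothesis $p>r$ already forces $\lambda$ into the relevant region. I would begin from the fact (a consequence of the Jantzen sum formula, equivalently of the strong linkage principle) that $\Delta(\lambda)$ is irreducible as soon as
\[
\langle\lambda+\rho,\beta^\vee\rangle\le p\qquad\text{for every }\beta\in\Phi^+,
\]
where $\rho=\sum_{i=1}^\ell\omega_i$. Thus the whole problem reduces to verifying this system of inequalities, and the task becomes to extract it from $p>r$ together with the running hypothesis $p\ge h$.

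Next I would reduce to a single coroot. Writing $\rho^\vee$ for the half-sum of the positive coroots, the cocharacter $\alpha^\vee$ defining $T_A$ is exactly $2\rho^\vee$, so that $r=\langle\lambda,2\rho^\vee\rangle$ and $2\rho^\vee$ is the sum of all positive coroots. Let $\gamma^\vee$ be the highest coroot, i.e.\ the highest root of the dual root system $\Phi^\vee$; then every positive coroot $\delta$ satisfies $\delta\le\gamma^\vee$, and since $\lambda+\rho$ is dominant we have $\langle\lambda+\rho,\delta\rangle\le\langle\lambda+\rho,\gamma^\vee\rangle$. Hence it suffices to prove the single inequality $\langle\lambda+\rho,\gamma^\vee\rangle\le p$. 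Because $\langle\rho,\gamma^\vee\rangle$ equals the height of $\gamma^\vee$, namely $h-1$, this amounts to
\[
\langle\lambda,\gamma^\vee\rangle+(h-1)\le p .
\]

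The heart of the matter, and the step I expect to be the main obstacle, is to bound $\langle\lambda,\gamma^\vee\rangle$ against $r$. Here I would argue as follows. If $\lambda=0$ the inequality is immediate, since the left-hand side is then $h-1\le p-1<p$ using $p\ge h$. If $\lambda\ne0$, fix $i$ with $\langle\lambda,\alpha_i^\vee\rangle\ge1$. Starting from the simple coroot $\alpha_i^\vee$ and repeatedly adding simple coroots so as to raise the height by one at each step, one obtains a chain of positive coroots terminating at $\gamma^\vee$; discarding $\gamma^\vee$ itself leaves $h-2$ distinct positive coroots, all different from $\gamma^\vee$ and each having positive $\alpha_i^\vee$-coefficient. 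Pairing $\lambda$ with $2\rho^\vee-\gamma^\vee=\sum_{\delta\ne\gamma^\vee}\delta$ (sum over positive coroots) and using $\langle\lambda,\alpha_i^\vee\rangle\ge1$ then gives $\langle\lambda,2\rho^\vee-\gamma^\vee\rangle\ge h-2$, whence
\[
\langle\lambda,\gamma^\vee\rangle=r-\langle\lambda,2\rho^\vee-\gamma^\vee\rangle\le r-(h-2).
\]
Combining, $\langle\lambda,\gamma^\vee\rangle+(h-1)\le r+1\le p$, which is exactly the bound needed, so the alcove criterion applies and $\Delta(\lambda)$ is irreducible. The only delicate point is the existence of the height-increasing chain of coroots ending at $\gamma^\vee$, which is the standard fact that any positive root can be joined to the highest root by successively adding simple roots, applied inside $\Phi^\vee$; everything else is a direct computation with the identity $2\rho^\vee=\sum_{\beta\in\Phi^+}\beta^\vee$.
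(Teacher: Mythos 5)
Your proof is correct and follows essentially the same route as the paper: both invoke the Jantzen sum formula to reduce to the closed-bottom-alcove condition $\langle\lambda+\rho,\beta^\vee\rangle\le p$, reduce that to the single highest coroot, and bound $\langle\lambda+\rho,\gamma^\vee\rangle$ by $r+1$. The only difference is that where the paper cites Serre's inequality $\langle\lambda+\rho,\gamma^\vee\rangle\le 1+\sum_{\alpha\in\Phi^+}\langle\lambda,\alpha^\vee\rangle$ and separately verifies $r=\sum_{\alpha\in\Phi^+}\langle\lambda,\alpha^\vee\rangle$, you prove the same estimate directly via the height-increasing chain of coroots terminating at $\gamma^\vee$, which is a valid (and self-contained) substitute.
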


\begin{proof}
    By the Jantzen sum formula \cite[Part II, 8.19]{Jantzen}, it suffices to prove that for all $\alpha\in\Phi^+$, we have $r \geq \langle \lambda+\delta,\alpha\rangle -1$, where $\delta = \sum_{i=1}^\ell \omega_i$. It is easy to see that $\langle \lambda,\alpha \rangle$ is maximal when $\alpha$ is the highest root of the dual root system $\Phi^\vee$, i.e. when $\alpha$ is the highest short root $\beta$ of $\Phi$. By \cite[Prop.~5]{Serre1994}, we have $\langle \lambda+\delta,\beta\rangle \leq 1 +\sum_{\alpha\in\Phi^+}\langle \lambda ,\alpha\rangle$. It is therefore sufficient to show that $r =\sum_{\alpha\in\Phi^+}\langle \lambda ,\alpha\rangle $, which is a simple calculation using the fact that for a simple root $\alpha_i$ we have $\sum_{\alpha\in\Phi^+\setminus \{\alpha_i\}}\langle \alpha_i,\alpha\rangle = \sum_{\alpha\in\Phi^+\setminus \{\alpha_i\}}\langle s_i(\alpha_i),s_i(\alpha)\rangle = \sum_{\alpha\in\Phi^+\setminus \{\alpha_i\}}\langle -\alpha_i,\alpha\rangle$ and so $\sum_{\alpha\in\Phi^+\setminus \{\alpha_i\}}\langle \alpha_i,\alpha\rangle=0$. 
\end{proof}

The next proposition establishes Theorem~\ref{main theorem} when $p>r$.

\begin{proposition}\label{V MF p=0 p>r iff lemma}
    Assume $p>r$. Then $V\downarrow A$ is MF if and only if $\Delta_K(\lambda)\downarrow A_{K}$ is MF.
\end{proposition}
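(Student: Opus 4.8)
The plan is to exploit the hypothesis $p>r$ to reduce the positive-characteristic restriction problem to the characteristic-zero one by comparing weights and their multiplicities directly. First I would invoke Lemma~\ref{p>r irreducible Weyl module lemma}, which gives that $\Delta(\lambda)$ is irreducible, so that $V=L(\lambda)=\Delta(\lambda)$ in characteristic $p$. Since Weyl modules are obtained by reduction mod $p$ from the characteristic-zero irreducible (the Weyl character is independent of characteristic), the set of $T$-weights of $V$ and their multiplicities coincide exactly with those of $\Delta_K(\lambda)$. The central observation is that for a type-$A_1$ group, every irreducible module $(s)$ has one-dimensional weight spaces and is determined by its highest weight; consequently, whether $V\downarrow A$ is MF is governed entirely by the $T_A$-weight structure of $V$, which one obtains by restricting the $T$-weights via $\alpha^\vee$.

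Next I would translate the MF condition into a statement purely about the multiset of $T_A$-weights. The key point is that the composition factors of a restriction to $A$ can be read off from the weight multiplicities: an irreducible $\mathcal{k}A$-summand/composition factor $L_A(s)=(s)$ contributes one to each $T_A$-weight in the set $\{s,s-2,\dots,-s\}$ \emph{when $s<p$} (the $p$-restricted range), so the number of composition factors of highest weight $s$ equals the difference of weight-multiplicities of the weights $s$ and $s+2$. Crucially, the bound $p>r$ forces every $T_A$-weight $s$ appearing in $V$ to satisfy $|s|\le r<p$, so all the relevant composition factors $(s)$ are $p$-restricted and their weight strings behave exactly as in characteristic zero: $(s)$ has weights $s,s-2,\dots,-s$ with no wraparound or Frobenius-twist complications. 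Therefore the recipe extracting composition-factor multiplicities from the weight-multiplicity function is identical in the two settings.

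Combining these two steps, the weight-multiplicity function of $V$ with respect to $T_A$ matches that of $\Delta_K(\lambda)$ with respect to $A_K$, and the same multiplicity-extraction formula applies in both cases because $p>r$ keeps us in the $p$-restricted regime. Hence $V\downarrow A$ has a repeated composition factor if and only if $\Delta_K(\lambda)\downarrow A_K$ does, which is exactly the claimed equivalence. I would phrase this as: for each non-negative integer $s$, the multiplicity of $L_A(s)$ as a composition factor of $V\downarrow A$ equals $\dim V_s - \dim V_{s+2}$ (where $V_s$ denotes the $T_A$-weight space for the integer weight $s$), and this same quantity computes the multiplicity of $\Delta_{A_K}(s)$ in $\Delta_K(\lambda)\downarrow A_K$; the two dimensions agree because the $T$-weight multiplicities of $V$ and $\Delta_K(\lambda)$ agree.

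The main obstacle I anticipate is justifying rigorously that the composition factors of $V\downarrow A$ are determined by the $T_A$-weight multiplicities in positive characteristic — one must ensure that no nonsplit extensions or unexpected composition factors arise beyond what the weight combinatorics predicts. This is precisely where the hypothesis $p>r$ does the work: it guarantees all constituents are $p$-restricted of highest weight $<p$, so their characters are the familiar weight strings and the multiplicity count via successive differences of weight-space dimensions is valid, mirroring the characteristic-zero complete-reducibility argument. A secondary technical point is to confirm that the restriction of $T$-weights along $\alpha^\vee$ commutes correctly with the reduction-mod-$p$ identification of weights between $\Delta_K(\lambda)$ and $\Delta(\lambda)$; this follows since $\alpha^\vee$ and the weight lattice are defined integrally and independently of the characteristic.
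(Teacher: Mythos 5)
Your argument is correct and follows essentially the same route as the paper: both proofs reduce to $V=\Delta(\lambda)$ via Lemma~\ref{p>r irreducible Weyl module lemma} and then compare characters, using that every $T_A$-weight of $V$ is bounded by $r<p$ so that all relevant $A_1$-composition factors are irreducible Weyl modules with full weight strings. Your explicit successive-difference formula $m_s=\dim V_s-\dim V_{s+2}$ is simply an unpacking of the paper's phrase ``a comparison of characters gives $\Delta(\lambda)\downarrow A=\sum_0^k(r_i)$''.
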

\begin{proof}
    By Lemma~\ref{p>r irreducible Weyl module lemma}, the Weyl module is irreducible and therefore $V = \Delta(\lambda)$. We have $\Delta_K(\lambda) \downarrow A_{K} = \sum_0^k \Delta_{A_{ K }}(r_i)$ for some integers $r_0\geq r_1\dots \geq r_k\geq 0$ with $r_0 = r$. Since $p>r$, a comparison of characters gives $\Delta(\lambda) \downarrow A = \sum_0^k \Delta_{A}(r_i) = \sum_{0}^k (r_i)$, which then implies the result.
\end{proof}

As the next result shows, in many cases when $\Delta(\lambda)$ is irreducible and $r\geq p$, we can still directly conclude that $V\downarrow A$ is not MF.

\begin{lem}\label{CD lemma}  Assume that $\Delta(\lambda)$ is irreducible, $r\geq p$, and $r\not\equiv -1\mod p$. Furthermore,  if $G$ is of type $B_\ell$, respectively $D_\ell$, and $\lambda$ does not lie in the root lattice of
  $G$, assume in addition that $p>{{\ell+1}\choose{2}}$, respectively $p> {\ell\choose2}$. Then $V\downarrow A$ is not MF.

\end{lem}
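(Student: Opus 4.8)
The plan is to exploit the fact that, under the hypothesis that $\Delta(\lambda)$ is irreducible, $V$ is a tilting module for $G$, and then to apply the filtration result of \cite{McHa}. First I would observe that since $\Delta(\lambda)$ is irreducible we have $V = L(\lambda) = \Delta(\lambda)$, and moreover this module coincides with the dual Weyl module $\nabla(\lambda)$: the two have the same dimension, while $L(\lambda)$ is both the head of $\Delta(\lambda)$ and the socle of $\nabla(\lambda)$. Thus $V$ possesses both a Weyl filtration and a dual Weyl filtration, so $V = T_G(\lambda)$, the indecomposable tilting module for $G$ of highest weight $\lambda$. This reformulation is the crucial point, as it places $V$ in the setting to which \cite{McHa} applies.

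Next I would invoke \cite{McHa}: under the running hypothesis $p\geq h$ together with the additional bounds $p>\binom{\ell+1}{2}$ in type $B_\ell$ and $p>\binom{\ell}{2}$ in type $D_\ell$ in the cases where $\lambda$ lies outside the root lattice --- precisely the conditions imposed in the statement --- the restriction $T_G(\lambda)\downarrow A$ admits a filtration by tilting modules for the principal $A_1$-subgroup $A$. A module with a filtration whose quotients are tilting has itself both a Weyl and a dual Weyl filtration for $A$, hence is a tilting $A$-module; by the Krull--Schmidt theorem it decomposes as $V\downarrow A = \bigoplus_i T_A(s_i)$ for some non-negative integers $s_i$.

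Then I would isolate the summand of maximal highest weight. Writing a $T$-weight of $V$ as $\mu = \lambda-\sum_i a_i\alpha_i$ with $a_i\geq 0$, its restriction to $T_A$ is $\mu\downarrow T_A = r - 2\sum_i a_i \leq r$, since $\alpha_i\downarrow T_A = 2$; thus $r$ is the highest $T_A$-weight of $V\downarrow A$ and is attained by $\lambda$. Hence $r = \max_i s_i$ and $T_A(r)$ occurs as a direct summand of $V\downarrow A$. Recalling that for $A$ of type $A_1$ the tilting module $T_A(s)$ is simple (equivalently, $\Delta_A(s)$ is irreducible) precisely when $s\leq p-1$ or $s\equiv -1 \pmod p$, the hypotheses $r\geq p$ and $r\not\equiv -1\pmod p$ ensure that $T_A(r)$ is a reducible indecomposable tilting module. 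As noted in the introduction, such a module necessarily has a repeated composition factor, so the single summand $T_A(r)$ already forces $V\downarrow A$ to contain a composition factor with multiplicity at least two. Therefore $V\downarrow A$ is not MF, as required.

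The main obstacle is the second step: correctly invoking \cite{McHa} and verifying that its hypotheses are met, in particular understanding why the extra lower bounds on $p$ for the weights outside the root lattice in types $B_\ell$ and $D_\ell$ are exactly what is needed to guarantee that the restriction of the tilting module $T_G(\lambda)$ to $A$ remains tilting. Once the tilting filtration is secured, the remaining steps are routine bookkeeping with the well-understood tilting theory of $A_1$.
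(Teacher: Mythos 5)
Your proof is correct and follows essentially the same route as the paper: invoke the results of \cite{McHa} to see that the restriction of the irreducible Weyl module to the principal $A_1$ is a tilting module, extract the indecomposable summand $T_A(r)$, and use the Carter--Cline criterion together with the fact that a reducible indecomposable tilting module for $A_1$ has a repeated composition factor. One minor quibble: your stated criterion that $T_A(s)$ is simple ``precisely when $s\leq p-1$ or $s\equiv -1\pmod p$'' is not an equivalence (e.g.\ $s=p^2+p-1$ satisfies $s\equiv -1\pmod p$ but $\Delta_A(s)$ is reducible), though the implication you actually use --- that $s\geq p$ and $s\not\equiv -1\pmod p$ force $T_A(s)$ to be reducible --- is correct.
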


\begin{proof} Here we use \cite[Theorems 4.1.2, 4.1.4 and 4.2.1]{McHa} to see that $A$ is  a so-called ``good filtration'' subgroup, which then implies that the irreducible Weyl module $\Delta(\lambda) = V$
  has an  $A$-filtration by
  both Weyl modules and by induced modules. So in particular, $V\downarrow A$ is a tilting module. Furthermore, since $r$ is the highest $T_A$-weight in $V$, the module $T(r)$ is a summand of $V\downarrow A$. The hypotheses on $r$ imply that the indecomposable tilting module $T(r)$ is reducible (see \cite[Theorem 1.2]{Carter_Cline}). Since tilting modules for $A$ are self-dual, no reducible indecomposable
  tilting module is MF, which then concludes the proof.
  \end{proof}

We now turn to a sequence of definitions and lemmas which provide tools for studying the set of composition factors of $V\downarrow A$ based upon knowing the set of weights of $V$.

\begin{dfn} For $n\in\mathbb{Z}$, let $n_d$ be the multiplicity of the $T_A$-weight $r-2d$ in $V\downarrow A$ and let $m_d$ be the multiplicity of the composition factor $(r-2d)$ in $V\downarrow A$. Also, let $\mathbf{S}_d$ denote the multiset of composition factors whose highest weight is greater than $r-2d$ and in which $r-2d$ does not occur as a weight, and let $s_d$ denote the cardinality of $\mathbf{S}_d$.\end{dfn}

\begin{lemma}\label{multiplicity bound general lemma}
Assume that $V\downarrow A$ is MF. Then $n_d\leq d+1$.
\end{lemma}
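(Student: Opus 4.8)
The plan is to bound $n_d$, the multiplicity of the $T_A$-weight $r-2d$ in $V\downarrow A$, by carefully counting how this weight can arise from the composition factors of $V\downarrow A$. Since $A$ is of type $A_1$, each irreducible $\mathcal{k}A$-module $(s)$ has all weight spaces of multiplicity one, and its set of weights is exactly $\{s, s-2, s-4, \dots, -s\}$. Thus a composition factor $(s)$ contributes $1$ to $n_d$ precisely when $r-2d$ is a weight of $(s)$, i.e. when $s \geq r-2d$ and $s \equiv r-2d \pmod 2$, and contributes $0$ otherwise. Because $V\downarrow A$ is MF, the composition factors are pairwise non-isomorphic, so the highest weights $s$ occurring are \emph{distinct}. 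Since $r$ is the highest $T_A$-weight of $V$, every composition factor $(s)$ satisfies $s\leq r$, and those contributing to the weight $r-2d$ must moreover satisfy $s\geq r-2d$.

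First I would observe that a composition factor $(s)$ contributes to $n_d$ only if its highest weight $s$ lies in the range $r-2d\leq s\leq r$ and has the correct parity $s\equiv r\pmod 2$ (the latter because all weights of $V$, hence of $V\downarrow A$, are congruent to $r$ modulo $2$, as they differ from $\lambda$ by a sum of roots, each of which restricts to an even value on $T_A$). The admissible values of $s$ are therefore $s\in\{r, r-2, r-4,\dots, r-2d\}$, of which there are exactly $d+1$. By the MF hypothesis, each such value of $s$ can occur as the highest weight of \emph{at most one} composition factor of $V\downarrow A$, so each contributes at most $1$ to the count. Summing, I would conclude that
\[
n_d = \sum_{s} m_s \cdot [\,r-2d \text{ is a weight of } (s)\,] \leq \#\{s : r-2d\leq s\leq r,\ s\equiv r \!\!\pmod 2\} = d+1,
\]
where the inequality uses $m_s\leq 1$ for all $s$ from the MF condition.

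The one subtlety to handle carefully is the restriction of the sum to composition factors whose highest weight actually lies in the range $[r-2d, r]$: any composition factor $(s)$ with $s<r-2d$ does not have $r-2d$ as a weight at all and so contributes nothing, which is exactly why the count is capped by the number of admissible highest weights in that window rather than by the total number of composition factors. I expect the main (and only real) obstacle to be stating this bookkeeping cleanly, namely making explicit that the multiplicity $n_d$ decomposes as a sum over composition factors, each weighted by whether $r-2d$ appears in that factor, and then invoking both the multiplicity-one property of weight spaces in $A_1$-modules and the pairwise non-isomorphism guaranteed by MF. No deep input is required beyond the elementary weight combinatorics of $\mathrm{SL}_2$-modules together with the MF hypothesis; the result is essentially a counting argument.
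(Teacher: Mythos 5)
Your argument is correct and follows essentially the same route as the paper: any composition factor of $V\downarrow A$ in which the $T_A$-weight $r-2d$ occurs must have highest weight in $\{r, r-2,\dots, r-2d\}$, the MF hypothesis allows each of these $d+1$ values to label at most one composition factor, and each such factor contributes at most $1$ to $n_d$ by the multiplicity-one property of weight spaces of irreducible $A_1$-modules. One inaccuracy worth fixing: in characteristic $p$ the set of weights of the irreducible $(s)$ is \emph{not} all of $\{s, s-2,\dots,-s\}$ --- by Steinberg's tensor product theorem there are gaps (this is precisely the content of the paper's later lemma on the multisets $\mathbf{S}_d$) --- so your phrase ``precisely when $r-2d$ is a weight of $(s)$, i.e.\ when $s\geq r-2d$ and $s\equiv r-2d \pmod 2$'' asserts an equivalence that is false. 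Fortunately your final display only uses the forward implication (the stated condition is \emph{necessary} for $r-2d$ to be a weight of $(s)$), which is all that is needed for an upper bound, so the inequality $n_d\leq d+1$ stands; you should simply weaken that sentence to a one-way implication.
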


\begin{proof}
Let $\mathbf{B}$ be the multiset of composition factors of $V\downarrow A$ where $r-2d$ occurs as a weight. Since $V\downarrow A$ is MF, we have $\mathbf{B}\subseteq \{(r),(r-2),\dots,(r-2d)\}$. Therefore $\lvert \mathbf{B} \rvert\leq d+1$ and we can conclude that $n_d\leq d+1$.
\end{proof}

\begin{lemma}\label{multiplicity recurrence lemma}
    For all $0\leq d\leq r$ we have \begin{equation}\label{recurrence equation}
        m_d = n_d-n_{d-1}+s_d-s_{d-1}.
    \end{equation}
    
\end{lemma}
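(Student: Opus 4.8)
The plan is to reduce the identity to a bookkeeping of composition factors according to their highest weight, exploiting the rank-one feature (used throughout the paper) that each irreducible $(s)$ has multiplicity-one weight spaces on the string $s, s-2, \dots, -s$. The crucial preliminary observation, and the step I expect to be the linchpin, is a parity statement: every $T_A$-weight of $V$ is congruent to $r$ modulo $2$. Indeed, each weight of $V$ has the form $\lambda - \sum_i a_i \alpha_i$ with $a_i \in \mathbb{Z}_{\geq 0}$, and since $\alpha_i \downarrow T_A = 2$ for every $i$, its restriction to $T_A$ equals $r - 2\sum_i a_i$. Consequently every composition factor $(s)$ of $V \downarrow A$ satisfies $s \equiv r \pmod 2$, so no composition factor has highest weight $r - 2d + 1$; equivalently, the only highest weight occurring strictly above $r - 2d$ and not exceeding $r - 2(d-1)$ is $r - 2(d-1)$ itself. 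Without this remark the naive count would pick up a spurious term counting opposite-parity factors, which is exactly the term that must vanish for the clean recurrence to hold.

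Next I would set up the counting. Since each composition factor has multiplicity-one weight spaces, $n_d$ equals the number of composition factors of $V \downarrow A$ in which $r - 2d$ occurs as a weight, and (using the parity observation) a factor $(s)$ contains the weight $r - 2d$ precisely when $-s \leq r - 2d \leq s$. I would split these factors into those whose highest weight equals $r - 2d$, of which there are $m_d$ by definition, and those whose highest weight strictly exceeds $r - 2d$ while still containing $r - 2d$; writing $t_d$ for the number of the latter, this gives $n_d = m_d + t_d$. On the other hand, the composition factors with highest weight greater than $r - 2d$ split, according to whether $r - 2d$ occurs among their weights, into exactly these $t_d$ factors together with the $s_d$ factors of $\mathbf{S}_d$. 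Hence the total number of composition factors with highest weight greater than $r - 2d$ equals $t_d + s_d$.

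Finally I would compare level $d$ with level $d-1$. Writing $N$ for the number of composition factors with highest weight greater than $r - 2d$ and $N'$ for the number with highest weight greater than $r - 2(d-1)$, the parity observation shows the only highest weight in the gap is $r - 2(d-1)$, so $N = N' + m_{d-1}$. Combining $N = t_d + s_d$, $N' = t_{d-1} + s_{d-1}$, and $t_{d-1} = n_{d-1} - m_{d-1}$ yields $N = n_{d-1} + s_{d-1}$; substituting into $m_d = n_d - t_d = n_d - (N - s_d)$ then gives $m_d = n_d - n_{d-1} + s_d - s_{d-1}$, with the boundary conventions $n_{-1} = s_{-1} = 0$ handling $d = 0$. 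The combinatorial identity is then immediate; the only care required is the routine check that these conventions and the degenerate cases (for example $d > r/2$, where $r - 2d < 0$ forces $m_d = 0$) are consistent with the definitions, which I do not expect to present any real difficulty.
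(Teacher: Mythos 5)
Your argument is correct and is essentially the paper's own proof: both rest on the identity $m_d = n_d - (N - s_d)$, where $N$ is the number of composition factors of highest weight exceeding $r-2d$ (counted via the multiplicity-one weight spaces of rank-one irreducibles); you evaluate $N = n_{d-1}+s_{d-1}$ by a direct count at level $d-1$ using the parity observation, while the paper writes $N = \sum_{k<d} m_k$ and obtains the same value by induction and telescoping. The only blemish is your parenthetical assertion that $(s)$ contains the weight $r-2d$ precisely when $-s\le r-2d\le s$, which fails in characteristic $p$ because of the gaps described in Lemma~\ref{gaps containment up to r-2p}; fortunately your count never uses that direction, only the definition of $\mathbf{S}_d$ as the factors in which $r-2d$ fails to occur, so nothing breaks.
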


\begin{proof}
    We prove this by induction on $d$. If $d=0$ the statement holds. Indeed, $m_0=n_0=1$ since $r$ is the highest weight and is afforded only by $\lambda$, and $n_{-1}=s_{-1}=s_0 = 0$.
    Assume that \eqref{recurrence equation} holds up to an arbitrary $d$. In general, the multiplicity $m_{d+1}$ can be determined by taking the difference between $n_{d+1}$ and the number of times the $T_A$-weight $r-2(d+1)$ appears in composition factors with greater highest weight. Thus,  \[ m_{d+1} = n_{d+1} -\left(\sum_{0\leq k\leq d}m_k-s_{d+1}\right).\]
    By the inductive hypothesis $m_k = n_k-n_{k-1}+s_k-s_{k-1}$ for all $k\leq d$. Substituting we get \[ m_{d+1} = n_{d+1} +s_{d+1} -\sum_{0\leq k\leq d}\left(n_k-n_{k-1}+s_k-s_{k-1}\right) = n_{d+1}-n_d+s_{d+1}-s_d,\] concluding the proof.
\end{proof}

\begin{lemma}\label{gaps containment up to r-2p}
    For all $1\leq d<p$ we have $\mathbf{S}_{d-1}\subseteq\mathbf{S}_d$. In particular $s_{d}\geq s_{d-1}$.
\end{lemma}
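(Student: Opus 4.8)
The plan is to reduce the multiset inclusion to a statement about a single composition factor and then to a purely combinatorial fact about the weights of an irreducible $A_1$-module near its highest weight. Since $\mathbf{S}_{d-1}$ and $\mathbf{S}_d$ are multisets whose members are selected, with their multiplicities as composition factors of $V\downarrow A$, by a condition depending only on the highest weight $s$ of the factor, it suffices to show that every irreducible $(s)$ satisfying the defining conditions of $\mathbf{S}_{d-1}$ also satisfies those of $\mathbf{S}_d$; each such $(s)$ then contributes the same multiplicity to both multisets, giving both the inclusion and $s_{d-1}\leq s_d$. First I would record two basic facts about any composition factor $(s)$ of $V\downarrow A$: every $T_A$-weight of $V$ has the form $r-2a$ with $a\geq 0$ (restricting $\lambda-\sum a_i\alpha_i$ to $T_A$ gives $r-2\sum a_i$), so $s\equiv r\pmod 2$ and $s\leq r$. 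Writing $r-2(d-1)=s-2k_1$ and $r-2d=s-2(k_1+1)$, one has $k_1=(s-r)/2+d-1$; the condition $s>r-2(d-1)$ forces $k_1\geq 1$, while $s\leq r$ gives $k_1+1\leq d\leq p-1$. Thus both weights in question have the form $s-2k$ with $0\leq k<p$.

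The key step is the following description of the top of the weight string of an irreducible $A_1$-module. Using the Steinberg tensor product theorem, write $s=\sum_i s_ip^i$ with $0\leq s_i<p$, so that $(s)=\bigotimes_i L_A(s_i)^{(p^i)}$ and the weights of $L_A(s_i)^{(p^i)}$ are $p^i(s_i-2j_i)$ for $0\leq j_i\leq s_i$. A weight $s-2k$ therefore occurs precisely when $k=\sum_i p^ij_i$ for some such $j_i$. When $0\leq k<p$ this forces $j_i=0$ for all $i\geq 1$, so $s-2k$ occurs exactly when $0\leq k\leq s_0$, where $s_0=s\bmod p$. Hence, among $k\in\{0,\dots,p-1\}$, the values for which $s-2k$ is a weight form the initial segment $\{0,1,\dots,s_0\}$; in particular, if $s-2k$ fails to be a weight for some $k<p$, then so does $s-2k'$ for every $k<k'<p$.

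Finally I would combine the two. If $(s)\in\mathbf{S}_{d-1}$, then $r-2(d-1)=s-2k_1$ is not a weight of $(s)$ with $0\leq k_1<p$, so $k_1>s_0$ by the key step. Since $k_1+1\leq p-1<p$, the value $r-2d=s-2(k_1+1)$ is also not a weight of $(s)$; and the highest-weight condition $s>r-2d$ is immediate from $s>r-2(d-1)>r-2d$. Therefore $(s)\in\mathbf{S}_d$, which yields $\mathbf{S}_{d-1}\subseteq\mathbf{S}_d$ as multisets and hence $s_{d-1}\leq s_d$.

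The main obstacle is the key step: it is where the hypothesis $d<p$ (together with $s\leq r$) is genuinely used, to guarantee that both $r-2(d-1)$ and $r-2d$ lie within the first $p$ steps below the highest weight $s$, where the present weights form an unbroken run at the top. Outside this range the weights of $(s)$ can have gaps that are filled again further down, coming from the higher Steinberg factors, so the initial-segment property — and with it the inclusion — can fail; the restriction $d<p$ is exactly what prevents this.
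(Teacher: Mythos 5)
Your proof is correct and follows essentially the same route as the paper: both arguments rest on Steinberg's tensor product theorem and the resulting description of which integers $k$ give a weight $s-2k$ of an irreducible $A_1$-module, combined with the bounds $s\le r$ and $d<p$ to place $r-2(d-1)$ and $r-2d$ within the first $p$ steps below $s$. The paper phrases the same combinatorial fact in terms of ``gaps'' (open intervals of missing weights whose lower endpoints lie at distance at least $2p$ below the highest weight) and argues by contradiction, whereas you state it directly as an initial-segment property $\{0,\dots,s_0\}$; the content is identical.
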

\begin{proof}
    We begin by analysing what weights occur in an arbitrary irreducible   module $(t)$. We will write $[ a_0,a_1,\dots,a_{m}] $ to denote the  integer $\sum_{i=0}^{m}a_i p^i$. Then $t = [a_0,a_1,\dots,a_{m}]$ where the $a_i$'s are the coefficients in the $p$-adic expansion of $t$. By Steinberg's 
    tensor product theorem, we have  \[(t) \cong (a_0)\otimes (a_1)^{(p)}\otimes\cdots\otimes (a_m)^{(p^m)}.\] The weights occurring in $(t)$ are therefore of the form $[a_0-2i_0,a_1-2i_1,\dots,a_{m}-2i_m] $ where $0\leq i_j \leq a_j$. Let $t-2q\geq 0$, with $q\in\mathbb{N}$, be an integer denoting a weight not occurring in $(t)$. Then $t-2q$ lies in an open interval \((\delta,\gamma)\) with 
    \begin{flalign*}
\delta &= [a_0,\dots, a_j, a_{j+1}-2i_{j+1}-2,\dots,a_{m}-2i_m], &\\
\gamma &= [-a_0,\dots, -a_j, a_{j+1}-2i_{j+1},\dots,a_{m}-2i_m], &
 \end{flalign*}
    
%\[\left((a_0,\dots, a_j, a_{j+1}-2i_{j+1}-2,\dots,a_{m}-2i_m), (-a_0,\dots, -a_j, a_{j+1}-2i_{j+1},\dots,a_{m}-2i_m)\right)\]
%\begin{equation}\label{gap interval equation}\left((a_0,\dots, a_j, a_{j+1}-2i_{j+1}-2,\dots,a_{m}-2i_m), (-a_0,\dots, -a_j, a_{j+1}-2i_{j+1},\dots,a_{m}-2i_m)\right) \end{equation}
where $0\leq i_{j+1}< a_{j+1}$ and $0\leq i_k\leq a_k$ for $k>j+1$. Conversely, any integer $t-2q$ lying in such an interval corresponds to a weight not occurring in $(t)$. We call these intervals the \textit{gaps} of $(t)$, so that a composition factor $(t)$ is in $ \mathbf{S}_d$ if and only if $r-2d$ is in a gap of $(t)$. 

Assume for a contradiction that $(t)\in \mathbf{S}_{d-1}\setminus \mathbf{S}_d$ for some $t\leq r$. Then $t>r-2d+2$ and the composition factor $(t)$ has a gap \((\delta,\gamma)\) as above containing $r-2d+2$, but not containing $r-2d$. This means that $r-2d = [a_0,\dots, a_j, a_{j+1}-2i_{j+1}-2,\dots,a_{m}-2i_m]$, implying that  \[2d - (r-t) = t-(r-2d) = t-[a_0,\dots, a_j, a_{j+1}-2i_{j+1}-2,\dots,a_{m}-2i_m]\] \[ = 2p^{j+1}[i_{j+1}+1,i_{j+2},\dots,i_m]\geq 2p.\] This contradicts the assumption that $d<p$.
\end{proof}

\begin{lemma}\label{sufficient conditions MF lemma}
Let $1\leq d<{\rm min}\{\lfloor \frac{r+2}{2}\rfloor,p\}$. Then the following hold.
\begin{enumerate}[label=\textnormal{(\roman*)}]
    \item If $n_d-n_{d-1} = 1$ then $r-2d$ is a composition factor of $V\downarrow A$.
    \item If $n_d-n_{d-1}\geq 2$ then $m_d\geq 2$ and $V\downarrow A$ is not MF.
    \item If $\lambda = c\omega_i$ and $n_d\geq d+1$ then $V\downarrow A$ is not MF.
    \item If $n_d-n_{d-1} = 1$ and $\mathbf{S}_{d-1}\neq \mathbf{S}_{d} $, then $m_d\geq 2$ and $V\downarrow A$ is not MF.
    
\end{enumerate}

\end{lemma}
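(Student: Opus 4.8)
The plan is to treat (i), (ii) and (iv) as immediate consequences of the recurrence in Lemma~\ref{multiplicity recurrence lemma} together with the containment of gaps in Lemma~\ref{gaps containment up to r-2p}, and to reserve a separate argument, by contradiction, for (iii). Throughout, the hypothesis $d<p$ places us in the range where Lemma~\ref{gaps containment up to r-2p} applies, so that $\mathbf{S}_{d-1}\subseteq\mathbf{S}_d$ and hence $s_d\geq s_{d-1}$.

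For (i), writing the recurrence as $m_d=(n_d-n_{d-1})+(s_d-s_{d-1})$, the assumption $n_d-n_{d-1}=1$ together with $s_d-s_{d-1}\geq 0$ gives $m_d\geq 1$, so $(r-2d)$ occurs as a composition factor. For (ii), the same identity with $n_d-n_{d-1}\geq 2$ gives $m_d\geq 2$, so the factor $(r-2d)$ is repeated and $V\downarrow A$ is not MF. For (iv), the hypotheses $n_d-n_{d-1}=1$ and $\mathbf{S}_{d-1}\neq\mathbf{S}_d$ combine with the proper containment $\mathbf{S}_{d-1}\subsetneq \mathbf{S}_d$ (which forces $s_d-s_{d-1}\geq 1$) to yield $m_d\geq 2$, again non-MF. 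None of these three require the MF hypothesis; they are unconditional computations once the recurrence and the gap containment are in hand.

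For (iii) I would argue by contradiction, assuming $V\downarrow A$ is MF. Then Lemma~\ref{multiplicity bound general lemma} gives $n_j\leq j+1$ for every $j$, so the hypothesis $n_d\geq d+1$ forces $n_d=d+1$. I then descend: if $1\leq j\leq d$ and $n_j=j+1$, then part (ii) (applicable since $1\leq j\leq d<\min\{\lfloor\tfrac{r+2}{2}\rfloor,p\}$) rules out $n_j-n_{j-1}\geq 2$ under the MF assumption, so $n_j-n_{j-1}\leq 1$; combined with the bound $n_{j-1}\leq j$ this yields $n_{j-1}=j$. Downward induction from $n_d=d+1$ then gives $n_j=j+1$ for all $0\leq j\leq d$, and in particular $n_1=2$.

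The crux — and the only place where the hypothesis $\lambda=c\omega_i$ enters — is the computation $n_1=1$, which contradicts $n_1=2$. Here I would observe that a $T$-weight $\mu$ of $V$ satisfies $\mu\downarrow T_A=r-2$ precisely when $\mu=\lambda-\alpha_j$ for a single simple root $\alpha_j$, since each simple root restricts to $2$ on $T_A$; and $\lambda-\alpha_j$ is a weight of $V$ only for those $j$ with $\langle\lambda,\alpha_j^\vee\rangle>0$. As $\lambda=c\omega_i$ with $c\geq 1$, the unique such $j$ is $i$, so $n_1=\dim V_{\lambda-\alpha_i}$. To justify that this multiplicity equals $1$ in positive characteristic I would invoke Proposition~\ref{prop:premet}: the weight $\lambda-\alpha_i$ occurs in $\Delta(\lambda)$ with multiplicity $1$ (standard for Weyl modules when $\langle\lambda,\alpha_i^\vee\rangle>0$), hence occurs in $L(\lambda)$, and its multiplicity there is squeezed between $1$ and the Weyl-module multiplicity $1$. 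Thus $n_1=1$, contradicting $n_1=2$, and $V\downarrow A$ is not MF. I expect this final multiplicity bookkeeping — ensuring $n_1=1$ even when $p\mid c$, where the naive $\mathfrak{sl}_2$-computation $e_if_iv^+=cv^+$ degenerates — to be the main subtlety, which is exactly why Proposition~\ref{prop:premet} is the right tool to close the gap.
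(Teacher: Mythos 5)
Your proposal is correct and follows essentially the same route as the paper: parts (i), (ii) and (iv) are read off from the recurrence $m_d=n_d-n_{d-1}+s_d-s_{d-1}$ together with $\mathbf{S}_{d-1}\subseteq\mathbf{S}_d$, and part (iii) reduces to part (ii) via the observation that $n_1=1$ when $\lambda=c\omega_i$. The only cosmetic difference is that for (iii) the paper uses a one-line telescoping/pigeonhole argument to produce some $2\leq d'\leq d$ with $n_{d'}-n_{d'-1}\geq 2$, whereas you run a downward induction under the MF assumption; the two are equivalent, and your careful justification of $n_1=1$ via Proposition~\ref{prop:premet} is sound (indeed $p\nmid c$ automatically since $\lambda$ is $p$-restricted).
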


\begin{proof}
    Parts (i),(ii) and (iv) follow directly from combining Lemma~\ref{multiplicity recurrence lemma} and Lemma~\ref{gaps containment up to r-2p}. If $\lambda = c\omega_i$ then $n_1 = 1$, and since $n_d\geq d+1$, there exists $2\leq d'\leq d$ such that $n_{d'}-n_{d'-1}\geq 2$, concluding by part (ii).
\end{proof}

We can often deduce the value $n_d$ from the characteristic zero case.

\begin{lemma}\label{n_d same as p=0 when Weyl module irreducible lemma}
    Assume that $V\cong\Delta(\lambda)$. Then $n_d = \dim(\Delta_K(\lambda)\downarrow A_{K})_{r-2d}$.
\end{lemma}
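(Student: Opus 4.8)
The plan is to exploit the fact that the formal character of a Weyl module is independent of the characteristic of the underlying field. First I would recall that, by Weyl's character formula, the $T$-weight multiplicities of $\Delta(\lambda)$ are computed purely from the root datum of $G$ and the highest weight $\lambda$; in particular they coincide with those of the characteristic-zero Weyl module (see \cite[II.5.10]{Jantzen}). Concretely, for every $\mu\in X(T)$ one has
\[
\dim\Delta(\lambda)_\mu = \dim\Delta_K(\lambda)_\mu.
\]
Since we are assuming $V\cong\Delta(\lambda)$, the $T$-weight multiplicities of $V$ are therefore exactly those of $\Delta_K(\lambda)$.

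Next I would pass from $T$-weights to $T_A$-weights. For any $G$-module, a $T$-weight $\mu$ contributes to the $T_A$-weight $\mu\downarrow T_A=\langle\mu,\alpha^\vee\rangle$ upon restriction to $A$, so the multiplicity of the $T_A$-weight $r-2d$ in $V\downarrow A$ is obtained by summing the $T$-weight multiplicities over all $\mu$ restricting to $r-2d$. That is,
\[
n_d=\sum_{\substack{\mu\in X(T)\\ \mu\downarrow T_A=r-2d}}\dim\Delta(\lambda)_\mu,
\qquad
\dim(\Delta_K(\lambda)\downarrow A_K)_{r-2d}=\sum_{\substack{\mu\in X(T)\\ \mu\downarrow T_A=r-2d}}\dim\Delta_K(\lambda)_\mu.
\]
The indexing set of each sum is the same, because the principal cocharacter $\alpha^\vee$ is determined by the normalisation $\alpha_i\downarrow T_A=2$ for all $i$, which depends only on the root datum and not on $p$; hence the condition $\mu\downarrow T_A=r-2d$ selects the same set of $T$-weights $\mu$ for $G$ and for $G_K$. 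Combining the two displays with the equality of $T$-weight multiplicities from the first paragraph gives $n_d=\dim(\Delta_K(\lambda)\downarrow A_K)_{r-2d}$, as required.

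I do not expect a serious obstacle here: the statement is essentially a bookkeeping consequence of the characteristic-independence of Weyl characters together with the observation that restriction to $T_A$ is governed by the single combinatorial datum $\alpha_i\downarrow T_A=2$. The only point requiring care is to record explicitly that this datum, and hence the map $\mu\mapsto\mu\downarrow T_A$, is identical in the two characteristics, so that the two weight-space computations are term-by-term the same.
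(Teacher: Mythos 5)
Your argument is correct and is essentially the paper's own proof: the paper likewise deduces the statement from the characteristic-independence of the Weyl character (citing \cite[Part II, 5.8]{Jantzen}) together with the observation that $T_A$ is pinned down by $\alpha_i\downarrow T_A=2$ for all $i$, so the restriction map on weights is the same in both characteristics. Your write-up just spells out the bookkeeping more explicitly.
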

\begin{proof} This follows from \cite[Part II, 5.8]{Jantzen}, since $T_A$ is uniquely determined by the property $\alpha_i\downarrow\nolinebreak T_A =\nolinebreak 2$ for all $1\leq i\leq \ell$.\end{proof}

We now establish two dimension bounds for multiplicity-free $\mathcal{k}A_1$-modules.
\begin{dfn}\label{B(r)}
    Given $r\in\mathbb{N}$, define $B(r)$ and $B_{K}(r)$ as
    \[B(r)=\sum_{r-2k\geq 0} \dim L_A(r-2k),\]
    \[B_{K}(r)=\sum_{r-2k\geq 0} \dim \Delta_{A_{K}}(r-2k).\]
\end{dfn}

In particular note that $B_{K}(r)$ is either $(\frac{r}{2}+1)^2$  or $\frac{r+1}{2}\frac{r+3}{2}$ according to whether $r$ is even or odd, respectively.

\begin{lemma}\label{dimension bound comparison} We have $B(r)\leq B_{K}(r)$ and 
if $V\downarrow A$ is MF, then $\dim V\leq B(r)$.
\end{lemma}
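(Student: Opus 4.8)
The plan is to prove the two inequalities separately, both resting on the elementary fact that an irreducible $\mathcal{k}A_1$-module $L_A(s)$ of highest weight $s$ satisfies $\dim L_A(s)\leq s+1$. For the first inequality $B(r)\leq B_{K}(r)$, I would recall that in every characteristic the Weyl module $\Delta_A(s)$ has dimension $s+1$ (by Weyl's dimension formula, which computes the character of a Weyl module independently of the characteristic), and that $L_A(s)$ is the simple head of $\Delta_A(s)$, hence a quotient. Thus $\dim L_A(s)\leq s+1=\dim\Delta_{A_{K}}(s)$. Since $B(r)$ and $B_{K}(r)$ are defined as sums over the \emph{identical} index set $\{\,r-2k : r-2k\geq 0\,\}$, a term-by-term comparison immediately gives $B(r)\leq B_{K}(r)$.

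For the second part, suppose $V\downarrow A$ is MF. The key observation is to pin down which irreducible $\mathcal{k}A$-modules can occur as composition factors of $V\downarrow A$. Every $T$-weight of $V$ has the form $\lambda-\sum_{i=1}^{\ell}a_i\alpha_i$ with $a_i\in\mathbb{Z}_{\geq 0}$, and since $\alpha_i\downarrow T_A=2$ for all $i$, such a weight restricts to $r-2\sum_i a_i$ on $T_A$. Hence every $T_A$-weight of $V$ is of the form $r-2k$ with $k\geq 0$, so it lies in $\{r,r-2,r-4,\dots\}$ and is at most $r$ (with $r$ itself realized by $\lambda$). Consequently every composition factor of the $\mathcal{k}A$-module $V$ is one of the modules $L_A(r-2k)$ with $r-2k\geq 0$.

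Now the MF hypothesis means that each distinct irreducible $\mathcal{k}A$-module occurs at most once among the composition factors of $V\downarrow A$. Summing dimensions over the (distinct) composition factors and bounding above by the full collection of admissible factors gives
\[
\dim V=\sum_{\text{factors }L_A(r-2k)}\dim L_A(r-2k)\;\leq\;\sum_{r-2k\geq 0}\dim L_A(r-2k)=B(r),
\]
as required. I do not expect a serious obstacle here: once the possible highest weights of the composition factors are identified, both inequalities follow formally. The only point requiring a little care is to confirm that the index sets defining $B(r)$ and $B_{K}(r)$ coincide, so that the characteristic-$p$ bound $\dim L_A(s)\leq \dim\Delta_{A_{K}}(s)$ can be applied summand by summand.
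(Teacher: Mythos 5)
Your proposal is correct and follows essentially the same route as the paper: the first inequality is a term-by-term comparison using $\dim L_A(s)\leq\dim\Delta_{A_K}(s)$ over the common index set, and the second follows from the observation that all composition factors of $V\downarrow A$ have highest weight in $\{r-2k: r-2k\geq 0\}$ and the MF hypothesis caps each multiplicity at one. The extra detail you supply about why the $T_A$-weights lie in $r-2\mathbb{Z}_{\geq 0}$ is left implicit in the paper but is a harmless elaboration.
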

\begin{proof}
The fact that $B(r)\leq B_{K}(r)$ is immediate since $\dim L_A(r-2k)\leq \dim \Delta_{A_{K}} (r-2k)$ for all $k$ such that $r-2k\geq 0$. Now if $V\downarrow A$ is MF, it can have at most one composition factor $(r-2d)$, i.e. $m_d = 1$, for every $0\leq d\leq \lfloor\frac{r}{2}\rfloor$. Therefore $\dim V\leq B(r)$. 
\end{proof}

\begin{lemma}\label{dimension bound for omega_i}
Suppose that $\lambda = a \omega_i$ and $r\not\equiv 0 \mod p$. If $V\downarrow A$ is MF, then $\dim V\leq B(r)-\dim (r-2)$.
\end{lemma}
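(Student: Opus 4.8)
The plan is to sharpen the bound $\dim V\le B(r)$ of Lemma~\ref{dimension bound comparison}, which rests only on the observation that an MF restriction forces $m_d\le 1$ for every $d$. The extra term $-\dim(r-2)$ is precisely the assertion that the composition factor $(r-2)$ does \emph{not} occur in $V\downarrow A$, i.e. that $m_1=0$. So the entire argument reduces to computing $m_1$ and showing that it vanishes, after which summing the dimensions of the (at most one copy each of the) surviving factors $(r-2d)$ yields the claimed inequality.

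To compute $m_1$ I would invoke the recurrence of Lemma~\ref{multiplicity recurrence lemma} at $d=1$, namely $m_1=n_1-n_0+s_1-s_0$. As recorded in the proof of that lemma, $n_0=1$ (the $T_A$-weight $r$ is afforded only by $\lambda$, with multiplicity one) and $s_0=0$ (no composition factor has highest weight exceeding $r$). Since $\lambda=a\omega_i$ is supported on a single node, the only $T$-weights restricting to the $T_A$-weight $r-2$ are the $\lambda-\alpha_j$, and among these only $\lambda-\alpha_i$ is a weight of $V$, with multiplicity one, because $\langle\lambda,\alpha_j^\vee\rangle=0$ for $j\ne i$; hence $n_1=1$, exactly as in the proof of Lemma~\ref{sufficient conditions MF lemma}(iii). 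This already reduces the recurrence to $m_1=s_1$.

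It then remains to prove $s_1=0$, and this is where the hypothesis $r\not\equiv 0\pmod p$ enters. All $T_A$-weights of $V$ are congruent to $r$ modulo $2$, so every composition factor of $V\downarrow A$ has highest weight of the form $r-2d$; the only one with highest weight strictly greater than $r-2$ is $(r)$ itself, occurring with multiplicity $m_0=1$. Using the description of the weights of $(r)=L_A(r)$ via Steinberg's tensor product theorem employed in Lemma~\ref{gaps containment up to r-2p}, writing $r=[a_0,a_1,\dots,a_m]$ in base $p$, the weight $r-2$ occurs in $(r)$ exactly when the bottom digit $a_0$ is non-zero, since obtaining $r-2$ requires subtracting $2$ in the lowest factor, i.e. $i_0=1$ with $0\le i_0\le a_0$. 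Under our hypothesis $p\nmid r$ we have $a_0\ge 1$, so $r-2$ is a weight of $(r)$; thus $r-2$ lies in no gap of $(r)$, whence $(r)\notin\mathbf S_1$ and $\mathbf S_1=\varnothing$. Therefore $s_1=0$ and $m_1=0$.

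Finally, since $V\downarrow A$ is MF we have $m_d\le 1$ for all $d$, and every composition factor is some $(r-2d)$ with $0\le d\le\lfloor r/2\rfloor$; combining this with $m_1=0$ gives $\dim V=\sum_{d}m_d\dim(r-2d)\le B(r)-\dim(r-2)$, as desired. I expect the only delicate point to be the verification that $r-2$ is genuinely a weight of $(r)$: this is the precise place where characteristic $p$ intervenes, through the lowest $p$-adic digit $a_0$ of $r$, and it is what makes the hypothesis $r\not\equiv 0\pmod p$ indispensable — for $p\mid r$ the factor $(r)$ really does have a gap at $r-2$, so $(r)\in\mathbf S_1$, and the sharper bound would fail.
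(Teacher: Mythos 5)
Your proof is correct and follows essentially the same route as the paper's: both arguments hinge on the two facts that $n_1=1$ for $\lambda=a\omega_i$ and that $r-2$ is a weight of the composition factor $(r)$ precisely because $r\not\equiv 0\pmod p$, forcing $m_1=0$. You merely route this through the recurrence of Lemma~\ref{multiplicity recurrence lemma} and the gap description, whereas the paper states the same conclusion directly.
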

\begin{proof}
The $T_A$-weight $r-2$ occurs with multiplicity $1$ in $V$, and since $r\not\equiv 0 \mod p$, it occurs as a weight in the composition factor $(r)$. Therefore $r-2$ does 
not afford a composition factor of $V\downarrow A$, i.e. $m_1 = 0$.
Since $V\downarrow A$ is MF, we have $m_d\leq 1$ for all $d\geq 0$ such that $r-2d\geq 0$. This proves that $\dim V\leq B(r)-\dim (r-2)$. 
\end{proof}

The following result is our main reduction tool, showing that if $V\downarrow A$ is MF, then $\lambda$ satisfies some highly restrictive conditions. The proof follows closely that of \cite[Lemma 2.6]{LST}.
\begin{prop}\label{first reduction lemma}
Let $\lambda = \sum_{i=1}^\ell c_i\omega_i$. Assume that there exist $i<j$ with $c_i\neq 0\neq c_j$ and that $V\downarrow A$ is MF. Then
\begin{enumerate}[label=\textnormal{(\roman*)}]
    \item $c_k=0$ for $k\neq i,j$.
    \item If $\alpha_i$ and $\alpha_j$ are non-adjacent, then $c_i=c_j=1$.
     \item If $\alpha_i$ and $\alpha_j$ are non-adjacent then they are both end-nodes.
    \item Either $\alpha_i$ or $\alpha_j$ is an end-node.
    \item If both $c_i>1$ and $c_j>1$, then $G$ has rank $2$ and $\lambda-ij$ has multiplicity $1$.
   \item If either $c_i>1$ or $c_j>1$, then either $G$ has rank $2$, or $\alpha_i$ is adjacent to $\alpha_j$ and $\lambda-ij$ has multiplicity $1$.
\end{enumerate}
\end{prop}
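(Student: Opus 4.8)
The plan is to extract the entire statement from the two instances $n_1\le 2$ and $n_2\le 3$ of the multiplicity bound of Lemma~\ref{multiplicity bound general lemma}, after computing $n_1$ and $n_2$ explicitly. The mechanism making this possible is that every simple root satisfies $\alpha_k\downarrow T_A=2$, so the $T_A$-weight $r-2d$ is afforded precisely by those $G$-weights of $V$ lying at height $d$ below $\lambda$; hence $n_d=\sum_\mu\dim V_\mu$, summed over the weights $\mu=\lambda-\sum_k a_k\alpha_k$ of $V$ with $\sum_k a_k=d$. Whether a given $\mu$ is a weight is decided in a characteristic-free way by Proposition~\ref{prop:premet}, so I may read off the \emph{support} from the characteristic-zero module, while the actual multiplicities at heights $1$ and $2$ are pinned down inside $L(\lambda)$ by $\mathfrak{sl}_2$- and PBW-arguments applied to a highest weight vector $v^+$. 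Note $r\ge 4$ whenever two coefficients are nonzero, so $n_2$ is well defined.

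At height $1$ the weight space $V_{\lambda-\alpha_k}$ is the line spanned by $f_{\alpha_k}v^+$, which is nonzero exactly when $\langle\lambda,\alpha_k^\vee\rangle=c_k\ge 1$; therefore $n_1=\#\{k:c_k\neq 0\}$. Since $c_i\neq 0\neq c_j$ this gives $n_1\ge 2$, and the bound $n_1\le 2$ forces equality. This is exactly statement~(i), so from here on only $c_i$ and $c_j$ are nonzero.

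Next I would compute $n_2$. The weights of $V$ at height $2$ are: the string weights $\lambda-2\alpha_i$ and $\lambda-2\alpha_j$, each of multiplicity $1$ and present exactly when the corresponding coefficient is $\ge 2$; the weight $\lambda-ij=\lambda-\alpha_i-\alpha_j$; and, for each node $k\notin\{i,j\}$ adjacent to $\alpha_i$ (resp.\ to $\alpha_j$), necessarily with $c_k=0$, the cross weight $\lambda-\alpha_i-\alpha_k$ (resp.\ $\lambda-\alpha_j-\alpha_k$). A PBW count shows $\dim V_{\lambda-ij}\le 2$; when $\alpha_i,\alpha_j$ are non-adjacent the two $\mathfrak{sl}_2$'s commute and $\dim V_{\lambda-ij}=1$, while when they are adjacent $\dim V_{\lambda-ij}\in\{1,2\}$ (the space being spanned by $f_{\alpha_i}f_{\alpha_j}v^+$ and $f_{\alpha_j}f_{\alpha_i}v^+$). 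Each cross weight has multiplicity exactly $1$: since $c_k=0$ gives $f_{\alpha_k}v^+=0$, the space $V_{\lambda-\alpha_i-\alpha_k}$ is spanned by $f_{\alpha_k}f_{\alpha_i}v^+$, which is nonzero because $\langle\lambda-\alpha_i,\alpha_k^\vee\rangle=-\langle\alpha_i,\alpha_k^\vee\rangle\ge 1$ for $\alpha_k$ adjacent to $\alpha_i$. Writing $a$ (resp.\ $b$) for the number of neighbors of $\alpha_i$ (resp.\ $\alpha_j$) other than $\alpha_j$ (resp.\ $\alpha_i$), setting $\varepsilon_i=1$ if $c_i\ge 2$ and $0$ otherwise (and likewise $\varepsilon_j$), and putting $N=\dim V_{\lambda-ij}$, I obtain
\[
n_2=\varepsilon_i+\varepsilon_j+N+a+b,\qquad 1\le N\le 2,\quad N=1\text{ if }\alpha_i,\alpha_j\text{ are non-adjacent}.
\]

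Finally I would read off (ii)--(vi) from $n_2\le 3$, using that the Dynkin diagram is connected (every node has degree $\ge 1$, and when $\ell\ge 3$ an adjacent pair $\{i,j\}$ has a neighbor outside itself, i.e.\ $a+b\ge 1$). If $\alpha_i,\alpha_j$ are non-adjacent then $N=1$ and $a,b\ge 1$, so $n_2\le 3$ forces $a=b=1$ and $\varepsilon_i=\varepsilon_j=0$, giving $c_i=c_j=1$ (this is~(ii)) and that both nodes are end-nodes (this is~(iii)). If $\alpha_i,\alpha_j$ are adjacent with $c_i,c_j>1$ then $\varepsilon_i+\varepsilon_j=2$, whence $N+a+b\le 1$, forcing $N=1$ and $a=b=0$, i.e.\ $G$ has rank $2$ with $\lambda-ij$ of multiplicity $1$ (this is~(v); non-adjacency is excluded by~(ii)). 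If merely, say, $c_i>1$, then adjacency holds by~(ii) and $N+a+b\le 2$; for $\ell\ge 3$ we have $a+b\ge 1$, forcing $N=1$, which is~(vi). For~(iv), suppose neither node is an end-node; by~(iii) they are adjacent, so $a,b\ge 1$ and $n_2\ge N+2$, whence $n_2\le 3$ leaves only the configuration $c_i=c_j=1,\ a=b=1,\ N=1$. I expect the main obstacle to be ruling this last configuration out, equivalently showing $N=\dim V_{\lambda-ij}=2$ when $\lambda=\omega_i+\omega_j$ and $\alpha_i,\alpha_j$ are adjacent: the only composition factor of $\Delta(\lambda)$ that could lower this multiplicity has highest weight $\mu=\lambda-\alpha_i-\alpha_j$ (dominant, with no dominant weight strictly between $\mu$ and $\lambda$), and its occurrence would require an affine reflection in $\beta=\alpha_i+\alpha_j$ with $\langle\lambda+\rho,\beta^\vee\rangle-1\equiv 0\pmod p$. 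Since $\langle\lambda+\rho,\beta^\vee\rangle-1$ is a small positive integer bounded above by $h-1<p$, the hypothesis $p\ge h$ forbids this, so $N=2$; then $n_2\ge 4$, contradicting MF and establishing~(iv).
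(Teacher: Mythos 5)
Your proposal is correct, and for parts (i), (ii), (iii), (v) and (vi) it is essentially the paper's argument: both count the weights of $V$ at heights $1$ and $2$ below $\lambda$ and feed $n_1\le 2$, $n_2\le 3$ into Lemma~\ref{multiplicity bound general lemma}; your identity $n_2=\varepsilon_i+\varepsilon_j+N+a+b$ is just a uniform packaging of the paper's case-by-case weight lists, and your multiplicity claims for the cross weights and for $N\le 2$ are justified the same way (saturation of the weight set via Proposition~\ref{prop:premet} plus a PBW count). The genuine divergence is in part (iv). The paper never needs to know $\dim V_{\lambda-ij}$ there: it descends to height $4$ and exhibits six weights affording the $T_A$-weight $r-8$ (all of multiplicity one, being conjugate to $\lambda$ or handled by the subdiagram reduction), so $n_4\ge 6>5$ and Lemma~\ref{multiplicity bound general lemma} applies directly. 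You instead stay at height $2$, reduce to the single configuration $c_i=c_j=1$, $a=b=1$, $N=1$, and kill it by showing $N=\dim V_{\lambda-ij}=2$. That claim is true and your linkage sketch is sound: the only candidate composition factor is $L(\lambda-\alpha_i-\alpha_j)$, and its occurrence forces $c_i+c_j=p-1$, $2c_i+c_j+2\equiv 0$, or $3c_i+c_j+3\equiv 0\pmod p$ (this is exactly the paper's Lemma~\ref{1 dimensional weight space lemma}, which you could simply cite), i.e.\ $p\in\{3,5\}$, excluded since the configuration forces rank $\ge 4$ and hence $p\ge h\ge 5$ in the simply-laced case and $p\ge 13$ in the only mixed-length case ($F_4$, $\{i,j\}=\{2,3\}$). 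Two small points you should tighten if you write this up: the strong linkage principle allows chains of affine reflections, so you should note that a chain passing through $\lambda-\alpha_i$ or $\lambda-\alpha_j$ would require $p\mid c_i=1$ and is thus impossible, leaving only the single reflection in $\alpha_i+\alpha_j$ that you rule out; and the blanket bound ``$\langle\lambda+\rho,\beta^\vee\rangle-1\le h-1$'' should be replaced by the explicit values ($3$, resp.\ $5$) in the two configurations that actually occur. The trade-off between the two routes: the paper's height-$4$ count needs no information about composition factors of $\Delta(\lambda)$ at all, while yours is shorter and stays at height $2$ at the cost of the linkage computation.
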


\begin{proof} We will use Proposition~\ref{prop:premet} throughout the proof, without direct reference.
\begin{enumerate}[label=\textnormal{(\roman*)}]
    \item If $c_k\geq 1$ for $k\neq i,j$, we have $n_1\geq 3$ as the $T_A$-weight $r-2$ is afforded by $\lambda-i$, $\lambda-j$ and $\lambda-k$. This contradicts Lemma~\ref{multiplicity bound general lemma}.
    \item Suppose $\alpha_i$ and $\alpha_j$ are not adjacent and that $c_i\geq 2$. Let $k\neq i,j$ such that $\alpha_{k}$ is adjacent to $\alpha_i$ and let $k'\neq i,j$ such that $\alpha_{k'}$ is adjacent to $\alpha_j$. Then $n_2\geq 4$, as the $T_A$-weight $r-4$ is afforded by $\lambda-i^2$, $\lambda-ik$, $\lambda-jk'$, $\lambda-ij$. This contradicts Lemma~\ref{multiplicity bound general lemma}.
     \item Assume that $\alpha_i$ and $\alpha_j$ are non-adjacent and that $\alpha_i$ is not an end-node. Then there exist distinct simple roots $\alpha_k, \alpha_l$, both adjacent to $\alpha_i$, and a simple root $\alpha_m\neq \alpha_i$ adjacent to $\alpha_j$. Then $n_2\geq 4$, as the $T_A$-weight $r-4$ is afforded by $\lambda-ik$, $\lambda-il$, $\lambda-jm$ and $\lambda-ij$. This contradicts Lemma~\ref{multiplicity bound general lemma}.
    \item Assume that neither $\alpha_i$ nor $\alpha_j$ is an end-node. Then by $(\rm{\Rn{3}})$, the roots $\alpha_i$ and $\alpha_j$ are adjacent. Let $1\leq k,l\leq \ell$ be distinct indices such that $\{i,j\}\cap\{k,l\}=\emptyset$ and such that $\alpha_i$ is adjacent to $\alpha_k$ and $\alpha_j$ is adjacent to $\alpha_l$. Then the $T_A$-weight $r-8$ is afforded by $\lambda -kijl, \lambda -kij^2,\lambda-i^2j^2, \lambda-i^2jl,\lambda-ki^2j$ and $\lambda-ij^2l$. Therefore $n_4\geq 6$, contradicting Lemma~\ref{multiplicity bound general lemma}. 
    \item If both $c_i>1$ and $c_j>1$, then by (ii), the roots $\alpha_i$ and $\alpha_j$ are adjacent. If the rank of $G$ is not $2$ we can find $k\neq i,j$, such that $\alpha_k$ is adjacent to either $\alpha_i$ or $\alpha_j$. But then the $T_A$-weight $r-4$ is afforded by $\lambda-ij$, $\lambda-i^2$, $\lambda-j^2$ and either $\lambda-ik$ or $\lambda-jk$. Therefore $n_2\geq 4$, contradicting  Lemma~\ref{multiplicity bound general lemma}. In addition, $\lambda-ij$ has multiplicity 1, else $n_2\geq 4$, again contradicting Lemma~\ref{multiplicity bound general lemma}.
   \item Assume $c_i\geq 2$ and that $G$ has rank at least $3$. Then $\alpha_i$ and $\alpha_j$ are adjacent by $({\rm{\Rn{2}}})$, and $r-4$ is afforded by $\lambda-i^2$, $\lambda-ij$ and either $\lambda-ik$ or $\lambda-jk$ for some $k\neq i,j$. Therefore, Lemma~\ref{multiplicity bound general lemma} implies that $\lambda-ij$ has multiplicity 1, as claimed.
\end{enumerate}
\end{proof}

\begin{lemma}\label{w_i reduction space on both sides}
Assume that $\lambda = \omega_i$ and that there exist $\{\beta_{i-3},\dots,\beta_{i+3}\}\subseteq \Pi$ such that for $i-3\leq s<t\leq i+3$, $(\beta_s,\beta_t) \ne 0$ if and only if $t = s+1$. Then $V\downarrow A$ is not MF.
\end{lemma}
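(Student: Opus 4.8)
The plan is to invoke Lemma~\ref{sufficient conditions MF lemma}(iii) with $d=4$: since $\lambda=\omega_i$ is a single fundamental weight, it suffices to exhibit five distinct $T_A$-weights of value $r-8$ in $V$, for this forces $n_4\geq 5=d+1$, and hence that $V\downarrow A$ is not MF, provided the side conditions $4<p$ and $4<\lfloor\frac{r+2}{2}\rfloor$ hold.

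First I would pass to the Levi subgroup $L\leq G$ generated by the root subgroups of $\beta_{i-3},\dots,\beta_{i+3}$. The hypothesis says these seven roots form a path, so (away from multiple bonds) $L$ is of type $A_7$; and since $\langle\lambda,\beta_i^\vee\rangle=1$ while $\langle\lambda,\beta_j^\vee\rangle=0$ for the other chain roots, $\lambda$ restricts to $L$ as the fourth fundamental weight. Thus $\Delta_L$ of this highest weight is $\bigwedge^4$ of the natural $8$-dimensional $A_7$-module, whose weights are indexed by the $4$-subsets of $\{1,\dots,8\}$. Because $\Phi_L\subseteq\Phi_G$, the (saturated) weight set of this $\Delta_L$ is contained in that of $\Delta_G(\lambda)$, and by Proposition~\ref{prop:premet} the latter coincides with the weight set of $V=L_G(\lambda)$. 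The five $4$-subsets lying four steps below the top $\{1,2,3,4\}$ then produce five weights of $V$, namely
\[
\lambda-\beta_{i-3}-\beta_{i-2}-\beta_{i-1}-\beta_i,\qquad
\lambda-\beta_{i-2}-\beta_{i-1}-\beta_i-\beta_{i+1},\qquad
\lambda-\beta_{i-1}-2\beta_i-\beta_{i+1},
\]
\[
\lambda-\beta_{i-1}-\beta_i-\beta_{i+1}-\beta_{i+2},\qquad
\lambda-\beta_i-\beta_{i+1}-\beta_{i+2}-\beta_{i+3},
\]
all of $T_A$-value $r-8$. Hence $n_4\geq 5$.

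Next I would verify the two side conditions. Since a chain of seven simple roots forces $G$ to have rank at least $7$, the Coxeter number satisfies $h\geq 8$, so $4<h\leq p$. For the second condition I use the same Levi module: its lowest weight, indexed by $\{5,6,7,8\}$, lies sixteen simple roots below $\lambda$, so $V\downarrow A$ has a weight of value $r-32$; as all $T_A$-weights of $V$ lie in $[-r,r]$, this gives $r\geq 16$, whence $4<\lfloor\frac{r+2}{2}\rfloor$ (this lower bound for $r$ is also visible in Table~\ref{tab:r values}). Lemma~\ref{sufficient conditions MF lemma}(iii) now yields that $V\downarrow A$ is not MF.

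The point requiring the most care is the third weight $\lambda-\beta_{i-1}-2\beta_i-\beta_{i+1}$: the other four weights already occur using only the five central roots, and it is precisely the $\beta_i$-coordinate $2$ that makes the full $A_7$ chain (three nodes on each side) necessary. I would double-check its occurrence directly by verifying that the $\beta_i$-string through $\lambda-\beta_{i-1}-\beta_i-\beta_{i+1}$ reopens, i.e. that $\langle\lambda-\beta_{i-1}-\beta_i-\beta_{i+1},\beta_i^\vee\rangle=1>0$. A secondary technical point is that the hypothesis only records adjacency, so if the chain contains a multiple bond one cannot literally invoke $\bigwedge^4$; there I would instead run the analogous sequence of $\mathfrak{sl}_2$-string subtractions, which still delivers five weights at depth four and the bound $r\geq 16$.
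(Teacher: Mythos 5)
Your proof is correct and follows essentially the same route as the paper's: the paper likewise notes that ${\rm rank}(G)\geq 7$ forces $p>7$ and $r>15$, asserts $n_4\geq 5$ as ``a simple check'', and concludes by Lemma~\ref{sufficient conditions MF lemma}(iii). Your five explicit depth-four weights (including $\lambda-\beta_{i-1}-2\beta_i-\beta_{i+1}$, justified by the reopened $\beta_i$-string) are exactly the content of that unstated check, and your handling of a possible multiple bond via root strings is sound.
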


\begin{proof}
Here $p>7$, as ${\rm rank}(G)\geq 7$, and Table~\ref{tab:r values} shows that $r>15$. It is now a simple check to see that $n_4\geq 5$, concluding by Lemma~\ref{sufficient conditions MF lemma}{(\Rn{3})}.
\end{proof}

\begin{lemma}\label{bw_i reduction lemma}
Assume that $\lambda = b\omega_i$ with $b\ge 2$. If $V\downarrow A$ is MF, then $\alpha_i$ is an end-node.
\end{lemma}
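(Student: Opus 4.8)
The plan is to argue by contradiction. Suppose $\alpha_i$ is \emph{not} an end-node; I will produce enough $T$-weights restricting to the single $T_A$-weight $r-4$ to force $V\downarrow A$ to have a repeated composition factor, contradicting the MF hypothesis. The whole argument rests on the fact that, because $\lambda=b\omega_i$ is a multiple of a single fundamental weight, the top of the weight structure of $V$ is very rigid, so that Lemma~\ref{sufficient conditions MF lemma}(iii) (the sharpened multiplicity bound valid for $\lambda=c\omega_i$) can be applied at $d=2$.

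First I would check that the hypotheses of Lemma~\ref{sufficient conditions MF lemma} hold at $d=2$, i.e.\ that $2<\min\{\lfloor\frac{r+2}{2}\rfloor,p\}$. A node with at least two neighbours can only occur in a connected Dynkin diagram of rank $\geq 3$, so $\operatorname{rank}(G)\geq 3$ and hence $p\geq h\geq 4>2$. Moreover, inspecting the coefficient of $c_i$ in Table~\ref{tab:r values} for an interior node shows $r\geq 8$ (the smallest value, $4b$, already arising for the middle node of $A_3$); in particular $\lfloor\frac{r+2}{2}\rfloor>2$. Thus both entries of the minimum exceed $2$.

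Next, let $\alpha_k$ and $\alpha_l$ be two distinct neighbours of $\alpha_i$. I would exhibit the three distinct $T$-weights $\lambda-2\alpha_i$, $\lambda-\alpha_i-\alpha_k$ and $\lambda-\alpha_i-\alpha_l$, all of which restrict to the $T_A$-weight $r-4$ since every simple root takes the value $2$ on $T_A$. To see that each is genuinely a weight of $V$, I would use Proposition~\ref{prop:premet} to pass to $\Delta(\lambda)$ and then run the usual $\mathfrak{sl}_2$-string argument: $\langle\lambda,\alpha_i^\vee\rangle=b\geq 2$ makes $\lambda-2\alpha_i$ a weight, while $\langle\lambda-\alpha_i,\alpha_k^\vee\rangle=-\langle\alpha_i,\alpha_k^\vee\rangle\geq 1$ by adjacency makes $\lambda-\alpha_i-\alpha_k$ a weight, and likewise for $\alpha_l$. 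These weights are pairwise distinct, so $n_2\geq 3$. Applying Lemma~\ref{sufficient conditions MF lemma}(iii) with $d=2$ then gives that $V\downarrow A$ is not MF, the desired contradiction, so $\alpha_i$ must be an end-node.

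The step I expect to be the main obstacle is precisely the verification of the range condition $2<\min\{\lfloor\frac{r+2}{2}\rfloor,p\}$: the plain bound $n_d\leq d+1$ of Lemma~\ref{multiplicity bound general lemma} only gives $n_2\leq 3$, which is consistent with $n_2=3$ and yields nothing. It is the refined inequality for $\lambda=c\omega_i$ that sharpens this to $n_2\leq 2$ and produces the contradiction, and this refinement is only licensed once $p>2$ and $r\geq 4$ have been secured from the non-end-node hypothesis.
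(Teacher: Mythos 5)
Your proposal is correct and follows exactly the paper's argument: the paper's proof also derives $n_2\geq 3$ from the non-end-node hypothesis, notes that ${\rm rank}(G)\geq 3$ forces $p>3$ and $r>7$, and concludes via Lemma~\ref{sufficient conditions MF lemma}(iii). You have merely filled in the details (the explicit weights $\lambda-2\alpha_i$, $\lambda-\alpha_i-\alpha_k$, $\lambda-\alpha_i-\alpha_l$ and the verification of the range condition) that the paper leaves as ``easy to see''.
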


\begin{proof}
    If $\alpha_i$ is not an end-node, it is easy to see that $n_2\geq 3$. As ${\rm rank}(G)\geq 3$ we have $p>3$, and Table~\ref{tab:r values} shows that $r> 7$, so  Lemma~\ref{sufficient conditions MF lemma}{(iii)} implies that $V\downarrow A$ is not MF.
\end{proof}

\begin{rem} In the previous two proofs, we have applied Lemma~$\ref{sufficient conditions MF lemma}$, and in each case it was straightforward to see that the condition $d<{\rm min}\{\lfloor \frac{r+2}{2}\rfloor,p\}$ was satisfied. In what follows, we will apply the lemma without systematically pointing out how we conclude that the hypothesis holds.\end{rem}

The following lemma  provides a classification for the second possibility of Proposition~\ref{first reduction lemma}(vi).

\begin{lemma}\textup{\cite[\nopp1.35]{Tes_mem_exc}}\label{1 dimensional weight space lemma}
    Assume that $\lambda = c_i\omega_i+c_j\omega_j$ with $\alpha_i$ and $\alpha_j$ adjacent and $c_ic_j\ne 0$. Let $d =\dim V_{\lambda-ij}$. Then $1\leq d\leq 2$ and the following hold.
    \begin{enumerate}[label=\textnormal{(\roman*)}]
        \item If $(\alpha_i,\alpha_i)=(\alpha_j,\alpha_j)$, then $d=1$ if and only if $c_i+c_j = p-1$.
        \item If $(\alpha_i,\alpha_i)=2(\alpha_j,\alpha_j)$, then $d=1$ if and only if $2c_i+c_j +2 \equiv 0 \mod p$.
        \item If $(\alpha_i,\alpha_i)=3(\alpha_j,\alpha_j)$, then $d=1$ if and only if $3c_i+c_j +3 \equiv 0 \mod p$.
    \end{enumerate}
\end{lemma}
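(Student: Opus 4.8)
The plan is to compute $d=\dim V_{\lambda-\alpha_i-\alpha_j}$ by passing through the Weyl module $\Delta(\lambda)$ and the contravariant (Shapovalov) form. Only the rank-two subsystem $\langle\alpha_i,\alpha_j\rangle$ intervenes: any vector of weight $\lambda-\alpha_i-\alpha_j$ is obtained from a highest weight vector $v^+$ by applying the negative simple root operators $f_i,f_j$, so $\Delta(\lambda)_{\lambda-\alpha_i-\alpha_j}$ is spanned by $f_if_jv^+$ and $f_jf_iv^+$. A short Kostant partition function computation (equivalently Weyl's character formula, in which the dot-action corrections from $s_i,s_j$ vanish because $c_i,c_j\geq 1$) shows these two vectors are independent, so $\dim\Delta(\lambda)_{\lambda-\alpha_i-\alpha_j}=2$. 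Since $V=L(\lambda)$ is the quotient of $\Delta(\lambda)$ by the radical of the contravariant form, I would then use $\dim L(\lambda)_\mu=\operatorname{rank}$ of the Gram matrix of the form on $\Delta(\lambda)_\mu$.

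Next I would compute this $2\times 2$ Gram matrix explicitly. Working in the Kostant $\mathbb{Z}$-form so that the relations descend to characteristic $p$, and using $[e_i,f_j]=0$ for distinct simple roots together with the $\mathfrak{sl}_2$-relation $e_kf_kv^+=c_kv^+$, one first gets $\langle f_jv^+,f_jv^+\rangle=c_j$, and then, after applying $e_i$ and $e_j$, the entries
\[ \langle f_if_jv^+,f_if_jv^+\rangle=(c_i-a_{ji})c_j,\quad \langle f_jf_iv^+,f_jf_iv^+\rangle=(c_j-a_{ij})c_i,\quad \langle f_if_jv^+,f_jf_iv^+\rangle=c_ic_j, \]
where $a_{ij}=\langle\alpha_i,\alpha_j^\vee\rangle$ are the Cartan integers. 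The determinant factors cleanly as $c_ic_j\bigl(a_{ij}a_{ji}-a_{ij}c_i-a_{ji}c_j\bigr)$.

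From here the three cases follow at once. Because $\lambda$ is $p$-restricted and $c_ic_j\neq 0$, the off-diagonal entry $c_ic_j$ is a nonzero scalar in $\mathcal{k}$, so the Gram matrix has rank at least $1$, giving $d\geq 1$; and $d=2$ precisely when the determinant is nonzero. Substituting the Cartan integers $(a_{ij},a_{ji})=(-1,-1),\,(-2,-1),\,(-3,-1)$ for the length ratios $1,2,3$ (with $\alpha_i$ the longer root in the last two cases) turns the vanishing condition $a_{ij}a_{ji}-a_{ij}c_i-a_{ji}c_j\equiv 0\pmod p$ into $c_i+c_j+1\equiv 0$, $2c_i+c_j+2\equiv 0$, and $3c_i+c_j+3\equiv 0$ respectively, which are exactly conditions (i)--(iii) for $d=1$. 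In case (i) the $p$-restricted bound $2\leq c_i+c_j\leq 2p-2$ forces the single admissible solution $c_i+c_j=p-1$, matching the stated equality.

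I expect the main obstacle to be the positive-characteristic bookkeeping rather than the algebra: one must phrase the whole argument inside the hyperalgebra/Kostant $\mathbb{Z}$-form so that both the contravariant form and the identity $\dim L(\lambda)_\mu=\operatorname{rank}(\text{Gram})$ remain valid modulo $p$, and one must confirm that no higher divided-power terms contribute to this particular weight space (they do not, since only the first powers of $f_i,f_j$ occur). Once these points are settled, the determinant computation and the elementary range analysis in case (i) complete the proof.
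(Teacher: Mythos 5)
Your argument is correct, and it is worth noting that the paper itself offers no proof of this lemma: it is quoted verbatim from \cite[1.35]{Tes_mem_exc}, so you are supplying a self-contained derivation of a cited result rather than paralleling an argument in the text. Your route --- compute the $2\times 2$ Gram matrix of the contravariant form on $\Delta(\lambda)_{\lambda-\alpha_i-\alpha_j}$ with respect to $f_if_jv^+$, $f_jf_iv^+$ and read off $d$ as its rank --- is the standard one, and your entries check out: with $a_{ij}=\langle\alpha_i,\alpha_j^\vee\rangle$ one gets diagonal entries $(c_i-a_{ji})c_j$ and $(c_j-a_{ij})c_i$, off-diagonal entry $c_ic_j$, and determinant $c_ic_j\left(a_{ij}a_{ji}-a_{ij}c_i-a_{ji}c_j\right)$; substituting $(a_{ij},a_{ji})=(-1,-1),(-2,-1),(-3,-1)$ yields $c_ic_j(1+c_i+c_j)$, $c_ic_j(2+2c_i+c_j)$, $c_ic_j(3+3c_i+c_j)$, which are exactly conditions (i)--(iii), and the off-diagonal entry $c_ic_j\not\equiv 0\bmod p$ gives $d\geq 1$. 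The two points you flag as potential obstacles are indeed the only ones needing care, and both are fine: the weight $-\alpha_i-\alpha_j$ component of the Kostant $\mathbb{Z}$-form is spanned over $\mathbb{Z}$ by $f_if_j$ and $f_jf_i$ (since the relevant structure constant $N_{-\alpha_i,-\alpha_j}=\pm 1$, these span the same lattice as the PBW basis $\{f_if_j,\,f_{\alpha_i+\alpha_j}\}$, and no divided powers intervene), and the identity $\dim L(\lambda)_\mu=\operatorname{rank}$ of the Gram matrix on $\Delta(\lambda)_\mu$ is the standard characterisation of $L(\lambda)$ as the quotient of $\Delta(\lambda)$ by the radical of the contravariant form. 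Your observation in case (i) that the $p$-restrictedness of $\lambda$ upgrades the congruence $c_i+c_j\equiv -1\bmod p$ to the equality $c_i+c_j=p-1$ is also needed to match the statement as given.
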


Finally, we conclude this section with two further results on the dimensions of certain weight spaces in $V$. 

\begin{lemma}\textup{\cite[\nopp 8.6]{seitz_mem_class}}\label{lambda minus string dimension lemma}
    Let $G=A_\ell$. Suppose that $\lambda = c_i\omega_i+c_j\omega_j$ and $1\leq s\leq i < j \leq t\leq k$, with $c_ic_j\neq 0$.  Let $d =\dim V_{\lambda-s(s+1)\dots (t-1)t}$. Then
        \begin{enumerate}[label=\textnormal{(\roman*)}]
        \item If $a+b+j-i\not\equiv 0 \mod p$, then $d = j-i+1$;
        \item If $a+b+j-i\equiv 0 \mod p$, then $d = j-i$.
    \end{enumerate}
\end{lemma}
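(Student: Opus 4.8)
The plan is to compute $\dim V_\mu$ for $\mu = \lambda - \alpha_s - \alpha_{s+1} - \cdots - \alpha_t$ directly, via the contravariant (Shapovalov) form on the Weyl module $\Delta(\lambda)$. Recall that $L(\lambda) = \Delta(\lambda)/\mathrm{rad}$, where the radical of $\Delta(\lambda)$ is precisely the radical of its contravariant form; hence $\dim V_\mu = \dim L(\lambda)_\mu$ equals the rank over $\mathcal{k}$ of the Gram matrix of the form restricted to the weight space $\Delta(\lambda)_\mu$. The first task is therefore to exhibit a manageable spanning set of $\Delta(\lambda)_\mu$. Writing $\beta_{a,b} = \alpha_a + \cdots + \alpha_b$ for the positive roots of the type-$A$ subsystem on the interval $[s,t]$ and $f_\beta$ for the associated Chevalley (divided-power) lowering operators acting on a highest weight vector $v^+$, I would use the relations $[e_m, f_{\beta_{a,b}}]v^+ = 0$ whenever the block $[a,b]$ avoids both nodes $i$ and $j$, together with $f_m v^+ = 0$ for $m \notin \{i,j\}$ (as $\langle \lambda, \alpha_m^\vee\rangle = c_m = 0$), to show that every vector of weight $\mu$ is a combination of the $j-i+1$ vectors $f_{\beta_{s,t}} v^+$ and $f_{\beta_{s,k}} f_{\beta_{k+1,t}} v^+$ for $i \le k \le j-1$. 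These are indexed by the position of the single admissible cut, which must fall strictly between the two nodes carrying nonzero coefficients.

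Next I would check that these $j-i+1$ vectors are linearly independent in characteristic $0$, so that they form a $\mathbb Z$-basis of the corresponding weight space of Kostant's admissible $\mathbb Z$-form and hence span $\Delta(\lambda)_\mu$ after reduction mod $p$; equivalently, one verifies $\dim \Delta_K(\lambda)_\mu = j-i+1$ by a standard weight count. Observe that this count, and the ensuing congruence, depend only on $j-i$ and not on the outer indices $s \le i$, $t \ge j$: the outer simple roots $\alpha_s,\dots,\alpha_{i-1}$ and $\alpha_{j+1},\dots,\alpha_t$ can only be applied in the forced nested order dictated by $f_m v^+ = 0$, so they contribute no additional freedom. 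The case $j-i = 1$ is exactly Lemma~\ref{1 dimensional weight space lemma}(i), which serves both as the base of an induction on $j-i$ and as a consistency check, since there the drop occurs precisely when $c_i + c_j + 1 = c_i + c_j + (j-i) \equiv 0 \pmod p$.

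It then remains to compute the $(j-i+1)\times(j-i+1)$ Gram matrix $M = (\langle w_a, w_b\rangle)$ of the contravariant form on this basis and to determine its rank mod $p$. Each entry is obtained by transferring the lowering operators in $w_b$ to raising operators acting on $w_a$ through $\langle f_\beta u, u'\rangle = \langle u, e_\beta u'\rangle$, leaving a polynomial expression in $c_i$ and $c_j$; the entries are controlled because only the boundary raising operators anchored at $i$ or $j$ act nontrivially. I expect $\det M$ to factor as a product of scalars that are units mod $p$ under our hypotheses $p \ge h$ and $\lambda$ $p$-restricted (which force $c_i, c_j < p$ and $j-i+1 \le \ell+1 \le p$), times a single linear factor congruent to $c_i + c_j + (j-i)$ up to a unit. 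The main obstacle is precisely this determinant computation: one must show that the rank drops by \emph{exactly} one, i.e.\ that the congruence $a + b + j - i \equiv 0 \pmod p$ (with $a = c_i$, $b = c_j$) produces a simple zero of $\det M$ while all other factors remain units mod $p$. This is where $p$-restrictedness and $p \ge h$ are essential, ruling out spurious coincidental vanishing; an induction on $j-i$, peeling off the outermost cut and reducing to the preceding Gram determinant, should keep the factorisation tractable.
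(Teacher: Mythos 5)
First, a point of comparison: the paper offers no proof of this statement at all --- it is imported verbatim (with $a=c_i$, $b=c_j$) from \cite[8.6]{seitz_mem_class} --- so you are reconstructing an argument the authors deliberately outsource. Your skeleton is sound and close in spirit to the hyperalgebra/contravariant-form computations of \emph{loc.\ cit.}: the reduction of $\Delta(\lambda)_\mu$ to the $j-i+1$ spanning vectors indexed by the position of the single admissible cut is correct (a block avoiding both $i$ and $j$ annihilates $v^+$ when innermost, and can always be commuted inward and absorbed into an adjacent block because consecutive $\beta$'s sum to a root), the count $\dim\Delta_K(\lambda)_\mu=j-i+1$ is routine, and the consistency check against Lemma~\ref{1 dimensional weight space lemma}(i) is the right sanity test.

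The genuine gap is that the entire content of the lemma is concentrated in the one step you leave as an expectation: that $\det M$ is a unit times a single factor congruent to $a+b+j-i$, and that at the critical congruence the rank drops by \emph{exactly} one. ``I expect $\det M$ to factor as\dots'' and ``should keep the factorisation tractable'' are not arguments, and nothing in the proposal excludes either an extra non-unit factor or a rank drop of two; as written, cases (i) and (ii) are both unproved. You can close the gap without ever computing the determinant. The dominant weights $\nu$ with $\mu\le\nu<\lambda$ are exactly $\lambda-\alpha_i$, $\lambda-\alpha_j$, $\lambda-\alpha_i-\alpha_j$ (each subject to obvious conditions on $c_i,c_j$ and $j-i$) and $\mu'=\lambda-\alpha_i-\cdots-\alpha_j$; the first three have one-dimensional weight spaces in $\Delta(\lambda)$ whose generating vectors have nonzero contravariant norm, so they afford no composition factors, while $\mu$ occurs with multiplicity exactly $1$ in $L(\mu')$. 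Hence $d=(j-i+1)-[\Delta(\lambda):L(\mu')]$. The linkage principle forces $[\Delta(\lambda):L(\mu')]=0$ unless $\langle\lambda+\rho,(\alpha_i+\cdots+\alpha_j)^\vee\rangle=a+b+j-i+1\equiv 1\pmod p$, and when that congruence holds the Jantzen sum formula \cite[Part II, 8.19]{Jantzen} --- using $a,b\le p-1$ and $j-i<h\le p$, so that $a+b+j-i$ is not divisible by $p^2$ --- gives $[\Delta(\lambda):L(\mu')]=1$ exactly. This yields both parts of the lemma, including the ``drops by exactly one'' assertion that your Gram-matrix route still owes.
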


\begin{lemma}\textup{\cite[Lemma~2.2.8]{IrrGeomSubCl_BGT}}\label{subdiagram lemma}
Let $\lambda = \sum_{i=1}^\ell d_i\omega_i$ and let $\mu = \lambda - \sum_{\beta\in S}c_{\beta}\beta\in X(T)$ for some subset $S\subseteq \Pi$. Set $X =\left\langle U_{\pm \beta}  \mid \beta\in S\right\rangle$, where for $\gamma\in\Phi$, $U_\gamma$ is the $T$-root subgroup associated to $\gamma$, $\lambda' = \lambda\downarrow (T\cap X)$ and $\mu' = \mu\downarrow (T\cap X)$. Then $\dim V_\mu = V'_{\mu'}$, where $V' = L_X(\lambda')$.
\end{lemma}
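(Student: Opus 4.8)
The plan is to realise $X$ as the semisimple part of the standard Levi subgroup $L=\langle T, U_{\pm\beta}\mid\beta\in S\rangle$ of the parabolic $P=LQ$ whose unipotent radical is $Q=\prod_{\gamma\in\Phi^+\setminus\Phi_S^+}U_\gamma$, where $\Phi_S=\Phi\cap\mathbb{Z}S$ is the subsystem generated by $S$. Writing $v^+$ for a highest weight vector of $V=L_G(\lambda)$ and $W=\mathrm{Dist}(X)\,v^+$ for the $X$-submodule of $V$ it generates, I would prove the lemma in two independent steps: first that $V_\mu=W_{\mu'}$, so that computing $\dim V_\mu$ becomes an entirely internal computation inside $W$; and second that $W\cong V'=L_X(\lambda')$ as $X$-modules, whence $\dim V_\mu=\dim W_{\mu'}=\dim V'_{\mu'}$.

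For the weight reduction I would work in the distribution (hyper)algebra, so that the argument is characteristic-free. Since $V$ is irreducible and $v^+$ is annihilated by $\mathrm{Dist}(U^+)$ and is a $\mathrm{Dist}(T)$-eigenvector, the triangular decomposition gives $V=\mathrm{Dist}(U^-)\,v^+$, spanned by the images under divided-power monomials $\prod_{\gamma\in\Phi^+}f_\gamma^{(n_\gamma)}$ of $v^+$; such a monomial sends $v^+$ to a vector of weight $\lambda-\sum_\gamma n_\gamma\gamma$. The key observation is that $\lambda-\mu=\sum_{\beta\in S}c_\beta\beta$ lies in $\mathbb{Z}_{\ge 0}S$ and has zero coefficient on every simple root outside $S$; as each $\gamma\in\Phi^+$ has non-negative coefficients on all simple roots, a monomial reaching weight $\mu$ can only involve roots $\gamma\in\Phi_S^+$. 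Hence $V_\mu\subseteq\mathrm{Dist}(U_X^-)\,v^+\subseteq W$. Finally, the restriction map $\mathbb{Z}S\to X(T\cap X)$ is injective, because the elements of $S$ restrict to the linearly independent simple roots of $X$ relative to $T\cap X$; since every weight of $W$ lies in $\lambda-\mathbb{Z}_{\ge0}S$, its $T$-weight is determined by its $(T\cap X)$-weight, and therefore $W_{\mu'}=W\cap V_\mu=V_\mu$.

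The remaining, and main, point is to show that the cyclic highest weight module $W$ is in fact the \emph{irreducible} module $L_X(\lambda')$, rather than merely a highest weight quotient of the Weyl module $\Delta_X(\lambda')$. For this I would invoke Smith's theorem on irreducible modules and parabolic subgroups: the fixed-point space $V^Q$ is an irreducible $L$-module of highest weight $\lambda\downarrow T$, and since the connected central torus of $L$ acts on it by a single scalar, $V^Q$ restricts to the irreducible $X$-module $L_X(\lambda')$. Now $Q\subseteq U^+$ fixes $v^+$, so $v^+\in V^Q$; as $V^Q$ is $L$-stable, and hence $X$-stable, the submodule $W=\mathrm{Dist}(X)\,v^+$ is contained in $V^Q$. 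Being a non-zero $X$-submodule of the irreducible module $V^Q$, it must equal $V^Q\cong L_X(\lambda')=V'$. Combining with the first step yields $\dim V_\mu=\dim W_{\mu'}=\dim V'_{\mu'}$. I expect this final step — identifying $W$ with the irreducible module rather than with the Weyl module — to be the crux, and Smith's theorem, together with the passage from $L$ to its derived subgroup $X$, is precisely the tool that overcomes it.
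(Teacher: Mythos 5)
The paper gives no proof of this lemma, citing it directly from \cite[Lemma~2.2.8]{IrrGeomSubCl_BGT}; your argument is correct and is essentially the standard proof of that result, combining the support argument (any monomial in $\mathrm{Dist}(U^-)$ landing in $V_\mu$ involves only roots in $\Phi_S^+$, and $\mathbb{Z}S\hookrightarrow X(T\cap X)$) with Smith's theorem to identify $\langle Xv^+\rangle$ with $V^Q\cong L_X(\lambda')$. The only cosmetic points are that you implicitly assume all $c_\beta\geq 0$ (otherwise both sides vanish) and that one should note the standard equivalence between $\mathrm{Dist}(X)$-submodules and $X$-submodules of a rational module; neither is a genuine gap.
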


\section{The case where $G$ has rank $2$}\label{rank 2 section}
In this section we establish Theorem~\ref{main theorem} in the case where $G$ has rank $2$. Let us recall our setup. Throughout the section we assume that $\lambda$ is a $p$-restricted dominant weight for $G$, and we let $r = \lambda\downarrow T_A$ and $V = L(\lambda)$. We write $\lambda = ab$ as shorthand notation for $\lambda = a\omega_1+b\omega_2$, and $\lambda-ab$ for $\lambda-a\alpha_1-b\alpha_2$.
Furthermore we assume that $p\leq r$, as Proposition~\ref{V MF p=0 p>r iff lemma} settles the case $r<p$, and in addition, we have that $p\geq h$ since we are assuming the existence of a principal $A_1$-subgroup in $G$.

\subsection{The case where $G$ is $A_2$}
We begin with the case $G=A_2$, where $p\geq h = 3$. The following is the main result, which we will prove after a sequence of lemmas.
\begin{proposition}\label{a2 classification prop}
    Let $G = A_2$ and assume $p\leq r$. Then $V\downarrow A$ is MF if and only if $\lambda = \omega_1+\omega_2$ and $p = 3$.
\end{proposition}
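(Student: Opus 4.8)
The plan is to prove both implications, reducing the work by duality. Since $L(a\omega_1+b\omega_2)^\ast\cong L(b\omega_1+a\omega_2)$ and the composition factors of $(V\downarrow A)^\ast$ are the duals of those of $V\downarrow A$, while every irreducible $\mathcal{k}A$-module $(s)$ is self-dual, the property ``$V\downarrow A$ is MF'' is unchanged when $\lambda=a\omega_1+b\omega_2$ is replaced by $b\omega_1+a\omega_2$; hence I may assume $a\ge b$. For the sufficiency direction, when $\lambda=\omega_1+\omega_2$ and $p=3$ one has $r=4$, and $\Delta(\lambda)$ has the two composition factors $L(\omega_1+\omega_2)$ and $L(0)$, so $\dim L(\lambda)=7$; a direct computation of the $T_A$-character shows $L(\omega_1+\omega_2)\downarrow A=(4)\oplus(2)$, which is MF. The remainder of the argument assumes $p\le r$ and that $V\downarrow A$ is MF, and aims to force $\lambda=\omega_1+\omega_2$, $p=3$.

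I first dispose of $\lambda=c\omega_1$. For $\mathrm{SL}_3$ and $c\le p-1$ the Weyl module $\Delta(c\omega_1)=S^c(\text{natural})$ is irreducible: in the Jantzen sum formula the only positive root that can possibly contribute is $\alpha_1+\alpha_2$, and for it the reflected weight $s_{\alpha_1+\alpha_2,p}\cdot(c\omega_1)$ has a singular $\rho$-shift, so the sum vanishes (exactly as in the proof of Lemma~\ref{p>r irreducible Weyl module lemma}). Thus $V=\Delta(c\omega_1)$ with $r=2c\ge p$, and since $2c$ is even and lies in $[p,2p-2]$ it cannot be $\equiv-1\pmod p$; Lemma~\ref{CD lemma} then shows $V\downarrow A$ is not MF, a contradiction. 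So no weight $c\omega_i$ survives.

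Next take $\lambda=a\omega_1+b\omega_2$ with $a\ge b\ge1$. If $a=b=1$ then $r=4$, and $p\le r$ together with $p\ge h=3$ forces $p=3$, which is the admissible case; so assume $a\ge2$. By the Jantzen sum formula $\Delta(\lambda)$ is reducible precisely when $\max(a,b)\le p-2$ and $a+b\ge p-1$, and otherwise irreducible. In the irreducible cases I have $V=\Delta(\lambda)$ with $r=2(a+b)\ge p$. When $r\not\equiv-1\pmod p$ (which holds whenever $a+b\le p-2$, and for all but one value of $b$ when $\max(a,b)=p-1$) Lemma~\ref{CD lemma} gives ``not MF''. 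The only irreducible case with $r\equiv-1\pmod p$ is $\{a,b\}=\{p-1,\tfrac{p+1}{2}\}$; there the Weyl dimension formula yields $\dim V>B_K(r)\ge B(r)$, contradicting Lemma~\ref{dimension bound comparison}. When $\Delta(\lambda)$ is reducible with $a+b\ge p$, one has $\operatorname{ch}L(\lambda)=\operatorname{ch}\Delta(\lambda)-\operatorname{ch}\Delta(\mu)$ for the single linked weight $\mu$ (whose Weyl module is irreducible), the subtracted term is comparatively small, and again $\dim L(\lambda)>B_K(r)\ge B(r)$, contradicting Lemma~\ref{dimension bound comparison}. This leaves the single delicate range $a+b=p-1$ with $a\ge2$ (so $p\ge5$), where $r=2p-2$ and the dimension bound is \emph{not} strong enough.

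For this range the plan is to compute the weight multiplicities $n_d$ of $V=L(\lambda)$ directly and to exhibit a repeated composition factor. Since $\Delta(\lambda)$ has composition factors $L(\lambda)$ and $L((a-1)\omega_1+(b-1)\omega_2)$ (the latter with irreducible Weyl module), I have $\operatorname{ch}_{T_A}V=\operatorname{ch}_{T_A}\Delta(a\omega_1+b\omega_2)-\operatorname{ch}_{T_A}\Delta((a-1)\omega_1+(b-1)\omega_2)$, and each Weyl-module $T_A$-character equals its characteristic-zero value by Lemma~\ref{n_d same as p=0 when Weyl module irreducible lemma}. The structural point is that the high composition factors $(2p-2),(2p-4),\dots$ each have a gap of missing weights centred at $0$, and the gap of $(2p-2-2k)$ reaches up to weight $2k$; thus as $d$ increases these gaps successively enlarge the multisets $\mathbf{S}_d$. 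I expect the main obstacle to be showing, uniformly in $p$, that there is a $d$ for which either $n_d-n_{d-1}\ge2$, or $n_d-n_{d-1}=1$ while $\mathbf{S}_{d-1}\subsetneq\mathbf{S}_d$, so that Lemma~\ref{sufficient conditions MF lemma}(ii) or (iv) forces $m_d\ge2$. Concretely, the enlarging gaps around weight $0$ push a repeated copy of a small factor such as $(2)$ or $(0)$ into the restriction, as already happens for $(a,b)=(3,3)$, $p=7$ (where $(2)$ repeats) and $(a,b)=(2,2)$, $p=5$ (where $(0)$ repeats). Verifying that such a $d$ always exists is where the bulk of the remaining work lies, and it is precisely what the preceding sequence of lemmas is intended to supply.
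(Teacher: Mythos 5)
Your reduction is sound and in places takes a genuinely different, more computational route than the paper: where the paper combines Proposition~\ref{first reduction lemma}(v) with Lemma~\ref{1 dimensional weight space lemma} to force either $b\leq 1$ or $a+b=p-1$ in one step, you instead classify reducibility of $\Delta(ab)$ via the Jantzen sum formula and dispose of the irreducible cases with Lemma~\ref{CD lemma} and of the reducible cases with $a+b\geq p$ by comparing $\dim L(\lambda)$ with $B_K(r)$. Those steps check out: the irreducibility criterion is correct, the identification of $\{a,b\}=\{p-1,\tfrac{p+1}{2}\}$ as the unique irreducible case with $r\equiv -1 \bmod p$ is correct, and the dimension comparison there works. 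One point of rigor: the inequality $\dim L(\lambda)>B_K(r)$ in the reducible range $a+b\geq p$ is only asserted (``the subtracted term is comparatively small''); it is true, but at $a=p-2$, $b=2$ the margin is only $2p-7$, so an explicit estimate is genuinely needed rather than a hand-wave.

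The real gap is the case $a+b=p-1$ with $a\geq 2$, which you explicitly defer (``verifying that such a $d$ always exists is where the bulk of the remaining work lies''). That is not a routine verification supplied by the preliminary lemmas of Section~2 — it is the mathematical core of the proposition, occupying Lemmas~\ref{A2 weights}, \ref{A2 weights patterns} and \ref{A2 c1c2 special p lemma} of the paper. One must first determine the multiplicities $n_d$ for $L(ab)$ (the paper does this by noting that all $T$-weight spaces are $1$-dimensional and counting weights via Lemma~\ref{A2 weights}; your character-difference formula $\mathrm{ch}\,\Delta(ab)-\mathrm{ch}\,\Delta((a-1)(b-1))$ would serve equally well), and then run the gap/$\mathbf{S}_d$ argument separately in the four regimes $a-b\geq 6$, $a-b=4$, $a-b=2$ and $a=b$, in each case exhibiting a specific $d$ where $n_d-n_{d-1}=1$ but $r-2d$ falls outside the gap structure of an already-identified composition factor $(p+1)$ or $(p+3)$. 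Your heuristic that the repetition lands on a ``small factor such as $(2)$ or $(0)$'' is an artifact of your two small examples: in general the repeated factor is $(p-5)$ or $(p-3)$, and no uniform-in-$p$ argument is given. As written, the proposal establishes the sufficiency direction and most of the reduction, but not the proposition.
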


\begin{lemma}\label{A2 weights}
Let $\lambda = ab$. Then $\lambda-ij$, with $i+j\leq a+b$, is a weight of $\Delta(\lambda)$ if and only if one of the following holds.
\begin{enumerate}[label=\textnormal{(\roman*)}]
    \item $i\leq j$ and $j-i\leq b$.
    \item $i\geq j$ and $i-j\leq a$.
\end{enumerate}
\end{lemma}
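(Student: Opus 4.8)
The plan is to reduce to the classical, characteristic-free description of the weights of a Weyl module. The formal character of $\Delta(\lambda)$ is given by Weyl's character formula and so is independent of $p$; consequently its set of weights is the saturated set with highest weight $\lambda$, i.e. a weight $\mu\in\lambda+\mathbb{Z}\Phi$ occurs in $\Delta(\lambda)$ if and only if $\mu$ lies in the convex hull of the orbit $W\lambda$. (Equivalently: the dominant $W$-conjugate of $\mu$ is $\leq\lambda$ in the dominance order.) It therefore suffices to record, in the coordinates $(i,j)$ encoding $\mu=\lambda-i\alpha_1-j\alpha_2$, which integer points lie in $\mathrm{conv}(W\lambda)$.

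First I would compute the orbit of $\lambda=a\omega_1+b\omega_2$ under $W=\langle s_1,s_2\rangle$. Writing the action on fundamental-weight coordinates as $s_1(x,y)=(-x,x+y)$ and $s_2(x,y)=(x+y,-y)$, and converting each conjugate back to the form $\lambda-i\alpha_1-j\alpha_2$, one finds that the six orbit points have $(i,j)$-coordinates
\[ (0,0),\quad (a,0),\quad (a+b,\,b),\quad (a+b,\,a+b),\quad (a,\,a+b),\quad (0,b). \]
These are the vertices of a (possibly degenerate) hexagon whose convex hull is cut out by the inequalities $i\geq 0$, $j\geq 0$, $i\leq a+b$, $j\leq a+b$, $i-j\leq a$ and $j-i\leq b$. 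Since the root lattice of $A_2$ is exactly $\mathbb{Z}\alpha_1\oplus\mathbb{Z}\alpha_2$, every integer point $(i,j)$ in this region lies in $\lambda+\mathbb{Z}\Phi$ and hence, by saturation, is a weight of $\Delta(\lambda)$; thus $\lambda-i\alpha_1-j\alpha_2$ is a weight precisely when $(i,j)$ satisfies these six inequalities.

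It remains to impose the hypothesis $i+j\leq a+b$. Together with $i,j\geq 0$ this already forces $i\leq a+b$ and $j\leq a+b$, so the two upper inequalities are redundant and membership reduces to $i,j\geq 0$ together with $-b\leq i-j\leq a$. Splitting at the diagonal $i=j$ yields exactly case (i), namely $i\leq j$ with $j-i\leq b$, and case (ii), namely $i\geq j$ with $i-j\leq a$, which completes the argument.

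The only genuine bookkeeping is the orbit computation and the verification that the six inequalities really describe the hull; the substantive ingredient—that every lattice point of the correct coset inside $\mathrm{conv}(W\lambda)$ is a weight—is just the standard saturation property, so no real difficulty arises. I would also glance at the degenerate cases $a=0$ or $b=0$, where the hexagon collapses to a triangle: there the inequalities continue to hold verbatim, so they need no separate treatment.
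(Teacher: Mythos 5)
Your proof is correct, but it takes a genuinely different route from the paper. The paper invokes \cite[VIII, \S7, Prop.~10]{bourbaki78} to identify the weights of $\Delta(\lambda)$ with those of $\Delta(a0)\otimes\Delta(0b)$, describes the weights of the two symmetric-power factors explicitly, and then runs a hands-on combinatorial argument to decide which sums $(i,j)=(i_1+i_2,\,j_1+j_2)$ are achievable. You instead use the saturation property: the formal character of $\Delta(\lambda)$ is given by Weyl's character formula independently of $p$, so its weight set is $\mathrm{conv}(W\lambda)\cap(\lambda+\mathbb{Z}\Phi)$, and since every $\lambda-i\alpha_1-j\alpha_2$ with $i,j\in\mathbb{Z}$ lies in the root-lattice coset, the problem reduces to describing the hexagon $\mathrm{conv}(W\lambda)$ by linear inequalities. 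Your orbit computation and the resulting six inequalities are correct (I checked the vertices and the reduction under $i+j\leq a+b$, as well as the degenerate triangles when $ab=0$), and the split at $i=j$ recovers exactly conditions (i) and (ii). The trade-off: your argument is shorter, more conceptual, and would transfer verbatim to the $B_2$ and $G_2$ analogues, whereas the paper's tensor-product argument is more elementary in that it only needs the weights of the natural module and its dual. One small point of hygiene, shared with the paper's own proof: the lemma implicitly takes $i,j\geq 0$ (as the paper's shorthand $\lambda-ij$ presumes), and your inequalities $i\geq 0$, $j\geq 0$ are doing real work in the reduction, so it is worth stating that hypothesis explicitly rather than leaving it to the notation.
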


\begin{proof}
    By \cite[VIII, \S7, Prop. 10]{bourbaki78}, the weights in $\Delta(\lambda)$ are precisely the same as those occurring in $\Delta(a0)\otimes \Delta(0b)$. Let $\lambda_1 = a0$ and $\lambda_2 = 0b$ and recall that $\Delta(c_i\omega_i)$, for $i=1,2$ is the $c_i$-th symmetric power of the natural, respectively, dual module for $G$. Hence, $\lambda_1-i_1j_1$  is a weight of $\Delta(\lambda_1)$ if and only if $i_1+j_1\leq 2a$ and  $0\leq i_1-j_1\leq a$. Similarly, $\lambda_2-i_2j_2$  is a weight of $\Delta(\lambda_2)$ if and only if $i_2+j_2\leq 2b$ and  $0\leq j_2-i_2\leq b$. By symmetry it suffices to show that the statement of the lemma is valid when $i\geq j$. First of all, it is clear that all weights $\lambda-ij$ of $\Delta(\lambda)$ satisfy $i-j\leq a$, since a weight $\lambda_2-i_2j_2$ of $\Delta(\lambda_2)$  satisfies $i_2-j_2\leq 0$, and a weight $\lambda_1-i_1j_1$ of $\Delta(\lambda_1)$ satisfies $i_1-j_1\leq a$. 
    
    For the converse, consider a pair $(i,j)$, such that $i\geq j$, $i+j=d\leq a+b$ and $i-j\leq a$. If $d\leq a$, then $j\leq i\leq a$, so $\lambda_1-ij$ is a weight of $\Delta(\lambda_1)$ and $\lambda_1+\lambda_2-ij$ is then a weight of $\Delta(\lambda)$. If $d>a$, write $d=a+k$, where $k\leq b$. To conclude we show that we can find $(i_1,j_1)$ with $i_1+j_1 = a$ and  $(i_2,j_2)$ with $i_2+j_2 = k$, such that $\lambda_1-i_1j_1$ is a weight of $\Delta(\lambda_1)$, $\lambda_2-i_2j_2$ is a weight of $\Delta(\lambda_2)$ and $i_1-j_1+i_2-j_2 = i-j$. Fix $i_1+j_1=a$ and $i_2+j_2=k$. Note that we are allowed to pick $i_1-j_1$ to be any integer between $a$ and $1$ if $a$ is odd, and between $a$ and $0$ if $a$ is even. Similarly, we are allowed to choose $i_2-j_2$ between $-k$ and $-1$ if $k$ is odd, and between $-k$ and $0$ when $k$ is even, concluding easily. 
\end{proof}

\begin{lemma}\label{A2 weights patterns}
Let $\lambda = ab$ with $a\geq b>0$ and $a+b=p-1$. 
\begin{enumerate}[label=\textnormal{(\roman*)}]
    \item For $0\leq d \leq b$, we have $n_d=d+1$. 
    \item For $b+1\leq d \leq a$, we have that $n_d$ increases alternatingly by respectively $0$ and $1$ with respect to $n_{d-1}$.
    \item For $a<d\leq a+b$, we have that $n_d$ alternates between $\left\lceil\frac{a+b}{2}\right\rceil$ and $\left\lceil\frac{a+b+1}{2}\right\rceil$.
\end{enumerate}
\end{lemma}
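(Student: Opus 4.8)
The plan is to compute the multiplicities $n_d$ by first pinning down $\operatorname{ch}L(\lambda)$ exactly and then reducing to a characteristic-zero weight count. Since $\alpha_1\downarrow T_A=\alpha_2\downarrow T_A=2$, a weight $\lambda-ij$ of $V$ restricts to the $T_A$-weight $r-2(i+j)$, so $n_d=\sum_{i+j=d}\dim V_{\lambda-ij}$. By Table~\ref{tab:r values} we have $r=2(a+b)=2(p-1)$, hence $p\le r$ and, crucially, the Weyl module $\Delta(\lambda)$ is \emph{not} irreducible here; in particular Lemma~\ref{n_d same as p=0 when Weyl module irreducible lemma} does not apply and the multiplicities cannot simply be imported from characteristic zero.

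The key structural step is to identify $\operatorname{ch}L(\lambda)$ via the Jantzen sum formula \cite[Part II, 8.19]{Jantzen}. Because $a+b=p-1$, the values $\langle\lambda+\delta,\alpha_1^\vee\rangle=a+1$ and $\langle\lambda+\delta,\alpha_2^\vee\rangle=b+1$ are each at most $p-1$, so the only positive root that contributes is $\beta=\alpha_1+\alpha_2$, for which $\langle\lambda+\delta,\beta^\vee\rangle=a+b+2=p+1$. The unique multiple of $p$ in the relevant range is $p$ itself, and the reflected weight is $\lambda-\beta=(a-1)\omega_1+(b-1)\omega_2$, dominant since $a\ge b\ge 1$. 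As this weight satisfies $\langle\cdot+\delta,\gamma^\vee\rangle<p$ for every positive root $\gamma$, its Weyl module is irreducible, so the sum formula shows that the radical of $\Delta(\lambda)$ is exactly $L(a-1,b-1)$. Writing $\chi(c,e)$ for the Weyl (characteristic-zero) character of highest weight $c\omega_1+e\omega_2$, this gives
\begin{equation*}
\operatorname{ch}L(\lambda)=\chi(a,b)-\chi(a-1,b-1).
\end{equation*}
Setting $N_k(c,e)=\sum_{i+j=k}\dim\Delta_K(c,e)_{(c,e)-ij}$ for the characteristic-zero diagonal slices, and using that $\Delta_K(a-1,b-1)$ has top $T_A$-weight $r-4$, this collapses to the clean identity $n_d=N_d(a,b)-N_{d-2}(a-1,b-1)$, reducing everything to characteristic zero.

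It then remains to compute the slices $N_k$. For this I would invoke the hexagonal weight-multiplicity structure of $SL_3$-modules: the weight set is given by Lemma~\ref{A2 weights}, multiplicities equal $1$ on the outer boundary and increase by $1$ on each successive ring inward, stabilising at $\min(c,e)+1$; equivalently the $T_A$-character is the principal specialisation $[c+1]_q[e+1]_q[c+e+2]_q/[2]_q$. Tracking which ring the diagonal $i+j=k$ meets yields $N_k(a,b)$ and $N_k(a-1,b-1)$, and the subtraction produces the three regimes: for $d\le b$ each diagonal still meets both sloped boundaries, giving $n_d=d+1$; for $b<d\le a$ the slices lie in the parallelogram band and the difference oscillates with increments $0,1,0,1,\dots$; and for $a<d\le a+b$ the diagonals begin to pass through the triangular core, forcing $n_d$ to alternate between $\lceil(a+b)/2\rceil$ and $\lceil(a+b+1)/2\rceil$.

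The main obstacle is this last bookkeeping: the multiplicity along a fixed diagonal is not monotone—a diagonal can jump from the outer ring straight to the central value when it is nearly tangent to the core (as already happens, e.g., for $\lambda=22$)—so verifying the exact increments in (ii) and the alternation in (iii) requires care precisely at the transition points $d=b$ and $d=a$, where the shape of the slice changes. I would handle this by computing each $N_k$ through Weyl-conjugacy reduction (sending $\lambda-ij$ to its dominant conjugate and reading off the ring) and checking the boundary behaviour at $d=b,\,a,\,a+b$ directly; the cancellation against $N_{d-2}(a-1,b-1)$ is exactly what tames the unbounded characteristic-zero growth into the bounded alternating pattern asserted.
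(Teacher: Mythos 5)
Your reduction is correct but takes a genuinely different route from the paper, and it stops short of the decisive step. The character identity $\operatorname{ch}L(\lambda)=\chi(a,b)-\chi(a-1,b-1)$ is right (the Jantzen computation is exactly as you describe: only $\alpha_1+\alpha_2$ contributes, and $\Delta((a-1)\omega_1+(b-1)\omega_2)$ is irreducible), and so is the resulting slice formula $n_d=N_d(a,b)-N_{d-2}(a-1,b-1)$. However, the actual content of the lemma is the three regimes in (i)--(iii), and your argument for them is only a plan: you assert that ``the subtraction produces the three regimes'' and defer the ring-by-ring bookkeeping that you yourself identify as the main obstacle, so as written that step is not carried out. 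The paper avoids this entirely: since $a+b=p-1$, all $T$-weight spaces of $L(\lambda)$ are one-dimensional by \cite[Prop.~2]{Za_Su_1dim}, and the weight set equals that of $\Delta(\lambda)$ by Proposition~\ref{prop:premet}, so $n_d$ is literally the number of pairs $(i,j)$ with $i+j=d$ satisfying the conditions of Lemma~\ref{A2 weights}; the three regimes then drop out of the elementary inequality count $\frac{d-b}{2}\leq i\leq\frac{a+d}{2}$ together with a parity check. Note that your character identity in fact \emph{implies} the multiplicity-one statement: a weight on shell $k$ of the hexagon has multiplicity $\min(k,b)+1$ in $\chi(a,b)$ and $\min(k-1,b-1)+1$ in $\chi(a-1,b-1)$, and these always differ by exactly $1$ on the common support, while the outer shell contributes $1$. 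So the cleanest way to complete your proof is to record that observation and then finish with the paper's lattice-point count, rather than computing the diagonal slices $N_k$ of two hexagons and subtracting, which is where the non-monotone transitions at $d=b$ and $d=a$ would otherwise have to be handled by hand.
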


\begin{proof}
Here we use the fact that all $T$-weights in $V$ are of multiplicity 1. (See \cite[Prop. 2]{Za_Su_1dim}. ) Hence, the proof consists of counting the pairs $(i,j)$ with $i+j=d$ and satisfying the conditions of Lemma~\ref{A2 weights}.
    \begin{enumerate}[label=\textnormal{(\roman*)}]
    \item Let $0\leq d\leq b$. The statement then follows immediately from noting that $\lambda - i(d-i)$ is a weight for $0\leq i\leq d$.
    \item Let us start from $d=b+1$, where the weights are given by $\lambda - (b-i+1)i$ for $0\leq i\leq b$. This means that $n_{b+1}=b+1=n_{b}$ by part (i). For $d=b+2$, still assuming that $d\leq a$, we find weights of the form $\lambda - (b-i+2)i$ for $0\leq i\leq b+1$. The same reasoning continues until $d=a$, proving the statement. 
    \item Let $a< d\leq a+b$. We must count the weights of the form $\lambda-i(d-i)$ where $a\geq 2i-d$ and $b\geq d-2i$. The conditions on $i$ are  equivalent to the inequalities $\frac{d-b}{2}\leq i\leq \frac{a+d}{2}$. Considering the various possibilities for the evenness of the terms in the inequality gives the result. 
\end{enumerate}\end{proof}

\begin{lemma}\label{A2 c1c2 special p lemma}
    Let  $\lambda = ab$ with $a\geq b>0$ and $a+b=p-1$. Then $V\downarrow A$ is MF if and only if $a=b=1$.
\end{lemma}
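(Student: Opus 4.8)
The plan is to run the machinery of weight multiplicities $n_d$, composition-factor multiplicities $m_d$, and gap sets $\mathbf{S}_d$ from Section~\ref{notation section}. Throughout, $r=\lambda\downarrow T_A = 2(a+b)=2(p-1)$ by Table~\ref{tab:r values}, so $r$ is even and every relevant index satisfies $d\le a+b=p-1<p$; since $\lfloor(r+2)/2\rfloor=p$, Lemmas~\ref{gaps containment up to r-2p} and \ref{sufficient conditions MF lemma} apply throughout. As all $T$-weights of $V$ have multiplicity one, the $n_d$ are exactly those computed in Lemma~\ref{A2 weights patterns}. For the forward direction, if $a=b=1$ then $p=3$ and $r=4$, with $n_0=1$, $n_1=2$, $n_2=1$; a short computation with Lemma~\ref{multiplicity recurrence lemma} gives $V\downarrow A=(4)+(2)$, and as these factors are distinct, $V\downarrow A$ is MF. (Here $(4)=L_A(4)\cong(1)\otimes(1)^{(3)}$ omits the zero weight, which is supplied by $(2)$.)

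For the converse, assume $(a,b)\neq(1,1)$, so $a+b\ge 3$ and $p\ge 5$. The key input is the explicit gap structure of the two top composition factors, read off from Steinberg's tensor product theorem: as $p$ is odd, $(r)=L_A(2p-2)\cong(p-2)\otimes(1)^{(p)}$ has weight set equal to all even integers in $[-r,r]$ except $0$, so its only gap contains the single weight $0$; and for $p\ge 5$, $(r-2)=L_A(2p-4)\cong(p-4)\otimes(1)^{(p)}$ has a gap containing exactly the weights $\{-2,0,2\}$. Because $a+b=p-1$ is even, $a$ and $b$ share a parity, and I split on it. Near the bottom of the weight string, Lemma~\ref{A2 weights patterns}(iii) gives $n_{a+b},n_{a+b-1},n_{a+b-2}\in\{\tfrac{p-1}{2},\tfrac{p+1}{2}\}$, oscillating between the two values, with $n_{a+b}=\tfrac{p+1}{2}$ when $a,b$ are even and $n_{a+b}=\tfrac{p-1}{2}$ when $a,b$ are odd.

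If $a,b$ are even, then $n_{a+b}-n_{a+b-1}=1$, and since $(r)$ is a composition factor whose gap contains $0=r-2(a+b)$ but not $2=r-2(a+b-1)$, we have $(r)\in\mathbf{S}_{a+b}\setminus\mathbf{S}_{a+b-1}$, so $\mathbf{S}_{a+b-1}\neq\mathbf{S}_{a+b}$; Lemma~\ref{sufficient conditions MF lemma}(iv) forces $m_{a+b}\ge 2$. If $a,b$ are odd, then $n_{a+b-1}-n_{a+b-2}=1$, and $(r-2)$ is a composition factor by Lemma~\ref{sufficient conditions MF lemma}(i) (since $n_1-n_0=1$); its gap contains $2=r-2(a+b-1)$ but not $4=r-2(a+b-2)$, so $(r-2)\in\mathbf{S}_{a+b-1}\setminus\mathbf{S}_{a+b-2}$, and Lemma~\ref{sufficient conditions MF lemma}(iv) forces $m_{a+b-1}\ge 2$. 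In either case $V\downarrow A$ is not MF.

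The delicate point, and the reason $\lambda=\omega_1+\omega_2$ is genuinely exceptional, is that the odd-odd argument relies on $(r-2)=L_A(2p-4)$ having a nonempty gap, which holds precisely when $p\ge 5$; for $p=3$ one has $r-2=2$ and $L_A(2)$ is the full string with no gap, so the argument correctly fails exactly at $(1,1)$. The main work is therefore the bookkeeping: pinning down the exact values $n_{a+b},n_{a+b-1},n_{a+b-2}$ from Lemma~\ref{A2 weights patterns}, including the region-boundary cases $b=1$ (where $d=a+b-1$ lies in the alternating-$0,1$ range of part (ii) and one must check that the unit jump occurs at the odd index $d=a=p-2$) and $b=2$, and confirming in each parity class that the positive unit jump in $n_d$ lines up with the index at which the relevant top factor enters $\mathbf{S}_d$.
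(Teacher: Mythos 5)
Your argument is correct, and it runs on the same engine as the paper's proof --- Lemma~\ref{A2 weights patterns} for the values $n_d$ together with Lemma~\ref{sufficient conditions MF lemma}(i),(iv) --- but it instantiates that engine differently. The paper splits into four cases according to $a-b\in\{0,2,4\}$ or $a-b\geq 6$, identifies $(p+1)$ or $(p+3)$ as a composition factor sitting in the middle of the weight string, and detects the repeated factor from the gap of that module around the $T_A$-weights $p-1,p-3$. You instead split on the common parity of $a$ and $b$ and use the two \emph{top} factors $(r)=(2p-2)$ and $(r-2)=(2p-4)$, whose gaps sit at the bottom of the string (around weight $0$), detecting the repeated factor at $d=a+b$ or $d=a+b-1$. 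This buys uniformity: $(r)$ is a composition factor for free and $(r-2)$ follows from the trivial jump $n_1-n_0=1$, whereas the paper must work case by case to decide which of $(p+1),(p+3)$ occurs. The bookkeeping you defer does go through: the proof of Lemma~\ref{A2 weights patterns}(iii) shows $n_d=\tfrac{p+1}{2}$ precisely when $d\equiv a\pmod 2$ (for $a<d\leq a+b$), which yields $n_{p-1}-n_{p-2}=1$ in the even case and $n_{p-2}-n_{p-3}=1$ in the odd case with $b\geq 3$; in the one genuine boundary case $b=1$ (so $a=p-2$ odd and $d=a+b-1=a$ lies in region (ii)), the alternating pattern $n_2=2,n_3=3,n_4=3,\dots$ places the unit jump at the odd index $d=a$, as required. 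Since $d\leq p-1<\min\{\lfloor\tfrac{r+2}{2}\rfloor,p\}=p$ throughout, Lemmas~\ref{gaps containment up to r-2p} and \ref{sufficient conditions MF lemma} apply, and your identification of the gaps of $(2p-2)$ and $(2p-4)$ via Steinberg is accurate, including the observation that the $(r-2)$-gap is empty exactly when $p=3$, which is what isolates $\lambda=\omega_1+\omega_2$. I would only ask that the deferred verification be written out rather than announced, but there is no gap in substance.
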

\begin{proof}
   Note that $r=2(a+b)<2p$ and that $a-b$ is an even number. For clarity we split the proof into $4$ cases, depending on whether $a-b\geq 6$, $a-b = 4$, $a-b=2$ or $a=b$. Suppose first that $a-b\geq 6$. By Lemma~\ref{A2 weights patterns}, all weights of the form $r-2d$ with $b+1\leq d \leq a$ follow the pattern in (ii) of the same lemma. Since \(r-2(b+1) = 2a-2\geq a+b+4 = p+3\), and \(r-2a = 2b \leq b+a-6 = p-7\), this includes weights that restrict to $p+3,p+1,p-1,p-3,p-5$. Therefore by Lemma~\ref{sufficient conditions MF lemma}{(i)} either $(p+3)$ or $(p+1)$ is a composition factor for $V\downarrow A$. In the first case $p-5$ occurs with multiplicity $1$ more than $p-3$, and does not occur as a weight in the composition factor $(p+3)$, while $p-3$ does. Therefore by Lemma~\ref{sufficient conditions MF lemma}{(iv)} the module $V\downarrow A$ is not MF. In the second case $(p-3)$ is similarly a repeated composition factor. 

    Now suppose that \(a-b=4\). We have \(p+3 = a+b+4 = r-2(\frac{a+b}{2}-2)=r-2b\). Therefore by Lemma~\ref{A2 weights patterns}(i), we have that \((p+3)\) is a composition factor for \(V\downarrow A\). The weights $p+1,p-1,p-3,p-5$ follow the pattern described in Lemma~\ref{sufficient conditions MF lemma}{(ii)}. Therefore we can conclude like in the previous case.
    
    Now suppose that $a=b+2$. Then by Lemma~\ref{A2 weights patterns} we know that $r-2k$ occurs with multiplicity $k+1$ for $k$ ranging between $0$ and $b$. In particular $(r-2b) = (p+1)$ is a composition factor by Lemma~\ref{sufficient conditions MF lemma}{(i)}. Again by Lemma~\ref{A2 weights patterns}, the $T_A$-weight $r-2(b+1) = 2b+2$ occurs with multiplicity $b+1$ and $r-2a=2b$ occurs with multiplicity $b+2$. Since $2b = p-5$ does not occur as a weight in the composition factor $(p+1)$, while $p-3 = 2b+2$ does, Lemma~\ref{sufficient conditions MF lemma}{(iv)} implies that $V\downarrow A$ is not MF. 
    
    Finally assume that $a=b$. Then by Lemma~\ref{A2 weights patterns} and Lemma~\ref{sufficient conditions MF lemma}{(i)}, the weights $r=4a,4a-2,\dots , 2a$ afford composition factors for $V\downarrow A$, with the last weight occurring with multiplicity $a+1$. If $a\geq 2$ we find that $2a-2$ occurs with multiplicity $\left\lceil\frac{a+b}{2}\right\rceil =a$ and $2a-4$ occurs with multiplicity $\left\lceil\frac{a+b+1}{2}\right\rceil=a+1$. Since $2a-4=p-5$ does not occur as a weight in the composition factor $(p+3)$, while $p-3$ does, Lemma~\ref{sufficient conditions MF lemma}{(iv)} implies that $V\downarrow A$ is not MF. On the other hand if $a=b=1$ we find that $V\downarrow A = (4)\oplus(2)$.
\end{proof}

\renewcommand*{\proofname}{Proof of Proposition~$\ref{a2 classification prop}$.}
\begin{proof}
Suppose that $V\downarrow A$ is MF, with $\lambda = ab$ and $a\geq b$. Since the Weyl module $\Delta(c0)$ is irreducible, the assumption that $r = 2a+2b\geq p>a$, together with Lemma~\ref{CD lemma}, implies that $b\geq 1$. If $\dim V_{\lambda-11} = 2$, then $a+b\neq p-1$ by Lemma~\ref{1 dimensional weight space lemma}, and $b = 1$ by Proposition~\ref{first reduction lemma}{(v)}. In this case, using the Jantzen $p$-sum formula \cite[Part II, 8.19]{Jantzen} (for example), one sees that  $\Delta(\lambda)$ is irreducible, a contradiction by Lemma~\ref{CD lemma}. If $\dim V_{\lambda-11} = 1$, then by Lemma~\ref{1 dimensional weight space lemma} we have $a+b= p-1$, and we conclude by Lemma~\ref{A2 c1c2 special p lemma}.
\end{proof}
\renewcommand*{\proofname}{Proof.}

\subsection{The case where $G$ is $B_2$}
We proceed with the case $G=B_2$, where $p\geq h = 4$. The main result is the following, which we shall prove after a series of lemmas.
\begin{proposition}\label{B2 classification prop}
    Let $G = B_2$ and assume that $p\leq r$. Then $V\downarrow A$ is MF if and only if $\lambda = 2\omega_1$ and $p = 5$.
\end{proposition}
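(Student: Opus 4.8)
The plan is to classify, for $G=B_2$ with $p\geq h=4$ and $p\leq r$, exactly when $V\downarrow A$ is MF, following the same architecture just used for $A_2$. First I would reduce the possible highest weights $\lambda = a\omega_1+b\omega_2$. By Lemma~\ref{first reduction lemma}, if both $a$ and $b$ are nonzero then since $G$ has rank $2$ we are in the situation of parts (v)--(vi): the nodes $\alpha_1,\alpha_2$ are adjacent, and $\lambda-12$ must have multiplicity $1$. Together with Lemma~\ref{bw_i reduction lemma} (which forces $\alpha_i$ to be an end-node when $\lambda=b\omega_i$ with $b\geq 2$), this should cut the candidate list down to $\lambda\in\{a\omega_1,\,\omega_2,\,b\omega_2,\,a\omega_1+\omega_2,\,\omega_1+b\omega_2,\,\omega_1+\omega_2\}$ with the fundamental-weight labelling of $B_2$ in mind (recall $\alpha_1$ long, $\alpha_2$ short, so $\omega_1$ sits at the long end-node). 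I would handle the one-parameter families $a\omega_1$ and $b\omega_2$ using Lemma~\ref{CD lemma} and the dimension bounds of Lemmas~\ref{dimension bound comparison} and \ref{dimension bound for omega_i}, and handle the two-parameter cases via the multiplicity-one condition on $\lambda-12$ from Lemma~\ref{1 dimensional weight space lemma}.

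The core of the argument will be a $B_2$-analogue of Lemmas~\ref{A2 weights}, \ref{A2 weights patterns} and \ref{A2 c1c2 special p lemma}: I would first determine the set of weights of $\Delta(\lambda)$ and their multiplicities, then translate this into the sequence $n_d = \dim(\Delta_K(\lambda)\downarrow A_K)_{r-2d}$ (valid whenever $\Delta(\lambda)$ is irreducible, by Lemma~\ref{n_d same as p=0 when Weyl module irreducible lemma}, and otherwise via Proposition~\ref{prop:premet} giving the weight set of $L(\lambda)$). For $B_2$ the weight multiplicities are no longer all $1$ — unlike the $A_2$ situation — so the counting of the $n_d$ requires the explicit $B_2$ weight combinatorics; Lemma~\ref{subdiagram lemma} should let me compute specific weight-space dimensions like $\dim V_{\lambda-ij}$ by restricting to an $A_1$ generated by a single root subgroup. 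With the $n_d$ in hand, the MF criterion is then mechanical: I apply Lemma~\ref{sufficient conditions MF lemma}, using part (ii) to rule out any $d$ with $n_d-n_{d-1}\geq 2$, part (iii) for the single-node weights $c\omega_i$, and part (iv) to detect a repeated composition factor whenever a jump $n_d-n_{d-1}=1$ is accompanied by a strict gap $\mathbf{S}_{d-1}\subsetneq\mathbf{S}_d$.

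I expect the main obstacle to be the special surviving case $\lambda=2\omega_1$, $p=5$, which must be shown to \emph{be} MF rather than ruled out, and the neighbouring cases that must be excluded. Here $r = \lambda\downarrow T_A = 2\cdot 3 = 6$ (from Table~\ref{tab:r values}, $r = 3c_1+4c_2$ for $B_2$), so $r<2p=10$ and the tilting-module obstruction of Lemma~\ref{CD lemma} does not immediately apply; one genuinely has to compute $V\downarrow A$. Since $\dim \Delta(2\omega_1)=10$ for $B_2$ and $\Delta(2\omega_1)$ is reducible at $p=5$ (the determinant of the natural orthogonal form intervenes), I would compute the actual composition factors of $L(2\omega_1)$, find its weight multiplicities via Proposition~\ref{prop:premet}, and verify directly that the restriction decomposes into distinct $(s)$'s — I anticipate something like $V\downarrow A=(6)\oplus(2)$ or a similarly short MF sum. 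The delicate part is confirming that no repeated factor sneaks in precisely at $p=5$ while showing that every nearby $(\lambda,p)$ fails; this is where the interplay between the reducibility of the Weyl module and the arithmetic condition $a+b\equiv\text{(something)}\bmod p$ in Lemma~\ref{1 dimensional weight space lemma} does the real work, and I would organise the exclusions by splitting into the residue of $r$ modulo $p$ and invoking Lemma~\ref{CD lemma} whenever $r\not\equiv -1 \pmod p$ and $r\geq p$.
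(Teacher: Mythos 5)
Your overall architecture (reduce the weight via Proposition~\ref{first reduction lemma}, kill the single-node families with Lemma~\ref{CD lemma} and the dimension bounds, analyse the survivors via the $n_d$) is the right one and matches the paper's, but the proposal contains concrete errors that would derail the execution. First, you have transposed the restriction formula: from Table~\ref{tab:r values}, for $B_2$ one has $r=4c_1+3c_2$, not $3c_1+4c_2$. Consequently, for the exceptional case $\lambda=2\omega_1$, $p=5$ you get $r=8$, not $6$; moreover $\dim\Delta(2\omega_1)=\frac{1}{6}\cdot 3\cdot 1\cdot 4\cdot 7=14$, not $10$, and since $\lambda=2\omega_1$ lies in the alcove $C_1$ for $p=5$ the module $L(2\omega_1)$ is the $13$-dimensional quotient of $\Delta(2\omega_1)$ by a trivial submodule, with $L(2\omega_1)\downarrow A=(8)\oplus(4)$ (of dimensions $8+5=13$ at $p=5$), not $(6)\oplus(2)$. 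Every subsequent residue-mod-$p$ and dimension comparison in your plan inherits this error, so the verification of the one positive case, and the exclusion of its neighbours, would not come out correctly as written.

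Second, your reduction of the candidate list to $\{a\omega_1,\,b\omega_2,\,a\omega_1+\omega_2,\,\omega_1+b\omega_2,\,\omega_1+\omega_2\}$ is not justified: Proposition~\ref{first reduction lemma}(v) does \emph{not} exclude $a,b\geq 2$ in rank $2$ --- it only forces $\dim V_{\lambda-11}=1$, which by Lemma~\ref{1 dimensional weight space lemma}(ii) means $2a+b+2\equiv 0\bmod p$ --- and Lemma~\ref{bw_i reduction lemma} concerns single-node weights in rank at least $3$, so it contributes nothing here. The surviving two-parameter family $\lambda=ab$ with $a\geq 1$, $b\geq 2$ and $2a+b+2\equiv 0\bmod p$ is in fact the hardest part of the paper's argument (its Lemma~\ref{B2 c1c2 reduction}), requiring a case split on the two possible values of $p$ and explicit comparisons of $\dim V$ against $B(r)$ using the Jantzen-sum description of $\dim L(ab)$; your plan does not anticipate this case at all. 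A smaller point: Lemma~\ref{subdiagram lemma} cannot compute $\dim V_{\lambda-\alpha_1-\alpha_2}$ for $B_2$, since the relevant subset $S$ would be all of $\Pi$; the tool for that weight space is Lemma~\ref{1 dimensional weight space lemma}, which you do cite elsewhere.
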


We begin by recalling some information about the structure of $B_2$ Weyl modules (with $p$-restricted highest weights). Let $\lambda = ab$ be a $p$-restricted dominant weight; here $\alpha_1$ is long.

We consider the following alcoves in which a $p$-restricted weight can lie :

$C_0 = \{\lambda \ |\ 2a+b+3 <p\}$;

$C_1 = \{\lambda \ |\ a+b+2<p<2a+b+3\}$;

$C_2 = \{\lambda \ |\ b+1<p<a+b+2 \mbox{ and } 2a+b+3 <2p\}$;

$C_3 = \{\lambda \ |\ 2a+b+3 >2p \mbox{ and } {\rm max}\{b+1, a+1\}<p\}$.

\begin{lemma}\label{B2 Weyl} The following hold.
\begin{enumerate}[label=\textnormal{(\roman*)}]
\item If $\lambda \in C_i$ for $i=1,2,3$, then $\Delta(\lambda)$ has exactly two composition factors, namely $V$ and $L(\mu)$, where $\mu =
(p-a-b-3)\omega_1+b\omega_2$, respectively $a\omega_1+(2p-2a-b-4)\omega_2$, $(2p-a-b-3)\omega_1+b\omega_2$, when $i=1,2,3$.
\item For $\lambda = a\omega_1+(p-1)\omega_2$ with $2a+(p-1)+3>2p$ and $a<p-1$, we have that $\Delta(\lambda)$ has exactly two composition factors, $V$ and $L(\mu)$ for  $\mu = (p-a-2)\omega_1+(p-1)\omega_2$.
\end{enumerate}
For $\lambda$ a $p$-restricted dominant weight not lying in $\cup_{i=1}^3C_i$ and not of the form described in \normalfont{(ii)} above, $\Delta(\lambda)$ is irreducible.
\end{lemma}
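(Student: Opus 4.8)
The plan is to read off the composition factors of $\Delta(\lambda)$ from the Jantzen sum formula \cite[Part II, 8.19]{Jantzen}, exploiting the fact that $p$-restrictedness together with $p\geq h=4$ forces only a handful of terms to survive. Writing $\delta=\omega_1+\omega_2$ and $\lambda=a\omega_1+b\omega_2$, the first step is to record the four pairings $\langle\lambda+\delta,\alpha_1^\vee\rangle=a+1$, $\langle\lambda+\delta,\alpha_2^\vee\rangle=b+1$, $\langle\lambda+\delta,(\alpha_1+\alpha_2)^\vee\rangle=2a+b+3$ and $\langle\lambda+\delta,(\alpha_1+2\alpha_2)^\vee\rangle=a+b+2$; these are exactly the quantities appearing in the definitions of $C_0,\dots,C_3$. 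Since $0\le a,b\le p-1$, the two simple roots satisfy $\langle\lambda+\delta,\alpha_i^\vee\rangle\le p$, so there is no $m$ with $0<mp<\langle\lambda+\delta,\alpha_i^\vee\rangle$ for $\alpha_1,\alpha_2$: only the short root $\gamma:=\alpha_1+\alpha_2$ (with $m\in\{1,2\}$, as $2a+b+3\le 3p$) and the long root $\beta:=\alpha_1+2\alpha_2$ (with $m=1$, as $a+b+2\le 2p$) can contribute, and each surviving $\nu_p(mp)=1$.

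Next I would compute the reflected weights $s_{\gamma,mp}\cdot\lambda=\lambda-(\langle\lambda+\delta,\gamma^\vee\rangle-mp)\gamma$, using the convenient identities $\alpha_1+\alpha_2=\omega_1$ and $\alpha_1+2\alpha_2=2\omega_2$, so that these reflections move only one fundamental coordinate at a time. The defining inequalities of each alcove then decide precisely which of the (at most three) hyperplanes $\langle\lambda+\delta,\gamma^\vee\rangle=mp$ lie strictly below $\lambda$, hence which terms appear: in $C_1$ only $s_{\gamma,p}\cdot\lambda=(p-a-b-3)\omega_1+b\omega_2$, which is already dominant; in $C_2$ the pair $s_{\gamma,p}\cdot\lambda$ and $s_{\beta,p}\cdot\lambda$; in $C_3$ all three of $s_{\gamma,p}\cdot\lambda$, $s_{\gamma,2p}\cdot\lambda$, $s_{\beta,p}\cdot\lambda$. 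For each non-dominant reflected weight I would apply the finite dot-action of $s_1$ and $s_2$ (explicitly, $s_1(c_1\omega_1+c_2\omega_2)=-c_1\omega_1+(2c_1+c_2)\omega_2$ and $s_2(c_1\omega_1+c_2\omega_2)=(c_1+c_2)\omega_1-c_2\omega_2$) to bring it into the dominant chamber, tracking the sign $\det(w)$; the alcove inequalities guarantee regularity at each step, so every Euler characteristic equals $\pm\operatorname{ch}\Delta(w\cdot\mu)$ with $w\cdot\mu$ dominant.

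The heart of the argument is then an induction on the ``depth'' of the alcove, i.e.\ the number of walls separating it from $C_0$. In $C_0$ the Jantzen sum is empty, so $\Delta(\lambda)=L(\lambda)$; this is the base case. For $C_1$ the sum equals $\operatorname{ch}\Delta(\mu)$ with $\mu=(p-a-b-3)\omega_1+b\omega_2\in C_0$ irreducible, giving exactly the two factors $L(\lambda),L(\mu)$. For $C_2$ and $C_3$ the Jantzen sum comes out as an alternating sum of Weyl characters $\operatorname{ch}\Delta(\cdot)$ whose highest weights lie in the strictly lower alcoves; substituting the already-established two-factor structure of those lower Weyl modules makes the sum telescope. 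Concretely, in $C_3$ one checks that the three dominant representatives are $\sigma_1\in C_0$, $\sigma_2\in C_1$ and $\mu=(2p-a-b-3)\omega_1+b\omega_2\in C_2$, with $\sigma_1$ the lower factor of $\Delta(\sigma_2)$ and $\sigma_2$ the lower factor of $\Delta(\mu)$, so that $\operatorname{ch}\Delta(\sigma_1)+\operatorname{ch}\Delta(\mu)-\operatorname{ch}\Delta(\sigma_2)=\operatorname{ch}L(\mu)$. In every case the sum collapses to a single $\operatorname{ch}L(\mu)$, which forces $\Delta(\lambda)$ to have radical $L(\mu)$ and hence precisely the two composition factors $L(\lambda)$ and $L(\mu)$, matching (i).

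Finally, case (ii) is the boundary $b=p-1$ of the $C_3$ region (so $\langle\lambda+\delta,\alpha_2^\vee\rangle=p$): the same three-term Jantzen computation applies, but the wall $b=p-1$ shortens the telescoping and yields the two factors $L(\lambda),L(\mu)$ with $\mu=(p-a-2)\omega_1+(p-1)\omega_2$; indeed $s_{\gamma,2p}\cdot\lambda$ is already this dominant $\mu$. For the remaining $p$-restricted $\lambda$, namely those outside $\bigcup_{i=1}^3 C_i$ and not of the form in (ii), I would check that either the relevant hyperplanes do not separate $\lambda$ from $C_0$ (empty Jantzen sum) or that each candidate reflected weight is singular, i.e.\ its $\delta$-shift is orthogonal to some coroot, so its Euler characteristic vanishes; hence $\Delta(\lambda)$ is irreducible. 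The main obstacle I anticipate is the careful bookkeeping of the chamber reflections and signs in $C_3$ and in case (ii), together with verifying that $C_0,\dots,C_3$, the family (ii), and this singular complement genuinely exhaust the $p$-restricted dominant weights, so that the closing ``otherwise irreducible'' clause is complete.
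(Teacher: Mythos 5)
Your proposal follows exactly the route the paper takes: the paper's entire proof is a citation of the Jantzen $p$-sum formula, and what you have written is a correct, detailed execution of that computation. Your pairings $\langle\lambda+\delta,\gamma^\vee\rangle=2a+b+3$ for the short root $\gamma=\alpha_1+\alpha_2=\omega_1$ and $\langle\lambda+\delta,\beta^\vee\rangle=a+b+2$ for the long root $\beta=\alpha_1+2\alpha_2=2\omega_2$ are right, the identification of the contributing $(\alpha,m)$ in each alcove is right, and the telescoping in $C_2$ and $C_3$ (with $\sigma_1$ the lower factor of $\Delta(\sigma_2)$ and $\sigma_2$ the lower factor of $\Delta(\mu)$) checks out, as does case (ii), where $\chi(s_{\gamma,p}\cdot\lambda)$ and $\chi(s_{\beta,p}\cdot\lambda)$ cancel outright.

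The one concrete gap is in your treatment of the closing ``otherwise irreducible'' clause. You propose that for the residual $p$-restricted weights the Jantzen sum is either empty or consists of singular terms. That dichotomy is incomplete: several wall cases produce non-empty sums of \emph{regular} terms that vanish only by pairwise cancellation. For example, for $\lambda=a\omega_1+(p-1)\omega_2$ with $2a+2<p$ (so $\lambda$ is on the wall $b+1=p$ but not of the form in (ii)), both $s_{\gamma,p}\cdot\lambda$ and $s_{\beta,p}\cdot\lambda$ contribute, neither is singular, and both are conjugate under the dot action to the same dominant weight $a\omega_1+(p-2a-3)\omega_2$ with opposite signs $\det(w)$; similarly for $\lambda=(p-1)\omega_1+b\omega_2$ with $b<p-1$, where $s_{\gamma,2p}\cdot\lambda$ and $s_{\beta,p}\cdot\lambda$ cancel while $s_{\gamma,p}\cdot\lambda$ is singular. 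So the final verification must allow for this third mechanism (cancellation of regular terms across the sum), not just empty sums and singular reflected weights; with that amendment the argument is complete.
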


\begin{proof} This follows from the Jantzen $p$-sum formula \cite[Part II, 8.19]{Jantzen}.
\end{proof}

\begin{rem}\label{B2 Weyl remark} Recall that here $\omega_1 = \alpha_1+\alpha_2$. It follows from Lemma~$\ref{B2 Weyl}$ that for a $p$-restricted weight $\lambda = ab$,  if $\Delta(\lambda)$ is reducible then the module $\Delta(\lambda)$ has exactly one composition factor in addition to the composition factor $L(\lambda)$. The highest weight of the second composition factor is of the form $(a-k)\omega_1+b\omega_2$ or $a\omega_1+(b-k)\omega_2$, for some $k\geq 1$. More precisely, for $\lambda\in C_i$, $i=1,2,3$ and $\mu$ as in the statement of the lemma, we have $\mu = \lambda-(2a+b+3-p)(\alpha_1+\alpha_2)$, respectively $\lambda-(a+b+2-p)(\alpha_1+2\alpha_2)$, $\lambda-(2a+b+3-2p)(\alpha_1+\alpha_2)$. And in case (ii) of the lemma, $\mu = \lambda-(2a-p+2)(\alpha_1+\alpha_2)$.
    
\end{rem}

 We record for convenience the dimension of the Weyl module $\Delta(ab)$, namely \[\dim \Delta(ab) = \frac{1}{6}(a+1)(b+1)(a+b+2)(2a+b+3).\]
 
\begin{lemma}\label{B2 weights c0}
Let $\lambda = c0$. Then $\lambda-ij$, with $i+j\leq 2c$, is a weight of $V$ if and only if one of the following holds.
\begin{enumerate}[label=\textnormal{(\roman*)}]
    \item $i\leq j$ and $j-i\leq i$.
    \item $i\geq j$ and $i-j\leq c-\left\lfloor\frac{j+1}{2}\right\rfloor$.
\end{enumerate}
\end{lemma}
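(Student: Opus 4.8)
The plan is to reduce the statement to a combinatorial description of the weight set of the relevant Weyl module, and then to perform a change of coordinates. First I would invoke Proposition~\ref{prop:premet}: since $p\ge h$ and $\lambda=c\omega_1$ is $p$-restricted, the set of $T$-weights of $V=L(c\omega_1)$ coincides with that of $\Delta(c\omega_1)$. The character of the Weyl module is given by Weyl's character formula and is therefore independent of the characteristic, so this weight set agrees with the weight set of the corresponding module in characteristic $0$. Consequently it is the \emph{saturated set} attached to $\lambda$, namely the intersection of the convex hull of the Weyl group orbit $W\lambda$ with the coset $\lambda+\mathbb{Z}\Phi$ of the root lattice (equivalently, the dominant weights $\preceq\lambda$ in the dominance order together with their $W$-orbits). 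One could instead realise $\Delta(c\omega_1)$ inside the $c$-th symmetric power of the natural $5$-dimensional module and read off the same weight set, in the spirit of Lemma~\ref{A2 weights}.

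Next I would set up the standard $\epsilon$-coordinates for $B_2$, in which $\alpha_1=\epsilon_1-\epsilon_2$ (long), $\alpha_2=\epsilon_2$ (short) and $\omega_1=\epsilon_1$. Here $W$ is the group of signed permutations of $\{\epsilon_1,\epsilon_2\}$, so $W\lambda=\{\pm c\epsilon_1,\pm c\epsilon_2\}$ and its convex hull is the diamond $\{a_1\epsilon_1+a_2\epsilon_2 : |a_1|+|a_2|\le c\}$. Since the root lattice is exactly $\mathbb{Z}\epsilon_1\oplus\mathbb{Z}\epsilon_2$ and $\lambda=c\epsilon_1$ lies in it, the congruence condition is automatic; hence the weight set of $V$ is precisely the set of integral points $a_1\epsilon_1+a_2\epsilon_2$ with $|a_1|+|a_2|\le c$. (For $c=1,2$ this recovers the $5$- and $14$-dimensional modules, as a sanity check against the dimension formula recorded above.)

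Finally I would translate this criterion into the $(i,j)$-coordinates of the lemma. Writing $\mu=\lambda-i\alpha_1-j\alpha_2=(c-i)\epsilon_1+(i-j)\epsilon_2$, the membership condition becomes $|c-i|+|i-j|\le c$, and I would analyse it according to whether $i\le j$ or $i\ge j$. When $i\le j$, the hypothesis $i+j\le 2c$ forces $2i\le i+j\le 2c$, so $i\le c$ and $|c-i|=c-i$; the inequality then reads $c+j-2i\le c$, i.e. $j-i\le i$, which is (i). When $i\ge j$, one splits on the sign of $c-i$: for $i\le c$ the criterion holds automatically, while for $i\ge c$ it becomes $2i-c-j\le c$, i.e. $i\le c+j/2$; as $i$ is an integer this is $i\le c+\lfloor j/2\rfloor$, and using $j-\lfloor (j+1)/2\rfloor=\lfloor j/2\rfloor$ this is exactly $i-j\le c-\lfloor (j+1)/2\rfloor$, which is (ii).

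The routine but delicate part is the bookkeeping in this last step: tracking the sign of $c-i$ (this is precisely where the hypothesis $i+j\le 2c$ is genuinely used, to exclude the region $i>c$ in case (i)) and converting the half-integer bound $c+j/2$ into the floor expression $c-\lfloor (j+1)/2\rfloor$ appearing in (ii). I do not anticipate any serious obstacle beyond this case analysis.
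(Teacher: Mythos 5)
Your argument is correct, but its combinatorial core is genuinely different from the paper's. Both proofs begin the same way, invoking Proposition~\ref{prop:premet} to replace $L(c\omega_1)$ by the characteristic-zero module $\Delta_K(c\omega_1)$. From there the paper works entirely in root coordinates and argues the two implications separately: sufficiency by exhibiting each weight explicitly via the reflection $s_{\alpha_2}$ and the root-string property \cite[VIII, \S7, Prop.~3]{bourbaki78}, and necessity by identifying the weight set with that of $V_1^{\otimes c}$ and running a case analysis on the coefficients in $\lambda-a_1\alpha_1-a_2(\alpha_1+\alpha_2)-a_3(\alpha_1+2\alpha_2)-a_4(2\alpha_1+2\alpha_2)$ with $\sum a_i\le c$. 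You instead use the saturation property of the weight set of $\Delta_K(\lambda)$, namely that it is the intersection of the convex hull of $W\lambda$ with $\lambda+\mathbb Z\Phi$, and pass to $\epsilon$-coordinates, where for $\lambda=c\epsilon_1$ (which lies in the root lattice, so the congruence condition reduces to integrality) everything collapses to the single inequality $\lvert c-i\rvert+\lvert i-j\rvert\le c$ for $\mu=(c-i)\epsilon_1+(i-j)\epsilon_2$. I checked the bookkeeping: the hypothesis $i+j\le 2c$ does force $i\le c$ in case (i), and the conversion of $i\le c+j/2$ into $i-j\le c-\lfloor(j+1)/2\rfloor$ via $j-\lfloor(j+1)/2\rfloor=\lfloor j/2\rfloor$ is right; note also that in the subcase $j\le i\le c$ both the diamond condition and (ii) hold automatically, so the equivalence survives. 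Your route is shorter, treats both implications simultaneously, and would generalise with no extra effort to any $\lambda$ in the root lattice of a larger group; what the paper's version buys is uniformity with the neighbouring results (Lemmas~\ref{A2 weights} and \ref{B2 weights c1} reuse the same tensor-product and root-string toolkit) at the cost of a longer two-sided argument.
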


\begin{proof}
    By Proposition~\ref{prop:premet},  the set of weights of $V$ is precisely the same as the set of weights of the corresponding $KG_K$-module
  $\Delta_K(\lambda)$. First we show that all weights satisfying either (i) or (ii) are weights of $\Delta_K(\lambda)$.

  Suppose $i\leq j\leq 2i$. Since $i+j\leq 2c$, we have that $i\leq c$. In particular, $\lambda-i0$ is a weight of $\Delta_K(\lambda)$.
  Now the weight $s_{\alpha_2}(\lambda-i0) = \lambda-i(2i)$ is also a weight of $\Delta_K(\lambda)$ and using \cite[VIII, \S7, Prop.~3]{bourbaki78} we have that for all
  $0\leq m\leq 2i$, $\lambda-im$ is a weight of $\Delta_K(\lambda)$. So in particular, $\lambda-ij$ is a
  weight of $V$.

  Suppose now that  $j\leq i \leq c+j- \lfloor\frac{j+1}{2}\rfloor$. As $i+j\leq 2c$, we have that $j\leq c$ and so
  $\lfloor\frac{j+1}{2}\rfloor\leq c$ and $\mu=\lambda-(\lfloor\frac{j+1}{2}\rfloor)\alpha_1$ is a weight of $V$. Hence,
  $s_{\alpha_2}(\mu) = \mu-2\lfloor\frac{j+1}{2}\rfloor\alpha_2$ is a weight of $\Delta_K(\lambda)$, which again by \cite[VIII, \S7, Prop.~3]{bourbaki78} implies that
  $\nu=\lambda-(\lfloor\frac{j+1}{2}\rfloor)j$ is a weight of $\Delta_K(\lambda)$. Further, we have that
  $\langle \nu,\alpha_1\rangle = c+j-2\lfloor\frac{j+1}{2}\rfloor$ and using again \emph{loc.cit.} we have that for all $0\leq m\leq c+j-2\lfloor\frac{j+1}{2}\rfloor$,
  $\nu-m\alpha_1 = \lambda-(m+\lfloor\frac{j+1}{2}\rfloor)j$ is a weight of $\Delta_K(\lambda)$, giving that $\lambda-ij$
  is a weight of $V$. 

  We now show that any weight $\lambda-ij$ with $i+j\leq 2c$ satisfies either (i) or (ii). To this end, we use the fact that the set of
  weights of $\Delta_K(\lambda)$ is the same as the set of weights of the module $V_1^{\otimes^c}$, the $c$-fold tensor product of the module $V_1$ with itself, where $V_1$ is the $KG_K$-module with highest
  weight $\omega_1$. (See  \cite[VIII, \S7, Prop.~10]{bourbaki78}.) Let $\mu$ be a weight of  $V_1^{\otimes^c}$, so that
  $\mu = c\omega_1-a_1\alpha_1-a_2(\alpha_1+\alpha_2)-a_3(\alpha_1+2\alpha_2)-a_4(2\alpha_1+2\alpha_2) = \lambda-(a_1+a_2+a_3+2a_4)\alpha_1-
  (a_2+2a_3+2a_4)\alpha_2$, with $a_i\in\mathbb N$ such that $a_1+a_2+a_3+a_4\leq c$.

  There are two cases to consider; suppose first that $a_1\leq a_3$, so that $a_1+a_2+a_3+2a_4\leq a_2+2a_3+2a_4$. Then
  $a_2+2a_3+2a_4\leq 2a_1+2a_2+2a_3+4a_4 = 2(a_1+a_2+a_3+2a_4)$ and the weight $\mu$ satisfies the conditions of (i).

  Now suppose $a_1\geq a_3$, so  that $a_1+a_2+a_3+2a_4\geq a_2+2a_3+2a_4$ and as usual
  \begin{equation}\label{ineq1} a_1+a_2+a_3+2a_4+ a_2+2a_3+2a_4 = a_1+2a_2+3a_3+4a_4\leq 2c, \end{equation} and \begin{equation}\label{ineq2} a_1+a_2+a_3+a_4\leq c.\end{equation} Note that $j-\lfloor\frac{j+1}{2}\rfloor = \lfloor\frac{j}{2}\rfloor$.
   If $a_1+a_2+a_3+2a_4 > c + \lfloor\frac{a_2+2a_3+2a_4}{2}\rfloor = c+a_3+a_4+\lfloor\frac{a_2}{2}\rfloor$, then 
   $a_1+a_2-\lfloor\frac{a_2}{2}\rfloor+a_4>c$ and $2a_1+a_2+1+2a_4>2c$. If $2a_1+2a_2+2a_3+2a_4\geq 2a_1+a_2+1+2a_4$ we obtain a
   contradiction to inequality (\ref{ineq2}). Hence we may now assume $2a_2+2a_3<a_2+1$, that is $a_2=0=a_3$. Now inequality (\ref{ineq2})
   becomes $a_1+a_4\leq c$ and so $a_1+2a_4\leq c+\lfloor\frac{2a_4}{2}\rfloor$ and the weight satisfies condition (ii).
\end{proof}

\begin{lemma}\label{B2 weights c1}
Let $\lambda = c1$, $c<p$. Then $\lambda-ij$, with $i+j\leq 2c$,  is a weight of $V$ if and only if one of the following holds.
\begin{enumerate}[label=\textnormal{(\roman*)}]
    \item $i\leq j$ and $j-i\leq i+1$.
    \item $i\geq j$ and $i-j\leq c-\left\lfloor\frac{j}{2}\right\rfloor$.
\end{enumerate}
\end{lemma}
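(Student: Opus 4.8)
The plan is to follow the template of Lemma~\ref{B2 weights c0}, reducing the computation to that lemma by means of a tensor product decomposition, exactly as was done for the two-fundamental-weight case in Lemma~\ref{A2 weights}. First I would invoke Proposition~\ref{prop:premet} to replace the set of weights of $V = L(c\omega_1+\omega_2)$ by the set of weights of the characteristic-zero Weyl module $\Delta_K(\lambda)$; this is legitimate since $p\geq h$ and, as $c<p$ and $p\geq 4$, the weight $\lambda = c\omega_1+\omega_2$ is $p$-restricted. Then, following the proof of Lemma~\ref{A2 weights}, I would apply \cite[VIII, \S7, Prop.~10]{bourbaki78} to identify the weights of $\Delta_K(\lambda)$ with those of $\Delta_K(c\omega_1)\otimes\Delta_K(\omega_2)$. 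Recording that $\Delta_K(\omega_2)$ is the $4$-dimensional spin module, whose weights are $\omega_2-i_2\alpha_1-j_2\alpha_2$ with $(i_2,j_2)\in\{(0,0),(0,1),(1,1),(1,2)\}$, the question becomes: $\lambda-ij$ is a weight of $V$ if and only if $c\omega_1-(i-i_2)\alpha_1-(j-j_2)\alpha_2$ is a weight of $\Delta_K(c\omega_1)$ for some such $(i_2,j_2)$ with $i-i_2\geq 0$ and $j-j_2\geq 0$. Since the spin displacements are non-negative, $(i-i_2)+(j-j_2)\leq i+j\leq 2c$, so Lemma~\ref{B2 weights c0} governs exactly when each candidate summand is a weight.

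For the ``if'' direction, given $(i,j)$ with $i+j\leq 2c$ satisfying (i) or (ii), I would exhibit a single admissible spin displacement $(i_2,j_2)$ for which $(i-i_2,\,j-j_2)$ meets condition (i) or (ii) of Lemma~\ref{B2 weights c0}, so that $\lambda-ij$ appears as a weight of the tensor product and hence of $V$. In the range governed by (i), namely $j\leq 2i+1$, the spin weight is used to absorb the parity discrepancy and land in the region $j'\leq 2i'$ of the $c0$-lemma; in the range governed by (ii), the spin weight is chosen to account for the passage from the bound $i'-j'\leq c-\lfloor (j'+1)/2\rfloor$ in Lemma~\ref{B2 weights c0} to the bound $i-j\leq c-\lfloor j/2\rfloor$ required here. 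Conversely, for the ``only if'' direction, I would assume $\lambda-ij$ is a weight with $i+j\leq 2c$, write it as above with $(i-i_2,\,j-j_2)$ a weight of $\Delta_K(c\omega_1)$, and run through the four choices of $(i_2,j_2)$; in each case I would feed the $c0$-inequalities through and take the union of the resulting constraints, recovering precisely conditions (i) and (ii).

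The main obstacle will be the parity and floor bookkeeping: the $+1$ appearing in (i) and the change of floor in (ii) both originate solely from the extra spin factor, and making the boundary inequalities match exactly requires splitting into the cases $j$ even and $j$ odd, treating the overlap region $i\approx j$ where conditions (i) and (ii) meet, and discarding the degenerate spin choices that would force $i-i_2<0$ or $j-j_2<0$. As a cross-check one verifies a small value such as $c=1$, where $V=L(\omega_1+\omega_2)$ and the weight set with $i+j\leq 2$ is listed directly as $\{(0,0),(1,0),(0,1),(1,1)\}$, against the stated formula. An essentially equivalent route, parallel to the first half of the proof of Lemma~\ref{B2 weights c0}, would establish the ``if'' direction directly by producing a dominant weight on each $\alpha_1$- or $\alpha_2$-string and saturating via \cite[VIII, \S7, Prop.~3]{bourbaki78}; I would fall back on this should the tensor bookkeeping in a particular case become unwieldy.
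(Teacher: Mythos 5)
Your proposal is correct and follows essentially the same route as the paper: the paper's proof simply cites Proposition~\ref{prop:premet} and \cite[VIII, \S7, Prop.~10]{bourbaki78} to identify the weights of $V$ with those of $c0\otimes 01$ and then concludes from Lemma~\ref{B2 weights c0}. Your more detailed bookkeeping with the four spin displacements $(i_2,j_2)$ is exactly the verification the paper leaves implicit.
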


\begin{proof}
    By \cite[VIII, \S7, Prop. 10]{bourbaki78} and Proposition~\ref{prop:premet}, the weights occurring in $V$ are the same as the weights occurring in $c0\otimes 01$. The statement then follows from Lemma~\ref{B2 weights c0}.
\end{proof}

\begin{lemma}\label{B2 c1c2 reduction}
    Let $\lambda = ab$ with $p>a\geq 1$, $p>b\geq 2$ and $2a+b+2 \equiv 0 \mod p$. Then $V\downarrow A$ is not MF.
\end{lemma}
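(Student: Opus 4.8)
The plan is to exploit the fact that the congruence $2a+b+2\equiv 0\pmod p$ is, via Lemma~\ref{1 dimensional weight space lemma}(ii) (here $\alpha_1$ is long, so $(\alpha_1,\alpha_1)=2(\alpha_2,\alpha_2)$), exactly the condition that $\dim V_{\lambda-11}=1$, and to combine this with the resulting reducibility of the Weyl module. First I would note that $1\le a\le p-1$ and $2\le b\le p-1$ give $6\le 2a+b+2\le 3p-1$, so the congruence forces $2a+b+2\in\{p,2p\}$. By Table~\ref{tab:r values} we have $r=4a+3b$. Inspecting the alcove conditions of Lemma~\ref{B2 Weyl} one checks that when $2a+b+2=p$ the weight $\lambda$ lies in $C_1$, and when $2a+b+2=2p$ it lies in $C_3$ (or is of the form in part (ii) when $b=p-1$); in all cases the computation of Remark~\ref{B2 Weyl remark} shows $\Delta(\lambda)$ has exactly two composition factors $V=L(\lambda)$ and $L(\mu)$ with $\mu=\lambda-\alpha_1-\alpha_2=(a-1)\omega_1+b\omega_2$, so that $\mu\downarrow T_A=r-4$.

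Next I would compute the low $T_A$-weight multiplicities $n_d=\dim(V\downarrow A)_{r-2d}$. Since the Weyl-module character is independent of $p$, we have $\dim V_\nu=\dim\Delta(\lambda)_\nu-\dim L(\mu)_\nu$; in the case $2a+b+2=p$ one has $2(a-1)+b+3=p-1<p$, so $\mu\in C_0$ and hence $L(\mu)=\Delta(\mu)$ is irreducible, whence every $\dim V_\nu$ is obtained from characteristic-free Weyl-module weight multiplicities (Freudenthal's formula, or the explicit counts behind Lemmas~\ref{B2 weights c0} and~\ref{B2 weights c1}, together with Proposition~\ref{prop:premet}). This gives $n_0=1$ and $n_1=2$. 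Now $r-2$ fails to be a weight of the top composition factor $(r)$ precisely when $p\mid r$; in that situation the multiset $\mathbf S_1$ is non-empty, and Lemma~\ref{multiplicity recurrence lemma} gives $m_1=n_1-n_0+s_1=2$, so $V\downarrow A$ is not MF and we are finished.

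Assuming $p\nmid r$, I would then locate the first place where a gap opens among the top composition factors $(r),(r-2),\dots$ of $V\downarrow A$. Writing $r=a_0+a_1p+\cdots$ in base $p$ (so $a_0\ge 1$), the description of gaps in the proof of Lemma~\ref{gaps containment up to r-2p} shows that $(r)$ omits the weight $r-2d$ for the first time at $d^\ast=a_0+1$, whence $\mathbf S_{d^\ast-1}\ne\mathbf S_{d^\ast}$. Since $d^\ast\le p-1<p$ and $r\geq p$, the hypothesis $d^\ast<\min\{\lfloor\frac{r+2}{2}\rfloor,p\}$ of Lemma~\ref{sufficient conditions MF lemma} holds, and provided the weight-multiplicity sequence is still increasing there, i.e. $n_{d^\ast}-n_{d^\ast-1}\ge 1$ (read off from the data of the previous step, as $d^\ast$ is small relative to the diameter of the weight set), parts (ii) and (iv) of that lemma force $m_{d^\ast}\ge 2$. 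Throughout, Lemma~\ref{multiplicity bound general lemma} provides the complementary bound $n_d\le d+1$ used to pin down the relevant multiplicities.

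The hard part will be the case $2a+b+2=2p$: here $2(a-1)+b+3=2p-1$ places $\mu$ in $C_2$, so $L(\mu)$ is itself reducible and the values $\dim V_\nu$ require peeling off one further composition factor before the gap analysis can be applied with the corrected $n_d$. A second delicate point is the boundary behaviour when $r\equiv-1\pmod p$ (for instance $b=3$ in the case $2a+b+2=p$, where $(r)$ has no early gap because $L_A(r)=\Delta_A(r)$ is irreducible) or when the leading digit $a_0$ is extremal; these must be handled by a direct count of $n_d$ slightly deeper into the weight poset, again invoking Lemmas~\ref{multiplicity recurrence lemma},~\ref{gaps containment up to r-2p} and~\ref{sufficient conditions MF lemma} to exhibit the repeated composition factor.
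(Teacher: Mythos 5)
Your overall strategy is genuinely different from the paper's: the paper proves this lemma almost entirely by a global dimension count, bounding $\dim V$ from below by $\dim\Delta(ab)-\dim\Delta((a-1)b)$ via the Weyl degree formula and computing $B(r)$ explicitly in each of the sub-cases $2a+b+2=p$ ($b$ odd) and $2a+b+2=2p$ ($b$ even), then invoking Lemma~\ref{dimension bound comparison}; only the single configuration $(a,b,p)=(1,3,7)$ is handled by a weight-multiplicity argument. Your proposal instead runs the local gap analysis of Lemmas~\ref{multiplicity recurrence lemma}, \ref{gaps containment up to r-2p} and \ref{sufficient conditions MF lemma} at the first level $d^\ast=a_0+1$ where the top factor $(r)$ omits a weight. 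The set-up of your first paragraph is correct (the alcove identification, $\mu=(a-1)\omega_1+b\omega_2$, and $\mu\in C_0$ when $2a+b+2=p$), and the observation that $p\mid r$ forces $m_1=2$ is fine. But as written the argument is a plan rather than a proof, and the places where it is incomplete are exactly the places where the work lies.

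Concretely: (1) The pivotal inequality $n_{d^\ast}-n_{d^\ast-1}\ge 1$ is asserted, not proved, and it is not automatic. For $B_2$ the sequence $n_d$ plateaus once the weight diagram narrows; e.g.\ for $a=1$, $b=p-4$ (which satisfies $2a+b+2=p$, so is squarely inside this lemma) one has $d^\ast=b-3=p-7$, far beyond the range where all $d+1$ pairs $(i,j)$ with $i+j=d$ are weights, and $n_d$ there grows only about two steps in three. If $n_{d^\ast}=n_{d^\ast-1}$, Lemma~\ref{multiplicity recurrence lemma} yields only $m_{d^\ast}\ge 1$, which is no contradiction, and neither part (ii) nor part (iv) of Lemma~\ref{sufficient conditions MF lemma} applies. (2) The case $2a+b+2=2p$ — which is all even $b$, i.e.\ half of the lemma — is explicitly deferred: there $\mu\in C_2$, $L(\mu)$ is itself reducible, and you would have to recompute every $n_d$ after peeling off a third composition factor before the gap analysis can even start. (3) The boundary cases $r\equiv-1\pmod p$ (which occur, e.g., for $(a,b)=(3,3)$, $p=11$) are likewise deferred; there $(r)$ has no gap at all in the relevant range and a different repeated factor must be exhibited. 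Until (1)--(3) are carried out the lemma is not proved; the paper's dimension-count route avoids all three issues at the cost of several explicit but routine computations of $B(r)$.
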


\begin{proof}
    Since $2a+b+2 \equiv 0 \mod p$, by Lemma~\ref{B2 Weyl} we have \[\dim V = \dim L(ab) = \dim \Delta(ab)-\dim L((a-1)b)\geq  \dim \Delta(ab)-\dim \Delta((a-1)b).\] Using the Weyl character formula, we have that 
    \begin{flalign}\label{lower bound B2 ab eq}
 \dim V&\geq  \frac{1}{6} (1 + b) (6 + 6 a^2 + 5 b + b^2 + 12 a  + 6 a b).
 \end{flalign}
    Since $p>a$ and $p>b$, there are exactly two possibilities for $p$, either $p=2a+b+2$ and $b$ is odd, or $p=a+1+\frac{b}{2}$ and $b$ is even. Let us start with the first case, namely $p=2a+b+2$.
    Assume that $b=3$ and $a\geq 2$. Then $r=4a+3b=2p-1$ and \begin{flalign}\label{upper bound B2 ab eq b=3} B(r) = 2\sum_{k=1}^{\frac{p+1}{2}}(2k-1)+ \sum_{k=1}^{\frac{p-1}{2}}2k = \frac{3p^2+ 4p + 1}{4}. \end{flalign}
    Plugging in $p=2a+5$ and combining \eqref{upper bound B2 ab eq b=3} with \eqref{lower bound B2 ab eq} gives $\dim V > B(r)$ and Lemma~\ref{dimension bound comparison} implies that $V\downarrow A$ is not MF. The case $b=3$, $a=1$ and $p=7$ can be handled directly; we observe that $n_1=n_2=2$, while $n_3 = 4$, as the weight space $\lambda-12$ is $2$-dimensional (see \cite{lubeckOnline}). Then Lemma~\ref{sufficient conditions MF lemma}{(ii)} implies that $V\downarrow A$ is not MF. 
    
    Next assume that $b\geq 5$, in which case $r=2p +(b-4)<3p$. Then \begin{flalign}\label{upper bound B2 ab eq b>=5} B(r) = 3\sum_{k=1}^{\frac{b-3}{2}}2k+ 2\sum_{k=1}^{\frac{p+1}{2}}(2k-1)+ \sum_{k=1}^{\frac{p-1}{2}}2k = \frac{3p^2+ 4p + 10-12b+3b^2}{4}. \end{flalign} Plugging in $p=2a+b+2$ and combining \eqref{upper bound B2 ab eq b>=5} with \eqref{lower bound B2 ab eq} gives \[\dim V - B(r) \geq -39 - 36 a - 12 a^2 + 5 b + 6 a^2 b - 3 b^2 + 6 a b^2 + b^3 .\]
    As $b\geq 5$ and $a\geq 1$, this means that $\dim V-B(r)>0$, and Lemma~\ref{dimension bound comparison} implies that $V\downarrow A$ is not MF.
    
    We now consider the second case, where $p=a+1+\frac{b}{2}$. Here we have $r=4p+b-4$. Suppose that $b=2$, so that $a = p-2\geq 3$ and $r=3p+a<4p$. If $a$ is even, we have
    \begin{flalign}\label{upper bound B2 ab eq b=2 a even}
B(r) &= 4\sum_{k=1}^{\frac{a+2}{2}}(2k-1)+ 3\sum_{k=1}^{\frac{p-1}{2}}2k+2\sum_{k=1}^{\frac{p+1}{2}}(2k-1)+ \sum_{k=1}^{\frac{p-1}{2}}2k = &\\
 &= \frac{3p^2+ 2p + 7+8a+2a^2}{2}. \nonumber &
 \end{flalign}
     Plugging in $p=a+2$ and combining \eqref{upper bound B2 ab eq b=2 a even} with \eqref{lower bound B2 ab eq} gives \[\dim V - B(r) \geq \frac{1}{2}(a^2 + 2a-3) .\] Therefore $\dim V-B(r)>0$, and Lemma~\ref{dimension bound comparison} implies that $V\downarrow A$ is not MF. If $a$ is odd, we have
         \begin{flalign}\label{upper bound B2 ab eq b=2 a odd}
B(r) &= 4\sum_{k=1}^{\frac{a+1}{2}}2k+ 3\sum_{k=1}^{\frac{p+1}{2}}(2k-1)+2\sum_{k=1}^{\frac{p-1}{2}}2k+\sum_{k=1}^{\frac{p+1}{2}}(2k-1)= &\\
 &=\frac{3p^2+ 4p + 7+8a+2a^2}{2}. \nonumber &
 \end{flalign}
Plugging in $p=a+2$ and combining \eqref{upper bound B2 ab eq b=2 a odd} with \eqref{lower bound B2 ab eq} gives \[\dim V - B(r) \geq \frac{1}{2}(a^2 -7).\] As $a\geq 3$, Lemma~\ref{dimension bound comparison} implies that $V\downarrow A$ is not MF. 
    Now suppose that $b\geq 4$, in which case $r=4p+b-4<5p.$ Then
         \begin{flalign}\label{upper bound B2 ab eq b>=4}
B(r) &= 5\sum_{k=1}^{\frac{b-2}{2}}(2k-1)+4 \sum_{k=1}^{\frac{p-1}{2}}2k +3\sum_{k=1}^{\frac{p+1}{2}}(2k-1)+2\sum_{k=1}^{\frac{p-1}{2}}2k+\sum_{k=1}^{\frac{p+1}{2}}(2k-1)= &\\
 &=\frac{10p^2+8p+5b^2-20b+18}{4}. \nonumber &
 \end{flalign}
Plugging in $p=a+1+\frac{b}{2}$ and combining \eqref{upper bound B2 ab eq b>=4} with \eqref{lower bound B2 ab eq} gives \[\dim V - B(r) \geq \frac{1}{24}(-192 - 120 a - 36 a^2 + 80 b + 12 a b + 24 a^2 b - 21 b^2 + 
 24 a b^2 + 4 b^3) .\] We can write this as  \[\dim V - B(r) \geq \frac{1}{24}(-192 + 80 b - 21 b^2 + 4 b^3 + 12 a^2 (-3 + 2 b) + 
 12 a (-10 + b + 2 b^2)) .\] Treating the right-hand-side as a quadratic polynomial in $a$, it is easy to see that since $b\geq 4$, we must again have $\dim V-B(r)>0$, concluding by Lemma~\ref{dimension bound comparison}.
\end{proof}

\begin{lemma}\label{B2 1b general}
        Let $\lambda = 1b$ with $b\geq 2$. Then $V\downarrow A$ is not MF.
\end{lemma}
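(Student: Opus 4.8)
The plan is to compute $r$, pin down exactly when $\Delta(1b)$ is reducible, and then dispatch each resulting case either by a reduction lemma already at hand or by the dimension bound of Lemma~\ref{dimension bound comparison}. From Table~\ref{tab:r values} one reads off $r=3b+4$, and since $p\geq h=4$ with $p$ prime, in fact $p\geq 5$. First I would use Lemma~\ref{B2 Weyl} to locate $\lambda=1b$ among the alcoves: with $a=1$ and $b\geq 2$, a direct check of the defining inequalities shows that $\lambda$ never lies in $C_3$ and is never of the form in Lemma~\ref{B2 Weyl}(ii) for an admissible $p$, that $\lambda\in C_1$ exactly when $p=b+4$, and that $\lambda\in C_2$ exactly when $p=b+2$. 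Hence $\Delta(1b)$ is reducible precisely when $p\in\{b+2,b+4\}$, and is irreducible otherwise.

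Next I would clear the generic cases. If $\Delta(1b)$ is irreducible then $V=\Delta(1b)$ and $r\geq p$; the side hypothesis of Lemma~\ref{CD lemma} in type $B_2$ reads $p>\binom{3}{2}=3$, which holds, so whenever $r\not\equiv -1\mod p$ that lemma gives that $V\downarrow A$ is not MF. When $p=b+4$ we have $2a+b+2=b+4=p\equiv 0\mod p$ with $a=1$, $b\geq 2$ and $p>b$, so Lemma~\ref{B2 c1c2 reduction} applies and again $V\downarrow A$ is not MF.

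This leaves two cases, both to be handled by showing $\dim V>B(r)$ and invoking Lemma~\ref{dimension bound comparison}: the reducible case $p=b+2$, and the irreducible case with $r\equiv -1\mod p$. In the latter, since $p\leq r=3b+4<3b+5$ and $p\geq b+1$, the condition $p\mid 3b+5$ forces $3b+5=2p$, so $p=\frac{3b+5}{2}$ with $b$ odd (and $b\neq 3$, the value $b=3$ already falling under $p=b+4$); here $r=2p-1$ and $\dim V=\dim\Delta(1b)=\frac{1}{3}(b+1)(b+3)(b+5)$. In the case $p=b+2$ we have $r=3p-2$, and applying Lemma~\ref{B2 Weyl} to $\Delta(1b)$ and then again to its second composition factor $\Delta(1(b-2))$ (whose own second composition factor $0(b-2)$ lies in $C_0$, hence is irreducible) gives $\dim V=\dim L(1b)=\frac{1}{6}(b+1)(b^2+11b+36)$. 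I would then evaluate $B(r)$ from $\dim L_A\!\left(\sum_i s_ip^i\right)=\prod_i(s_i+1)$, obtaining $B(2p-1)=\frac{1}{4}(3p+1)(p+1)$ and $B(3p-2)=\frac{1}{2}(3p-1)(p+1)$, and check the two resulting inequalities: the difference reduces to $\dim V-B(r)=\frac{1}{6}(b^3+3b^2+5b-9)$ when $p=b+2$, and to a manifestly positive cubic in $p$ in the irreducible case, both positive for all admissible $b$.

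The main obstacle will be the bookkeeping in this last step: iterating the two-composition-factor description of Lemma~\ref{B2 Weyl} to pin down $\dim L(1b)$ when $p=b+2$, and summing the $A_1$-dimensions over the correct $p$-adic digit ranges (carefully tracking parities, since $p$ is odd) to evaluate $B(3p-2)$ and $B(2p-1)$ in closed form. These computations are elementary but must be executed precisely to reach the stated formulas; everything else reduces to invoking Lemmas~\ref{CD lemma}, \ref{B2 c1c2 reduction} and \ref{dimension bound comparison}.
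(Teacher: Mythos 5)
Your proof is correct, and its skeleton matches the paper's: both rest on the alcove analysis of Lemma~\ref{B2 Weyl} to isolate the reducible cases $p=b+2$ and $p=b+4$, both dispatch $p=b+4$ via Lemma~\ref{B2 c1c2 reduction}, and both kill $p=b+2$ by comparing $\dim V$ with $B(3p-2)=\tfrac{1}{2}(p+1)(3p-1)$ (you compute $\dim L(1b)$ exactly by iterating Lemma~\ref{B2 Weyl} twice; the paper gets away with the cruder bound $\dim V\geq \dim\Delta(1b)-\dim\Delta(1(b-2))=2(p^2-1)$, which already exceeds $B(r)$ for $p>3$). The genuine divergence is in the irreducible case: the paper compares $\dim\Delta(1b)$ directly with the characteristic-zero bound $B_K(r)$ (which works for all $b\geq 3$ and needs no congruence hypothesis), whereas you invoke the tilting-module criterion of Lemma~\ref{CD lemma}, which forces you to split off the sub-case $r\equiv -1\bmod p$, i.e.\ $2p=3b+5$, and handle it by a separate dimension count. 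Your route costs one extra sub-case but buys something real: the paper's enumeration lists only ``$p>b+5$'' among the irreducible possibilities and so does not explicitly address the wall cases $p\in\{b+1,b+3,b+5\}$ with $p\leq r$ (e.g.\ $(b,p)=(2,5)$ and $(2,7)$, where the paper's $B_K$ comparison would in fact fail since $\dim\Delta(12)=35<36=B_K(10)$); your Lemma~\ref{CD lemma} argument covers these uniformly, since there $r\not\equiv -1\bmod p$. All the computations you flag as the ``main obstacle'' check out: $B(2p-1)=\tfrac{1}{4}(p+1)(3p+1)$, $B(3p-2)=\tfrac{1}{2}(p+1)(3p-1)$, $\dim L(1b)=\tfrac{1}{6}(b+1)(b^2+11b+36)$ for $p=b+2$, and the two difference polynomials are positive on the admissible ranges.
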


\begin{proof}
    By Lemma~\ref{B2 Weyl}, we have that one of the following holds. \begin{enumerate}[label=\textnormal{(\roman*)}]
    \item $p>b+5$ and $V = \Delta(1b)$.
    \item $b = p-4$.
    \item $b=p-2$ and $\dim V\geq \dim \Delta(1b)-\dim \Delta(1(b-2))$. 
    \end{enumerate}
    
    In the first case, for $b\geq 3$, $\dim V$ exceeds $B_K(r)$ and Lemma~\ref{dimension bound comparison} then implies that $V\downarrow A$ is not MF.  For $b=2$, 
    we have $p\geq 11>r$, contradicting our assumption that $p\leq r$.
    
    The second case is covered by Lemma~\ref{B2 c1c2 reduction}.
    
    Finally, we consider the third case. Here $\dim V \geq 2 (3 + 4 b + b^2) = 2(p-1)(p+1)$, $r = 3p-2>p$ and $b\geq 3$. In addition, we have  $B(r) =\frac{1}{2}(p+1)(3p-1)$ so that  $\dim V> B(r)$ and $V\downarrow A$ is not MF by Lemma~\ref{dimension bound comparison}.
\end{proof}

\begin{lemma}\label{B2 c1 not MF all 1 dim}
Let $\lambda = c1$ with $c\geq 1$ and $p=2c+3$. Then $V\downarrow A$ is not MF. 
\end{lemma}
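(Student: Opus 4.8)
The plan is to exhibit a single repeated composition factor of $V\downarrow A$, namely $(p-4)$. First I record the relevant structure. Since $\lambda=c1$ we have $r=4c+3=2p-3$, and as $a+b+2=c+3<p<2a+b+3=2c+4$ the weight lies in the alcove $C_1$ of Lemma~\ref{B2 Weyl}; hence $\Delta(c1)$ has exactly the two composition factors $V=L(c1)$ and $L(\mu)$ with $\mu=(c-1)1=\lambda-(\alpha_1+\alpha_2)$, and a check of the alcove conditions shows $\Delta((c-1)1)$ is irreducible, so $L(\mu)=\Delta((c-1)1)$. Consequently, at the level of $A$-composition factors,
\[
[V\downarrow A]=[\Delta(c1)\downarrow A]-[\Delta((c-1)1)\downarrow A],
\]
and since weight multiplicities of Weyl modules are independent of the characteristic, the argument of Lemma~\ref{n_d same as p=0 when Weyl module irreducible lemma} gives
\[
n_d=\dim(\Delta_K(c1)\downarrow A_K)_{r-2d}-\dim(\Delta_K((c-1)1)\downarrow A_K)_{r-2d}
\]
for all $d$. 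Thus every weight multiplicity $n_d$ of $V\downarrow A$ is computed in characteristic $0$.

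Next I would analyse the weight $p-4=r-2d^\ast$, where $d^\ast=(p+1)/2$ (note $1\le d^\ast<p-1$, so Lemma~\ref{sufficient conditions MF lemma} applies). By Steinberg's tensor product theorem the composition factor $(p+2)=(2)\otimes(1)^{(p)}$ has positive $T_A$-weights exactly $p-2,p,p+2$, so $p-4$ lies in a gap of $(p+2)$ while $p-2$ does not. Hence, once $(p+2)$ is known to occur in $V\downarrow A$, it contributes to $\mathbf S_{d^\ast}$ but not to $\mathbf S_{d^\ast-1}$, giving $s_{d^\ast}-s_{d^\ast-1}\ge 1$ (for $c=1$ this factor is simply $(p+2)=(r)$, the top factor). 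I would then establish the two numerical facts
\[
n_{d^\ast}-n_{d^\ast-1}=1\qquad\text{and}\qquad (p+2)\ \text{is a composition factor of}\ V\downarrow A,
\]
the latter via Lemma~\ref{sufficient conditions MF lemma}(i) at the weight $p+2$. Combining these with the recurrence of Lemma~\ref{multiplicity recurrence lemma}, or equivalently invoking Lemma~\ref{sufficient conditions MF lemma}(iv) at $d^\ast$, yields $m_{d^\ast}=n_{d^\ast}-n_{d^\ast-1}+s_{d^\ast}-s_{d^\ast-1}\ge 2$, so $(p-4)$ is repeated and $V\downarrow A$ is not MF.

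A cleaner bookkeeping, if preferred, uses that $A$ is a good-filtration subgroup of $G$ (the hypotheses of \cite{McHa} hold, as $p>3=\binom{3}{2}$ and $\lambda\notin$ the root lattice): both $\Delta(c1)\downarrow A$ and $\Delta((c-1)1)\downarrow A$ are then tilting, with $\Delta$-filtration multiplicities equal to the characteristic-$0$ principal $A_1$-branching numbers $N^{(\nu)}_s$ of $\Delta_K(\nu)$. Since $T(p+j)$ has composition factors $(p+j)$ and $(p-2-j)$, while $T(s)=(s)$ for $s<p$, one obtains for $0\le t\le p-2$ that the multiplicity of $(t)$ in $V\downarrow A$ equals $\big(N^{(c1)}_t+N^{(c1)}_{2p-2-t}\big)-\big(N^{((c-1)1)}_t+N^{((c-1)1)}_{2p-2-t}\big)$. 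Taking $t=p-4$ (so $2p-2-t=p+2$), the assertion $m(p-4)=2$ becomes the two identities $N^{(c1)}_{p-4}-N^{((c-1)1)}_{p-4}=1$ and $N^{(c1)}_{p+2}-N^{((c-1)1)}_{p+2}=1$, which is exactly the pair of facts in the previous paragraph.

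The main obstacle is precisely these characteristic-$0$ branching identities, i.e. controlling the principal $A_1$-decompositions of the $B_2$-Weyl modules near the weights $p-4$ and $p+2$ uniformly in $c$. I would prove them using the character identity $\Delta_K(c0)\otimes\Delta_K(01)=\Delta_K(c1)\oplus\Delta_K((c-1)1)$ (verified on dimensions), together with $\Delta_K(01)\downarrow A_K=\Delta_{A_K}(3)$ and the multiplicity-free branching of $\Delta_K(c0)$, reducing the computation to an $SL_2$ Clebsch--Gordan count; alternatively the required weight-space dimensions can be read off directly from Lemmas~\ref{B2 weights c0} and \ref{B2 weights c1} (valid here since $d^\ast=c+2\le 2c$ for $c\ge 2$), with the small case $c=1$ checked by hand.
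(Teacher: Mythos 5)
Your overall strategy coincides with the paper's: both arguments produce $(p+2)$ as a composition factor of $V\downarrow A$, observe that $p-4$ lies in a gap of $(p+2)$ while $p-2$ does not, and conclude via Lemma~\ref{sufficient conditions MF lemma}(iv) at $d=c+2$ that $(p-4)$ is a repeated factor. The setup is also sound: $\lambda=c1$ does lie in $C_1$, $\mu=(c-1)1$, $\Delta((c-1)1)$ is irreducible, and your formula $n_d=\dim(\Delta_K(c1)\downarrow A_K)_{r-2d}-\dim(\Delta_K((c-1)1)\downarrow A_K)_{r-2d}$ is correct. The genuine gap is that the two numerical facts carrying the whole proof --- $n_{c+2}-n_{c+1}=1$ and the existence of the factor $(p+2)$ via $n_{c-1}-n_{c-2}=1$ --- are only announced, and the routes you sketch for them do not all work. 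Concretely: (1) $\Delta_K(c0)\downarrow A_K$ is \emph{not} multiplicity-free for $c\geq 6$ (Table~\ref{tab:p=0 big table} allows only $b\leq 5$), so the Clebsch--Gordan reduction as stated fails for large $c$; (2) the identity $\Delta_K(c0)\otimes\Delta_K(01)=\Delta_K(c1)\oplus\Delta_K((c-1)1)$ determines the \emph{sum} of the two branchings, whereas your $n_d$ needs their \emph{difference}, so you would still need an induction on $c$ or an independent computation of $\Delta_K((c-1)1)\downarrow A_K$; (3) Lemmas~\ref{B2 weights c0} and~\ref{B2 weights c1} record only \emph{which} weights occur, not their multiplicities, so nothing can be ``read off'' from them until you know the multiplicities.

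The paper closes exactly this gap with one citation: by Zalesskii--Suprunenko, all $T$-weight spaces of $L(c1)$ with $p=2c+3$ are one-dimensional, after which Lemma~\ref{B2 weights c1} immediately gives $n_d=\lfloor\frac{2d+1}{3}\rfloor+1$ for $1\leq d\leq c$ together with $n_{c+1}$ and $n_{c+2}$, and both numerical facts follow from a short congruence check on $c\bmod 3$. Your detour through the reducible Weyl module is correct in principle but replaces this one-line input with a harder characteristic-zero branching computation that you have not carried out; if you add the multiplicity-one fact, the Weyl-module decomposition becomes unnecessary and your argument collapses onto the paper's.
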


\begin{proof}
Note that all $T$-weight spaces of $V$ are $1$-dimensional, see \cite[Prop. 2]{Za_Su_1dim}.
By Lemma~\ref{B2 weights c1}, for $1\leq d\leq c$ and $0\leq k\leq d$, we have that $\lambda-(d-k)k$ is a weight if and only if $0\leq k\leq 2d-2k+1$.
We then find that $n_d = \lfloor \frac{2d+1}{3}\rfloor+1$, for $1\leq d\leq c$. Since $\lambda-(c+1-k)k$ is a weight if and only if $1\leq k\leq 2c+2-2k+1$, we find that $n_{c+1}=\lfloor \frac{2c}{3}\rfloor+1$. Similarly $n_{c+2}=\lfloor \frac{2c+2}{3}\rfloor+1$.
There are now two cases to consider: either $c\equiv 1 \mod 3$ or $c\equiv -1 \mod 3$. In the first case $n_{c-1}>n_{c-2}$ (note that $n_{c-2}=0$ if $c=1$), and therefore by Lemma~\ref{sufficient conditions MF lemma}{(i)} we know that $(p+2)$ is a composition factor of $V\downarrow A$. We have $n_{c+2}=n_{c+1}+1$, and the $T_A$-weight $r-2(c+2) = p-4$ does not occur in the composition factor $(p+2)$, but the $T_A$-weight $p-2$ does. Therefore Lemma~\ref{sufficient conditions MF lemma}{(iv)} implies that $V\downarrow A$ is not MF. The second case, when $c\equiv -1\mod 3$, follows similarly.
\end{proof}

\begin{lemma}\label{B2 c1 not MF}
Let $\lambda = c1$ with $2c+3\neq p$ and $c\geq 1$. Then $V\downarrow A$ is not MF.
\end{lemma}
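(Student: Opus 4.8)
The plan is to split according to whether the Weyl module $\Delta(c1)$ is reducible, using Lemma~\ref{B2 Weyl}. Recall that here $r = 4c+3$ and $p\ge 5$. Checking the alcoves of Lemma~\ref{B2 Weyl} for $\lambda = c1$ (so $a=c$, $b=1$), one finds that $C_2$ and $C_3$ are empty and that case (ii) cannot occur (it would force $b=p-1=1$), so $\Delta(c1)$ is reducible precisely when $\lambda\in C_1$, i.e. $c+4\le p\le 2c+3$. The boundary $p=2c+3$ is excluded by hypothesis (it is treated in Lemma~\ref{B2 c1 not MF all 1 dim}), and since $p$ is odd the remaining reducible range is $c+4\le p\le 2c+1$, which in particular forces $c\ge 3$.

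First I would dispose of the irreducible cases. Here $V=\Delta(c1)$, and the standing assumption of this section gives $r\ge p$. Since $p\ge 5$ is prime and coprime to $4$, we have $r\equiv -1\pmod p$ if and only if $p\mid c+1$, i.e. (as $\lambda$ is $p$-restricted) if and only if $c=p-1$. Thus whenever $\Delta(c1)$ is irreducible and $c\ne p-1$, we have $r\not\equiv -1\pmod p$; noting that $\lambda=c1$ lies outside the root lattice of $B_2$ and that $p>3=\binom{3}{2}$, Lemma~\ref{CD lemma} applies directly and shows that $V\downarrow A$ is not MF.

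It then remains to treat (a) the irreducible case $c=p-1$, where Lemma~\ref{CD lemma} fails because $T(r)$ is then irreducible, and (b) the reducible case $c+4\le p\le 2c+1$. In both I would use the crude dimension bound of Lemma~\ref{dimension bound comparison}: if $V\downarrow A$ is MF then $\dim V\le B(r)\le B_K(r)$, so it suffices to show $\dim V>B_K(r)$. Using $\dim\Delta(c1)=\tfrac23(c+1)(c+2)(c+3)$ and, for $r$ odd, $B_K(r)=\tfrac{r+1}{2}\cdot\tfrac{r+3}{2}$, I proceed as follows. In case (a), $\dim V=\dim\Delta((p-1)1)=\tfrac23 p(p+1)(p+2)$ while $r=4p-1$ gives $B_K(r)=2p(2p+1)$, and $\dim V-B_K(r)=\tfrac23 p(p^2-3p-1)>0$ for all $p\ge 5$. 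In case (b), Lemma~\ref{B2 Weyl} gives $\dim V=\dim\Delta(c1)-\dim L(\mu)$ with $\mu=(p-c-4)\omega_1+\omega_2$; writing $e=p-c-4$ and using $\dim L(\mu)\le\dim\Delta(e1)=\tfrac23(e+1)(e+2)(e+3)$ together with $e\le c-3$, one obtains the lower bound $\dim V\ge\tfrac23\big[(c+1)(c+2)(c+3)-(c-2)(c-1)c\big]=6c^2+6c+4$, which exceeds $B_K(r)=(2c+2)(2c+3)=4c^2+10c+6$ for every $c\ge 3$.

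The main obstacle is case (a): since $r\equiv -1\pmod p$ the tilting-module argument of Lemma~\ref{CD lemma} is unavailable, so one must fall back on an absolute dimension estimate, the point being that the full Weyl module $\Delta((p-1)1)$ is already large enough to overshoot even the characteristic-zero bound $B_K(r)$. In the reducible case the subtlety is that one only controls $\dim V$ from below, after subtracting $\dim L(\mu)$; the worst situation is therefore the one in which the second composition factor is largest, namely $e=c-3$ (equivalently $p=2c+1$), and verifying that the resulting lower bound $6c^2+6c+4$ still beats $B_K(r)$ for every $c\ge 3$ is exactly what makes the estimate uniform.
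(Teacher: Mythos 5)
Your proposal is correct and follows essentially the same route as the paper: use Lemma~\ref{B2 Weyl} to locate the reducible range, apply Lemma~\ref{CD lemma} in the irreducible cases where $r\not\equiv-1\pmod p$, and fall back on the dimension comparison $\dim V>B_K(r)$ of Lemma~\ref{dimension bound comparison} elsewhere (your lower bound $6c^2+6c+4$ in the reducible case is exactly the paper's). The only difference is bookkeeping: the paper splits on $p>2c+4$ versus $p\le 2c+1$ and handles $c=1,2$ separately, whereas you isolate the single problematic irreducible case $c=p-1$; both case divisions are valid.
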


\begin{proof}
Note that $r=4c+3$. First assume  $c=1$, so that $r=7$; hence  $p=7$ and  the result follows from Lemma~\ref{CD lemma}. When $c=2$ we have $p\ne 7$ and by Lemma~\ref{B2 Weyl} $\Delta(\lambda)$ is irreducible. Since $p=5$ or $p=11$, the hypotheses of Lemma~\ref{CD lemma} are satisfied, and $V\downarrow A$ is not MF.

We henceforth assume that $c\geq 3$ and will show that $V\downarrow A$ is not MF.  Suppose first that $p>2c+4$, so that by Lemma~\ref{B2 Weyl} $\Delta(\lambda)$ is irreducible.  Since $p\geq 5$ and $r=4c+3$, the hypotheses of Lemma~\ref{CD lemma} are satisfied and $V\downarrow A$ is not MF.

Assume now that $p\leq 2c+4$, so that in fact $p\leq 2c+1$. Then by Lemma~\ref{B2 Weyl} and Remark~\ref{B2 Weyl remark}, either $p-3\leq c\leq p-1$ and $\Delta(\lambda)$ is irreducible and so $\dim V = \dim \Delta(\lambda)$; or $c\leq p-4$ and $\dim V\geq \dim \Delta(c1)-\dim \Delta((c-k)1)$, for $k=2c+4-p\geq 3$. In particular,  $\dim V\geq \dim \Delta(c1)-\dim \Delta((c-3)1) = 4 + 6 c + 6 c^2$. In both cases, one checks that $\dim V > B_{K}(r)$ so that $V\downarrow A$ is not MF by Lemma~\ref{dimension bound comparison}.
\end{proof}

\begin{lemma}\label{B2 c0 not MF p=2c+1}
Let $\lambda = c0$ with $c>1$ and $p=2c+1$. Then $V\downarrow A$ is MF if and only if $c=2$. 
\end{lemma}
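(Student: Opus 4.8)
The plan is to first pin down the module structure and then reduce everything to a single application of Lemma~\ref{sufficient conditions MF lemma}(iv). Since $\lambda = c\omega_1$ and $p = 2c+1$, Table~\ref{tab:r values} gives $r = 4c = 2p-2$, so $r\equiv -2 \pmod p$. A quick check of the alcove conditions shows $\lambda = c0\in C_1$, so by Lemma~\ref{B2 Weyl} the Weyl module $\Delta(c0)$ has exactly the two composition factors $L(c0)$ and $L((c-2)0)$; moreover $(c-2)0\in C_0$, so $\Delta((c-2)0)=L((c-2)0)$ is irreducible. Comparing the weight multiplicities of these two Weyl modules (which agree with the characteristic-$0$ values) I would deduce that \emph{every} weight space of $V=L(c0)$ is one-dimensional, and that $\dim V = \dim\Delta(c0)-\dim\Delta((c-2)0) = 2c^2+2c+1$. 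For $c=2$ (so $p=5$, $r=8$) the one-dimensionality lets me read off the $T_A$-weight multiplicities directly and conclude $V\downarrow A = (8)\oplus(4)$, which is MF; this is case (iii) of the Main Theorem.

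For $c>2$ I would show $V\downarrow A$ is not MF. Because all weight spaces of $V$ are one-dimensional, $n_d$ is simply the number of $T$-weights of $V$ restricting to $r-2d$, i.e. the number of integer points $(m_1,m_2)$ with $|m_1|+|m_2|\le c$ on the line $2m_1+m_2 = 2c-d$; this is an explicit piecewise-linear count. The key arithmetic input is that $p=2c+1$ is prime, which forces $c\not\equiv 1\pmod 3$ (otherwise $3\mid p$), leaving two cases. If $c\equiv 0\pmod 3$ I would take $d=2c$ (so $r-2d=0$): the lattice count gives $n_{2c}-n_{2c-1}=1$, while by its Steinberg decomposition $(r)=(2p-2)=(p-2)\otimes(1)^{(p)}$ contains the weight $2$ but not the weight $0$, so $(r)$ lies in $\mathbf S_{2c}\setminus\mathbf S_{2c-1}$ and Lemma~\ref{sufficient conditions MF lemma}(iv) yields $m_{2c}\ge 2$. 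If $c\equiv 2\pmod 3$ (so $c\ge 5$) I would instead take $d=2c-2$ (so $r-2d=4$): here $n_{2c-2}-n_{2c-3}=1$, and the composition factor $(r-4)=(2p-6)=(p-6)\otimes(1)^{(p)}$ has highest weight $2p-6>4$ and contains the weight $6$ but not the weight $4$, so it lies in $\mathbf S_{2c-2}\setminus\mathbf S_{2c-3}$ and Lemma~\ref{sufficient conditions MF lemma}(iv) again forces a repeated composition factor.

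The main obstacle is that the crude dimension estimates are useless here: one computes $B(r)-\dim(r-2) = 3c^2+5 > 2c^2+2c+1 = \dim V$ for \emph{every} $c$, so Lemma~\ref{dimension bound for omega_i} never excludes MF and the entire argument must run through composition-factor multiplicities. The real work is therefore twofold: establishing the one-dimensionality of the weight spaces (which turns $n_d$ into a clean lattice count and makes the differences $n_d-n_{d-1}$ computable), and correctly locating the gap that triggers Lemma~\ref{sufficient conditions MF lemma}(iv). The delicate feature — and the reason $c=2$ is genuinely exceptional — is that the repeated factor is a \emph{reflected} low-weight constituent: the mechanism requires $r-2d\ge p$, so that the relevant module $(r-2d)$ is a nontrivial tensor $(\,\cdot\,)\otimes(1)^{(p)}$ possessing an actual gap, and this fails exactly when $r-4<p$, i.e. precisely at $c=2$.
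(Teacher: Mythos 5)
Your proof is correct, and the skeleton is the same as the paper's: establish that all $T$-weight spaces of $V$ are one-dimensional, compute the $T_A$-weight multiplicities $n_d$ by an explicit lattice count, split on $c \bmod 3$ (both arguments use that $p=2c+1\geq 7$ is prime to rule out $c\equiv 1 \bmod 3$), and conclude via Lemma~\ref{sufficient conditions MF lemma}(iv) by exhibiting a composition factor with a $p$-adic gap at the relevant spot. The differences are in the details and are worth noting. First, the paper gets the one-dimensionality of weight spaces by citing Zalesskii--Suprunenko, whereas you derive it from the two-step Weyl filtration $\mathrm{ch}\,L(c0)=\mathrm{ch}\,\Delta(c0)-\mathrm{ch}\,\Delta((c-2)0)$; this is fine but does require knowing the weight multiplicities of $\Delta(c\omega_1)$ (namely $\lfloor(c-s)/2\rfloor+1$ on the shell $|m_1|+|m_2|=s$), a step you should make explicit. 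Second, and more substantively, you locate the repeated composition factor at the bottom of the weight string (at $T_A$-weight $0$ when $c\equiv 0$, using the gap of $(r)=(p-2)\otimes(1)^{(p)}$ at $0$; at $T_A$-weight $4$ when $c\equiv 2$, using the gap of $(r-4)=(p-6)\otimes(1)^{(p)}$ at $4$), whereas the paper works near the middle, at $T_A$-weight $p-7=r-2(c+3)$, after first producing the composition factors $(p+5)$ and one of $(p+1),(p+3)$. Your choice of $d$ buys a slightly leaner argument: the only composition factors you need to identify in advance are $(r)$ (automatic) and $(r-4)$ (immediate from $n_2-n_1=1$ and Lemma~\ref{sufficient conditions MF lemma}(i)), and the gap computations for $(2p-2)$ and $(2p-6)$ are one-liners. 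I verified your lattice counts $n_{2c}-n_{2c-1}=1$ and $n_{2c-2}-n_{2c-3}=1$ in the respective congruence classes, the admissibility of $d=2c$ and $d=2c-2$ in the hypothesis $d<\min\{\lfloor\tfrac{r+2}{2}\rfloor,p\}=2c+1$, and your observation that $B(r)-\dim(r-2)=3c^2+5>\dim V$, which correctly explains why the dimension bounds of Lemmas~\ref{dimension bound comparison} and \ref{dimension bound for omega_i} cannot be used here.
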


\begin{proof}
    First note that $L(20)\downarrow A = (8)+(4)$, since here $p=5$. Now assume that $c\geq 3$, so that $p\geq 7$. By \cite[Prop. 2]{Za_Su_1dim}, all $T$-weight spaces of $V$ are $1$-dimensional. Let $1\leq l\leq c$. Then $n_l = \lfloor \frac{2l}{3}\rfloor+1$, since by Lemma~\ref{B2 weights c1} we have that $\lambda-(l-k)k$ is a weight if and only if $0\leq k\leq 2l-2k.$ Similarly $n_{c+1}=\lfloor \frac{2c+2}{3}\rfloor$, $n_{c+2}=\lfloor \frac{2c+1}{3}\rfloor$, $n_{c+3}=n_c$. Therefore by Lemma~\ref{sufficient conditions MF lemma}{(i)} either both $(p+1)$ and $(p+5)$ or both $(p+3)$ and $(p+5)$ are composition factors of $V\downarrow A$, and by Lemma~\ref{sufficient conditions MF lemma}{(iii)}, $(p-7)$ must occur as a repeated composition factor. 
\end{proof}

\begin{lemma}\label{B2 c0 not MF}
Let $\lambda = c0$ with $c>1$ and $p\neq 2c+1$. Then $V\downarrow A$ is not MF.
\end{lemma}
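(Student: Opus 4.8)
Throughout, recall from Table~\ref{tab:r values} that $r=4c$, and note that since $\omega_1=\alpha_1+\alpha_2$ the weight $\lambda=c\omega_1=c\alpha_1+c\alpha_2$ lies in the root lattice of $G$; in particular the extra hypothesis on $p$ appearing in Lemma~\ref{CD lemma} is vacuous for these weights. The plan is to first pin down the composition factors of $\Delta(c0)$ via Lemma~\ref{B2 Weyl}, and then to treat separately the cases where $\Delta(c0)$ is irreducible and where it is reducible. For $\lambda=c0$ one checks directly from the defining inequalities that the alcoves $C_2$ and $C_3$ contain no such weight, since the relevant intervals $(c+\tfrac32,c+2)$ and $(c+1,c+\tfrac32)$ contain no integer. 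Hence by Lemma~\ref{B2 Weyl}, $\Delta(c0)$ is reducible if and only if $\lambda\in C_1$, i.e.\ $c+2<p<2c+3$, in which case its unique second composition factor is $L((p-c-3)\omega_1)$. Setting $m=2c+3-p$, Remark~\ref{B2 Weyl remark} gives $(p-c-3)\omega_1=\lambda-m\omega_1=(c-m)\omega_1$; since $p$ and $2c+3$ are odd, $m$ is even, and $p\neq 2c+1$ (the case excluded by hypothesis, handled in Lemma~\ref{B2 c0 not MF p=2c+1}) forces $m\neq 2$, so $m\geq 4$ and $0\leq c-m\leq c-4$. A second application of Lemma~\ref{B2 Weyl} shows $\Delta((c-m)0)$ is irreducible, whence $\dim V=\dim\Delta(c0)-\dim\Delta((c-m)0)$.

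In the irreducible case we have $V=\Delta(c0)$, and $p$-restrictedness together with $p\leq r$ forces either $p\in\{c+1,c+2\}$ or $2c+3\leq p\leq 4c$. I would check in each range that $r=4c\not\equiv-1\pmod p$: when $p\geq 2c+3$ the integer $4c+1$ lies strictly between $p$ and $2p$; when $p=c+1$ one has $4c\equiv-1$ only if $p\mid 3$, and when $p=c+2$ only if $p\mid 7$, i.e.\ only for the single pair $(c,p)=(5,7)$. For every other $p$, Lemma~\ref{CD lemma} applies directly to conclude that $V\downarrow A$ is not MF. For the sporadic pair $(c,p)=(5,7)$, where $T(r)$ happens to be irreducible so Lemma~\ref{CD lemma} is unavailable, I would instead compute $\dim V=\dim\Delta(50)=91$ and $B(20)=88$, so that $\dim V>B(r)$ and Lemma~\ref{dimension bound comparison} gives the result.

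In the reducible case I would use the dimension bound of Lemma~\ref{dimension bound comparison}, proving $\dim V>B(r)$; note that comparing instead with $B_K(r)=(2c+1)^2$ is \emph{not} sufficient here (already the extremal case $m=4$ gives $\dim V<B_K(r)$), which is exactly why the finer quantity $B(r)$ is required. By the Weyl dimension formula $\dim V=\dim\Delta(c0)-\dim\Delta((c-m)0)$ is an explicit polynomial in $c$ and $m$. To evaluate $B(r)$ with $r=4c$ and $p=2c+3-m$, I would split according to the position of $r$ relative to $3p$: since $p>c$ one always has $r<4p$, while $r<3p$ (respectively $r\geq 3p$) holds precisely when $3m<2c+9$ (respectively $3m\geq 2c+9$). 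In each regime $B(r)$ is computed layer by layer from the base-$p$ expansion, using $\dim L_A(s)=\prod_i(a_i+1)$ over even $s\leq r$; for example when $r<3p$ one obtains $B(r)=\tfrac14(3p^2+2p-1)+3(m-2)^2$. Substituting $p=2c+3-m$ reduces the claim to a polynomial inequality in $c$ and $m$ for all even $m$ with $4\leq m\leq c$; at the extremal value $m=4$ one finds $\dim V-B(r)=c^2-2c-6>0$, and the margin only grows with $m$.

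The main obstacle is precisely this reducible-case estimate. Because $B_K(r)$ is too weak, one must compute $B(r)$ exactly through the base-$p$ expansion and carry out a two-case polynomial comparison, with the principal care going into correctly splitting at the boundary $r=3p$ and verifying that $\dim V-B(r)>0$ throughout the admissible range of $(c,m)$. The remaining steps — the reducibility analysis via Lemma~\ref{B2 Weyl} and the application of Lemma~\ref{CD lemma} in the irreducible case — are routine once the single sporadic pair $(c,p)=(5,7)$ is isolated and dispatched by the dimension bound.
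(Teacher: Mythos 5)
Your overall strategy coincides with the paper's: use Lemma~\ref{B2 Weyl} to determine when $\Delta(c0)$ is reducible, dispatch the irreducible case with Lemma~\ref{CD lemma} (isolating the single exception $(c,p)=(5,7)$, which both you and the paper settle by $\dim V=91>88=B(20)$), and handle the reducible case by the dimension bound of Lemma~\ref{dimension bound comparison}. Your bookkeeping is correct up to and including the case $m=4$: the identification of the second composition factor $L((c-m)\omega_1)$ with $m=2c+3-p$ even and $m\geq 4$, the verification that $4c\not\equiv -1\pmod p$ whenever $p\in\{c+1,c+2\}$ with $(c,p)\neq(5,7)$ or $p\geq 2c+3$ (this part is in fact tidier than the paper's appeal to L\"ubeck's tables), the formula $B(r)=\tfrac14(3p^2+2p-1)+3(m-2)^2$ for $2p\leq r<3p$, and the value $\dim V-B(r)=c^2-2c-6$ at $m=4$ all check out and agree with the paper's computations.

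The gap is in how you finish the reducible case for $m\geq 6$. The assertion that the margin ``only grows with $m$'' is false: writing $g(c,m)=\dim V-B(r)$ in the regime $r<3p$, one finds $g(c,6)=3c^2-10c-22$ and $g(c,8)=5c^2-26c-32$, so for instance $g(8,8)=80<90=g(8,6)$. The quantity does stay positive, but monotonicity in $m$ is not the reason, so this step needs an actual argument (for example, treating $g(c,m)$ as a quadratic in $c$ with leading coefficient $m-3>0$ and locating its larger root below the admissible range $c\geq m$, $c>(3m-9)/2$). Moreover the regime $r\geq 3p$, which does occur (e.g.\ $(c,m)=(10,10)$ gives $p=13$, $r=40\geq 39=3p$), is only announced and never carried out; there $B(r)$ acquires a fourth layer and the polynomial changes. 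The paper sidesteps both issues by using the cruder bound $B_K(r)=(2c+1)^2\geq B(r)$ for $k\geq 6$, which already suffices except for $k\in\{6,8,10,12\}$ with $c\leq 9$, leaving four sporadic pairs to be checked individually against $B(r)$, and reserving the exact computation of $B(r)$ for $k=4$ alone. Either your route (once completed) or the paper's works; as written, yours has an unproved step exactly where you yourself located the main difficulty.
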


\begin{proof}
    First assume  that $p>2c+3$, so that Lemma~\ref{B2 Weyl} implies that $\Delta(c0)$ is irreducible. Since $r=4c$ and  by hypothesis $r\geq p$ and $p\geq 2c+4$, we have that $p$ does not divide $r+1$ and Lemma ~\ref{CD lemma} gives the result. 
    
    So we now assume that $p\leq 2c+3$. If $\Delta(\lambda)$ is irreducible, then one checks that $\dim V = \dim \Delta(\lambda)$ exceeds $B_{K}(r)$ for all $c\geq 9$. For $c\leq 8$ we combine the information from the tables  in \cite{lubeck} with the criteria of Lemma~\ref{CD lemma} to reduce to the case $c=5$ and $p=7$. But then we have $\dim V = 91$ and $B(r) = 88$, so we conclude by applying Lemma~\ref{dimension bound comparison}. 
    
    Now assume that $p\leq 2c+3$ and $\Delta(\lambda)$ is reducible. Then by Lemma~\ref{B2 Weyl}, $c+2<p$ and $2c+3 > p$,
    and so in particular, $4c = r >p$. By Remark~\ref{B2 Weyl remark}, $\dim V = \dim \Delta(c0) - \dim \Delta((c-k)0)$ for $k = 2c+3-p$, so $k$ is even. Assume that $c\geq k\geq 6$. We find that \[\dim V-B_{K}(r) = c^2 (-4 + k) - c (4 - 3 k + k^2) + 1/6 (-6 + 13 k - 9 k^2 + 2 k^3).\] Treating this as a quadratic polynomial in $c$, we find that $\dim V-B_{K}(r)$ is certainly strictly positive if $ -44 + 47 k - 16 k^2 + k^3>0$. Therefore if $k>12$, by Lemma~\ref{dimension bound comparison} we have that $V\downarrow A$ is not MF. If $k=6,8,10$ or $12$, we have $\dim V-B_{K}(r)>0$ when $c\geq 10$. Therefore the only possibilities for $(c,k)$, with $k\in\{6,8,10,12\}$,  are $(8,6)$ with $p=13$, $(7,6)$ with $p=11$, $(8,8)$ with $p=11$ or $(9,6)$ with $p=13$. In each of these cases, we find that $\dim V-B(r)>0$, concluding by Lemma~\ref{dimension bound comparison}.
    
    Note that $k \ne 2$ as $p\ne 2c+1$.  So finally we consider the case $k=4$ and $p=2c-1$. Now $r = 4c = 2p+2$ and a direct computation shows that $\dim V = 4c^2-4c+6$ while $B(r) = 3c^2-2c+12$. Now we have $c\geq 4$ (since $k=4$) and hence $\dim V$ exceeds $B(r)$, showing as before that $V\downarrow A$ is not MF. 
\end{proof}

    \begin{lemma}\label{B2 0c not MF}
Let $\lambda = 0c$ with $c>1$ and $p\leq r$. Then $V\downarrow A$ is not MF.
\end{lemma}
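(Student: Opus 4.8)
The plan is to first pin down the module structure, then dispatch a generic range with a tilting-module argument, and finally isolate a single congruence class — and within it one small prime — that needs a direct weight count. By Table~\ref{tab:r values} we have $r=3c$, and since $p\geq h=4$ is prime, $p\geq 5$. I would start by showing that $\Delta(0c)$ is \emph{always} irreducible: substituting $a=0$, $b=c$ into the alcove inequalities of Lemma~\ref{B2 Weyl}, each of $C_1,C_2,C_3$ collapses to an empty interval (for instance $C_1$ would require $c+2<p<c+3$, and $C_3$ would require $c+1<p<\tfrac{c+3}{2}$, forcing $c<1$), and the exceptional form (ii) would force $p<2$. Hence $V=\Delta(0c)$ and $\dim V=\tfrac16(c+1)(c+2)(c+3)$.

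Next I would clear the generic case with Lemma~\ref{CD lemma}. Its hypotheses hold here: $r\geq p$ by assumption, and the side condition $p>\binom{3}{2}=3$ holds since $p\geq 5$. Therefore, whenever $r=3c\not\equiv -1\pmod p$, the module $V\downarrow A$ is not MF. It then remains to treat $3c\equiv -1\pmod p$, i.e. $p\mid 3c+1$. The $p$-restriction $c\leq p-1$ gives $3c+1\leq 3p-2<3p$, while $p\leq r=3c$ gives $3c+1>p$; the only multiple of $p$ in this range is $2p$, so necessarily $3c+1=2p$. Thus $r=2p-1$, $c=\tfrac{2p-1}{3}$, and in particular $p\equiv 2\pmod 3$.

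For this residual case I would compare dimensions. Since $r=2p-1$ is odd, one computes directly $B(2p-1)=\tfrac{3p^2+4p+1}{4}$, while $\dim V=\tfrac16(c+1)(c+2)(c+3)$ with $c=\tfrac{2p-1}{3}$. Treating $\dim V-B(2p-1)$ as a polynomial in $p$, a short comparison shows it is positive for all $p\geq 11$, so Lemma~\ref{dimension bound comparison} rules out MF there. The only prime $p\equiv 2\pmod 3$ with $5\leq p<11$ is $p=5$, leaving the single case $\lambda=03$, $r=9$.

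The genuine obstacle is exactly this last case, where $\dim V=20\leq B(9)=24$ makes the dimension bound useless and one must inspect the $A$-module directly. Since $V=\Delta(03)$, Lemma~\ref{n_d same as p=0 when Weyl module irreducible lemma} lets me read off the $T_A$-weight multiplicities from characteristic zero, namely $n_0=n_1=1$, $n_2=2$, $n_3=3$. From $n_2-n_1=1$, Lemma~\ref{sufficient conditions MF lemma}(i) shows that $(5)$ is a composition factor; but in characteristic $5$ one has $(5)\cong (0)\otimes (1)^{(5)}$, whose only weights are $\pm 5$, so $r-6=3$ lies in a gap of $(5)$ and $(5)\in\mathbf{S}_3\setminus\mathbf{S}_2$. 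Combined with $n_3-n_2=1$, Lemma~\ref{sufficient conditions MF lemma}(iv) then forces $m_3\geq 2$, so $(3)$ is a repeated composition factor and $V\downarrow A$ is not MF. The key point — and the reason the characteristic-zero criterion breaks here — is that the irreducible $A$-module $(5)$ has strictly fewer weights than its characteristic-zero analogue, opening precisely the gap that produces the repetition.
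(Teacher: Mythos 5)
Your proof is correct, and its skeleton (irreducibility of $\Delta(0c)$, Lemma~\ref{CD lemma} for the generic residue classes, then dimension bounds) matches the paper's; the difference lies entirely in how the exceptional cases are isolated and finished. The paper splits by size ($c\geq 9$ killed by $\dim V>B_K(r)$, then $2\leq c\leq 8$ checked against Lemma~\ref{CD lemma}, leaving $(c,p)=(3,5)$ and $(7,11)$, both dispatched in one line by Lemma~\ref{dimension bound for omega_i}), whereas you solve the congruence $3c\equiv -1\bmod p$ exactly, showing $3c+1=2p$, and then a single polynomial inequality in $p$ handles everything down to $p=11$ — which is cleaner and avoids the finite case check. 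For the last case $(3,5)$ you assert the dimension bound is useless, but that is only true of the plain bound $B(r)$: since $\lambda=c\omega_2$ and $r=9\not\equiv 0\bmod 5$, Lemma~\ref{dimension bound for omega_i} gives $\dim V\leq B(9)-\dim(7)=24-6=18<20$, which is exactly how the paper concludes. Your alternative — reading $n_0,\dots,n_3$ from the characteristic-zero restriction $(9)+(5)+(3)$ via Lemma~\ref{n_d same as p=0 when Weyl module irreducible lemma}, noting that $(5)\cong(0)\otimes(1)^{(5)}$ has a gap at $3$ so $(5)\in\mathbf{S}_3\setminus\mathbf{S}_2$, and invoking Lemma~\ref{sufficient conditions MF lemma}(iv) — is more work but entirely valid, and has the virtue of exhibiting the repeated composition factor $(3)$ explicitly rather than merely certifying its existence by a dimension count.
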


\begin{proof} We have $V = \Delta(0c)$ (see \cite[Table 1]{seitz_mem_class}). One checks that $\dim V = \frac{1}{6}(1 + c) (2 + c) (3 + c)> B_{K}(r)$ for $c\geq 9$; by Lemma~\ref{dimension bound comparison}, the module $V\downarrow A$ is not MF in these cases. For $2\leq c\leq 8$, we may apply Lemma~\ref{CD lemma} to conclude that $V\downarrow A$ is not MF except for the pairs $(c,p) = (3,5)$ and $(c,p) = (7,11)$. Here we apply Lemma~\ref{dimension bound for omega_i} to again conclude that $V\downarrow A$ is not MF.
\end{proof}

\renewcommand*{\proofname}{Proof of Proposition~$\ref{B2 classification prop}$.}
\begin{proof}
Suppose that $V\downarrow A$ is MF, with $\lambda = ab$ and $r\geq p$. By Proposition~\ref{first reduction lemma} and Lemma~\ref{1 dimensional weight space lemma}, if $a,b\geq 2$ we must have $ 2a+b+2\equiv 0\mod p$. Therefore Lemma~\ref{B2 c1c2 reduction} implies that either $a\leq 1$ or $b\leq 1$ and  Lemma~\ref{B2 c1 not MF all 1 dim} and Lemma~\ref{B2 c1 not MF} show that $\lambda \ne 11$. By Lemma~\ref{B2 1b general} we conclude that if $a=1$, then $b=0$ contrary to our assumption that $r\geq p$ and  another application of Lemma~\ref{B2 c1 not MF all 1 dim} and Lemma~\ref{B2 c1 not MF} shows that if $b = 1$ then $a=0$, again contrary to our assumption on $p$ and $r$. We therefore reduce to the case $a=0$ or $b=0$, the first being ruled out by Lemma~\ref{B2 0c not MF}. If $b=0$, by Lemma~\ref{B2 c0 not MF p=2c+1} and Lemma~\ref{B2 c0 not MF}, and the above remarks,  we conclude that $a=2$ with $ p=5$, in which case $V\downarrow A$ is MF by Lemma~\ref{B2 c0 not MF p=2c+1}.
\end{proof}
\renewcommand*{\proofname}{Proof.}

\subsection{The case where $G$ is $G_2$}
We now move on to the final case where $G$ has rank $2$, i.e. $G=G_2$. Our main result, to be proven in a sequence of lemmas,  is the following proposition.

\begin{prop}\label{G2 classification main prop} Let $G = G_2$ and $\lambda = ab$ with $p\leq r$. Then $V\downarrow A$ is not MF.
\end{prop}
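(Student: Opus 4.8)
The plan is to follow the same overall strategy as in the $A_2$ and $B_2$ cases, reducing to a small list of candidate highest weights via the general reduction machinery of Section~\ref{notation section} and then eliminating each candidate by a dimension or multiplicity argument. First I would invoke Proposition~\ref{first reduction lemma} and Lemma~\ref{1 dimensional weight space lemma}: since $G_2$ has rank $2$, these tell us that if $V\downarrow A$ is MF, then either $\lambda = c\omega_i$ is a multiple of a single fundamental weight, or $\lambda = a\omega_1+b\omega_2$ with both $a,b\geq 1$ and the weight space $V_{\lambda-12}$ of dimension $1$, which by Lemma~\ref{1 dimensional weight space lemma}(iii) (the long root $\alpha_1$ satisfying $(\alpha_1,\alpha_1)=3(\alpha_2,\alpha_2)$) forces the congruence $3a+b+3\equiv 0 \bmod p$. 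This splits the analysis into the single-node cases $\lambda = c\omega_1$, $\lambda = c\omega_2$, and the genuinely mixed case with the congruence condition.

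For the single-node cases I would combine Lemma~\ref{CD lemma} with the dimension bounds of Lemma~\ref{dimension bound comparison} and Lemma~\ref{dimension bound for omega_i}. Since $G_2$ has Coxeter number $h=6$, we have $p\geq 7$ throughout, and the value $r=6c_1+10c_2$ from Table~\ref{tab:r values} grows quickly relative to $B_K(r)\approx \tfrac{r^2}{4}$. When $\Delta(\lambda)$ is irreducible (which I would determine using the Jantzen $p$-sum formula, as in Lemma~\ref{B2 Weyl}), Lemma~\ref{CD lemma} disposes of most cases directly provided $r\not\equiv -1\bmod p$, and the residual congruence cases $r\equiv -1$ together with the small values of $c$ left unresolved can be handled by checking $\dim V$ against $B(r)$ using the explicit irreducible dimensions from \cite{lubeck}. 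When $\Delta(\lambda)$ is reducible I would bound $\dim L(\lambda)$ below by $\dim\Delta(\lambda)-\dim\Delta(\mu)$ for the second highest weight $\mu$ and again compare with $B_K(r)$ or $B(r)$, exactly as in Lemma~\ref{B2 c0 not MF}.

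For the mixed case $\lambda = a\omega_1+b\omega_2$ with $a,b\geq 1$ and $3a+b+3\equiv 0\bmod p$, I would first use this congruence together with $p\geq 7$ and $p$-restrictedness ($a,b\leq p-1$) to pin down the possible values of $3a+b+3$ as $p$ or $2p$, giving a one-parameter family in each case. I would then produce a lower bound for $\dim L(\lambda)$ from the Weyl module dimension formula minus the dimension of the single additional composition factor, and an explicit upper bound $B(r)$ obtained by summing the ranges of $L_A(r-2k)$-dimensions as in the displayed computations of Lemma~\ref{B2 c1c2 reduction}, finally showing $\dim V > B(r)$ for all admissible parameters and concluding by Lemma~\ref{dimension bound comparison}. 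The main obstacle I anticipate is the bookkeeping in the mixed case: computing the weight multiplicities $n_d$ of $V\downarrow A$ explicitly is harder for $G_2$ than for $A_2$ or $B_2$ because weight spaces need not be one-dimensional, so the clean counting arguments of Lemma~\ref{A2 weights patterns} are unavailable and I would instead lean on the dimension bounds, which requires the lower bound on $\dim L(\lambda)$ to be sharp enough to beat $B(r)$ even in the smallest cases. Those few smallest cases (small $p$ such as $p=7$) may need to be verified by direct computation using the weight-multiplicity data of \cite{lubeckOnline}, as was done for the exceptional small cases in the $B_2$ analysis.
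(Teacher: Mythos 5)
Your opening reduction via Proposition~\ref{first reduction lemma} and Lemma~\ref{1 dimensional weight space lemma} matches the paper's starting point, but there are two genuine gaps. First, Proposition~\ref{first reduction lemma}(v) only yields $\dim V_{\lambda-11}=1$ --- and hence the congruence, which in the paper's labelling ($\alpha_1$ short, consistent with $r=6a+10b$) reads $a+3b+3\equiv 0\bmod p$ rather than $3a+b+3$ --- when \emph{both} $a\geq 2$ and $b\geq 2$. The mixed weights with $a=1$ or $b=1$ escape this reduction entirely and must be analysed without any congruence; the paper devotes Cases 5--7 of its proof to precisely these, splitting further according to whether $\mu=\lambda-11$ affords a composition factor of $\Delta(\lambda)$. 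Your plan as written silently assumes the congruence holds throughout the mixed case.

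Second, and more seriously, the dimension-bound strategy you propose for the mixed case does not go through. The lower bound $\dim L(\lambda)\geq\dim\Delta(\lambda)-\dim\Delta(\mu)$ can be negative for $G_2$: with $\lambda=\omega_1+b\omega_2$ one has $\mu=2\omega_1+(b-1)\omega_2$ and, e.g., $\dim\Delta(2\omega_1+5\omega_2)=10206>9660=\dim\Delta(\omega_1+6\omega_2)$, a configuration that genuinely occurs (at $p=11$, where $a+3b+3=22$). Even where the bound is positive it is frequently too weak: for $\lambda=\omega_1+\omega_2$ and $p=7$ one has $\dim V=38<B(16)=52$, so Lemma~\ref{dimension bound comparison} is inconclusive and the paper must instead exhibit a repeated composition factor from the multiplicity pattern $n_0,\dots,n_3=1,2,1,2$ via Lemma~\ref{sufficient conditions MF lemma}. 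To run such weight-multiplicity arguments in the mixed (and the $a=0$) cases the paper needs tools beyond the Section~\ref{notation section} machinery: Seitz's criterion (Lemma~\ref{compfactor_Weyl}) restricting which subdominant weights can afford composition factors of $\Delta(\lambda)$, together with explicit Lie-algebra computations of $\dim V_{\lambda-21}$ and $\dim V_{\lambda-12}$ (Lemmas~\ref{lemma G2 21 dimension}--\ref{lemma G2 dimension 12}); your proposal has no substitute for these, and the clean $A_2$/$B_2$-style counting is indeed unavailable, as you anticipate. Finally, for $\lambda=a\omega_1$ the paper avoids the problem of controlling $\dim L(a\omega_1)$ altogether by embedding $G_2<B_3$ via $L(\omega_1)$ and invoking Proposition~\ref{rank>=3 single support main proposition}; a direct attack there would face the same difficulty of bounding $\dim L(\lambda)$ from below when the Weyl module is reducible.
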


Set $\lambda = ab$, with $0\leq a,b<p$, where we take $\alpha_1$ to be short, $(\alpha_2,\alpha_2) = 1$. (This choice of root lengths is required for using the result \cite[(6.2)]{seitz_mem_class} stated below in Lemma~\ref{compfactor_Weyl}.) Here we have $r = 6a+10b$, and $p\geq 7$ since $p\geq h$.  In addition we set $\mu = \lambda-11$ throughout the entire section and note that $\mu = (a+1)\omega_1+(b-1)\omega_2$. We require an additional notation: for $\alpha\in\Phi$, we let $e_\alpha, f_\alpha$ denote the $T$-weight vectors in the Lie algebra of $G$ associated with the root $\alpha$, respectively $-\alpha$. 

We will use a result from \cite{seitz_mem_class}, which we state here only for the group $G_2$:

\begin{lem}\textup{\cite[(6.2)]{seitz_mem_class}}\label{compfactor_Weyl}  Assume $p>3$. Let $\nu$ be a dominant weight such that $L(\nu)$ affords a composition factor of $\Delta(\lambda)$. 
Then $$2(\lambda+\rho,\lambda-\nu)-(\lambda-\nu,\lambda-\nu)\in\frac{p}{6}\mathbb Z.$$
\end{lem}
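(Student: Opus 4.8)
The plan is to show that for every $p$-restricted $\lambda = ab$ with $r = 6a+10b\geq p\geq 7$ the restriction $L(\lambda)\downarrow A$ has a repeated composition factor, proceeding by a reduction to a short list of families followed by a case analysis in three regimes. First I would apply Proposition~\ref{first reduction lemma}(v) together with Lemma~\ref{1 dimensional weight space lemma}(iii): since $\alpha_1,\alpha_2$ are adjacent with $(\alpha_2,\alpha_2)=3(\alpha_1,\alpha_1)$, if $V\downarrow A$ is MF and $a,b\geq 2$ then $\dim V_\mu = 1$, which forces $a+3b+3\equiv 0\mod p$. This confines the potentially-MF weights to the families $\{a0 : a\geq 2\}$, $\{0b : b\geq 1\}$, $\{1b : b\geq 1\}$, $\{a1 : a\geq 2\}$, and the congruence family $\{ab : a,b\geq 2,\ a+3b+3\equiv 0\mod p\}$.

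The workhorse for ``large'' weights is the dimension bound of Lemma~\ref{dimension bound comparison}, namely $\dim V\leq B(r)\leq B_{K}(r)$, where $B_{K}(r)$ is of size roughly $(3a+5b)^2$. Against this I would weigh the Weyl dimension
\[\dim\Delta(ab) = \tfrac{1}{120}(a+1)(b+1)(a+b+2)(a+2b+3)(a+3b+4)(2a+3b+5),\]
which is a polynomial of degree six in $(a,b)$ and hence eventually dwarfs $B_{K}(r)$. To turn this into a bound on $\dim V=\dim L(\lambda)$ when $\Delta(\lambda)$ is reducible, I would use Lemma~\ref{compfactor_Weyl}: for $p\geq 7$ the congruence $2(\lambda+\rho,\lambda-\nu)-(\lambda-\nu,\lambda-\nu)\in\frac{p}{6}\mathbb{Z}$ pins the highest weights $\nu$ of the remaining composition factors of $\Delta(\lambda)$ to a finite, explicitly listable set with $\lambda-\nu$ in the root lattice of bounded height, giving $\dim V\geq \dim\Delta(\lambda)-\sum_\nu\dim\Delta(\nu)$ with each $\nu$ strictly smaller than $\lambda$. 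This suffices to beat $B_{K}(r)$ once $a+b$ exceeds an explicit small threshold, closing all but finitely many weights in each family.

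For the remaining ``medium'' weights I would invoke Lemma~\ref{CD lemma}: whenever $\Delta(\lambda)$ is irreducible, $r\geq p$ and $r\not\equiv -1\mod p$ (the extra $B_\ell,D_\ell$ hypothesis being vacuous for $G_2$), the module $V\downarrow A$ is a tilting module with the reducible indecomposable summand $T(r)$, hence not MF. The truly residual cases are then the finitely many $(\lambda,p)$ with $p\leq r$ in which either $\Delta(\lambda)$ is reducible or $r\equiv -1\mod p$; these include exactly the characteristic-zero multiplicity-free weights $01,\,11,\,20,\,02,\,30$ paired with the primes $7\leq p\leq r$. Here I would turn to explicit weight bookkeeping: for the single-node families $a0,\,0b$ I would exhibit a $d$ with $n_d\geq d+1$ and apply Lemma~\ref{sufficient conditions MF lemma}(iii) (sharpened by Lemma~\ref{dimension bound for omega_i} when $r\not\equiv 0\mod p$), while for the mixed families I would compute consecutive multiplicities $n_{d-1},n_d$ and use parts (i),(ii),(iv), reading the $n_d$ from the characteristic-zero values via Lemma~\ref{n_d same as p=0 when Weyl module irreducible lemma} when $\Delta(\lambda)$ is irreducible and from L\"ubeck's tables otherwise.

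The main obstacle is precisely the reducible-Weyl-module cases, and above all the congruence family $\{ab : a,b\geq 2,\ a+3b+3\equiv 0\mod p\}$: here Lemma~\ref{CD lemma} is unavailable, so I must determine $\dim L(\lambda)$ --- or a sharp enough lower bound --- by identifying the full set of composition factors of $\Delta(\lambda)$ through Lemma~\ref{compfactor_Weyl}, and in the genuinely borderline instances exhibit a repeated composition factor by a direct count of the $n_d$ feeding into Lemma~\ref{sufficient conditions MF lemma}(iv). I expect the conceptual content to be light and the difficulty to lie in the bookkeeping needed to make the dimension comparison and the weight counts conclusive across these finitely many exceptional pairs.
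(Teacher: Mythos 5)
Your proposal does not address the statement you were asked to prove. Lemma~\ref{compfactor_Weyl} is a congruence condition --- quoted in the paper without proof, as a citation of \cite[(6.2)]{seitz_mem_class} --- that restricts which dominant weights $\nu$ can afford composition factors of the Weyl module $\Delta(\lambda)$ for $G_2$. What you have sketched is instead a proof strategy for Proposition~\ref{G2 classification main prop}, the statement that $L(\lambda)\downarrow A$ is never MF for $G_2$ when $p\leq r$; indeed your second and fourth paragraphs explicitly \emph{invoke} Lemma~\ref{compfactor_Weyl} to pin down the composition factors of $\Delta(\lambda)$. As a proof of the lemma itself this is circular, and none of your ingredients (the reduction via Proposition~\ref{first reduction lemma}, the dimension bounds $B(r)$ and $B_K(r)$ of Lemma~\ref{dimension bound comparison}, the tilting argument of Lemma~\ref{CD lemma}, the weight counts feeding Lemma~\ref{sufficient conditions MF lemma}) bears on the asserted congruence $2(\lambda+\rho,\lambda-\nu)-(\lambda-\nu,\lambda-\nu)\in\frac{p}{6}\mathbb{Z}$.

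The missing idea is a central-character (Casimir/linkage) argument. Observe the identity $2(\lambda+\rho,\lambda-\nu)-(\lambda-\nu,\lambda-\nu)=(\lambda+\rho,\lambda+\rho)-(\nu+\rho,\nu+\rho)$, which is the difference of the scalars by which the Casimir element acts on $\Delta(\lambda)$ and on $\Delta(\nu)$. Equivalently, by the strong linkage principle, if $L(\nu)$ is a composition factor of $\Delta(\lambda)$ then $\nu+\rho$ is obtained from $\lambda+\rho$ by a chain of affine reflections $\eta\mapsto s_\beta(\eta)+mp\beta$ with $\beta\in\Phi^+$ and $m\in\mathbb{Z}$; each such step changes $(\eta,\eta)$ by $2mp\,(s_\beta(\eta),\beta)+m^2p^2(\beta,\beta)$, which lies in $p\mathbb{Z}$ when the form is normalized so that the short roots of $G_2$ have squared length $2$. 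In the paper's normalization, where the long root $\alpha_2$ satisfies $(\alpha_2,\alpha_2)=1$, all inner products are rescaled by $\frac{1}{6}$, and the congruence becomes membership in $\frac{p}{6}\mathbb{Z}$ exactly as stated; the hypothesis $p>3$ guarantees $\gcd(p,6)=1$, so no divisibility information is lost in the rescaling. This is essentially a two-line argument once linkage is available, and it is entirely disjoint from the MF-classification machinery you assembled.
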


In view of applying Lemma~\ref{compfactor_Weyl}, we record the results of some computations for particular subdominant weights in $\Delta(\lambda)$ in Table~\ref{tab: weights G2}.
\begin{center}
\begin{xltabular}[h]{\textwidth}{c c c c}
\caption{Weight multiplicities for $G_2$-modules} \label{tab: weights G2} \\
\hline \multicolumn{1}{c}{\textbf{$(a,b)$}} & \multicolumn{1}{c}{$\nu$} & \multicolumn{1}{c}{$\dim \Delta(\lambda)_\nu$} & \multicolumn{1}{c}{$2(\lambda+\rho,\lambda-\nu)-(\lambda-\nu,\lambda-\nu)$}  \\ \hline 
\endhead
$a\geq 1, b\geq 1$&$\lambda-21$ &$3-\delta_{a,1}$&$\frac{2a+3b+4}{3}$\\ 
$a\geq 1, b\geq2$&$\lambda-12$&$2$&$\frac{a+6b}{3}$\\ 
$a\geq 1, b\geq1$&$\lambda-22$&$4-\delta_{a,1}-\delta_{b,1}$&$\frac{2a+6b+4}{3}$\\
$a=0, b\geq2$&$\lambda-22$&$2$&$\frac{6b+4}{3}$\\
$a\geq 1, b\geq2$&$\lambda-13$&$2-\delta_{b,2}$&$\frac{a+9b-9}{3}$\\ 
$a\geq 1, b\geq2$&$\lambda-32$&$7-2\delta_{a,1}-\delta_{a,2}$&$a+2b+2$\\ 
$a\geq 1, b\geq3$&$\lambda-23$&$4-\delta_{a,1}$&$\frac{2a+9b-2}{3}$\\ 
$a=1, b\geq3$&$\lambda-14$&$2-\delta_{b,3}$&$\frac{a+12b-24}{3}$\\ \hline
\end{xltabular}
\end{center} 

In addition, we note that since $\lambda$ is $p$-restricted, $V$ is irreducible as a module for the Lie algebra of $G$ (see \cite[Cor. Ch. 1]{curtis}). For the following lemmas, we let $v^+\in V_\lambda$, that is,  a highest weight vector in $V$. Then by \cite[1.29]{Tes_mem_exc} we have that. For $\nu\leq\lambda$, the weight space $V_\nu$ is spanned by vectors of the form $f_{\gamma_1}^{m_1}\cdots f_{\gamma_r}^{m_r}v^+$, where $\gamma_j\in\Phi^+$ and $m_j\in\mathbb N$ with $\lambda-\nu=\sum m_j\gamma_j$.

\begin{lem}\label{lemma G2 21 dimension} Assume $a\geq 2$, $b\geq 1$ and set $\nu = \lambda-21$. The following hold.  \begin{enumerate}[label=\textnormal{(\roman*)}]
  \item If $a+3b+3\not\equiv 0\mod p$ and $2a+3b+4\not\equiv 0\mod p$, then $\dim V_\nu = 3$.
  \item If $a+3b+3\equiv 0 \mod p$, then $\dim V_\nu = 2$.
  \item If $2a+3b+4\equiv 0 \mod p$, then $\dim V_\nu = 2$. 
    \end{enumerate}

\end{lem}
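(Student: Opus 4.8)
The plan is to compute $\dim V_\nu$ by comparing $V=L(\lambda)$ with the Weyl module $\Delta(\lambda)$, whose relevant weight space is already known. Since $a\geq 2$, Table~\ref{tab: weights G2} gives $\dim\Delta(\lambda)_\nu=3$. Because $V=L(\lambda)=\Delta(\lambda)/\mathrm{rad}$, we have $\dim V_\nu=\dim\Delta(\lambda)_\nu-\sum_{\kappa<\lambda}[\Delta(\lambda):L(\kappa)]\,\dim L(\kappa)_\nu$, so the whole question reduces to deciding which composition factors $L(\kappa)$ of $\Delta(\lambda)$ actually meet the weight $\nu$, and with what multiplicity.

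First I would enumerate the candidates. Since $\lambda-\nu=2\alpha_1+\alpha_2$, every dominant $\kappa$ with $\nu\leq\kappa<\lambda$ has $\lambda-\kappa=c_1\alpha_1+c_2\alpha_2$ with $0\leq c_1\leq 2$ and $0\leq c_2\leq 1$; there are exactly five nonzero such differences, namely $\kappa\in\{\lambda-10,\lambda-20,\lambda-01,\lambda-11,\lambda-21\}$, the last two being $\mu$ and $\nu$. For each I would evaluate the invariant $Q(\kappa)=2(\lambda+\rho,\lambda-\kappa)-(\lambda-\kappa,\lambda-\kappa)$ of Lemma~\ref{compfactor_Weyl}, using $(\alpha_2,\alpha_2)=1$, $(\alpha_1,\alpha_1)=\tfrac13$, $(\alpha_1,\alpha_2)=-\tfrac12$. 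A direct computation shows that the divisibility condition $Q(\kappa)\in\tfrac{p}{6}\mathbb{Z}$ becomes, respectively, $a\equiv 0$, $a-1\equiv 0$, $b\equiv 0$, $a+3b+3\equiv 0$ and $2a+3b+4\equiv 0\pmod p$. As $\lambda$ is $p$-restricted with $a\geq 2$, $b\geq 1$ and $p\geq 7$, the first three congruences are impossible, so $\lambda-10$, $\lambda-20$, $\lambda-01$ never afford composition factors of $\Delta(\lambda)$; the only possible contributors to $V_\nu$ are $L(\mu)$ (possible only when $a+3b+3\equiv 0$) and $L(\nu)$ (possible only when $2a+3b+4\equiv 0$).

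This settles case (i): if neither congruence holds, no composition factor besides $L(\lambda)$ has $\nu$ as a weight, whence $\dim V_\nu=3$. For (ii) and (iii) I would check that the surviving factor contributes exactly $1$. The weight $\nu$ is the highest weight of $L(\nu)$, so $\dim L(\nu)_\nu=1$; and $\nu=\mu-\alpha_1$ with $\langle\mu,\alpha_1^\vee\rangle=a+1$, so $\dim L(\mu)_\nu=1$ as long as $a+1\not\equiv 0\pmod p$. Together with the fact that the offending factor occurs with multiplicity one in $\Delta(\lambda)$, this gives $\dim V_\nu=3-1=2$.

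The delicate points, which I expect to be the main obstacle, are the two places where Lemma~\ref{compfactor_Weyl} alone does not suffice. First, that lemma gives only a necessary condition and no information on multiplicities; I would close this gap either by invoking the known decomposition of $G_2$ Weyl modules or by reading off the Jantzen sum formula, which at this small weight also forces the multiplicity to be $1$ and confirms that the contravariant form genuinely degenerates. Second, the two congruences can hold simultaneously, exactly when $a\equiv -1$, i.e.\ $a=p-1$; there $\mu=(a+1)\omega_1+(b-1)\omega_2$ ceases to be $p$-restricted, and one checks via Steinberg's tensor product theorem that $\nu=\mu-\alpha_1$ drops out of the weights of $L(\mu)$, so that $L(\mu)$ no longer contributes and the count is rescued by $L(\nu)$ alone, still giving $\dim V_\nu=2$. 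A uniform way to handle all of these subcases at once — the alternative I would keep in reserve — is to compute directly the Gram matrix of the contravariant form on the three spanning vectors $f_{2\alpha_1+\alpha_2}v^+$, $f_{\alpha_1}f_{\alpha_1+\alpha_2}v^+$, $f_{\alpha_1}^2 f_{\alpha_2}v^+$, since $\dim V_\nu$ equals its rank; a Shapovalov-type computation shows its determinant is, up to a unit modulo $p$, equal to $(a+3b+3)(2a+3b+4)$, and inspecting the $2\times 2$ minors then yields rank exactly $2$ under either single congruence.
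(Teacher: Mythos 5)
Your reduction is set up correctly: $\dim\Delta(\lambda)_\nu=3$, the only dominant weights between $\nu$ and $\lambda$ are the five you list, and your evaluation of the invariant of Lemma~\ref{compfactor_Weyl} at each of them (giving $a$, $a-1$, $b$, $a+3b+3$, $2a+3b+4$ modulo $p$, up to units) is right, so indeed only $L(\mu)$ and $L(\nu)$ can contribute. This disposes of case (i), and of case (ii) when $a\neq p-1$, exactly as the paper does: there $[\Delta(\lambda):L(\mu)]=1$ comes from Lemma~\ref{1 dimensional weight space lemma} (not from Jantzen), $\dim L(\mu)_\nu=1$, and $L(\nu)$ is excluded because the two congruences cannot hold together unless $a\equiv-1$.

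The genuine gap is case (iii) with $a+3b+3\not\equiv 0 \bmod p$: there you need $[\Delta(\lambda):L(\nu)]=1$, i.e.\ that the contravariant form on $\Delta(\lambda)_\nu$ \emph{actually} degenerates, and Lemma~\ref{compfactor_Weyl} is only a necessary condition -- as it stands your primary argument is consistent with $\dim V_\nu=3$. You flag this yourself and offer two repairs; the second (rank of the Gram matrix on $f_{2\alpha_1+\alpha_2}v^+$, $f_{\alpha_1+\alpha_2}f_{\alpha_1}v^+$, $f_{\alpha_2}f_{\alpha_1}^2v^+$) is in substance exactly the paper's proof, which applies $e_{\alpha_1}$ and $e_{\alpha_2}$ to a general linear combination of these three vectors, with explicit structure constants, and reads off that a dependence exists if and only if $2a+3b+4\equiv 0\bmod p$, in which case exactly one relation holds. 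Your asserted determinant $(a+3b+3)(2a+3b+4)$ (up to units mod $p$) is correct, but it is asserted, not computed, and the rank statement you draw from it is only claimed ``under either single congruence'': the doubly degenerate case $a=p-1$, where the determinant vanishes to order two, still requires exhibiting a nonzero $2\times 2$ minor (or, as in the paper, checking directly that $v_1\in\langle v_2,v_3\rangle$ with $v_2,v_3$ independent). Your Steinberg observation that $\nu$ is not a weight of $L(\mu)$ when $a=p-1$ is correct and matches the paper, but by itself it only removes the $L(\mu)$ contribution; it does not establish that the rank drops to exactly $2$ rather than staying at $3$ or falling to $1$. So the skeleton is right and coincides with the paper's, but the load-bearing computation for (iii) and for the overlap case must actually be carried out.
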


\begin{proof} Using \cite{lubeckOnline} and \cite[Proposition A]{cavallin}, one checks that $\dim \Delta(\lambda)_\nu = 3$. Note as well that if $\eta$ is a  dominant weight satisfying  $\nu\prec \eta\prec \lambda$, then $\eta\in\{\lambda-10,\lambda-01\  ({\text {if }} b\geq 2), \lambda-20\  ({\text{if }} a\geq 4), \mu\}$. The weight $\eta$ does not afford a composition factor of $\Delta(\lambda)$, for $\eta\in X(T)$, $\eta\ne \mu$.

First consider the case where $a+3b+3\not\equiv 0\mod p$. In this case, $\mu$ does not afford a composition factor of $\Delta(\lambda)$ (see Lemma~\ref{1 dimensional weight space lemma}) and the
  vectors $f_{\alpha_1+\alpha_2}v^+$ and $f_{\alpha_2}f_{\alpha_1}v^+$ are linearly independent.
  The weight space $V_\nu$ is spanned by the vectors  $v_1 = f_{2\alpha_1+\alpha_2}v^+$, $v_2 = f_{\alpha_1+\alpha_2}f_{\alpha_1}v^+$ and
  $v_3 = f_{\alpha_2}f_{\alpha_1}^2v^+$. Suppose $\sum_{i=1}^3 a_iv_i=0$, for $a_i\in \mathcal{k}$. Then applying $e_{\alpha_1}$ and $e_{\alpha_2}$ respectively, and using the fact that $f_{\alpha_1}^2v^+\ne 0$, we obtain the following system of equations.

  $$2a_1+a a_2 = 0,\quad 3a_2+a_3(2a-2) = 0\mbox { and } a_3(b+2)-a_2=0.$$

 (These computations depend on a choice of structure constants; we have used those given in \cite[\S12.5]{cartersimple}.)  We then have that  $v_1,v_2,v_3$ are linearly dependent if and only if $a_3\ne 0$. If $a_3\ne 0$, then we deduce that $2a+3b+4\equiv 0\mod p$. Moreover, if $2a+3b+4\equiv0\mod p$ the three vectors are linearly dependent and it is easy to check that $v_1$ and $v_2$ are linearly independent. This gives (i).

  Now consider the case where $a+3b+3\equiv 0\mod p$, so that $\mu$ affords a composition factor of $\Delta(\lambda)$ and one checks that $bf_{\alpha_1+\alpha_2}v^+ +f_{\alpha_1}f_{\alpha_2}v^+ = 0$. Now if $a=p-1$ (so that $2a+3b+4\equiv 0\mod p$), then $\nu$ does not occur in the composition factor afforded by $\mu$. In addition, arguing as above, we see that
  $v_1\in\langle v_2,v_3\rangle$ and $v_2$ and $v_3$ are linearly independent, so that $\dim V_\nu = 2$. While if $a\ne p-1$, then $\nu$ occurs in the
  composition factor afforded by $\mu$, with multiplicity 1. Moreover,  $2a+3b+4\not\equiv 0\mod p$, and Lemma~\ref{compfactor_Weyl} implies that the weight $\nu$ does not afford a composition factor of $\Delta(\lambda)$ and so $\dim V_{\nu} = 2$. These arguments give the conclusions of (ii) and (iii).\end{proof}

\begin{lem}\label{lemma g2 21 dimension a=1} Let $a=1$, $b\geq 1$,  and set $\nu = \lambda-21$. Then $\dim V_\nu = 1$ if $3b+4\equiv 0\mod p$ and $\dim V_\nu = 2$ otherwise.

\end{lem}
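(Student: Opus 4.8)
The plan is to identify $V_\nu$ with the adjacent weight space $V_\mu$, where $\mu=\lambda-11$, by means of a single raising operator, and then to read off $\dim V_\nu=\dim V_\mu$ from Lemma~\ref{1 dimensional weight space lemma}.

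The hypothesis $a=1$ enters through one observation: $\lambda-2\alpha_1$ is \emph{not} a weight of $V$. Indeed, since $\langle\lambda,\alpha_1^\vee\rangle=a=1$, applying the simple reflection $s_1$ gives $s_1(\lambda-2\alpha_1)=\lambda+\alpha_1$, which is strictly above $\lambda$ and hence cannot be a weight; as the weight set is $W$-stable, $\lambda-2\alpha_1$ is not a weight either, i.e. $V_{\lambda-2\alpha_1}=0$. Two consequences follow. First, $f_{\alpha_1}^2v^+\in V_{\lambda-2\alpha_1}=0$, so the third of the three vectors $f_{2\alpha_1+\alpha_2}v^+,\ f_{\alpha_1+\alpha_2}f_{\alpha_1}v^+,\ f_{\alpha_2}f_{\alpha_1}^2v^+$ spanning $V_\nu$ (as in the proof of Lemma~\ref{lemma G2 21 dimension}) vanishes; thus $V_\nu=\langle v_1,v_2\rangle$ with $v_1=f_{2\alpha_1+\alpha_2}v^+$ and $v_2=f_{\alpha_1+\alpha_2}f_{\alpha_1}v^+$, and in particular $\dim V_\nu\le2$. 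Second, since $\nu+\alpha_2=\lambda-2\alpha_1$, we have $V_{\nu+\alpha_2}=0$.

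I would then show that $e_{\alpha_1}\colon V_\nu\to V_{\nu+\alpha_1}=V_\mu$ is injective. If $w\in V_\nu$ satisfies $e_{\alpha_1}w=0$, then also $e_{\alpha_2}w\in V_{\nu+\alpha_2}=0$, so $w$ is annihilated by both $e_{\alpha_1}$ and $e_{\alpha_2}$, and hence by the whole subalgebra $\mathfrak n^+$ that they generate. Then $w$ would be a maximal vector of weight $\nu\ne\lambda$, which is impossible in the $\mathrm{Lie}(G)$-irreducible module $V$ unless $w=0$. Hence $e_{\alpha_1}$ is injective on $V_\nu$ and $\dim V_\nu=\dim\langle e_{\alpha_1}v_1,\,e_{\alpha_1}v_2\rangle$.

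It remains to evaluate the two images inside $V_\mu=\langle f_{\alpha_1+\alpha_2}v^+,\,f_{\alpha_2}f_{\alpha_1}v^+\rangle$. Using the commutation relations with a fixed choice of structure constants as in \cite[\S12.5]{cartersimple}, together with $e_{\alpha_1}f_{\alpha_1}v^+=h_{\alpha_1}v^+=a\,v^+=v^+$, one obtains
\[
e_{\alpha_1}v_1=c_1\,f_{\alpha_1+\alpha_2}v^+,\qquad e_{\alpha_1}v_2=f_{\alpha_1+\alpha_2}v^++c_2\,f_{\alpha_2}f_{\alpha_1}v^+
\]
with $c_1,c_2\ne0$; these two vectors span $V_\mu$, so $\dim V_\nu=\dim V_\mu$. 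Finally Lemma~\ref{1 dimensional weight space lemma}(iii), applied to $\mu=\lambda-11$ with $(\alpha_2,\alpha_2)=3(\alpha_1,\alpha_1)$ (so that the long root $\alpha_2$ plays the role of $\alpha_i$, giving $c_i=b$ and $c_j=1$), yields $\dim V_\mu=1$ precisely when $3b+1+3=3b+4\equiv0\bmod p$, and $\dim V_\mu=2$ otherwise, which is the assertion. I expect the only genuine work to be the structure-constant computation of the last step, namely checking that $c_1,c_2$ are nonzero and that the two images span $V_\mu$ in both the one- and two-dimensional cases; the conceptual heart is the vanishing $V_{\lambda-2\alpha_1}=0$, which is exactly what distinguishes $a=1$ from the generic situation of Lemma~\ref{lemma G2 21 dimension}.
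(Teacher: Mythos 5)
Your proof is correct, and it takes a genuinely different and more uniform route than the paper's. The paper splits into the two congruence cases: when $3b+4\equiv 0\bmod p$ it combines $\dim\Delta(\lambda)_\nu=2$ (read off from L\"ubeck/Cavallin) with the fact that $L(\mu)$ is then a composition factor of $\Delta(\lambda)$ in which $\nu$ occurs once, plus Proposition~\ref{prop:premet}, to get $\dim V_\nu=1$; when $3b+4\not\equiv 0$ it shows directly that $v_1,v_2$ are independent by applying $e_{\alpha_1}$ and $e_{\alpha_2}$ to a putative dependence relation. You instead prove in one stroke that $e_{\alpha_1}\colon V_\nu\to V_\mu$ is bijective --- injectivity from $V_{\lambda-2\alpha_1}=0$ (the point where $a=1$ enters) together with Lie-algebra irreducibility of $V$, which the paper records via Curtis; surjectivity from the explicit images of $v_1,v_2$ --- and then quote Lemma~\ref{1 dimensional weight space lemma}(iii) for $\dim V_\mu$. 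The deferred computation is exactly the one the paper performs in its second case: with the structure constants of \cite[\S12.5]{cartersimple} one gets $e_{\alpha_1}v_1=\pm 2\,f_{\alpha_1+\alpha_2}v^+$ and $e_{\alpha_1}v_2=f_{\alpha_1+\alpha_2}v^+\pm 3\,f_{\alpha_2}f_{\alpha_1}v^+$ (compare the paper's relations $2a_1+a_2=0=3a_2$), and the constants $2,3$ are units since $p\geq h=6$, so $p\geq 7$; the same bound guarantees that $\mathfrak n^+$ is generated by $e_{\alpha_1},e_{\alpha_2}$ over $\mathcal k$, which your maximal-vector argument uses. What your approach buys is that you never need the Weyl-module multiplicity $\dim\Delta(\lambda)_\nu$ nor the composition-factor analysis, and both congruence cases are handled identically; note only that the surjectivity step rests on the standard spanning fact $V_\mu=\langle f_{\alpha_1+\alpha_2}v^+, f_{\alpha_2}f_{\alpha_1}v^+\rangle$ from \cite[1.29]{Tes_mem_exc}, the same fact the paper uses implicitly.
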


\begin{proof} Using \cite{lubeckOnline} and \cite[Proposition A]{cavallin}, we have $\dim \Delta(\lambda)_\nu = 2$ and $V_\nu$ is spanned by $v_1 = f_{2\alpha_1+\alpha_2}v^+$ and $v_2 = f_{\alpha_1+\alpha_2}f_{\alpha_1}v^+$. If $3b+4\equiv 0\mod p$, then by Lemma~\ref{1 dimensional weight space lemma}, $\mu$ affords a composition factor of $\Delta(\lambda)$ and $\nu$ occurs with multiplicity 1 there. So by Proposition~\ref{prop:premet}, we have $\dim V_\nu = 1$.

  If $3b+4\not\equiv0\mod p$, then $\mu$ does not afford a composition factor and  $f_{\alpha_1+\alpha_2}v^+$ and $f_{\alpha_2}f_{\alpha_1}v^+$ are linearly independent. If $a_1v_1+a_2v_2 = 0$ for $a_i\in \mathcal{k}$, then applying $e_{\alpha_1}$ and $e_{\alpha_2}$, we deduce that $2a_1+a_2 = 0 = 3a_2$. Hence the two vectors are linearly independent and $\dim V_\nu = 2$.\end{proof}

\begin{lem}\label{lemma G2 dimension 12} Assume $b\geq 2$, $a\geq 1$, and set $\nu = \lambda-12$. Then $\dim V_\nu = 1$ if $a+3b+3\equiv 0\mod p$ and $\dim V_\nu = 2$ otherwise.

\end{lem}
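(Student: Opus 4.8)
The plan is to follow the template of Lemmas~\ref{lemma G2 21 dimension} and \ref{lemma g2 21 dimension a=1}, comparing the weight space of $V$ with that of the Weyl module. First I would record that $\dim\Delta(\lambda)_\nu = 2$ (this is the relevant row of Table~\ref{tab: weights G2}, obtained from \cite{lubeckOnline} and \cite[Proposition~A]{cavallin}), and that by \cite[1.29]{Tes_mem_exc} the weight space $V_\nu$ is spanned by the two monomial vectors $v_1 = f_{\alpha_1+\alpha_2}f_{\alpha_2}v^+$ and $v_2 = f_{\alpha_1}f_{\alpha_2}^2v^+$. Since $V$ and $\Delta(\lambda)$ have the same weights by Proposition~\ref{prop:premet}, the weight $\nu$ occurs in $V$, so $1\le \dim V_\nu\le 2$; the whole point is to decide between these two values according to the stated congruence.

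For the case $a+3b+3\equiv 0\bmod p$, I would invoke Lemma~\ref{1 dimensional weight space lemma}(iii) (with $\alpha_2$ long and $\alpha_1$ short, so that $c_i = b$, $c_j = a$ and the condition reads $3b+a+3\equiv 0\bmod p$) to conclude that $\mu = \lambda-11$ affords a composition factor $L(\mu)$ of $\Delta(\lambda)$. Because $\nu = \mu-\alpha_2$ and $\langle\mu,\alpha_2^\vee\rangle = b-1\ge 1$, the weight $\nu$ occurs in $L(\mu)$ with multiplicity $1$. Comparing composition factors then gives $\dim V_\nu \le \dim\Delta(\lambda)_\nu - [\Delta(\lambda):L(\mu)]\dim L(\mu)_\nu \le 2-1 = 1$, and combined with $\dim V_\nu\ge 1$ this yields $\dim V_\nu = 1$, exactly as in the reducible case of Lemma~\ref{lemma g2 21 dimension a=1}.

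For the case $a+3b+3\not\equiv 0\bmod p$, Lemma~\ref{1 dimensional weight space lemma} shows that $\mu$ does not afford a composition factor, so $\dim V_\mu = 2$ and the vectors $f_{\alpha_1+\alpha_2}v^+$ and $f_{\alpha_1}f_{\alpha_2}v^+$ are linearly independent in $V$. I would then establish the independence of $v_1,v_2$ by applying the raising operator $e_{\alpha_2}$ to a putative relation $a_1v_1+a_2v_2 = 0$. Using $e_{\alpha_2}f_{\alpha_2}v^+ = b\,v^+$, $e_{\alpha_2}f_{\alpha_2}^2v^+ = 2(b-1)f_{\alpha_2}v^+$, the vanishing $[e_{\alpha_2},f_{\alpha_1}] = 0$, and the fact that $[e_{\alpha_2},f_{\alpha_1+\alpha_2}]$ is a multiple of $f_{\alpha_1}$ (structure constants as in \cite[\S12.5]{cartersimple}), one arrives at a relation of the form $b\,a_1\,f_{\alpha_1+\alpha_2}v^+ + \bigl(c\,a_1 + 2(b-1)a_2\bigr)f_{\alpha_1}f_{\alpha_2}v^+ = 0$ for some constant $c$. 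The independence of the two lower vectors forces $b\,a_1 = 0$ and $c\,a_1 + 2(b-1)a_2 = 0$; since $2\le b<p$ we have $b\not\equiv 0$ and $b-1\not\equiv 0 \bmod p$, and $p$ is odd, so $a_1 = a_2 = 0$. Hence $v_1,v_2$ are independent and $\dim V_\nu = 2$.

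The only delicate point is the bookkeeping of the structure-constant computation in the second case, together with confirming $f_{\alpha_2}^2v^+\ne 0$ and the independence of the two $\mu$-weight vectors when $a+3b+3\not\equiv 0$; all of these follow from $2\le b<p$ and Lemma~\ref{1 dimensional weight space lemma}. Everything else is the same linkage and composition-factor comparison already used in the two preceding lemmas, so I expect no conceptual obstacle beyond the explicit $\mathfrak{sl}_2$-type calculations with the $f_\gamma$.
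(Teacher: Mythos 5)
Your proof is correct and follows essentially the same route as the paper's: in the congruence case you identify the composition factor $L(\mu)$ of $\Delta(\lambda)$ and compare weight multiplicities via Proposition~\ref{prop:premet}, and otherwise you test a putative relation $a_1v_1+a_2v_2=0$ against the two-dimensionality of $V_\mu$. The only (harmless) deviations are your choice of monomial ordering for $v_2$ and your extracting both conditions $a_1=a_2=0$ from the single operator $e_{\alpha_2}$, where the paper applies both $e_{\alpha_1}$ and $e_{\alpha_2}$; since $2\le b<p$ and $p$ is odd, the coefficients $b$ and $2(b-1)$ are indeed invertible, so the shortcut is valid.
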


\begin{proof} As in the preceding lemmas, we find that $\dim \Delta(\lambda)_{\nu} = 2$. If $a+3b+3\equiv 0\mod p$, then $\mu$ affords a composition factor of $\Delta(\lambda)$ and using Proposition~\ref{prop:premet} we deduce that $\dim V_\nu = 1$.

  So assume that $a+3b+3\not\equiv 0\mod p$ and then $f_{\alpha_1+\alpha_2}v^+$ and $f_{\alpha_2}f_{\alpha_1}v^+$ are linearly independent. The $\nu$ weight space is spanned by $v_1 = f_{\alpha_1+\alpha_2}f_{\alpha_2}v^+$ and $v_2 = f_{\alpha_2}^2f_{\alpha_1}v^+$.  Suppose $a_1v_1+a_2v_2 = 0$ for $a_i\in \mathcal{k}$. Applying
  $e_{\alpha_1}$ and $e_{\alpha_2}$ and using that $f_{\alpha_2}^2v^+\ne 0$, we deduce that $3a_1+aa_2 = 0$ and $a_1b = 0$. Hence the two vectors are linearly independent, giving the result. \end{proof}

We are now ready to prove the main proposition.
\renewcommand*{\proofname}{Proof of Proposition~$\ref{G2 classification main prop}$.}
\begin{proof}

  We  treat various cases separately below. In Cases 1 to 4, we use Proposition~\ref{first reduction lemma}(v) and Lemma~\ref{1 dimensional weight space lemma} to reduce to the case where $a+3b+3\equiv 0\mod p$ (as otherwise $V\downarrow A$ is not MF and neither is $\Delta_K(\lambda)\downarrow A_{K}$). Throughout we rely on the Tables in \cite{lubeckOnline}.

{{Case 1:}} $a\geq 3$ and $b\geq 3$. The $T_A$-weight $r-6$ is
           afforded by $\lambda-30$, $\lambda-03$, $\lambda - 21$ and $\lambda-12$. An application of Lemma~\ref{lemma G2 21 dimension}  then shows that $n_3\geq 5$ and Lemma~\ref{multiplicity bound general lemma} then shows that $V\downarrow A$ is not MF.
           
           {{Case 2:}} $a=2$ and $b\geq 4$. Here $3b+5\equiv 0\mod p$, so Lemma~\ref{compfactor_Weyl} implies that neither of the weights $\lambda-21$ and $\lambda - 12$ affords a composition factor of $\Delta(\lambda)$. Now, the $T_A$-weight $r-8$ is afforded by
           $\lambda-31$, $\lambda - 22$, $\lambda-13$ and $\lambda-04$. By Lemma~\ref{compfactor_Weyl}, none of these weights affords a composition factor of $\Delta(\lambda)$.  Therefore $\dim V_{\lambda-31} = \dim V_{\lambda-22}=2$, Indeed, for the weight $\lambda -31$ we note that the only dominant weights $\mu'$ with $\lambda-31\prec\mu'$ are $\mu$, 
           $\lambda-01$, $\lambda-10$, $\lambda-21$. We have assumed that $\mu$ affords a composition factor of $\Delta(\lambda)$, but the second and third weights occur with multiplicity 1 in $\Delta(\lambda)$ and so do not afford a composition factor of $\Delta(\lambda)$ (and as mentioned above, neither does the fourth weight). This then allows us to determine $\dim V_{\lambda-31}$ and similarly for $V_{\lambda-22}$. We conclude that $n_4\geq 6$ and apply Lemma~\ref{multiplicity bound general lemma} to see that $V\downarrow A$ is not MF.
           
           {{Case 3:}} $a\geq 3$ and $b=2$. Here we have $a+9\equiv 0 \mod p$. Lemma~\ref{compfactor_Weyl} implies that neither of the weights $\lambda-21$, $\lambda-12$ affords a composition factor of $\Delta(\lambda)$. Now consider the $T_A$-weight $r-8$,
           afforded by $\lambda-31$, $\lambda-22$ and $\lambda - 13$, none of which affords a composition factor of $\Delta(\lambda)$. Counting the occurrences of these weights in the irreducible $L(\mu)$,
           we see that $n_4\geq 6$ and then use Lemma~\ref{multiplicity bound general lemma} to see that $V\downarrow A$ is not MF.

          {{Case 4:}} $a = 2$ and $b\in \{2,3\}$. Here, we have  $a+3b+3\equiv 0\mod p$.
          Consider first the weight $\lambda = 2\omega_1+2\omega_2$ with $p=11$; here $\dim V = 295$ and $r=32$.
          One then checks that $B(r) = 204$. 
          For the weight $\lambda = 2\omega_1+3\omega_2$, with $p=7$, we have $r=42$, $\dim V = 532$ and $B(r) = 295$. In both cases, Lemma~\ref{dimension bound comparison} then implies that $V\downarrow A$ is not MF.\smallbreak

          We now turn to the cases where one or both of $a$ and $b$ is less than $2$, in which case we no longer deduce that 
          $\dim V_\mu=1$.\smallbreak

           {{Case 5:}} $a\geq 3$ and $b=1$. If $a=p-6$, then $n_2=2$, while the $T_A$-weight $r-6$ is afforded by $\lambda-21$, $\lambda-12$ and $\lambda-30$; using Lemma~\ref{compfactor_Weyl} we have that $n_3=4$ and so $V\downarrow A$ is not MF by Lemma~\ref{sufficient conditions MF lemma}. 

           Now suppose $a\ne p-6$ so that $n_2=3$ and $\mu$ does not afford a composition factor of $\Delta(\lambda)$. Let $\nu = \lambda-21$. Suppose first that $2a+7\equiv 0\mod p$; then
           by Lemma~\ref{lemma G2 21 dimension} we have $\dim V_{\nu} = 2$, which implies that $[\Delta(\lambda):L(\nu)] = 1$ and $n_3=4$. Now count the occurrences of the $T_A$-weight $r-8$ which is afforded by $\lambda-31$, $\lambda-22$ and $\lambda-40$, the latter only
           if $a\geq 4$. If $a\geq 4$, Lemma~\ref{compfactor_Weyl} implies that $n_4\geq 6$, giving the usual contradiction. The case where $2a+7\not\equiv 0\mod p$ is easier; here $\nu$ does
           not afford a composition factor of $\Delta(\lambda)$ and $n_3=5 = n_2+3$ (even if $a=3$).

           So we are left with the case $a=3$, $b=1$ and $p=13$, where $\dim V = 259$ and $r=28$. But as above, one checks that $\dim V >B(r)$,  and Lemma~\ref{dimension bound comparison} implies that $V\downarrow A$ is not MF.

           {{Case 6:}} $a=1$ and $b\geq 3$.  Consider first the case where $3b+4\equiv0\mod p$, when $\mu$ affords a composition factor of $\Delta(\lambda)$. Moreover, we note that $b\ne 4$. We claim that $n_4 = 4-\delta_{b,3}$ and $n_5\geq 6-\delta_{b,3}$, which then shows that $V\downarrow A$ is not MF.

           The $T_A$-weight $r-8$ is afforded by $\lambda - 31$, $\lambda - 22$, $\lambda - 13$ and $\lambda - 04$ (the latter only if $b\geq 4$). The first of these is conjugate to $\mu$ and so has multiplicity 1 in $V$ and the last of these has multiplicity $1-\delta_{b,3}$. For the remaining two weights, we use repeatedly Lemma~\ref{compfactor_Weyl} and note that \begin{enumerate}[label=\textnormal{(\roman*)}]
               \item $\lambda-21$ and $\lambda-12$ do not afford composition factors of $\Delta(\lambda)$; 
               \item $\lambda - 22$ does not afford a composition factor of $\Delta(\mu)$ and so occurs with multiplicity 2 in $L(\mu)$; and
               \item $\lambda - 13$ and $\lambda - 22$ do not afford composition factors of $\Delta(\lambda)$. 
    \end{enumerate}
           We then deduce that the weights $\lambda-22$ and $\lambda - 13$ each occur with multiplicity 1 in $V$. Hence $n_4 = 4-\delta_{b,3}$ as claimed.

           Now we turn to $n_5$; the $T_A$-weight $r-10$ is afforded by $\lambda - 41$, $\lambda - 32$, $\lambda  - 23$, $\lambda - 14$ and $\lambda - 05$ (the latter only if $b\geq 5$). The first of these is conjugate to $\lambda$. We now argue that $\nu = \lambda - 32$ has multiplicity $2$ in $V$, which establishes the claim on $n_5$.  Note that $\nu$ does not afford a composition factor of $\Delta(\lambda)$ nor of $\Delta(\mu)$.
         Applying Lemma~\ref{lemma G2 21 dimension}, we deduce that $\nu = \mu-21$ has multiplicity 3 in $L(\mu)$ and so has multiplicity $2$ in $V$, as claimed.

           Now consider the case where $3b+4\not\equiv 0\mod p$ and so $n_2 = 3$. By Lemma~\ref{lemma g2 21 dimension a=1} and Lemma~\ref{lemma G2 dimension 12} we have $\dim V_{\lambda-21} = \dim V_{\lambda-12}=2$, which means that $n_3 = 5$, so that $V\downarrow A$ is not MF.

           {{Case 7:}} $(a,b)\in\{(1,1), (1,2), (2,1)\}$. Here we have $r = 16$, respectively $26$, $22$. If $p\ne 7$, respectively $p\ne 7$, $p\ne 11$, the Weyl modules are irreducible and we may apply Lemma~\ref{CD lemma}. For the primes $p=7,7, 11$, respectively, an application of Lemma~\ref{dimension bound for omega_i}  shows that $V\downarrow A$ is not MF in the second and third cases. Now for the case $\lambda = \omega_1+\omega_2$ and $p=7$, we must argue more carefully. Here, one checks that  the weights $r,r-2,r-4,r-6$ occur with multiplicities $1,2,1,2$ respectively. Since $r-6$ does not occur as a weight in $(r)$, while $r-4$ does,  by Lemma~\ref{sufficient conditions MF lemma} we conclude that $V\downarrow A$ is not MF.

         {{Case 8:}} $b=0$. Here we view $G$ as a subgroup of $B_3$ via the $7$-dimensional irreducible representation afforded by $L(\omega_1)$. Then we have that $A\subset G$ is the principal $A_1$-subgroup of $B_3$ and moreover the $B_3$-module $L_{B_3}(a\omega_1)$ remains irreducible upon restriction to $G$, and affords the module $V$. (See \cite{seitz_mem_class}, Table 1.)  Hence, we can use the $B_3$ analysis, which is given in Proposition~\ref{rank>=3 single support main proposition}, to conclude.

         {{Case 9:}} $a=0$, $b\geq 4$. Here the $T_A$-weight $r-6$ is afforded by $\lambda - 21$, $\lambda - 12$ and $\lambda - 03$, each of which has multiplicity $1$ in
         $\Delta(\lambda)$ and so $n_3=3$. In particular, none of the listed weights affords a composition factor of $\Delta(\lambda)$, nor does $\lambda-11$.  Now we separate into two cases. First suppose that  $\lambda- 22$ does not
         afford a composition factor of $\Delta(\lambda)$; then $n_4\geq 5$ and $V\downarrow A$ is not MF.

         Now suppose that $\nu = \lambda-22$ affords a composition factor of $\Delta(\lambda)$ and so by Lemma~\ref{compfactor_Weyl} we have $3b+2\equiv0\mod p$. We first treat the case where $b\geq 6$.
         We claim that $n_5 = 4$. The $T_A$-weight $r-10$  is afforded by $\lambda - 23$, $\lambda - 32$, $\lambda - 14$, and $\lambda - 05$. The first two occur in the composition factor afforded by $\nu$, each with multiplicity 1,  and using the multiplicities in the Weyl module and Proposition~\ref{prop:premet}, we see that each of the four listed weights occurs with multiplicity 1 in $V$, establishing the claim. The $T_A$-weight $r-12$ is afforded by $\lambda - 42$, $\lambda - 33$,
         $\lambda - 24$, $\lambda - 15$ and $\lambda - 06$. The second of these weights has multiplicity $4$ in $\Delta(\lambda)$ and
         occurs with multiplicity 2 in $L(\nu)$. Moreover, this weight does not afford a composition factor of $\Delta(\lambda)$ (nor does any dominant weight $\eta\ne \nu$ with $\lambda-33\prec \eta\prec\lambda$) and so occurs with multiplicity 2 in $V$. Hence, $n_6\geq 6$  and $V\downarrow A$ is not MF.

        It remains to consider the cases $b=4$ and $b=5$ with $p=7$, respectively $p=17$ and $\dim V = 267$, respectively 546. In both cases, an application of Lemma~\ref{dimension bound for omega_i} shows that $V\downarrow A$ is not MF.

        {{Case 10:}} $a=0$ and $1\leq b\leq 3$. When $b=1$, the Weyl module is irreducible and the result follows from Lemma~\ref{CD lemma}. If $b=2$ and $p\ne 7$, we may apply Lemma~\ref{CD lemma} to conclude. When $(b,p) = (2,7)$, we use Lemma~\ref{n_d same as p=0 when Weyl module irreducible lemma} and the proof of \cite[Lemma 4.5]{LSTA1} to deduce that $n_0=1, n_1=1, n_2=2, n_3=2, n_4=3, n_5=4$ , so that $V\downarrow A$ has composition factors $(20), (16)$ and $(12)$. Since the $T_A$-weight $12$ lies in the composition factor $(16)$ but the $T_A$-weight $10$ does not, Lemma~\ref{sufficient conditions MF lemma} implies that $V\downarrow A$ is not MF. 
        
        Finally, we consider the case $b=3$, where $r=30$. Here the Weyl module is irreducible unless $p=11$. If $p\ne 11$, the result follows from Lemma~\ref{CD lemma}. If $p=11$, we use \cite{lubeckOnline} to see that $n_0=1, n_2=1, n_2=2, n_3=3, n_4=3, n_5=3$. We then deduce that $V\downarrow A$ has no composition factor $(22)$, nor $(20)$. But then $\dim V = 148 > B(30)-\dim(20)-\dim(22)$, so that $V\downarrow A$ is not MF.\end{proof}
\renewcommand*{\proofname}{Proof.}

\section{The case where $G$ has rank at least $3$}\label{rank at least 3 section}
In this section we handle the case where $G$ has rank at least $3$, establishing the following proposition. 

\begin{proposition}\label{main prop rank >=3}
    Suppose that $G$ has rank at least $3$ and $p\leq r$. Then $V\downarrow A$ is not MF.
\end{proposition}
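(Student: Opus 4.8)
The plan is to assume, for contradiction, that $V\downarrow A$ is MF and then drive $\lambda$ into a very short list of possibilities using Proposition~\ref{first reduction lemma}, before eliminating each possibility with the dimension and weight-multiplicity bounds of Section~\ref{notation section}. Recall that $p\leq r$ is assumed and $p\geq h$, so $r$ cannot be too small; in particular the natural modules $\omega_1$ (where $r<h\leq p$) never occur here, being already covered by Proposition~\ref{V MF p=0 p>r iff lemma}. If $\lambda$ has at least two non-zero coefficients, Proposition~\ref{first reduction lemma} forces $\lambda=c_i\omega_i+c_j\omega_j$, with $\alpha_i,\alpha_j$ either adjacent or both end-nodes (and $c_i=c_j=1$ in the non-adjacent case, by part~(ii)). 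I would therefore split into the single-support case $\lambda=c\omega_i$ and the two-support case $\lambda=c_i\omega_i+c_j\omega_j$.

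For the single-support case, recorded separately as Proposition~\ref{rank>=3 single support main proposition}, I would first treat $c=1$. Whenever the node $i$ is sufficiently interior, namely when a type-$A_7$ subdiagram $\{\beta_{i-3},\dots,\beta_{i+3}\}$ is centred at it, Lemma~\ref{w_i reduction space on both sides} shows directly that $V\downarrow A$ is not MF. This leaves only finitely many boundary fundamental weights per type: nodes within distance $3$ of an end of the diagram, the spin and half-spin nodes of $B_\ell$ and $D_\ell$, and the small weights of the exceptional groups. For these, $\Delta(\omega_i)$ is irreducible or has very few composition factors, so one of the following applies: Lemma~\ref{CD lemma}, once one checks $\Delta(\omega_i)$ is irreducible and $r\not\equiv-1\bmod p$ (and the additional root-lattice hypothesis for the spin cases in $B_\ell,D_\ell$); or the explicit dimension of $V$ from the L\"ubeck tables exceeds $B(r)$, contradicting Lemma~\ref{dimension bound comparison}; or Lemma~\ref{dimension bound for omega_i} gives the sharper bound $\dim V\leq B(r)-\dim(r-2)$ and again fails. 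For $c\geq 2$, Lemma~\ref{bw_i reduction lemma} forces $\alpha_i$ to be an end-node, reducing us to the families $c\omega_1$ and $c\omega_\ell$; here $\dim\Delta(c\omega_i)$ is a polynomial in $c$ whose degree equals the number of positive roots with $\alpha_i$ in their support, which is at least the rank $\ell\geq 3$, and hence outgrows $B(r)\sim r^2/4$. The finitely many remaining small pairs $(c,p)$ are cleared by Lemma~\ref{CD lemma} (noting that $p\leq r$ prevents $r\equiv-1\bmod p$ in these ranges) together with the tables.

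For the two-support case, I would use the adjacency dichotomy of Proposition~\ref{first reduction lemma} to fix the local shape of $\lambda$ near the relevant nodes. When $\alpha_i,\alpha_j$ are non-adjacent end-nodes, so $\lambda=\omega_i+\omega_j$, I would count the $T_A$-weight multiplicities $n_d$ for small $d$ directly from the weight set of $V$, which by Proposition~\ref{prop:premet} coincides with that of $\Delta(\lambda)$, using Lemma~\ref{subdiagram lemma} to compute the needed weight-space dimensions inside rank-$2$ Levi subgroups; one then exhibits some $n_d>d+1$, contradicting Lemma~\ref{multiplicity bound general lemma}, or applies Lemma~\ref{sufficient conditions MF lemma}. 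When $\alpha_i,\alpha_j$ are adjacent, Proposition~\ref{first reduction lemma}(v),(vi) together with Lemma~\ref{1 dimensional weight space lemma} pin down $\dim V_{\lambda-ij}$ through a congruence on $(c_i,c_j)$; with this value known, the weights $\lambda-ij$, $\lambda-i^2$, $\lambda-j^2$ and their diagram-neighbours supply enough linearly independent contributions that $n_2$ (or a slightly larger $n_d$) already exceeds its allowed value. In type $A$, Lemma~\ref{lambda minus string dimension lemma} provides the requisite multiplicities along a string of nodes, and whenever reducibility of $\Delta(\lambda)$ would alter the count I would fall back on the Jantzen sum formula and Lemma~\ref{CD lemma}.

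The hardest part will be the adjacent two-support subcase and the boundary single-support weights, because there the weight count $n_d$ genuinely depends on $p$ through the reducibility of $\Delta(\lambda)$ and the dimension of $V_{\lambda-ij}$, so it cannot simply be transported from characteristic zero. Closely related is the principal technical obstacle: proving the dimension inequality $\dim V>B(r)$ (or its refinement from Lemma~\ref{dimension bound for omega_i}) uniformly across the infinite families $c\omega_i$ and $\omega_i+\omega_j$. This amounts to comparing the polynomial growth in the coefficients of $\dim\Delta(\lambda)$ against the quadratic growth of $B(r)$, establishing the inequality for all but finitely many parameter values, and then disposing of the finitely many genuine exceptions by explicit computation with the L\"ubeck data and Lemma~\ref{CD lemma}.
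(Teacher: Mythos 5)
Your overall strategy coincides with the paper's: reduce to $\lambda=c_i\omega_i+c_j\omega_j$ via Proposition~\ref{first reduction lemma}, split into single- and two-support cases, and eliminate each family by weight-multiplicity counts ($n_d$ versus $d+1$), the dimension bounds $B(r)$ and $B(r)-\dim(r-2)$, Lemma~\ref{CD lemma}, and L\"ubeck's tables. However, two steps as you describe them would fail.

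First, for the spin and half-spin weights of $B_\ell$ and $D_\ell$ you assert that one of Lemma~\ref{CD lemma}, the bound $\dim V>B(r)$, or the bound $\dim V>B(r)-\dim(r-2)$ always applies. Lemma~\ref{CD lemma} is structurally unavailable here: its extra hypothesis for a weight outside the root lattice is $p>\binom{\ell+1}{2}$ (resp.\ $p>\binom{\ell}{2}$), which is exactly $p>r$ for the spin (resp.\ half-spin) module and so contradicts the standing assumption $p\leq r$. Moreover, the two dimension bounds do not dispose of the nine pairs $(\ell,p)=(5,11),(5,13),(6,13),(6,17),(6,19),(7,17),(7,19),(7,23),(8,31)$; for these the paper must compute the $T_A$-weight multiplicities from the characteristic-zero restriction (Lemma~\ref{n_d same as p=0 when Weyl module irreducible lemma}) and invoke Lemma~\ref{sufficient conditions MF lemma}(iii), a tool you do not list for this case.

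Second, for $\lambda=c\omega_i$ with $c\geq 2$ and $\alpha_i$ an end-node, you compare $\dim\Delta(c\omega_i)$ against $B(r)$. The relevant inequality is $\dim L(c\omega_i)>B(r)$, and $\dim L(c\omega_i)$ can be substantially smaller than $\dim\Delta(c\omega_i)$ when the Weyl module is reducible; you give no mechanism for bounding $\dim L(\lambda)$ from below (the paper, where it uses dimensions at all, either verifies irreducibility of $\Delta(\lambda)$ or bounds $\dim V\geq\dim\Delta(\lambda)-\dim\Delta(\mu)$ via the Jantzen sum formula). In fact the paper handles most of the families $b\omega_1$ and $b\omega_\ell$ not by dimensions but by exhibiting $n_d\geq d+2$ for a small $d$ directly from the weight set, which is reducibility-independent thanks to Proposition~\ref{prop:premet}; your asymptotic ``outgrows'' argument also does not by itself cover the full range $2\leq c<p$. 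These two points need to be repaired before the plan yields a proof.
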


We assume throughout Section 4 that $p\leq r$. Furthermore by Lemma~\ref{first reduction lemma}{(i)} we only need to consider the case $\lambda = c_i\omega_i+c_j\omega_j$ (with $c_i$ or $c_j$ possibly $0$), i.e. the weight $\lambda$ has support on at most two nodes.
\subsection{The case \(c_ic_j\) not \(0\)} 
We treat the case where $\lambda = c_i\omega_i+c_j\omega_j$ with $c_ic_j\neq 0$ in a sequence of lemmas. 

\begin{lemma}\label{rank at least 4 adjacent reduction}
Suppose that $G$ has rank at least $4$ and $\lambda = c_i\omega_i+c_j\omega_j$ with $\alpha_i$ and $\alpha_j$ adjacent and $c_i,c_j\geq 1$. Then $V\downarrow A$ is not MF.
\end{lemma}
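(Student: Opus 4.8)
The plan is to suppose that $V\downarrow A$ is MF and to contradict the bound $n_d\leq d+1$ of Lemma~\ref{multiplicity bound general lemma} (equivalently to trigger Lemma~\ref{sufficient conditions MF lemma}(ii)) by counting, for small $d$, the multiplicity $n_d$ of the $T_A$-weight $r-2d$. Since every simple root takes the value $2$ on $T_A$, a weight $\lambda-\sum_m a_m\alpha_m$ lies in the $T_A$-weight space $r-2d$ exactly when $\sum_m a_m=d$, so $n_d=\sum_\mu \dim V_\mu$ over such $\mu$; and by Proposition~\ref{prop:premet} a given $\mu$ contributes as soon as it is a weight of $\Delta(\lambda)$. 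Each individual multiplicity $\dim V_\mu$ will be computed by restriction to the Levi subgroup $X=\langle U_{\pm\beta}\mid \beta\in S\rangle$, where $S$ is the support of $\lambda-\mu$, using Lemma~\ref{subdiagram lemma}; as the supports arising are small connected sub-diagrams (of type $A_1$, $A_2$ or $A_3$), these reduce to weight multiplicities in low-rank type-$A$ Weyl modules, computable via Lemma~\ref{lambda minus string dimension lemma} or by counting semistandard tableaux.

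First I would apply Proposition~\ref{first reduction lemma}: part~(iv) allows me to assume, after relabelling, that $\alpha_j$ is an end-node, and part~(v) that not both $c_i,c_j\geq 2$; moreover, whenever one coefficient exceeds $1$, part~(vi) and Lemma~\ref{1 dimensional weight space lemma} give $\dim V_{\lambda-ij}=1$, while whenever $c_i=c_j=1$ the congruences of Lemma~\ref{1 dimensional weight space lemma} cannot hold because $p\geq h$ exceeds the relevant small value, so $\dim V_{\lambda-ij}=2$. Since ${\rm rank}(G)\geq 4$ and $\alpha_j$ is an end-node, $\alpha_i$ has a further neighbour $\alpha_k\neq\alpha_j$, and either $\alpha_i$ is a branch node (so it has a second neighbour $\alpha_{k'}$) or $\alpha_k$ has a neighbour $\alpha_l\notin\{i,j\}$. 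If $\alpha_i$ is a branch node, the $T_A$-weight $r-4$ is afforded by $\lambda-ij$, $\lambda-ik$, $\lambda-ik'$ and, when some coefficient is $\geq 2$, also by $\lambda-i^2$ or $\lambda-j^2$; together with $\dim V_{\lambda-ij}=2$ in the remaining case $c_i=c_j=1$, this forces $n_2\geq 4>3$, contradicting Lemma~\ref{multiplicity bound general lemma}. In the chain case one checks that $n_2=3$ throughout, so the decisive level is $d=3$, where I aim to show $n_3\geq 5$.

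For the chain case the $T_A$-weight $r-6$ is afforded by $\lambda-i^2j$, $\lambda-ij^2$, $\lambda-ijk$ and $\lambda-ikl$. When some coefficient is $\geq 2$ (say $c_i\geq 2$) there is in addition the weight $\lambda-i^2k$, and these five weights are pairwise distinct and each a weight of $\Delta(\lambda)$, so $n_3\geq 5$ by Proposition~\ref{prop:premet}; this argument is uniform in $p$. The genuinely delicate case is $c_i=c_j=1$, where only four distinct weights occur at level $3$ and the bound $n_3\geq 5$ requires $\dim V_{\lambda-ijk}=2$. The $A_3$-Levi computation (Lemma~\ref{subdiagram lemma}) shows $\dim\Delta(\lambda)_{\lambda-ijk}=2$, so if $\Delta(\lambda)$ is irreducible then $V\cong\Delta(\lambda)$, and by Lemma~\ref{n_d same as p=0 when Weyl module irreducible lemma} the counts $n_d$ agree with their characteristic-zero values, giving $n_3=5$; equivalently, Table~\ref{tab:p=0 big table} lists no adjacent two-node weight for a group of rank $\geq 4$, so the characteristic-zero restriction is already not MF. The main obstacle is the remaining possibility that $\Delta(\lambda)$ is reducible with $c_i=c_j=1$, which by the Jantzen sum formula occurs only for the smallest admissible primes; here passing to the quotient $L(\lambda)$ could a priori lower $\dim V_{\lambda-ijk}$ to $1$. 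I expect to settle this either by the dimension bound of Lemma~\ref{dimension bound comparison}, comparing a lower bound for $\dim L(\lambda)$ (as $\dim\Delta(\lambda)$ minus the dimensions of its other composition factors, in the style of the rank-$2$ lemmas) with $B(r)$, which suffices when the two nodes are not too central, or, for the central configurations, by identifying the few composition factors of $\Delta(\lambda)$ and using Proposition~\ref{prop:premet} to verify that none of them carries the weight $\lambda-ijk$ with sufficient multiplicity to force a drop below $2$.
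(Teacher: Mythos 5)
Your overall strategy coincides with the paper's: reduce via Proposition~\ref{first reduction lemma}(iv)--(vi) and Lemma~\ref{1 dimensional weight space lemma} to the case where one coefficient equals $1$ and one node is terminal, then contradict $n_d\leq d+1$ by counting $T_A$-weight multiplicities at level $2$ or $3$. However, the decisive case $c_i=c_j=1$ is not actually completed, and the difficulty you flag there is misdiagnosed. The multiplicity $\dim V_{\lambda-ijk}=2$ is a statement about the \emph{irreducible} module: Lemma~\ref{subdiagram lemma} and Lemma~\ref{lambda minus string dimension lemma} are both formulated for $L(\lambda)$, and the paper simply observes that $\lambda-ijk=(\lambda-ij)^{s_k}$ is Weyl-conjugate to $\lambda-ij$, whose multiplicity in $V$ is $2$ by Lemma~\ref{1 dimensional weight space lemma} (the congruences there fail since $p\geq h$). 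So reducibility of $\Delta(\lambda)$ is irrelevant, and your fallback plan (``I expect to settle this either by the dimension bound \dots or by identifying the few composition factors of $\Delta(\lambda)$'') is both unnecessary and, as written, not a proof --- a uniform-in-$\ell$ comparison of $\dim L(\lambda)$ with $B(r)$, or a composition-series analysis of $\Delta(\lambda)$ in arbitrary rank, is substantial work that you have not carried out.

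A second concrete gap is your assertion that all the Levi computations land in type $A_1$, $A_2$ or $A_3$. For $G=B_\ell$ or $C_\ell$ with $\lambda=\omega_{\ell-1}+\omega_\ell$ --- a configuration your reduction does not exclude --- the subdiagram supporting $\lambda-\mu$ at level $3$ is of type $B_3$ or $C_3$, so Lemma~\ref{lambda minus string dimension lemma} does not apply and semistandard-tableau counts are unavailable. This is exactly the case the paper must treat separately, obtaining one multiplicity-$2$ weight by Weyl conjugacy and a second from L\"ubeck's tables at $\ell=4$ propagated to all $\ell$ by Lemma~\ref{subdiagram lemma}. Finally, a smaller point: your fifth weight $\lambda-i^2k$ in the ``coefficient $\geq 2$'' chain case exists only when $c_i\geq 2$; if instead the end-node coefficient satisfies $c_j=2$ the cleanest exit is the one the paper takes, namely that Lemma~\ref{1 dimensional weight space lemma} forces $\dim V_{\lambda-ij}=2$, which directly contradicts Proposition~\ref{first reduction lemma}(vi).
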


\begin{proof}
    Since $p\geq h$ we have $p\geq 5,11,11,7$ respectively for $G=A_\ell,B_\ell,C_\ell,D_\ell$ and $p\geq 13,13,19,31$ respectively for $G=F_4,E_6,E_7,E_8$.
    By Proposition~\ref{first reduction lemma}{(iv)} and {(v)}, we can assume that $c_i=1$ or $c_j=1$, and $\alpha_i$ or $\alpha_j$ is an end-node. Recall that by Proposition~\ref{prop:premet}, the set of weights of $V$ is the same as the set of weights of $\Delta(\lambda)$. Using this, it is straightforward to see that if $c_i\geq 3$ or $c_j\geq 3$, then $V\downarrow A$ is not MF. For example if $\lambda = c_1 \omega_1 +\omega_2$ (so $G$ is not of type $E_\ell$) and $c_1\geq 3$, the $T_A$-weight $r-6$ is afforded by $\lambda-123$, $\lambda-234$, $\lambda-12^2$, $\lambda-1^22$ and $\lambda -1^3$. Therefore $n_3\geq 5$, and Lemma~\ref{multiplicity bound general lemma} implies that $V\downarrow A$ is not MF. Similarly, if $\lambda=\omega_1+c_2\omega_2$, with $c_2\geq 3$ (so again $G$ is not of type $E_\ell$), then the $T_A$-weight $r-6$ is afforded by $\lambda-123$, $\lambda-234$, $\lambda-12^2$, $\lambda-2^23$ and $\lambda -2^3$. As before, $n_3\geq 5$, and  $V\downarrow A$ is not MF. If $c_i = 2$ or $c_j = 2$, by Lemma~\ref{1 dimensional weight space lemma} we have $ \dim V_{\lambda-\alpha_i-\alpha_j}>1$, and by Proposition~\ref{first reduction lemma}{(vi)} the module $V\downarrow A$ is not MF. Thus, we reduce to $c_i=c_j=1$.
    
    Consider $\lambda = \omega_1+\omega_2$. For $G$ classical, the weights $\lambda-123 = (\lambda-12)^{s_3},\lambda-234,\lambda-1^22=(\lambda-2)^{s_1},\lambda-12^2=(\lambda-1)^{s_2}$ occur with multiplicities $2$, $1$, $1$, $1$ by Lemma~\ref{1 dimensional weight space lemma}. Therefore $n_3\geq 5$ and Lemma~\ref{multiplicity bound general lemma} implies that $V\downarrow A$ is not MF. The same argument, with the appropriate relabelling of indices, handles all remaining cases where $(\alpha_i,\alpha_i) = (\alpha_j,\alpha_j)$, including the cases $G=E_\ell$ and $\lambda \in\{\omega_1+\omega_3,\omega_2+\omega_4,\omega_{\ell-1}+\omega_\ell\}$. For the group of type $F_4$ and the weights $\omega_1+\omega_2$ and $\omega_3+\omega_4$, we use the weight space dimensions provided in \cite{lubeckOnline} to conclude again that $n_3\geq 5$.

    Therefore we reduce to $G=B_\ell$ or $G=C_\ell$ with $\lambda = \omega_{\ell-1}+\omega_\ell$. Suppose $G=B_\ell$. Since $p\geq h$, by Lemma~\ref{1 dimensional weight space lemma} we have $\dim V_{\lambda-(\ell-1)\ell} =  2$. The $T_A$-weight $r-6$ is afforded by $\lambda-(\ell-1)\ell^2=(\lambda-(\ell-1)\ell)^{s_\ell}$, $\lambda-(\ell-2)(\ell-1)\ell$ and $\lambda-(\ell-1)^2\ell$. If $\ell = 4$, the first two weight spaces have dimension $2$ by \cite{lubeckOnline}. By Lemma~\ref{subdiagram lemma}, for any $\ell\geq 4$, we have $n_3\geq 5$, and Lemma~\ref{multiplicity bound general lemma} implies that $V\downarrow A$ is not MF. The $C_\ell$ case is handled similarly.
\end{proof}

\begin{lem}\label{A3 c10 and 1c0 reduction}
 Let $G=A_3$ and $\lambda = c\omega_1+\omega_2$ or $\omega_1+c\omega_2$, with $c>1$. Then $V\downarrow A$ is not MF.
\end{lem}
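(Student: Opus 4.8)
The plan is to first use the reduction machinery of Section~\ref{notation section} to pin down the value of $c$, and then to compute $\dim V$ and compare it with the bounds of Lemma~\ref{dimension bound comparison}. Since $G=A_3$ has rank $3$ and $\alpha_1,\alpha_2$ are adjacent, both weights in the statement have support on the adjacent pair $\{\alpha_1,\alpha_2\}$ with one coefficient equal to $c>1$. Assuming $V\downarrow A$ is MF, I would apply Proposition~\ref{first reduction lemma}(vi): as $G$ does not have rank $2$, this forces $\dim V_{\lambda-12}=1$. Because $A_3$ is simply laced, Lemma~\ref{1 dimensional weight space lemma}(i) applies and gives $\dim V_{\lambda-12}=1$ if and only if $c_1+c_2=p-1$; in both cases this yields $c=p-2$. (Here $p\geq h=4$, so $p\geq 5$ and $c=p-2>1$, and $\lambda$ is $p$-restricted.) It therefore suffices to contradict the MF hypothesis for $\lambda=(p-2)\omega_1+\omega_2$ and for $\lambda=\omega_1+(p-2)\omega_2$.

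Next I would determine the composition factors of $\Delta(\lambda)$ using the Jantzen sum formula \cite[Part II, 8.19]{Jantzen}. For $\lambda=(p-2)\omega_1+\omega_2$ only the positive roots $\alpha_1+\alpha_2$ and $\alpha_1+\alpha_2+\alpha_3$ contribute, and the sum equals $\chi(\mu)-\chi((p-4)\omega_1)$ with $\mu=(p-3)\omega_1+\omega_3$. A second, nested application to $\Delta(\mu)$ shows that $\Delta(\mu)$ has exactly the factors $L(\mu)$ and $L((p-4)\omega_1)$, so the Jantzen sum for $\Delta(\lambda)$ collapses to $\mathrm{ch}\,L(\mu)$; hence $\Delta(\lambda)$ has precisely the two factors $L(\lambda)$ and $L(\mu)$, each with multiplicity one. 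Using that $\Delta(c\omega_1)$ is irreducible in type $A$, this gives $\dim L(\lambda)=\dim\Delta(\lambda)-\dim L(\mu)$. An entirely parallel computation for $\lambda=\omega_1+(p-2)\omega_2$ produces the single extra factor $L(\mu')$ with $\mu'=(p-3)\omega_2+\omega_3$, where $\Delta(\mu')$ turns out to be irreducible, so $\dim L(\lambda)=\dim\Delta(\lambda)-\dim\Delta(\mu')$. Evaluating the Weyl dimension formula then yields $\dim L((p-2)\omega_1+\omega_2)=\tfrac{1}{6}(p-1)(p^2+7p+18)$ and $\dim L(\omega_1+(p-2)\omega_2)=\tfrac{2}{3}p(p^2-1)$.

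Finally I would compare these dimensions with the bounds of Lemma~\ref{dimension bound comparison}, using the quantities of Definition~\ref{B(r)}. Here $r=3p-2$, respectively $r=4p-5$, both odd. In the second case one checks $\dim L(\lambda)-B_{K}(4p-5)=\tfrac{2}{3}(p-1)(p^2-5p+3)>0$ for all $p\geq 5$, so $V\downarrow A$ is not MF. In the first case the comparison against $B_{K}(3p-2)=\tfrac14(9p^2-1)$ is positive for all $p\geq 7$; the single prime $p=5$ (i.e. $\lambda=3\omega_1+\omega_2$) escapes the crude bound, since there $\dim L(\lambda)=52<56=B_{K}(13)$. For it I would instead compute the sharper characteristic-$p$ quantity $B(13)=42$ directly from Steinberg's tensor product theorem and conclude $52>42$, again contradicting the MF hypothesis via Lemma~\ref{dimension bound comparison}.

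The main obstacle is the precise determination of the composition factors of $\Delta(\lambda)$: the Jantzen sum for $\Delta(\lambda)$ is a difference of two Weyl characters and only collapses to a single irreducible character after the nested Jantzen computation for $\Delta(\mu)$ (or the verification that $\Delta(\mu')$ is irreducible). A secondary difficulty is that the characteristic-zero bound $B_{K}(r)$ is too weak for the smallest prime in the first case, so the finer bound $B(r)$ — which genuinely depends on the $p$-modular dimensions $\dim L_A(r-2k)$ — must be invoked there.
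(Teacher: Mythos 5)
Your proposal is correct: the reduction to $c=p-2$ via Proposition~\ref{first reduction lemma}(vi) and Lemma~\ref{1 dimensional weight space lemma} is exactly the paper's opening move, and all of your Jantzen and Weyl-dimension computations check out (in particular $\dim L((p-2)\omega_1+\omega_2)=\tfrac{1}{6}(p-1)(p^2+7p+18)$ agrees with the paper, and your values $B_K(13)=56$, $B(13)=42$ versus $\dim V=52$ at $p=5$ are right). The difference lies in how the two weights are dispatched. For $\lambda=(p-2)\omega_1+\omega_2$ the paper argues as you do, except that it computes the characteristic-$p$ bound $B(3p-2)=\tfrac{1}{2}(p+1)(3p-1)$ once and for all, which beats $\dim V$ for every $p>3$; this avoids your case split at $p=5$, where $B_K$ is too weak. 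For $\lambda=\omega_1+(p-2)\omega_2$ the paper does something much cheaper: it simply lists five weights $\lambda-123$, $\lambda-12^2$, $\lambda-2^23$, $\lambda-2^3$, $\lambda-1^22$ affording the $T_A$-weight $r-6$, so $n_3\geq 5$ contradicts Lemma~\ref{multiplicity bound general lemma}. That argument needs only the \emph{set} of weights of $V$ (available from Proposition~\ref{prop:premet}) and no information about composition factors of $\Delta(\lambda)$, whereas your route requires the nested Jantzen analysis to certify that $\Delta(\mu')$ is irreducible before the dimension formula $\dim L(\lambda)=\tfrac{2}{3}p(p^2-1)$ is legitimate. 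What your approach buys is uniformity (both weights are killed by the same dimension-bound mechanism) and an exact dimension for $L(\lambda)$; what the paper's buys is brevity and independence from the submodule structure of the Weyl module. Either is a complete proof.
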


\begin{proof}
By Proposition~\ref{first reduction lemma}{(vi)} we can assume that the weight space $\lambda -12$ is $1$-dimensional. In particular we must have $ c = p-2$ by Lemma~\ref{1 dimensional weight space lemma}. Let us start with $\lambda = \omega_1+(p-2)\omega_2$. Since $p\geq 5$, the $T_A$-weight $r-6$ is afforded by $\lambda - 123$, $\lambda-12^2$, $\lambda-2^23$, $\lambda-2^3$ and $\lambda-1^2 2$. Therefore Lemma~\ref{multiplicity bound general lemma} implies that $V\downarrow A$ is not MF.

For the case $\lambda = (p-2)\omega_1+\omega_2$, we will use a dimension argument. We refer to the discussion in \cite[Part II, 8.20]{Jantzen}, where the weight
  $\lambda$ satisfies the conditions of the weight $\lambda_2$, with $s=1=t$ and $r=p-3$. Then one has
  \[\dim V = \dim \Delta(\lambda) - \dim \Delta(\lambda - \alpha_1-\alpha_2) + \dim \Delta(\lambda - 2\alpha_1-2\alpha_2-\alpha_3).\]

  Using the Weyl degree formula we find that $\dim V = \frac{(p-1)}{6}(p^2+7p+18)$.
  We have $r = 3p-2$, and a simple calculation shows that $B(r) = \frac{(p+1)(3p-1)}{2}$. Since $B(r)<\dim V$ for all $p>3$, by Lemma~\ref{dimension bound comparison} we conclude that $V\downarrow A$ is not MF.
\end{proof}

\begin{lem}\label{B3 and C3 c1 reduction}
    Let $G=B_3$ or $C_3$ and let $\lambda \in \{c \omega_1+\omega_2$, $\omega_1+c\omega_2$, $c\omega_2+\omega_3$, $\omega_2+c\omega_3\}$, with $c>1$. Then $V\downarrow A$ is not MF.
\end{lem}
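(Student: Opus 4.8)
The plan is to follow the template of Lemma~\ref{A3 c10 and 1c0 reduction} and Lemma~\ref{rank at least 4 adjacent reduction}, adapting it to the two root lengths present in $B_3$ and $C_3$. In every one of the four weight types the support $\{\alpha_i,\alpha_j\}$ consists of \emph{adjacent} nodes of the path $\alpha_1-\alpha_2-\alpha_3$, so since $G$ has rank $3$, Proposition~\ref{first reduction lemma}(vi) tells us that either $V\downarrow A$ is already not MF, or $\dim V_{\lambda-ij}=1$. I may therefore assume $\dim V_{\lambda-ij}=1$ and feed this into Lemma~\ref{1 dimensional weight space lemma}. Reading off the root lengths (in $B_3$ the nodes $\alpha_1,\alpha_2$ are long and $\alpha_3$ is short; in $C_3$ the nodes $\alpha_1,\alpha_2$ are short and $\alpha_3$ is long), this pins $c$ to a single residue modulo $p$ in each case: for the two weights supported on $\{\alpha_1,\alpha_2\}$ (equal root lengths) part (i) gives $c=p-2$, while for the weights supported on $\{\alpha_2,\alpha_3\}$ part (ii) gives a congruence of the form $2c+3\equiv 0$ or $c+4\equiv 0 \pmod p$, depending on which node carries the heavy coefficient. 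Together with Table~\ref{tab:r values} this determines $r$ exactly in terms of $p$. I would then split the eight resulting cases according to whether the coefficient $c>1$ sits on an end node or on the central node $\alpha_2$.

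For the cases with $c$ on an end node, namely $c\omega_1+\omega_2$ and $\omega_2+c\omega_3$, I would argue by dimension. Using the Jantzen $p$-sum formula \cite[Part II, 8.19]{Jantzen} (and the tables of \cite{lubeckOnline} for the finitely many small primes) one determines the composition factors of $\Delta(\lambda)$ and hence $\dim V$; the point is that $\dim V$ grows as a polynomial in $c$ of degree at least three (the number of positive roots whose coroot involves the heavy node), whereas $B(r)\le B_K(r)$ is only quadratic in $r$, and here $r$ is a fixed constant times $p\sim c$. One thus obtains $\dim V>B(r)$ and concludes by Lemma~\ref{dimension bound comparison}. When $\Delta(\lambda)$ happens to be irreducible and $r\not\equiv -1\pmod p$ one may instead invoke Lemma~\ref{CD lemma} directly.

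For the cases with $c$ on the central node, namely $\omega_1+c\omega_2$ and $c\omega_2+\omega_3$, I would count weights exactly as in Lemma~\ref{A3 c10 and 1c0 reduction}. By Proposition~\ref{prop:premet} the weights of $V$ coincide with those of $\Delta(\lambda)$, and since every simple root restricts to the $T_A$-weight $2$, all level-$3$ weights restrict to $r-6$. Because $\alpha_2$ is adjacent to both $\alpha_1$ and $\alpha_3$, one exhibits at least five distinct such weights, for example $\lambda-123,\ \lambda-12^2,\ \lambda-2^23,\ \lambda-2^3,\ \lambda-23^2$, each of which is genuinely a weight of $V$ by a short string/positivity check on the relevant Cartan pairings (and using Lemma~\ref{subdiagram lemma} where convenient). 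This forces $n_3\ge 5$, contradicting Lemma~\ref{multiplicity bound general lemma}, so $V\downarrow A$ is not MF. The one subtlety is that $\lambda-2^3$ requires $c\ge 3$, so the boundary value $c=2$ (which occurs for $c\omega_2+\omega_3$ when $p=7$) must be treated separately, either by passing to level $4$ or by a direct dimension check against $B(r)$ via \cite{lubeckOnline}.

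The main obstacle is the dimension computation in the end-node cases: because $B_3$ and $C_3$ lack the clean two-composition-factor structure that is available for $B_2$ in Lemma~\ref{B2 Weyl}, pinning down $\dim V$ requires a careful application of the Jantzen sum formula to identify the proper composition factors of $\Delta(\lambda)$, after which one must verify that the residual loss $\dim\Delta(\lambda)-\dim V$ does not drop the degree in $c$ below that of $B(r)$. A secondary, more bookkeeping-heavy point is confirming that the residual small cases (small $c$, hence small $p$) survive, for which Lübeck's tables provide the exact weight multiplicities and dimensions needed to apply Lemma~\ref{dimension bound comparison} or the weight-multiplicity criteria of Lemma~\ref{multiplicity bound general lemma}.
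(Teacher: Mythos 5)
Your reduction (adjacency plus Proposition~\ref{first reduction lemma}(vi) forces $\dim V_{\lambda-ij}=1$, hence a congruence on $c$ via Lemma~\ref{1 dimensional weight space lemma}) and your treatment of the ``central node'' weights $\omega_1+c\omega_2$ and $c\omega_2+\omega_3$ by exhibiting five level-three weights match the paper, which handles \emph{all four} weight families this way: it shows $n_3\geq 5$ or $n_4\geq 6$ (or $n_4-n_3\geq 2$ in the one residual subcase $C_3$, $\omega_2+2\omega_3$, $p=7$) and invokes Lemma~\ref{multiplicity bound general lemma} or Lemma~\ref{sufficient conditions MF lemma}. Two small points on your weight lists: $\lambda-23^2$ is not a weight of $\Delta(\omega_1+c\omega_2)$ for $C_3$ (since $\langle\lambda-\alpha_2,\alpha_3^\vee\rangle=1$ there), so you should substitute $\lambda-1^22$ as the paper does; and for $B_3$ with $\lambda=\omega_2+c\omega_3$ only four distinct level-three weights exist, so the count only reaches five because $\lambda-23^2$ has multiplicity $2$ --- the paper gets this from Lemma~\ref{subdiagram lemma} together with the known two-factor structure of $\Delta_{B_2}(1c)$ from Lemma~\ref{B2 Weyl}, a step your outline does not supply.

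The genuine gap is in your ``end-node'' cases, where you replace the weight count by the inequality $\dim V>B(r)$ and justify it only by comparing degrees: $\dim\Delta(\lambda)$ has degree $\geq 3$ in $c$ while $B(r)$ is quadratic. This does not establish the inequality for two reasons. First, the congruence ties $c$ to $p$ (e.g.\ $c=p-2$), so the comparison must be checked at the smallest admissible primes ($p=7$ for both $B_3$ and $C_3$), where an asymptotic degree count proves nothing. Second, and more seriously, the quantity you need to bound below is $\dim V=\dim L(\lambda)$, not $\dim\Delta(\lambda)$, and the Jantzen correction can cancel the high-degree terms: in the paper's own $B_2$ analysis (Lemma~\ref{B2 c0 not MF}, case $k=4$) the cubic $\dim\Delta(c0)$ collapses to the quadratic $\dim V=4c^2-4c+6$, which exceeds $B(r)=3c^2-2c+12$ only barely. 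You acknowledge that identifying the composition factors of $\Delta(\lambda)$ for $B_3$ and $C_3$ is ``the main obstacle,'' but that is precisely the step on which your argument rests, and it is not carried out. The paper avoids it entirely: for $c\omega_1+\omega_2$ the congruence gives $c=p-2\geq 5$, so $\lambda-1^3$, $\lambda-1^22$, $\lambda-12^2$, $\lambda-123$ together with one further weight ($\lambda-23^2$ for $B_3$, $\lambda-2^23$ for $C_3$) already give $n_3\geq 5$; the $\omega_2+c\omega_3$ cases are settled by the level-three count with the multiplicity-two weight ($B_3$) or a level-four count ($C_3$). You should replace the dimension detour by these counts, or else actually compute the decompositions of the relevant Weyl modules.
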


\begin{proof}
By Proposition~\ref{first reduction lemma}{(vii)} we can assume that the weight space $\lambda -ij$ is $1$-dimensional, where $\lambda = c_i\omega_i+c_j\omega_j$. Note that $p\geq 7$ as $p\geq h$.

Case 1 : $\lambda = c\omega_1+\omega_2$. As the weight space $\lambda -ij$ is $1$-dimensional, we have $c = p-2$ by Lemma~\ref{1 dimensional weight space lemma}. In particular $c\geq 5$. The $T_A$-weight $r-6$ is afforded by $\lambda-1^3$, $\lambda-1^22$,
$\lambda-12^2$, and $\lambda-123$; in addition, for $G=B_3$, $r-6$ is afforded by $\lambda-23^2$ and if $G=C_3$, by $\lambda-2^23$. Hence $n_3\geq 5$ and $V\downarrow A$ is not MF by Lemma~\ref{multiplicity bound general lemma}. 

Case 2 : $\lambda = \omega_1+c\omega_2$.
As in the previous case, we reduce to $c=p-2$, so $c\geq 5$. The $T_A$-weight $r-6$ is afforded by
$\lambda-12^2$, $\lambda-1^22$, $\lambda-2^23$, $\lambda-123$, and $\lambda-2^3$. Therefore $n_3\geq 5$ and $V\downarrow A$ is not MF by Lemma~\ref{multiplicity bound general lemma}. 

Case 3 : $G = B_3$ and $\lambda = c\omega_2+\omega_3$. The $T_A$-weight $r-8$ is
afforded by $\lambda-12^23$, $\lambda-1^22^2$,  $\lambda - 2^33$, $\lambda-23^3$, $\lambda - 123^2$, $\lambda-2^23^2$ which implies $n_4\geq 6$ and $V\downarrow A$ is not MF by Lemma~\ref{multiplicity bound general lemma}.

Case 4 : $G = C_3$ and $\lambda = c\omega_2+\omega_3$.
By Lemma~\ref{1 dimensional weight space lemma} we may assume that $c+4\equiv 0\mod p$, implying $c\geq 3$. The $T_A$-weight $r-6$ is afforded by $\lambda-2^3$, $\lambda-12^2$, $\lambda-2^23$, $\lambda-123$,
$\lambda-23^2$, which implies $n_3\geq 5$ and $V\downarrow A$ is not MF by Lemma~\ref{multiplicity bound general lemma}.

Case 5 :  $G = B_3$ and $\lambda = \omega_2+c\omega_3$. As above we reduce to $c+4\equiv 0\mod p$ and so $c\geq 3$. The $T_A$-weight $r-6$ is afforded by $\lambda-123$, $\lambda -2^23$, $\lambda-3^3$ and $\lambda-23^2$. By Lemma~\ref{B2 Weyl}, the Weyl module $\Delta_{B_2}(1c)$ has exactly two composition factors $L_{B_2}(1c)$ and $L_{B_2}(0c)$, the latter afforded by $\lambda-11$. Therefore the multiplicity of the weight $\lambda-23^2$ in $V$ is $2$ and so $n_3\geq 5$, which by Lemma~\ref{multiplicity bound general lemma}
implies that $V\downarrow A$ is not MF. 

Case 6 :  $G = C_3$ and $\lambda = \omega_2+c\omega_3$.
This is entirely similar. Here we may assume $2c+3\equiv 0\mod p$. If $c\geq 4$, the $T_A$-weight 
$r-8$ is afforded by $\lambda-12^23$, $\lambda - 2^33$, $\lambda - 23^3$, $\lambda - 2^23^2$, $\lambda - 123^2$, and $\lambda - 3^4$. Therefore $n_4\geq 6$, and $V\downarrow A$ is not MF by Lemma~\ref{multiplicity bound general lemma}.
If $c\leq 3$, we must have $c =2$ and $p=7$. By \cite{lubeckOnline}, all weight spaces of $V$ are $1$-dimensional.
The $T_A$-weight $r-8$ is again afforded by the first $5$ weights listed above. On the other hand, the $T_A$-weight $r-6$ is afforded precisely by $ \lambda-123,\lambda-23^2$ and $\lambda-2^23$, implying that $n_3 = 3$. Therefore $n_4-n_3\geq 2$, and by Lemma~\ref{sufficient conditions MF lemma} we conclude that $V\downarrow A$ is not MF. 
\end{proof}

\begin{lemma}\label{A3 B3 C3 11 lemma}
    Let $G=A_3,\,B_3$ or $C_3$ and $\lambda = \omega_1+\omega_2$ or $\omega_2+\omega_3$.
    Then $V\downarrow A$ is not MF.
\end{lemma}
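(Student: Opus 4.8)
The plan is to split the statement into the finitely many pairs $(G,p)$ that can actually occur and dispose of each with the machinery already in place. First I would read off $r=\lambda\downarrow T_A$ from Table~\ref{tab:r values}: for $\lambda=\omega_1+\omega_2$ one gets $r=7,16,13$ and for $\lambda=\omega_2+\omega_3$ one gets $r=7,16,17$, according as $G=A_3,B_3,C_3$. Since a principal $A_1$ exists only for $p\geq h$ (so $p\geq 5$ for $A_3$ and $p\geq 7$ for $B_3,C_3$) and we are in the standing regime $p\leq r$, the admissible primes form the finite sets $\{5,7\}$ for $A_3$, $\{7,11,13\}$ for $B_3$ and for $C_3$ with $\omega_1+\omega_2$, and $\{7,11,13,17\}$ for $C_3$ with $\omega_2+\omega_3$.

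The main engine will be Lemma~\ref{CD lemma}. For a fixed pair I would first decide whether $\Delta(\lambda)$ is irreducible by running the Jantzen sum formula \cite[Part II, 8.19]{Jantzen}: one computes $\langle\lambda+\rho,\alpha^\vee\rangle$ over $\alpha\in\Phi^+$ and checks whether there is a positive root $\alpha$ and $m\geq 1$ with $0<mp<\langle\lambda+\rho,\alpha^\vee\rangle$ for which the linked weight $s_{\alpha,mp}\cdot\lambda=\lambda-(\langle\lambda+\rho,\alpha^\vee\rangle-mp)\alpha$ is both nonsingular and dominant. In every case except $(G,\lambda,p)=(C_3,\omega_1+\omega_2,7)$ this forces irreducibility: for all $p\geq 11$ the relevant inner products are too small to produce any term, while in the delicate $p=7$ instances (namely $B_3$ with either weight, and $C_3$ with $\omega_2+\omega_3$) the single admissible Jantzen term lands on a wall, i.e. $s_{\alpha,7}\cdot\lambda+\rho$ has two equal $\epsilon$-coordinates, and so contributes $\chi=0$. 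Once $\Delta(\lambda)$ is irreducible we have $V=\Delta(\lambda)$ and Lemma~\ref{CD lemma} applies: $r\geq p$ holds by the section hypothesis, $r\not\equiv-1\pmod p$ is immediate in each case (the residues of $7,13,16,17$ are never $p-1$), and for $G=B_3$ the extra hypothesis is automatic since $p\geq 7>\binom{4}{2}$. Thus $T(r)$ is a reducible indecomposable tilting summand of the self-dual module $V\downarrow A$, so $V\downarrow A$ is not MF.

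For $A_3$ with $\lambda=\omega_2+\omega_3$ I would avoid repeating the computation by invoking the graph automorphism of $A_3$, which interchanges $\omega_1\leftrightarrow\omega_3$, fixes $\omega_2$, and carries the principal $A_1$-subgroup to a conjugate one; hence $L(\omega_2+\omega_3)\downarrow A$ is MF if and only if $L(\omega_1+\omega_2)\downarrow A$ is, and the latter has already been settled. The single genuinely separate case is $(C_3,\omega_1+\omega_2,p=7)$, where the Jantzen term $s_{\epsilon_1+\epsilon_2,7}\cdot\lambda=\omega_1$ is dominant, so $\Delta(\lambda)$ is reducible with $L(\omega_1)$ occurring exactly once. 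Here I would switch to a dimension count: the Weyl dimension formula gives $\dim\Delta(\omega_1+\omega_2)=64$ and $\dim L(\omega_1)=6$, whence $\dim V=58$, while $r=13$ gives $B(r)\leq B_K(r)=7\cdot 8=56<58$, and Lemma~\ref{dimension bound comparison} shows $V\downarrow A$ is not MF.

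The only real obstacle in the whole argument is the careful bookkeeping of the Jantzen sum formula in the handful of $p=7$ borderline cases: one must correctly distinguish those where the unique linked weight is singular (forcing irreducibility and hence the clean application of Lemma~\ref{CD lemma}) from the lone $C_3$ exception where it is dominant, together with the small Weyl-dimension computation that the exceptional case then requires. Everything else is routine residue and inequality checking.
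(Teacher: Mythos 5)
Your proposal is correct and follows essentially the same route as the paper: both arguments dispose of every case via Lemma~\ref{CD lemma} applied to the irreducible Weyl module (your Jantzen bookkeeping, including the singular linked weights in the $p=7$ borderline cases and the residue checks $r\not\equiv -1 \bmod p$, is accurate), and both isolate $(C_3,\omega_1+\omega_2,p=7)$ as the one exception. The only divergence is in that exceptional case, where the paper counts weight multiplicities to get $n_3\geq 5$ and invokes Lemma~\ref{multiplicity bound general lemma}, whereas you compute $\dim V=64-6=58>56=B_K(13)\geq B(13)$ and invoke Lemma~\ref{dimension bound comparison}; both are valid, and your graph-automorphism reduction for $A_3$, $\omega_2+\omega_3$ is a harmless shortcut.
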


\begin{proof}
Consider $G=A_3$. Then $r=7$ and $p=5$ or $p=7$ as $r\geq p\geq h$. In both cases the Weyl module is irreducible and the conditions for Lemma~\ref{CD lemma} are satisfied, implying that $V\downarrow A$ is not MF. 

    Now consider $G=B_3$. Since $p\geq 7$, the Weyl module is irreducible. The conditions for Lemma~\ref{CD lemma} are satisfied, implying that $V\downarrow A$ is not MF. 

    Finally consider $G=C_3$. If $(\lambda,p)\neq (\omega_1+\omega_2,7)$ we can conclude as for $B_3$. Therefore assume that $\lambda = \omega_1+\omega_2$ with $p=7$. By Lemma~\ref{1 dimensional weight space lemma} we have $\dim V_{\lambda-123}=2$, and it is straightforward to see that $n_3\geq 5$, which by Lemma~\ref{multiplicity bound general lemma} implies that $V\downarrow A$ is not MF.
\end{proof}

\begin{lemma}\label{rank at least 3 end nodes lemma}
Assume that  $G$ has rank at least $3$ and $\lambda = \omega_i+\omega_j$ where $\alpha_i$ and $\alpha_j$ are end-nodes. Then $V\downarrow A$ is not MF.
\end{lemma}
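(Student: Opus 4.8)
The plan is to first pin down the exact shape of $\lambda$, then dispatch the bulk of the cases with the tilting-module criterion (Lemma~\ref{CD lemma}) together with the dimension bound (Lemma~\ref{dimension bound comparison}), reserving explicit weight computations for a short list of genuinely delicate cases. The first observation I would record is that in a connected Dynkin diagram of rank at least $3$ two distinct end-nodes can never be adjacent, so $\alpha_i$ and $\alpha_j$ are automatically non-adjacent (consistent with Proposition~\ref{first reduction lemma}(ii),(iii)), and Proposition~\ref{prop:premet} lets me read off the set of weights of $V$ from $\Delta(\lambda)$. The pairs $(G,\{\alpha_i,\alpha_j\})$ then form a finite list: $A_\ell,B_\ell,C_\ell$ with $\{\omega_1,\omega_\ell\}$; $D_\ell$ with the three pairs from $\{\omega_1,\omega_{\ell-1},\omega_\ell\}$; $E_6,E_7,E_8$ with the three pairs of end-nodes; and $F_4$ with $\{\omega_1,\omega_4\}$. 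Using Table~\ref{tab:r values} I would compute $r$ in each family (e.g.\ $r=2\ell$ for $A_\ell$, $r=\ell(\ell-1)$ for $D_\ell:\omega_{\ell-1}+\omega_\ell$), which via $h\le p\le r$ pins down the relevant primes.

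The main engine is Lemma~\ref{CD lemma}. Whenever $\Delta(\lambda)$ is irreducible I would check $r\not\equiv -1\pmod p$ and, for the $B_\ell/D_\ell$ weights lying outside the root lattice, the caveat $p>\binom{\ell+1}{2}$, resp.\ $p>\binom{\ell}{2}$, and conclude non-MF. For $A_\ell:\omega_1+\omega_\ell$ the weight is the highest root, so it lies in the root lattice (no caveat), and $r=2\ell\not\equiv -1\pmod p$ throughout $\ell+1\le p\le 2\ell$ (if $p\mid 2\ell+1$ then $p=2\ell+1>r$, impossible); hence Lemma~\ref{CD lemma} settles every $p$ with $p\nmid\ell+1$. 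When $\Delta(\lambda)$ is reducible, or when the $B_\ell/D_\ell$ caveat fails, I would fall back on Lemma~\ref{dimension bound comparison}: for the spin and last-fundamental modules of $B_\ell,C_\ell,D_\ell$ and for the $E_\ell,F_4$ modules, $\dim V$ grows faster than $B_K(r)=O(r^2)$, so $\dim V>B(r)$ for all but small rank, and Lübeck's tables \cite{lubeck,lubeckOnline} close the remaining small ranks by direct computation of $\dim V$ and of the $n_d$.

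The hard part will be the two families that are already MF in characteristic $0$ (Table~\ref{tab:p=0 big table}), where there is no dimensional slack: $A_\ell:\omega_1+\omega_\ell$ with $p\mid\ell+1$, and $B_3:\omega_1+\omega_3$. In the former, $p\mid\ell+1$ and $p\ge\ell+1$ force $p=\ell+1$; here $\Delta(\lambda)$ has composition factors $L(\lambda)$ and the trivial module, so $\dim V=(\ell+1)^2-2=p^2-2$, while $r=2p-2$ and $B_K(r)=p^2$. The saving point is the strict inequality $B(r)<B_K(r)$: since $\dim L_A(2p-2)=2p-2<2p-1$ and $\dim L_A(2p-4)=2p-6<2p-3$, one gets $B(r)\le p^2-4<\dim V$, and Lemma~\ref{dimension bound comparison} gives non-MF.

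The true crux is $B_3:\omega_1+\omega_3$, where $r=12$ and $p\in\{7,11\}$. The Jantzen sum formula shows $\Delta(101)$ is irreducible at $p=11$, and there Lemma~\ref{CD lemma} applies since $11>\binom{4}{2}=6$, $r=12\ge p$, and $12\not\equiv -1\pmod{11}$. At $p=7$, however, $\lambda+\rho=(4,2,1)$ is linked across the short-root wall ($\langle\lambda+\rho,\epsilon_1^\vee\rangle=8=7+1$), so $\Delta(101)$ is reducible with factors $L(\omega_1+\omega_3)$ and $L(\omega_3)$, whence $\dim V=48-8=40$; a direct computation gives $B(12)=12+8+4+7+5+3+1=40$ as well. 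Thus there is neither a dimension margin nor an irreducible Weyl module to exploit, and this last instance must be settled by computing the $T_A$-weight multiplicities $n_d$ of $L(101)$ from \cite{lubeckOnline} and applying the gap criterion of Lemma~\ref{sufficient conditions MF lemma}(iv) to exhibit a repeated composition factor. I expect this $p=7$ case to be the one requiring the most care.
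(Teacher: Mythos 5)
Your proposal is correct in outline but follows a genuinely different route from the paper's. The paper disposes of most families by explicit $T_A$-weight multiplicity counts: for $B_\ell,C_\ell$ with $\ell\geq 4$ and for $D_\ell$ with $\omega_{\ell-1}+\omega_\ell$ it shows $n_3\geq 5$; for $D_\ell$ with $\omega_1+\omega_{\ell-1}$ it shows $n_{\ell-1}\geq\ell+1$ via Lemma~\ref{lambda minus string dimension lemma} and Lemma~\ref{subdiagram lemma}; for the exceptional groups it imports the weight computations of \cite[Lemma~3.6]{LSTA1}; Lemma~\ref{CD lemma} is invoked only for $A_\ell$ with $p\neq\ell+1$, for $C_3$, and for $B_3$ with $p=11$. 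You invert the emphasis, making Lemma~\ref{CD lemma} and Lemma~\ref{dimension bound comparison} the workhorses and reserving weight counts for the tight cases; this is leaner where it applies, but one step needs to be made precise: when $\Delta(\lambda)$ is reducible, the inequality $\dim V>B(r)$ requires a lower bound on $\dim L(\lambda)$ rather than on $\dim\Delta(\lambda)$, and you should say where it comes from (for instance, by Proposition~\ref{prop:premet} the weights of $L(\lambda)$ coincide with those of $\Delta(\lambda)$, so $\dim L(\lambda)\geq\lvert W\lambda\rvert$, which is $2^\ell\ell$ for $B_\ell$ and grows much faster than $B_K(r)=O(\ell^4)$, with \cite{lubeck,lubeckOnline} covering the small ranks). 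Your treatment of $A_\ell$ with $p=\ell+1$ is a genuine improvement over the paper's: the estimate $B(2p-2)\leq B_K(2p-2)-4=p^2-4<p^2-2=\dim V$, using $\dim L_A(2p-2)=2p-2$ and $\dim L_A(2p-4)=2p-6$, is correct and replaces the paper's analysis of the $n_d$ and the gap criterion by a two-line dimension count. Finally, your diagnosis of $B_3$, $\lambda=\omega_1+\omega_3$, $p=7$ as the irreducible residue is exactly right ($\dim V=40=B(12)$, so no dimensional slack), and the computation you defer to is precisely what the paper does: $n_2=3$ forces $(8)$ to be a composition factor, then $n_3=3$, $n_4=4$, and $r-8=4$ is not a weight of $(8)$ while $r-6=6$ is, so Lemma~\ref{sufficient conditions MF lemma}(iv) yields a repeated composition factor; you should carry this out rather than leave it as an expectation, but it is not a gap in the approach.
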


\begin{proof}
Consider first the case where $G=A_\ell$, where $p\geq h = \ell+1$. By \cite{lubeck} the Weyl module $\Delta(\lambda)$ is irreducible if and only if $p\neq \ell+1$. If $p\neq \ell+1$, the conditions of Lemma~\ref{CD lemma} are satisfied and therefore $V\downarrow A$ is not MF. We therefore reduce to the case $p = \ell+1$, where $V$ is isomorphic to the quotient of $\Delta(\lambda)$ by a $1$-dimensional trivial submodule. For $d<\ell$, it is straightforward to see that $n_d = d+1$ (where we use that $r=2\ell=2(p-1)$).
Therefore by Lemma~\ref{sufficient conditions MF lemma}{(i)} we know that $(p+1)$ is a composition factor of $V\downarrow A$. Now the $T_A$-weight $p-3$ occurs with multiplicity one more than the $T_A$-weight $p-1$, and it does not occur as a weight in $(p+1)$, while $p-1$ does. Therefore Lemma~\ref{sufficient conditions MF lemma}{(iv)} implies that $V\downarrow A$ is not MF. 

    If $G=B_\ell$ or $C_\ell$ and $\ell\geq 4$, the first paragraph of the proof of \cite[Lemma~3.5]{LSTA1} shows that $n_3\geq 5$, so $V\downarrow A$ is not MF by Lemma~\ref{multiplicity bound general lemma}. If $G=C_3$, we can apply Lemma~\ref{CD lemma} to conclude that $V\downarrow A$ is not MF.
    
Now assume $G=B_3$. We have $p=7$ or $p=11$, as $r=12$. If $p=11$, the Weyl module is irreducible by \cite{lubeck}, and the conditions of Lemma~\ref{CD lemma} are satisfied, implying that $V\downarrow A$ is not MF.
When $p=7$, using \cite{lubeckOnline}, we find that $n_2 = 3$ and $(r-4)$ is therefore a composition factor by Lemma~\ref{sufficient conditions MF lemma}{(i)}. Furthermore, we have $n_3 = 3$, $n_4 = 4$ and the $T_A$-weight $r-8$ does not occur as a weight in $(r-4)$, while the $T_A$-weight $r-6$ does. Therefore  Lemma~\ref{sufficient conditions MF lemma}{(iv)} implies that $V\downarrow A$ is not MF.

    Now consider $G=D_\ell$, with $\ell\geq 4$. If $\lambda = \omega_1+\omega_{\ell-1}$, the $T_A$-weight $r-2(\ell-1)$ is afforded by $\lambda-1\dots (\ell-1)$, $\lambda-2\dots \ell$, and $\lambda-1\dots (\ell-2)\ell$. Since $p\geq h$ we have $p>\ell$, and therefore by Lemma~\ref{lambda minus string dimension lemma} and Lemma~\ref{subdiagram lemma} we have $\dim V_{\lambda-1\dots (\ell-1)} = \ell -1$. Therefore $n_{\ell-1} \geq \ell+1$ and Lemma~\ref{multiplicity bound general lemma} implies that $V\downarrow A$ is not MF. It is also easy to see that if $\lambda = \omega_{\ell-1}+\omega_\ell$, we have $n_3\geq 5$. Again Lemma~\ref{multiplicity bound general lemma} implies that $V\downarrow A$ is not MF.

    Finally, if $G$ is exceptional, the arguments used in the proof of \cite[Lemma~3.6]{LSTA1} in characteristic zero allow us to conclude, as \cite{lubeckOnline} shows that the relevant weight spaces in $V$ have the same dimension as the corresponding weight spaces in $\Delta_K(\lambda)$.
\end{proof}

\begin{proposition}\label{proposition rank at least 3 double support}
    Suppose that $G$ has rank at least $3$ and $\lambda = c_i\omega_i+c_j\omega_j$ with $c_i,c_j\geq 1$. Then $V\downarrow A$ is not MF.
\end{proposition}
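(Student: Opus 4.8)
The plan is to prove this statement by assembling the preceding lemmas of this subsection: assume for contradiction that $V\downarrow A$ is MF, and use Proposition~\ref{first reduction lemma} to force $\lambda$ into one of a short list of configurations, each of which is already known to yield a non-MF restriction. Since $\lambda = c_i\omega_i+c_j\omega_j$ has support on exactly the two nodes $\alpha_i,\alpha_j$, the natural first dichotomy is whether these two nodes are adjacent in the Dynkin diagram.

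First I would treat the case where $\alpha_i$ and $\alpha_j$ are non-adjacent. Here Proposition~\ref{first reduction lemma}(ii) forces $c_i=c_j=1$, and Proposition~\ref{first reduction lemma}(iii) forces both $\alpha_i$ and $\alpha_j$ to be end-nodes. Thus $\lambda=\omega_i+\omega_j$ with $\alpha_i,\alpha_j$ end-nodes, which is exactly the hypothesis of Lemma~\ref{rank at least 3 end nodes lemma}; that lemma gives that $V\downarrow A$ is not MF, contradicting our assumption.

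Next I would treat the adjacent case. If $G$ has rank at least $4$, then Lemma~\ref{rank at least 4 adjacent reduction} applies directly and gives the conclusion. So suppose $G$ has rank exactly $3$; since $D_\ell$ and the exceptional groups only arise in rank $\geq 4$, we have $G\in\{A_3,B_3,C_3\}$, and the only adjacent pairs of nodes are $\{1,2\}$ and $\{2,3\}$. By Proposition~\ref{first reduction lemma}(v), not both $c_i,c_j$ can exceed $1$ (otherwise $G$ would have rank $2$), so $\min\{c_i,c_j\}=1$. If $c_i=c_j=1$, then $\lambda\in\{\omega_1+\omega_2,\omega_2+\omega_3\}$ and Lemma~\ref{A3 B3 C3 11 lemma} applies. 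If exactly one coefficient exceeds $1$, then for $G=B_3$ or $C_3$ the four possibilities $\{c\omega_1+\omega_2,\omega_1+c\omega_2,c\omega_2+\omega_3,\omega_2+c\omega_3\}$ are precisely those covered by Lemma~\ref{B3 and C3 c1 reduction}; for $G=A_3$, Lemma~\ref{A3 c10 and 1c0 reduction} covers the $\{1,2\}$-supported weights $c\omega_1+\omega_2$ and $\omega_1+c\omega_2$, and the graph automorphism of $A_3$ interchanging $\alpha_1$ and $\alpha_3$ (which carries the principal $A_1$-subgroup to a conjugate and hence preserves the MF property) reduces the $\{2,3\}$-supported weights to these. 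In every case we reach a non-MF restriction, contradicting the assumption and completing the proof.

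I expect the genuine content to reside entirely in the lemmas already proved; the proposition itself is organizational. The only points requiring care are confirming that the rank-$3$ case list is exhaustive---in particular that Proposition~\ref{first reduction lemma}(iv),(v) leave no adjacent configuration with both coefficients large---and justifying the use of the $A_3$ graph automorphism to pass from $\{2,3\}$- to $\{1,2\}$-supported weights. Neither of these is a serious obstacle, so the main difficulty of the whole subsection lies upstream, in the weight-multiplicity and dimension estimates of the individual lemmas rather than in this final assembly.
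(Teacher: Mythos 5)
Your proof is correct and takes essentially the same route as the paper's: the identical case division (non-adjacent versus adjacent, rank $\geq 4$ versus rank $3$) assembled from the same supporting lemmas. The only difference is that you make explicit the $A_3$ graph-automorphism step needed to reduce the $\{2,3\}$-supported weights $c\omega_2+\omega_3$ and $\omega_2+c\omega_3$ to Lemma~\ref{A3 c10 and 1c0 reduction}, a point the paper leaves implicit when it states that the three rank-$3$ lemmas ``combine'' to give the conclusion.
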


\begin{proof}
By Lemma~\ref{rank at least 4 adjacent reduction}, if $\alpha_i$ and $\alpha_j$ are adjacent and $G$ has rank at least $4$, the module $V\downarrow A$ is not MF. If $\alpha_i$ and $\alpha_j$ are adjacent and $G$ has rank $3$, Lemma~\ref{A3 c10 and 1c0 reduction}, Lemma~\ref{B3 and C3 c1 reduction} and Lemma~\ref{A3 B3 C3 11 lemma} combine to imply that $V\downarrow A$ is not MF.

We have therefore reduced to $\alpha_i$ and $\alpha_j$ not adjacent, in which case by Proposition~\ref{first reduction lemma} we can assume that $c_i=c_j=1$ and $\alpha_i$ and $\alpha_j$ are both end-nodes. In this case, by Lemma~\ref{rank at least 3 end nodes lemma} we conclude that $V\downarrow A$ is not MF.
\end{proof}

\subsection{The case where $\lambda = b\omega_i$}

We now consider the case $\lambda = b\omega_i$. Note that if $G$ is classical, then $\lambda\ne \omega_1$, as we are assuming that $p\leq r$, and necessarily $p\geq h$.
\begin{lemma}\label{rank >= 3 2w_1 lemma}
     Assume that $G=A_{\ell}, B_{\ell},C_{\ell}$ with $\ell\geq 3$ or $G=D_{\ell}$ with $\ell\geq 4$. Let $\lambda = b\omega_1$, with $b\geq 2$. Then $V\downarrow A$ is not MF.
\end{lemma}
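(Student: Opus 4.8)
The plan is to argue type by type, in each case first pinning down the structure of the Weyl module $\Delta(b\omega_1)$ and then invoking either the tilting-module criterion of Lemma~\ref{CD lemma} or one of the dimension bounds of Lemmas~\ref{dimension bound comparison} and \ref{dimension bound for omega_i}. Note that $\alpha_1$ is an end-node, so Lemma~\ref{bw_i reduction lemma} gives no immediate contradiction and genuine work is required. From Table~\ref{tab:r values} one reads off $r = b\ell,\ 2b\ell,\ b(2\ell-1),\ b(2\ell-2)$ for $G$ of type $A_\ell, B_\ell, C_\ell, D_\ell$ respectively, so that the estimate $B(r)\le B_K(r)\approx r^2/4$ grows only quadratically in $b$, whereas $\dim\Delta(b\omega_1)$ (a symmetric, resp.\ harmonic, degree-$b$ construction on the natural module) is a polynomial in $b$ of degree essentially equal to the rank. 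This degree mismatch is the engine of the argument once $b$ or $\ell$ is large.

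First I would determine, via \cite{lubeck}, exactly when $\Delta(b\omega_1)$ is irreducible. For type $A_\ell$ the module $\Delta(b\omega_1)=S^b(W)$ is irreducible for every $p$-restricted $b$ (one checks that all Jantzen-sum terms $\chi(b\omega_1-mp\alpha)$ vanish after the $\rho$-shift because of a repeated coordinate), so $V=\Delta(b\omega_1)$; for the remaining types $\Delta(b\omega_1)$ is irreducible for all $p\ge h$ apart from finitely many congruence obstructions on $p$. Whenever $\Delta(b\omega_1)$ is irreducible and $r\not\equiv -1\pmod p$, Lemma~\ref{CD lemma} applies at once and yields that $V\downarrow A$ is not MF. Here one must check the extra hypothesis of Lemma~\ref{CD lemma} for types $B_\ell,D_\ell$: since the root lattice of $B_\ell$ is all of $\mathbb{Z}^\ell$ and that of $D_\ell$ consists of the even-sum vectors, $b\omega_1=b\epsilon_1$ lies in the root lattice whenever $b$ is even (in particular for $b=2$), so no extra condition arises; for $b$ odd one has $b\ge 3$ and falls into the dimension-bound regime below.

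It then remains to treat two residual families. When $r\equiv -1\pmod p$, the tilting criterion is unavailable, but since in these situations $r\not\equiv 0\pmod p$ one may instead invoke the sharper bound $\dim V\le B(r)-\dim (r-2)$ of Lemma~\ref{dimension bound for omega_i}; combined with the degree mismatch above, this forces $\dim V>B(r)-\dim(r-2)$ for all but small $(\ell,b,p)$. When $\Delta(b\omega_1)$ is reducible I would bound $\dim V=\dim L(b\omega_1)$ from below by $\dim\Delta(b\omega_1)-\dim\Delta(\mu)$, where $\mu$ is the highest weight of the relevant lower composition factor (read off from \cite{lubeck}), and compare with $B_K(r)$ through Lemma~\ref{dimension bound comparison}; again the polynomial gap yields $\dim V>B(r)$ once $b$ or $\ell$ is large.

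The main obstacle is the clean-up of the finitely many remaining small cases of low rank and small $b$ for which the dimension estimates are too coarse and for which $r\equiv -1\pmod p$ blocks Lemma~\ref{CD lemma}. For these I would compute the weight multiplicities $n_d$ directly from \cite{lubeckOnline}, using Lemma~\ref{n_d same as p=0 when Weyl module irreducible lemma} where the Weyl module is irreducible, and then exhibit a repeated composition factor through Lemma~\ref{sufficient conditions MF lemma}: typically either $n_d-n_{d-1}\ge 2$ for some admissible $d$, or some $(r-2d)$ is forced to occur twice because the weight $r-2d$ lies in a $p$-gap of a higher factor $(r-2d')$. Controlling the borderline irreducibility of $\Delta(b\omega_1)$ in the exceptional characteristics, and keeping the orthogonal root-lattice hypotheses of Lemma~\ref{CD lemma} under control for odd $b$, are the fiddly points of the write-up.
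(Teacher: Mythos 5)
Your framework (irreducibility of $\Delta(b\omega_1)$ $\Rightarrow$ Lemma~\ref{CD lemma}, otherwise dimension bounds, plus a finite cleanup) covers most of the ground, but it has one genuine hole: the infinite family $G=B_\ell$, $\lambda=2\omega_1$, $p=2\ell+1$. There $\Delta(2\omega_1)$ is reducible ($V=\Delta(2\omega_1)/\mathcal{k}$, of dimension $\tfrac{p(p+1)}{2}-2$), so Lemma~\ref{CD lemma} is unavailable, and \emph{every} dimension bound fails: with $r=4\ell=2p-2$ one computes $B_K(r)=p^2$ and even the sharper quantity $B(r)-\dim(r-2)=\tfrac{3p^2-6p+23}{4}$, both of which exceed $\dim V\approx \tfrac{p^2}{2}$ for all $\ell$. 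Your ``degree mismatch'' engine is powerless here precisely because $b=2$ makes $\dim V$ and $B(r)$ both quadratic in $\ell$, with the wrong inequality; increasing $\ell$ never helps. So this is not one of ``finitely many small cases of low rank,'' and it cannot be settled by reading $n_d$ off \cite{lubeckOnline} for unbounded $\ell$. The paper's proof devotes a dedicated argument to exactly this family: it uses the general decomposition $\Delta_K(2\omega_1)\downarrow A_K=(4\ell)+(4\ell-4)+\cdots$ from \cite[Lemma~4.2]{LSTA1} to get $n_d$ uniformly in $\ell$ (the increments $n_d-n_{d-1}$ are only ever $0$ or $1$, so Lemma~\ref{sufficient conditions MF lemma}(ii) never fires), and then locates a repeated composition factor via the $p$-gap criterion of Lemma~\ref{sufficient conditions MF lemma}(iv) applied to $(p+1)$ or $(p+3)$. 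Your cleanup paragraph names the right technique but misjudges its scope; without an explicit uniform-in-$\ell$ treatment of this family the proof is incomplete.

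Two smaller points. First, for $b\geq 3$ the paper's main engine is not dimensions but weight-multiplicity counts ($n_d\geq d+2$ via Lemma~\ref{sufficient conditions MF lemma}(iii)), together with the reductions $C_\ell\subset A_{2\ell-1}$ and $A\leq B_{\ell-1}<D_\ell$; your dimension-based route is viable there but leaves a noticeably larger finite residue (e.g.\ all of $3\leq\ell\leq 13$ for $b=3$ in type $A$) that you would have to check case by case. Second, your root-lattice discussion is slightly off: $b\omega_1=b\varepsilon_1$ lies in the root lattice of $B_\ell$ for \emph{all} $b$ (the root lattice is all of $\mathbb{Z}^\ell$), so the extra hypothesis of Lemma~\ref{CD lemma} only ever intervenes for $D_\ell$ with $b$ odd; this error is in the safe direction but should be fixed.
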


\begin{proof}
    We first consider the case $b=2$ and start by assuming that $(G,p)\neq (B_\ell,2\ell+1)$. By \cite{lubeck} and since $p\geq h$, the Weyl module is irreducible. A simple check shows that the conditions of Lemma~\ref{CD lemma} are satisfied, implying that $V\downarrow A$ is not MF. Consider now the case $G = B_\ell$ and $p = 2\ell+1$, where $V$ is isomorphic to the quotient of  $\Delta(\lambda)$ by a $1$-dimensional trivial submodule. For all strictly positive weights \(r-2d\), we have $n_d = \dim(\Delta_K(\lambda)\downarrow A_{K})_{r-2d}$. By \cite[Lemma~4.2]{LSTA1}, we have $\Delta_K(\lambda)\downarrow A_K = (4\ell) + (4\ell-4)+\dots $, which implies $n_d = d+1$ for $d$ even with $d< 2\ell $, and $n_{d+1} = n_d$ for $d$ odd with $d+1< 2\ell $. By Lemma~\ref{sufficient conditions MF lemma}{(i)}, for all $0\leq d < 2\ell$ we have that $(r-2d)$ is a composition factor of $V\downarrow A$. In particular, either $(p+1)$ or $(p+3)$ is a composition factor of $V\downarrow A$. In the first case, the $T_A$-weight $p-3$ occurs with multiplicity one more than the $T_A$-weight $p-1$, but does not occur as a weight in $(p+1)$. Therefore Lemma~\ref{sufficient conditions MF lemma}{(iv)} implies that $V\downarrow A$ is not MF. Similarly, if $(p+3)$ is a composition factor of $V\downarrow A$, then the $T_A$-weight $p-5$ (note that $p>5$ since $\ell\geq 3$) occurs with multiplicity one more than the $T_A$-weight $p-3$, but does not occur as a weight in $(p+3)$, concluding in the same way.

    Now consider the case $b\geq 3$. Start with $G=A_\ell$. Here $V = \Delta(\lambda)$ by \textup{\cite[\nopp 1.14]{seitz_mem_class}}. 
    If $\ell \geq 6$, the first paragraph of the proof of \cite[Lemma~4.4]{LSTA1} shows that $n_6\geq 7$, which by Lemma~\ref{sufficient conditions MF lemma}{(iii)} implies that $V\downarrow A$ is not MF. 
    If $b = 4$ and $\ell = 4$ or $\ell = 5$, we similarly have $n_4\geq 5$. If $b\geq 5$ and $(b,\ell)\neq (5,3)$, we have $n_6-n_5\geq 2$. Therefore by Lemma~\ref{sufficient conditions MF lemma}{(iii)} and {(ii)}, we reduce down to the cases $(b,\ell) = (4,3)$, $(5,3)$, $(3,3)$, $(3,4)$, $(3,5)$. For these cases we can conclude using Lemma~\ref{CD lemma}, unless $\ell=b=3$ and $p=5$. In this case $r = 9$ and the weights $9,7,5,3$ occur respectively with multiplicities $1,1,2,3$. By Lemma~\ref{sufficient conditions MF lemma}{(i)}, we have that $(5)$ is a composition factor, and in addition $r-6$ does not occur as weight in this composition factor. Therefore by  Lemma~\ref{sufficient conditions MF lemma}{(iv)} the module $V\downarrow A$ is not MF.
     
     The $C_\ell$ case follows from the $A_{2\ell-1}$ case since $A<C_{\ell}<A_{2\ell-1}$ is a principal $A_1$-subgroup of $A_{2\ell-1}$ and $V=S^b(L_{C_{\ell}}(\omega_1)) = L_{A_{2\ell-1}}(b\omega_1)\downarrow C_\ell$. Now consider $G=B_\ell$. If $b\geq 4$, it is straightforward to check that $n_4\geq 5$, which by Lemma~\ref{sufficient conditions MF lemma}{(iii)} implies that $V\downarrow A$ is not MF. If $b=3$ and $\ell\geq 4$, by the proof of \cite[Lemma~4.4]{LSTA1}, we have $n_6\geq 7$, concluding in the same way. If $\ell=b=3$ we have $p\leq r=18$ and by Lemma~\ref{CD lemma} the module $V\downarrow A$ is not MF. Finally, we consider the case where $G=D_\ell$ where we have $A\leq B_{\ell-1}<G$. Since $\Delta_{B_{\ell-1}}(b\omega_1)$ is a composition factor of $\Delta_{D_{\ell}}(b\omega_1)$, if $\Delta_{D_{\ell}}(b\omega_1)\downarrow A$ is MF, so is $\Delta_{B_{\ell-1}}(b\omega_1)$. Therefore by the $B_\ell$ result, we conclude that $V\downarrow A$ is not MF.
 \end{proof}

\begin{lemma}\label{b omega_i i>1 b>1 lemma}
Assume that $G=B_\ell,C_\ell$ with $\ell\geq 3$ or $G=D_\ell$ with $\ell\geq 4$. Let $\lambda = b\omega_i$, with $i>1$ and $b>1$. Then $V\downarrow A$ is not MF.
 \end{lemma}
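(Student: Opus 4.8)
The plan is to reduce to the spin and half-spin fundamental weights and then dispose of those by a split into an irreducibility argument and a dimension argument, with a short list of small cases handled by explicit weight data. First I would apply Lemma~\ref{bw_i reduction lemma}: since $b\geq 2$, if $V\downarrow A$ were MF then $\alpha_i$ would have to be an end-node. As $i>1$, for $G=B_\ell,C_\ell$ this forces $i=\ell$, and for $G=D_\ell$ it forces $i\in\{\ell-1,\ell\}$. Next I would eliminate the $D_\ell$ case exactly as in the last paragraph of Lemma~\ref{rank >= 3 2w_1 lemma}: the principal $A_1$ of $D_\ell$ is also the principal $A_1$ of the naturally embedded $B_{\ell-1}$ (both contain a regular unipotent of Jordan type $(2\ell-1,1)$), and each half-spin module of $D_\ell$ restricts to $B_{\ell-1}$ with highest-weight composition factor $L_{B_{\ell-1}}(b\omega_{\ell-1})$, the spin node of $B_{\ell-1}$, which by Table~\ref{tab:r values} carries the same value of $r$. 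Hence MF-ness of $V\downarrow A$ would force MF-ness of $L_{B_{\ell-1}}(b\omega_{\ell-1})\downarrow A$, so the $D_\ell$ statement follows from the $B_{\ell-1}$ statement. This leaves only $G=B_\ell$ and $G=C_\ell$ with $\lambda=b\omega_\ell$.

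For these I would split on reducibility of the Weyl module. If $\Delta(b\omega_\ell)$ is irreducible — the generic situation, decided by the Jantzen sum formula and, in low rank, by L\"ubeck's tables \cite{lubeck} — I apply Lemma~\ref{CD lemma}. From Table~\ref{tab:r values} one has $r=\tfrac{\ell(\ell+1)}{2}b$ for $B_\ell$ and $r=\ell^2 b$ for $C_\ell$, so $r\geq p$ holds by hypothesis; one checks $r\not\equiv -1\bmod p$, and when $b$ is even the weight $b\omega_\ell$ lies in the root lattice, so the auxiliary hypothesis on $p$ for type $B_\ell$ is vacuous (for $C_\ell$ that hypothesis is never required). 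When $\Delta(b\omega_\ell)$ is reducible I would argue by dimension, via Lemma~\ref{dimension bound comparison} together with the sharper Lemma~\ref{dimension bound for omega_i}: since $\lambda=b\omega_\ell$ and $r\not\equiv 0\bmod p$ in these residual cases, MF forces $\dim V\leq B(r)-\dim(r-2)$.

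By the Weyl dimension formula, $\dim\Delta(b\omega_\ell)$ is a polynomial in $b$ whose degree equals the number of positive roots not orthogonal to $\omega_\ell$, namely $\tfrac{\ell(\ell+1)}{2}\geq 6$, whereas $B(r)\leq B_K(r)$ is quadratic in $b$; hence the bound of Lemma~\ref{dimension bound comparison} is violated for all but finitely many $(\ell,b,p)$, giving the result in those cases. The main obstacle is the short list of genuinely small cases — essentially $\ell=3$, and to a lesser extent $\ell=4$, with $b$ small and $p$ close to $h$ — where the crude inequality $\dim V>B_K(r)$ fails (for instance $\dim\Delta_{B_3}(2\omega_3)=35<B_K(12)=49$ and $\dim\Delta_{C_3}(2\omega_3)=84<B_K(18)=100$).

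For these I would first invoke the refined bound $B(r)-\dim(r-2)$ of Lemma~\ref{dimension bound for omega_i}, which already eliminates the irreducible instances (e.g. for $B_3$, $2\omega_3$, $p=7$ it gives $\dim V\leq 32<35$). For the remaining reducible instances I would read off $\dim L(b\omega_\ell)$ and the low weight multiplicities $n_0,n_1,n_2,\dots$ from L\"ubeck's tables \cite{lubeck,lubeckOnline}, or compute them from a rank-$2$ Levi using Lemma~\ref{subdiagram lemma} and Lemma~\ref{n_d same as p=0 when Weyl module irreducible lemma}, and then conclude with Lemma~\ref{sufficient conditions MF lemma}: either some $n_d\geq d+1$ forces a repeated composition factor by part~(iii), or a gap mismatch $\mathbf S_{d-1}\neq\mathbf S_d$ together with $n_d-n_{d-1}=1$ does so by part~(iv). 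Establishing irreducibility of $\Delta(b\omega_\ell)$ throughout the boundary range $h\leq p\leq r$, and clearing this finite list of exceptions, is where the real work lies.
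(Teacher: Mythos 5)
Your two reduction steps (forcing $\alpha_i$ to be an end-node via Lemma~\ref{bw_i reduction lemma}, and passing from $D_\ell$ to $B_{\ell-1}$ with the observation that $r$ is preserved) coincide with the paper's. The core of the argument, however, has a genuine gap. For $\lambda=b\omega_\ell$ with $\Delta(\lambda)$ reducible you propose to conclude by a dimension count, justified by the fact that $\dim\Delta(b\omega_\ell)$ is a polynomial in $b$ of degree $\tfrac{\ell(\ell+1)}{2}$ while $B(r)\leq B_K(r)$ is quadratic in $b$. But Lemmas~\ref{dimension bound comparison} and~\ref{dimension bound for omega_i} bound $\dim V=\dim L(b\omega_\ell)$, not $\dim\Delta(b\omega_\ell)$, and in the reducible case the former can be much smaller than the latter; the polynomial growth of the Weyl module's dimension gives no lower bound on $\dim L(b\omega_\ell)$ without knowing the composition factor structure of $\Delta(b\omega_\ell)$, which for the spin node of $B_\ell$ and the last node of $C_\ell$ is not available (and L\"ubeck's tables \cite{lubeck} only cover small dimensions, so they cannot supply $\dim L$ for large $b$). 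A second, related problem is the irreducible branch: the claim that ``one checks $r\not\equiv -1\bmod p$'' is false in general (e.g.\ $C_3$, $b=3$, $p=7$ gives $r=27\equiv -1$), and for $B_\ell$ with $b$ odd the weight is not in the root lattice, so Lemma~\ref{CD lemma} additionally requires $p>\binom{\ell+1}{2}$, which fails for $p$ near $h$. These exceptional cases are not a finite list independent of $b$, so the strategy does not close.

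The paper avoids both issues by never estimating $\dim L(b\omega_\ell)$ in the main cases. For $b\geq 3$ it lists four weights of $\lambda$ restricting to the $T_A$-weight $r-6$ (for $B_\ell$, three weights, one of which has multiplicity $2$ by restricting to a rank-$2$ Levi via Lemma~\ref{subdiagram lemma}, where the relevant $B_2$ Weyl module is irreducible); by Proposition~\ref{prop:premet} these are weights of $L(\lambda)$ whatever the submodule structure of $\Delta(\lambda)$, so $n_3\geq 4$ and Lemma~\ref{sufficient conditions MF lemma}(iii) applies uniformly in $b$ and $p$. The case $b=2$ is handled the same way for $\ell\geq 4$ ($n_4\geq5$ or $n_5\geq 6$), leaving only $\ell=3$ and $C_4$, where the Weyl module is known to be irreducible and Lemma~\ref{CD lemma} or the characteristic-zero weight data of \cite{lubeckOnline} finish. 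If you want to keep your dimension-based plan, you would at minimum need to determine when $\Delta(b\omega_\ell)$ is reducible and produce lower bounds for $\dim L(b\omega_\ell)$ in those cases; the weight-counting route is substantially cheaper.
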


 \begin{proof}
 By Lemma~\ref{bw_i reduction lemma} we can assume that $\lambda = b\omega_\ell$. We will treat the case $G = D_\ell$ at the end of the proof.
 
Assume for now that $b\geq 3$. If $G=C_\ell$, the $T_A$-weight $r-6$ is afforded by $\lambda-\ell^3$, $\lambda-(\ell-2)(\ell-1)\ell$, $\lambda-(\ell-1)^2\ell$,
   $\lambda-(\ell-1)\ell^2$. If $G=B_\ell$, the $T_A$-weight $r-6$ is afforded by $\lambda-(\ell-1)\ell^2$, $\lambda-(\ell-2)(\ell-1)\ell$ and $\lambda-\ell^3$, and using the fact
   that the $B_2$-module $\Delta(b\omega_2)$ is irreducible, by Lemma~\ref{subdiagram lemma}, we have that the first of these weights has multiplicity $2$. Hence for both of the groups $C_\ell$ and $B_\ell$, we have $n_3\geq 4$. By Lemma~\ref{sufficient conditions MF lemma}{(iii)}, the module $V\downarrow A$ is not MF. 

We now consider the case $b=2$ when $G = C_\ell$ and first assume that $\ell\geq 5$. The $T_A$-weight $r-10$ is afforded by $\lambda-(\ell-1)^3\ell^2$, $\lambda-(\ell-2)(\ell-1)^2\ell^2$, $\lambda-(\ell-1)^2\ell^3$, $\lambda-(\ell-2)^2(\ell-1)^2\ell$,
   $\lambda-(\ell-3)(\ell-2)(\ell-1)\ell^2$ and $\lambda-(\ell-4)(\ell-3)(\ell-2)(\ell-1)\ell$. Again, by Lemma~\ref{sufficient conditions MF lemma}{(iii)} the module $V\downarrow A$ is not MF.

   Now consider the cases $C_\ell$, for $\lambda = 2\omega_\ell$ and $\ell=3,4$ where $r=18$, respectively $32$, and $p\geq 7$, respectively $11$. In both cases we have that $\Delta(2\omega_\ell)$ is irreducible by \cite{lubeck}. For $\ell=3$, the conditions of Lemma~\ref{CD lemma} are satisfied, implying that $V\downarrow A$ is not MF.
   If $\ell=4$, by the first paragraph of \cite[Lemma~4.3]{LSTA1} we have $\dim (\Delta_K(\lambda)\downarrow A_{K})_{r-12} \geq \dim (\Delta_K(\lambda)\downarrow A_{K})_{r-10}+2$. Therefore by Lemma~\ref{n_d same as p=0 when Weyl module irreducible lemma} we find that $n_6-n_5\geq 2$, concluding by Lemma~\ref{sufficient conditions MF lemma}{(ii)}.  
 
   Turn now to the case $G = B_\ell$ and $b=2$. Here the $T_A$-weight $r-8$ is afforded by
   $\lambda-(\ell-2)(\ell-1)\ell^2$, $\lambda-(\ell-1)^2\ell^2$,
   $\lambda-(\ell-1)\ell^3$ and if $\ell\geq4$, $\lambda-(\ell-3)(\ell-2)(\ell-1)\ell$. The first of these weights is conjugate to $\lambda-(\ell-1)\ell^2$ and so
   has multiplicity $2$ by the first paragraph of this proof. Therefore
   if $\ell\geq 4$, $V\downarrow A$ is not MF by Lemma~\ref{sufficient conditions MF lemma}{(iii)}. So finally, we reduce to $b=2$ and $\ell=3$, where $p\geq 7$ and $r = 12$. The Weyl module is irreducible by \cite{lubeck}. The conditions of  Lemma~\ref{CD lemma} are satisfied, implying that $V\downarrow A$ is not MF.

   Finally suppose that $G = D_\ell$ and $\lambda = b\omega_\ell$. Since $A\leq  B_{\ell-1} \leq D_{\ell}$ and $V\downarrow B_{\ell-1}\cong L_{B_{\ell-1}}(b\omega_{\ell-1})$, we
   may use the $B_{\ell-1}$ result to
   conclude.
 \end{proof}

 \begin{lemma}\label{En case cw_i c>1 lemma}
 If $G = E_\ell$ and $\lambda=b\omega_i$ with $b>1$, then  $V\downarrow A$ is not MF.  
 \end{lemma}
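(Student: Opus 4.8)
The plan is to first apply Lemma~\ref{bw_i reduction lemma}: since $b\ge 2$, if $V\downarrow A$ were MF then $\alpha_i$ would have to be an end-node of the Dynkin diagram. Reading off the diagram of $E_\ell$ (labelled as in \cite{bourbaki46}), the end-nodes are $\alpha_1,\alpha_2,\alpha_\ell$, so it suffices to treat $\lambda=b\omega_i$ with $i\in\{1,2,\ell\}$. Recall the standing hypotheses of the section: $\lambda$ is $p$-restricted, so $2\le b\le p-1$; moreover $p\ge h$ and $p\le r$, where $r=b\,r_1$ and $r_1=\omega_i\downarrow T_A$ is the fixed integer recorded in Table~\ref{tab:r values}.

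The main engine is the dimension bound of Lemma~\ref{dimension bound comparison}, which forces $\dim V\le B(r)\le B_K(r)\le(\tfrac r2+1)^2$ whenever $V\downarrow A$ is MF; since $r=b\,r_1$, this upper bound is quadratic in $b$. I will contradict it using a lower bound on $\dim V$ that grows much faster. By Proposition~\ref{prop:premet} the set of $T$-weights of $V=L(b\omega_i)$ coincides with that of $\Delta(b\omega_i)$, and as every weight of $V$ occurs with multiplicity at least one, $\dim V\ge\#\{\text{weights of }\Delta(b\omega_i)\}$. The weights of $\Delta(b\omega_i)$ are exactly the points of the coset $b\omega_i+\mathbb Z\Phi$ lying in the convex hull $b\cdot\mathrm{conv}(W\omega_i)$; as $W$ acts irreducibly on the reflection representation this hull is $\ell$-dimensional, so by a standard lattice-point (Ehrhart) count this number of weights grows like a polynomial of degree $\ell\ge 6$ in $b$. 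A quantity of degree $\ge 6$ eventually dominates the degree-$2$ bound $B(r)$, so $\dim V>B(r)$ for all $b$ past an explicit small threshold, and Lemma~\ref{dimension bound comparison} then shows that $V\downarrow A$ is not MF.

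It remains to dispatch the finitely many small values of $b$ (equivalently, of $p$, via $p\le r=b\,r_1$) left for each of the nine pairs $(E_\ell,\omega_i)$. When $\Delta(b\omega_i)$ is irreducible and $r\not\equiv-1\pmod p$, Lemma~\ref{CD lemma} applies verbatim, as its auxiliary hypotheses concern only types $B_\ell$ and $D_\ell$. In the residual cases---where the Weyl module is reducible, or $r\equiv-1\pmod p$---I would instead read the precise weight multiplicities of $L(b\omega_i)$ from L\"ubeck's data \cite{lubeck,lubeckOnline}, and either verify $\dim V>B(r)$ directly or exhibit a small $d$ with $n_d\ge d+1$ and invoke Lemma~\ref{sufficient conditions MF lemma}{(iii)}. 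The main obstacle lies precisely here: for reducible Weyl modules one can no longer transport the weight multiplicities $n_d$ from characteristic zero (Lemma~\ref{n_d same as p=0 when Weyl module irreducible lemma} fails), so genuine positive-characteristic information is required; and the tightest comparisons arise for the minuscule-type end-nodes---for instance $\lambda=2\omega_6$ in $E_6$, where $\dim V$ exceeds $B(r)$ only by a narrow margin, and where a naive reduction to the classical subsystem $A_5$ produces the weight $2\omega_1$, which is itself MF and hence yields no contradiction.
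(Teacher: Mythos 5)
Your general toolkit is the right one, but as written the argument has a genuine gap: everything is deferred to an asymptotic threshold that you never compute, and the cases below that threshold --- which are precisely where the lemma has content --- are only gestured at. The lattice-point count does show that the number of distinct weights of $\Delta(b\omega_i)$ grows like $b^\ell$ while $B_K(r)$ grows like $b^2$, but for the small values of $b$ that survive (recall $b\le p-1$ and $p\le r=br_1$ forces $p$, hence everything, into a finite range once $b$ is bounded) the count of distinct weights is typically \emph{below} $B(r)$: for example for $E_8$ and $\lambda=2\omega_8$ one has $B_K(116)=3481$, and a bare count of lattice points in twice the root polytope does not obviously beat this. So the dimension bound via ``number of weights'' does no work exactly where work is needed, and your fallback (``read the multiplicities from L\"ubeck's data, or find some $d$ with $n_d\ge d+1$'') is a plan, not a proof. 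Your reduction to end-nodes via Lemma~\ref{bw_i reduction lemma} and your observation that Lemma~\ref{CD lemma} applies without side conditions in type $E_\ell$ are both fine, but they do not close the reducible-Weyl-module cases.

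More importantly, the ``main obstacle'' you identify --- that for reducible Weyl modules one cannot transport the counts $n_d$ from characteristic zero --- is not actually an obstacle, and seeing why is the whole point of the paper's proof. Lemma~\ref{n_d same as p=0 when Weyl module irreducible lemma} indeed requires $V\cong\Delta(\lambda)$, but Proposition~\ref{prop:premet} guarantees unconditionally that $L(\lambda)$ and $\Delta(\lambda)$ have the \emph{same set} of weights; hence any weight of multiplicity $1$ in $\Delta(\lambda)$ has multiplicity exactly $1$ in $L(\lambda)$, reducible Weyl module or not. The characteristic-zero proof of \cite[Lemma~4.6]{LSTA1} establishes non-MF-ness by exhibiting, for each $b\omega_i$, a small $d$ with $n_d\ge d+2$ (so that Lemma~\ref{multiplicity bound general lemma} or Lemma~\ref{sufficient conditions MF lemma}(iii) applies), and those counts are assembled from multiplicity-one weight spaces. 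The paper therefore imports that proof verbatim, and only the cases $i=\ell$, $G=E_7$ or $E_8$, $b\in\{2,3\}$ need a separate check that $n_6\ge n_5+2$ still follows from one-dimensional weight spaces. Your proposal, by contrast, abandons the weight-multiplicity route exactly where it is most effective and replaces it with a dimension comparison that is too coarse there.
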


 \begin{proof}
     This follows verbatim from the proof of \cite[Lemma~4.6]{LSTA1}, unless $i=\ell$ and $G=E_7$ or $E_8$ with $b=2$ or $b=3$. In these remaining cases, it is not difficult to check that we have $n_6\geq n_5+2$ (as stated in the proof of \cite[Lemma~4.6]{LSTA1}), as this count relies on $1$-dimensional weight spaces. By Proposition~\ref{first reduction lemma} the module $V\downarrow A$ is not MF.
 \end{proof}

  \begin{lemma}\label{F4 case cw_i c>1 lemma}
 If $G = F_4$ and $\lambda=b\omega_i$ with $b>1$, then $V\downarrow A$ is not MF.  
 \end{lemma}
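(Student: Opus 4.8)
The plan is to begin by reducing the number of nodes to consider. Since $b\geq 2$, Lemma~\ref{bw_i reduction lemma} tells us that if $V\downarrow A$ were MF then $\alpha_i$ would have to be an end-node of the $F_4$ diagram, so we may assume $i\in\{1,4\}$. Thus it suffices to treat $\lambda=b\omega_1$, where $r=22b\geq 44$, and $\lambda=b\omega_4$, where $r=16b\geq 32$ (using Table~\ref{tab:r values}). Recall also that $p\geq h=12$, hence $p\geq 13$, and that throughout we are in the range $p\leq r$.

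For the generic situation I would transfer the characteristic-zero analysis carried out for $F_4$ in \cite{LSTA1}, exactly as was done for the $E_\ell$ cases in Lemma~\ref{En case cw_i c>1 lemma}. The point is that the relevant count there takes place at a small depth $d$: one exhibits enough weights $\lambda-\sum a_k\alpha_k$ with $\sum a_k=d$ occurring in $V$ to force $n_d\geq d+1$. Because $p\geq 13$, any composition factor of $\Delta(\lambda)$ other than $L(\lambda)$ has highest weight lying well below depth $d$, so by Proposition~\ref{prop:premet} together with the weight-multiplicity tables of \cite{lubeckOnline} the multiplicities of the weights used in the count agree in $V=L(\lambda)$ and in $\Delta_K(\lambda)$. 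Since $\lambda=b\omega_i$ is supported on a single node, Lemma~\ref{sufficient conditions MF lemma}(iii) then yields that $V\downarrow A$ is not MF; the hypothesis $d<\min\{\lfloor(r+2)/2\rfloor,p\}$ is automatic here, as $r\geq p\geq 13$ forces $\lfloor(r+2)/2\rfloor\geq 7$ and the depth used is small (at most $6$, as in the rank-$\ell$ arguments of \cite{LSTA1}).

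This leaves a finite list of small pairs $(b,p)$ requiring separate treatment, and verifying them is the main obstacle. The delicate issue is precisely that when $\Delta(\lambda)$ is reducible with a composition factor whose highest weight is close to $\lambda$, the depth-$d$ multiplicities in $L(\lambda)$ can drop below their characteristic-zero values, so the transferred count is no longer guaranteed. For these residual cases I would argue directly: whenever $\Delta(\lambda)$ is irreducible (checked via \cite{lubeck}) and $r\not\equiv -1\pmod p$, Lemma~\ref{CD lemma} applies at once, with no extra hypothesis needed since $F_4$ is neither of type $B_\ell$ nor $D_\ell$. In the remaining handful of cases I would invoke the dimension bound of Lemma~\ref{dimension bound comparison} (computing $\dim V$ and $B(r)$ from Lübeck's tables and checking $\dim V>B(r)$), or, when $r\not\equiv 0\pmod p$, the sharper bound of Lemma~\ref{dimension bound for omega_i} giving $\dim V>B(r)-\dim(r-2)$. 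Throughout, the essential input is \cite{lubeckOnline}, which is what lets us confirm that the weights entering the $n_d$-count genuinely occur in $L(\lambda)$ and not merely in $\Delta(\lambda)$.
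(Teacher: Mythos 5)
Your proposal follows essentially the same route as the paper: reduce to the end-nodes $i\in\{1,4\}$ via Lemma~\ref{bw_i reduction lemma}, transfer the characteristic-zero weight count from \cite[Lemma~4.7]{LSTA1} (for $i=4$, $b\geq 3$ this gives $n_4\geq 5$ and Lemma~\ref{sufficient conditions MF lemma}(iii) applies), and dispose of the residual case $\lambda=2\omega_4$ via irreducibility of the Weyl module and Lemma~\ref{CD lemma}. The only difference is that the paper pins down the single remaining small case explicitly rather than leaving a ``finite list'' to be checked, but the argument is the same.
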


 \begin{proof}
    By Lemma~\ref{bw_i reduction lemma} the simple root $\alpha_i$ corresponds to an end-node of the Dynkin diagram.
     If $i=1$ we can conclude as in the first paragraph of the proof of \cite[Lemma~4.7]{LSTA1}.

     Assume $i=4$. If $b\geq 3$, like in \cite[Lemma~4.7]{LSTA1} we have $n_4\geq 5$, concluding by Lemma~\ref{sufficient conditions MF lemma}{(iii)}. If $b=2$ we have $V = \Delta(\lambda)$ by \cite{lubeck}, and since $r=32$ and $r\geq p>11$, the conditions of Lemma~\ref{CD lemma} are satisfied. Thus, $V\downarrow A$ is not MF.
 \end{proof}

It remains to consider the case $\lambda = \omega_i$. Recall that for $G$ classical, we have $\lambda\ne\omega_1$.

 \begin{lemma}\label{omega_i lots of cases classical lemma}
     Assume that  $G$ has rank at least $3$, $\lambda = \omega_i$ and that one of the following holds.
     \begin{enumerate}[label=\textnormal{(\roman*)}]
        \item $G=A_\ell,B_\ell,C_\ell$ with $\ell\geq 3$  or $G=D_{\ell}$ with $\ell\geq 4$, and $4\leq i\leq \ell-3$.
        \item $G=A_\ell$, $i=3$, and $\ell\geq 5$.
        \item $G=A_\ell,B_\ell, C_\ell$ with $\ell\geq 3$  or $G=D_{\ell}$ with $\ell\geq 4$ and  $i=2$.
     \end{enumerate}
     Then $V\downarrow A$ is not MF. 
 \end{lemma}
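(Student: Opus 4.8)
The plan is to exploit that $\lambda=\omega_i$ is supported on a single node, which makes both Lemma~\ref{sufficient conditions MF lemma}(iii) (the bound $n_d\geq d+1$ forces non-MF) and Lemma~\ref{CD lemma} available; the three cases differ only in which tool does the work. In case (i) the node $\alpha_i$ lies in the interior of the diagram with at least three nodes to either side. For $A_\ell,B_\ell,C_\ell$, and for $D_\ell$ whenever $i\leq\ell-4$, the seven simple roots $\alpha_{i-3},\dots,\alpha_{i+3}$ satisfy $(\alpha_s,\alpha_t)\neq 0$ if and only if $t=s+1$, so Lemma~\ref{w_i reduction space on both sides} applies verbatim and gives $n_4\geq 5$, whence $V\downarrow A$ is not MF. (Here $4\leq i\leq\ell-3$ forces $\ell\geq 7$, hence $p\geq h\geq 8>4$, so the numerical hypotheses are met.) The sole configuration this misses is $D_\ell$ with $i=\ell-3$: the branch node $\alpha_{\ell-2}$ blocks a length-three extension to the right, so no $A_7$-subsystem is centred at $\alpha_i$. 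For this case I would instead invoke Lemma~\ref{subdiagram lemma} with the type-$D_6$ Levi generated by $\{\alpha_{\ell-5},\dots,\alpha_\ell\}$, identifying the relevant weight multiplicities with those of $L_{D_6}(\omega_3)=\Lambda^3(\text{natural})$; a direct count of its weights of level $4$ again yields $n_4\geq 5$.

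In case (ii), with $G=A_\ell$, $i=3$, $\ell\geq 5$, the module $L(\omega_3)=\Lambda^3(\text{natural})$ is minuscule, so $\Delta(\omega_3)=L(\omega_3)$ and every weight has multiplicity one. Hence $n_d$ equals the number of partitions of $d$ into at most three parts, each at most $\ell-2$. For $\ell\geq 8$ the part-size bound is inactive up to $d=6$, giving $n_6=7\geq 6+1$, and Lemma~\ref{sufficient conditions MF lemma}(iii) applies since $r=3\ell-6$ and $p\geq h=\ell+1$ force $6<\min\{\lfloor (r+2)/2\rfloor,p\}$. For $\ell\in\{5,6,7\}$ the Weyl module is irreducible with $r\geq p$, and a short check shows $r=3\ell-6\not\equiv-1\pmod p$ for every prime $p$ in the range $h\leq p\leq r$; Lemma~\ref{CD lemma} then concludes.

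In case (iii), $\lambda=\omega_2$, the strategy is uniformly Lemma~\ref{CD lemma}. First I would verify that $\Delta(\omega_2)$ is irreducible: it is minuscule ($\Lambda^2$) in type $A_\ell$; in type $C_\ell$ it is irreducible because $p\geq h=2\ell$ forces $p\nmid\ell$; and in types $B_\ell,D_\ell$ the weight $\omega_2=e_1+e_2$ is the highest root, whose adjoint Weyl module is reducible only at primes dividing the determinant of the Cartan matrix ($2$ and $4$ respectively), which is impossible for the odd primes $p\geq h$ at hand. Since $\omega_2$ lies in the root lattice in types $B_\ell$ and $D_\ell$, the supplementary hypothesis of Lemma~\ref{CD lemma} is vacuous. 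Reading $r$ off Table~\ref{tab:r values} ($r=2\ell-2,\,4\ell-4,\,4\ell-2,\,4\ell-6$ in types $A,C,B,D$), one checks in each case that $r\not\equiv-1\pmod p$ throughout $h\leq p\leq r$, so Lemma~\ref{CD lemma} yields that $V\downarrow A$ is not MF.

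I expect the only genuine obstacle to be the exceptional configuration $D_\ell$ with $i=\ell-3$ in case (i), where the clean $A_7$-argument of Lemma~\ref{w_i reduction space on both sides} is unavailable and one must fall back on an explicit (if elementary) weight count in a type-$D$ Levi subgroup; apart from this, the whole lemma reduces to confirming irreducibility of the Weyl module together with the congruence $r\not\equiv-1\pmod p$, so that Lemma~\ref{CD lemma} may be applied.
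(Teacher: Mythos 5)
Your proof is correct and follows essentially the same route as the paper: Lemma~\ref{w_i reduction space on both sides} for case (i) (with a direct count giving $n_4\geq 5$ in the configuration where the $A_7$-chain hypothesis fails), the count $n_6\geq 7$ for $\Lambda^3$ of the natural module in large rank for case (ii), and Lemma~\ref{CD lemma} after verifying irreducibility of $\Delta(\lambda)$ and the congruence $r\not\equiv -1 \mod p$ in the remaining situations. In case (i) you are in fact more precise than the paper, which flags only $G=D_7$ as the exception to Lemma~\ref{w_i reduction space on both sides} even though the chain centred at $\alpha_{\ell-3}$ fails for every $D_\ell$; your fallback via the $D_6$ Levi and Lemma~\ref{subdiagram lemma} handles all of these uniformly and yields the required bound $n_4\geq 5$.
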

 \begin{proof}
 
    \begin{enumerate}[label=\textnormal{(\roman*)}]
         \item Lemma~\ref{w_i reduction space on both sides} applies, except when $G = D_7$, and implies that the module $V\downarrow A$ is not MF. For the case $G = D_7$, where $i=4$, it is straightforward to see that $n_4\geq 5$ and then Lemma~\ref{sufficient conditions MF lemma}(iii) implies that $V\downarrow A$ is not MF.
         \item Here $V= \bigwedge^3(L(\omega_1))$. Assume for now that $l\geq 8$. The $T_A$-weight $r-12$ is afforded by the wedge of weight vectors in $L(\omega_1)$ for each of the following triples of $T_A$-weights: $\ell(\ell-2)(\ell-16)$, $\ell(\ell-4)(\ell-14)$, $\ell(\ell-6)(\ell-12)$, $\ell(\ell-8)(\ell-10)$, $(\ell-2)(\ell-4)(\ell-12)$, $(\ell-2)(\ell-6)(\ell-10)$, $(\ell-4)(\ell-6)(\ell-8)$. Therefore $n_6\geq 7$, and Lemma~\ref{sufficient conditions MF lemma}{(iii)} implies that $V\downarrow A$ is not MF.

         For the remaining cases, when $5\leq l\leq 7$, we have $\Delta(\lambda) = V$ and a quick check shows that the conditions of Lemma~\ref{CD lemma} are satisfied, implying that $V\downarrow A$ is not MF.
         \item Here $\lambda = \omega_2$, and as $p>\ell$ we have $V = \Delta(\lambda)$ \cite[Table~2]{lubeck}. We have  $r = 2\ell-2,4\ell-2,4\ell-4$ or $r=4\ell-6$ according to whether $G=A_\ell,B_\ell,C_\ell$ or $G=D_\ell$. Furthermore we have $p$ greater than $\ell,2\ell-1,2\ell,2\ell-2$ respectively. It is then an easy check to see that the conditions of Lemma~\ref{CD lemma} are satisfied, implying that $V\downarrow A$ is not MF.
     \end{enumerate}\end{proof}

     \begin{lemma}\label{omega_i i>=3 not spin lemma}
         Assume that $G=B_\ell,C_\ell$ with $\ell\geq 3$  or $G=D_{\ell}$ with $\ell\geq 4$, and  in addition that $\lambda = \omega_i$ for $i\geq 3$ and $V$ is not a spin module for $B_\ell$ or $D_\ell$. Then $V\downarrow A$ is not MF.
     \end{lemma}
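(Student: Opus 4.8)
The plan is to remove the cases already covered by Lemma~\ref{omega_i lots of cases classical lemma}, namely $4\le i\le\ell-3$, leaving only $i=3$ and the non-spin nodes $i\in\{\ell-2,\ell-1,\ell\}$, and then to settle these by transporting the characteristic-zero weight count to characteristic $p$, reserving a separate finite analysis for the few weights that remain MF over $K$. First I would record that, since $p\ge h$, the fundamental Weyl module $\Delta(\omega_i)$ is irreducible in every surviving case: its reducibility primes are all smaller than $h$ (for $C_\ell$ they divide $\ell-i+1$, and the orthogonal cases are analogous; see \cite{lubeck}). Hence $V=\Delta(\omega_i)$, and Lemma~\ref{n_d same as p=0 when Weyl module irreducible lemma} gives $n_d=\dim(\Delta_K(\omega_i)\downarrow A_K)_{r-2d}$, so that the $T_A$-weight multiplicities are exactly the characteristic-zero ones.

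Comparing with Table~\ref{tab:p=0 big table}, the module $\Delta_K(\omega_i)\downarrow A_K$ fails to be MF for every remaining pair except $G=C_\ell$ with $\lambda=\omega_3$ $(3\le\ell\le5)$ or $\lambda=\omega_\ell$ $(\ell=4,5)$. In each non-exceptional case, characteristic zero already provides an index $d$ with $n_d-n_{d-1}\ge2$, occurring at a small $d<p$; since $n_d$ is unchanged, Lemma~\ref{sufficient conditions MF lemma}(ii) then yields $m_d\ge2$. For $\lambda=\omega_3$ this is explicit: for $C_\ell$ the natural module restricts to $T_A$ as the irreducible with weights $\pm(2\ell-1),\dots,\pm1$, and counting $3$-element subsets summing to $r-2d$ gives (for $\ell\ge9$, the few smaller ranks being checked directly) $n_5=5$ and $n_6=7$, so $n_6-n_5=2$ with $6<h\le p$; the orthogonal groups are entirely analogous. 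The nodes $i\in\{\ell-2,\ell-1,\ell\}$ are treated in the same spirit, the weight multiplicities being computed at the multiple-bond or branch end of the diagram with the help of Lemma~\ref{subdiagram lemma}, exactly as in the corresponding characteristic-zero lemmas of \cite{LSTA1}.

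It remains to handle the five exceptional symplectic weights, where the characteristic-zero multiplicities produce no repeat and the hypothesis $p\le r$ must be used directly. The key point is that $V\downarrow A$ is assembled from composition factors $L_A(t_j)$ indexed by the same highest weights as the (MF) characteristic-zero branching $\Delta_K(\lambda)\downarrow A_K=\bigoplus_j(t_j)$, and that whenever $t_j\ge p$ with $t_j\not\equiv-1\pmod p$, the factor $L_A(t_j)$ is a proper quotient of the Weyl module $\Delta(t_j)$ for $A$ and has a genuine gap in its $T_A$-weights, the first gap beginning at $t_j-2\bigl((t_j\bmod p)+1\bigr)$ (Lemma~\ref{gaps containment up to r-2p}). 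A weight lying in such a gap but carried over $K$ by that same factor must be supplied by an additional lower factor, forcing a repeat via Lemma~\ref{sufficient conditions MF lemma}(iv). Concretely I would compute the branching, select a $t_j\ge p$ whose first gap meets some other $t_{j'}$, and conclude; for example $(C_3,\omega_3,7)$ gives $V\downarrow A=L_A(9)\oplus2\,L_A(3)$, while $(C_5,\omega_5,13)$ is approached through the gap of the second factor $L_A(19)$. When no factor with $t_j\ge p$ offers an exploitable gap, the crude bound $\dim V>B(r)$ of Lemma~\ref{dimension bound comparison} finishes the job instead, as in $(C_5,\omega_3,11)$ where $\dim V=110>102=B(21)$.

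The hard part will be exactly this last step. Over the finitely many admissible primes $h\le p\le r$ attached to the five symplectic weights, I must verify in each instance that one of the two mechanisms applies: either a composition factor $L_A(t_j)$ with $t_j\ge p$ has a first gap colliding with a lower factor, so that Lemma~\ref{sufficient conditions MF lemma}(iv) applies, or $\dim V$ exceeds $B(r)$. This requires the explicit characteristic-zero decompositions of $\Delta_K(\omega_3)$ and $\Delta_K(\omega_\ell)$ for $Sp_{2\ell}$ with $\ell\le5$, together with the $p$-adic gap data of the factors $L_A(t_j)$, and for some primes one may have to descend below the second composition factor before the collision appears. A secondary, routine difficulty is establishing the irreducibility of $\Delta(\omega_i)$ uniformly and organizing the weight count at the special end of the diagram in the near-$\ell$ cases.
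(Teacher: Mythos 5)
Your reduction, your use of Lemma~\ref{n_d same as p=0 when Weyl module irreducible lemma} to transport characteristic-zero weight multiplicities, and your gap/dimension analysis for the residual symplectic weights are all sound, and the sample computations you give check out (e.g.\ $B(21)=102<110$ for $(C_5,\omega_3,p=11)$, and $L_{C_3}(\omega_3)\downarrow A=(9)\oplus 2(3)$ for $p=7$); so the proposal is correct, but it takes a genuinely different and more laborious route than the paper in two places. First, for $G=B_\ell$ and $D_\ell$ the paper does not redo the weight count ``analogously'': it observes that $V=\bigwedge^i L(\omega_1)$ is the restriction of $L_{A_{2\ell}}(\omega_i)$ (with $A$ regular in $A_{2\ell}$), respectively restricts through $B_{\ell-1}<D_\ell$, and quotes Lemma~\ref{omega_i lots of cases classical lemma} for type $A$; this avoids any new multiplicity computations at the branch or multiple-bond end. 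Second, and more significantly, for the five symplectic weights that are MF over $K$ the paper invokes Lemma~\ref{CD lemma}: since $\Delta(\omega_i)$ is irreducible and $p\le r$, the restriction $V\downarrow A$ is a tilting module containing the reducible self-dual summand $T(r)$ whenever $r\not\equiv-1\pmod p$, which kills every admissible prime at once except $(C_5,\omega_3,p=11)$ and $(C_5,\omega_5,p=13)$ (where $r\equiv-1\pmod p$); those two are then finished by the refined bound $B(r)-\dim(r-2)$ of Lemma~\ref{dimension bound for omega_i}. Your prime-by-prime gap analysis replaces this and does work --- for instance at $(C_5,\omega_5,p=13)$ the factor $(13)=L_A(0)\otimes L_A(1)^{(13)}$ already forces $m_7\ge 2$ at the weight $11$, even before the gap of $(19)$ that you single out --- but it is exactly the ``hard part'' you identify, and Lemma~\ref{CD lemma} is the device that makes it unnecessary. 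One phrasing to repair: the composition factors of $V\downarrow A$ are \emph{not} ``indexed by the same highest weights as the characteristic-zero branching'' (extra lower factors appear precisely because of the gaps); what is preserved is only the multiset of $T_A$-weights, which is all your argument actually uses. Finally, note that your crude bound $\dim V>B(r)$ fails for $(C_5,\omega_5,p=13)$ (there $B(25)=140>132$), so for that case you are committed to the gap mechanism or to the sharper bound of Lemma~\ref{dimension bound for omega_i}.
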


     \begin{proof}
         If $G=B_\ell$ or $D_\ell$, then $V = \bigwedge^i(\omega_1)$ by \cite{seitz_mem_class} and the result follows from Lemma~\ref{omega_i lots of cases classical lemma}{(i)(ii)} for $G=A_{2\ell}$ or $A_{2\ell-1}$. Indeed, if $G = B_\ell$, then $A$ is regular in $A_{2\ell}$ and $V = L_{A_{2\ell}}(\omega_i)\downarrow G$, while if $G = D_\ell$, then $A < B_{\ell-1} < D_{\ell}$ and by the $B_\ell$ case there is a $B_{\ell-1}$-composition factor of $V$ (namely $L_{B_{\ell-1}}(\omega_i)$) that is not multiplicity-free in its restriction to $A$, implying that $V\downarrow A$ is not MF. 
         
         We now consider the case $G=C_\ell$. By part (i) of Lemma~\ref{omega_i lots of cases classical lemma} we can furthermore assume that $i = 3$ or $i>\ell-3$. If $i = \ell-2>3$, the $T_A$-weight $r-8$ has multiplicity at least $5$ as it is afforded by $5$ different weights as in the proof of \cite[Lemma~5.3]{LSTA1}. Therefore Lemma~\ref{sufficient conditions MF lemma}{(iii)} implies that $V\downarrow A$ is not MF.

         Assume $i=\ell-1>3$. As $\ell\geq 5$, the $T_A$-weight $r-12$ is afforded by $\lambda-(\ell-3)(\ell-2)(\ell-1)^3\ell$, $\lambda-(\ell-2)(\ell-1)^3\ell^2$, $\lambda-(\ell-2)^2(\ell-1)^3\ell$, $\lambda-(\ell-4)(\ell-3)(\ell-2)(\ell-1)^2\ell$, $\lambda-(\ell-3)(\ell-2)^2(\ell-1)^2\ell$. When $\ell = 5$, the last two weights have multiplicity $2$ in $V$ by \cite{lubeckOnline}, and therefore the same holds for $\ell\geq 5$ by Lemma~\ref{subdiagram lemma}. Thus, $n_6\geq 7$, and by Lemma~\ref{sufficient conditions MF lemma}{(iii)} the module $V\downarrow A$ is not MF.

         Now assume $i=\ell>3$. Start with $\ell =4$ or $5$. In both cases the Weyl module is irreducible. We have $r=16$ if $\ell = 4$, and $r=25$ if $\ell = 5$. If $(\ell,p)\ne (5,13)$, the conditions of Lemma~\ref{CD lemma} are satisfied, showing that $V\downarrow A$ is not MF. In the remaining case (when $(\ell,p) = (5,13)$), we find that $B(r) - \dim (r-2) = 118<\dim V = 132$. Therefore by Lemma~\ref{dimension bound for omega_i}, the module $V\downarrow A$ is not MF. 
         
         Now consider the case $\ell\geq 6$ with $\lambda = \omega_\ell$. Here the $T_A$-weight $r-10$ has multiplicity $4$ as it is afforded precisely by $\lambda- (\ell-4)(\ell-3)(\ell-2)(\ell-1)\ell$, $\lambda- (\ell-2)^2(\ell-1)^2\ell$, $\lambda- (\ell-3)(\ell-2)(\ell-1)^2\ell$, $\lambda- (\ell-2)(\ell-1)^2\ell^2$. The $T_A$-weight $r-12$ has multiplicity at least $6$ as it is afforded by $\lambda- (\ell-5)(\ell-4)(\ell-3)(\ell-2)(\ell-1)\ell$, $\lambda- (\ell-2)^2(\ell-1)^2\ell^2$, $\lambda- (\ell-4)(\ell-3)(\ell-2)(\ell-1)^2\ell$, $\lambda- (\ell-3)(\ell-2)^2(\ell-1)^2\ell$, $\lambda- (\ell-3)(\ell-2)(\ell-1)^2\ell^2$, $\lambda- (\ell-2)(\ell-1)^3\ell^2$. By Lemma~\ref{sufficient conditions MF lemma}{(ii)} $V\downarrow A$ is not MF.

         We now turn to the final case, where $i=3$. If $\ell\geq 6$, we have $n_6\geq 7$, since by the last paragraph of the proof of \cite[Lemma~5.3]{LSTA1} there are $7$ distinct weights of $V$ affording the $T_A$-weight $r-12$. Therefore $V\downarrow A$ is not MF by Lemma~\ref{sufficient conditions MF lemma}{(iii)}.

         In the remaining cases, when $\ell\in\{3,4,5\}$, the Weyl module is irreducible and we can apply Lemma~\ref{CD lemma}, unless $\ell=5$ and $p=11$, in which case $r=21\equiv -1\mod p$.
         In this case, we find that $B(r) - \dim (r-2) = 84<\dim V = 110$. Therefore by Lemma~\ref{dimension bound for omega_i}, the module $V\downarrow A$ is not MF.
     \end{proof}

     \begin{lemma}\label{spin modules MF classification lemma}
         Assume that $V$ is a spin module for $B_\ell$ with $\ell\geq 3$ or $D_\ell$ with $\ell\geq 4$. Then $V\downarrow A$ is not MF.
     \end{lemma}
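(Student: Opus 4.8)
The plan is to reduce everything to the spin module of $B_\ell$ and then split according to the size of $p$ relative to $r=\tfrac12\ell(\ell+1)$. First I would dispose of the $D_\ell$ case: since $A\le B_{\ell-1}\le D_\ell$ and a half-spin module restricts to $B_{\ell-1}$ as the spin module $L_{B_{\ell-1}}(\omega_{\ell-1})$ (exactly as in the restriction arguments of Lemma~\ref{b omega_i i>1 b>1 lemma}), the $D_\ell$ case for $\ell\ge 4$ follows from the $B_{\ell-1}$ case, with the same $r$ and the same bound $2(\ell-1)=h\le p\le r$. So it suffices to treat $G=B_\ell$ with $\ell\ge 3$. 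Here $\lambda=\omega_\ell$ is minuscule, so $V=\Delta(\omega_\ell)=L(\omega_\ell)$ has all $T$-weights of multiplicity $1$ and dimension $2^\ell$. Using $\alpha_i\downarrow T_A=2$ one finds $\varepsilon_j\downarrow T_A=2(\ell+1-j)$, so the $T_A$-weights of $V$, coming from the $2^\ell$ weights $\tfrac12(\pm\varepsilon_1\pm\cdots\pm\varepsilon_\ell)$, are exactly $\{\pm1\pm2\pm\cdots\pm\ell\}$ with top $r=\tfrac12\ell(\ell+1)$; hence by Lemma~\ref{n_d same as p=0 when Weyl module irreducible lemma}, $n_d$ is the number of subsets of $\{1,\dots,\ell\}$ summing to $d$. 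As $h=2\ell$ for $B_\ell$, the hypothesis $2\ell\le p\le r$ already excludes $\ell\in\{3,4\}$, since the ranges $[6,6]$ and $[8,10]$ contain no prime; thus I may assume $\ell\ge 5$.

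Next I would record the base-$p$ shape of $r$. Since $p\ge 2\ell$ gives $r=\tfrac12\ell(\ell+1)\le\ell^2<p^2$, we may write $r=a_0+a_1p$ with $0\le a_0<p$ and $1\le a_1<p$, and I split into two regimes. When $a_1\ge2$, i.e. $p\le r/2$, the module is small: comparing $\dim V=2^\ell$ with $B(r)$ (Definition~\ref{B(r)}), computed from the products of $p$-adic digits, yields $\dim V>B(r)$, and Lemma~\ref{dimension bound comparison} gives that $V\downarrow A$ is not MF. The same dimension bound also handles all primes once $\ell\ge 9$: for $\ell\ge10$ one has $2^\ell>B_K(r)\ge B(r)$ because $B_K(r)\le(\tfrac r2+1)^2$ grows only polynomially in $\ell$, and the finitely many pairs with $\ell=9$ are checked directly. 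In the remaining band $5\le\ell\le8$ the only pair with $a_1\ge2$ is $(\ell,p)=(8,17)$, again covered by the dimension bound.

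The substantive case is $a_1=1$, i.e. $r/2<p\le r$ with $5\le\ell\le 8$. Here $a_0=r-p<p-1$ (one checks $r\ne2p-1$ in each such pair), so $r\not\equiv-1\bmod p$ and $\Delta_A(r)$ is reducible; consequently, by the gap description of Lemma~\ref{gaps containment up to r-2p}, the composition factor $(r)=(a_0)\otimes(1)^{(p)}$ of $V\downarrow A$ carries the weights $r,r-2,\dots,r-2a_0$ at the top and then a gap, with $r-2a_0-2=2p-2-r$ its first omitted weight. Set $d=a_0+1=r-p+1$. Then $r-2d=2p-2-r$ lies in a gap of $(r)$ while $r-2(d-1)=r-2a_0$ does not, so $(r)\in\mathbf S_d\setminus\mathbf S_{d-1}$ and $\mathbf S_{d-1}\ne\mathbf S_d$. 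I would then read off $n_d-n_{d-1}$ from the subset-sum counts and check $n_d-n_{d-1}\ge1$ in each admissible pair (in fact it equals $1$, since $a_0+1\le\lfloor r/2\rfloor$ and the sequence $n_0,n_1,\dots$ increases up to the middle). If $n_d-n_{d-1}\ge2$, Lemma~\ref{sufficient conditions MF lemma}(ii) already concludes; if $n_d-n_{d-1}=1$, then Lemma~\ref{sufficient conditions MF lemma}(iv) applies and gives $m_d\ge2$, so $(2p-2-r)$ is a repeated composition factor and $V\downarrow A$ is not MF. The hypothesis $d<\min\{\lfloor(r+2)/2\rfloor,p\}$ of that lemma follows from $r<2p$.

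The hard part is precisely this $a_1=1$ regime: the repeat is produced by the reducibility of the top $A$-factor $(r)$, a phenomenon invisible to the dimension bound and to the crude estimate $n_d\le d+1$, so turning it into a contradiction requires the gap bookkeeping of Lemma~\ref{gaps containment up to r-2p} combined with Lemma~\ref{sufficient conditions MF lemma}(iv) at the single index $d=r-p+1$. The only genuinely case-dependent input is the verification that $n_{r-p+1}>n_{r-p}$ for the finitely many admissible pairs with $5\le\ell\le8$, together with the small number of dimension-bound computations for $\ell=9$ and for $(\ell,p)=(8,17)$.
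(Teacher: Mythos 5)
Your proof is correct, and its skeleton (reduce $D_\ell$ to $B_{\ell-1}$, kill large $\ell$ by dimension bounds, then a finite weight-multiplicity check) matches the paper's, but the execution of the finite check is genuinely different and worth comparing. The paper first applies the sharper bound $\dim V\leq B(r)-\dim(r-2)$ from Lemma~\ref{dimension bound for omega_i} (valid since $p\nmid r$) to cut the list down to nine pairs $(\ell,p)$, and then reads the $T_A$-multiplicities off the table in \cite[Lemma 5.4]{LSTA1} to exhibit a repeated factor in each case; you instead keep all thirteen pairs with $5\leq\ell\leq 8$ in the $r/2<p\leq r$ regime and locate the repeat uniformly at the single index $d=r-p+1$, where the top factor $(r)=(r-p)\otimes(1)^{(p)}$ first develops a gap, so that $(r)\in\mathbf S_d\setminus\mathbf S_{d-1}$ and Lemma~\ref{sufficient conditions MF lemma}(iv) applies once you verify $n_d>n_{d-1}$. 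Your route buys a cleaner, structurally motivated choice of $d$ (and, incidentally, invokes the part of Lemma~\ref{sufficient conditions MF lemma} that actually does the work: the paper cites part (iii), whose hypothesis $n_d\geq d+1$ is never met for these spin modules since $n_d\leq\lceil 2^\ell/(r+1)\rceil$-ish stays small, so part (iv) must be what is intended there); the paper's route buys fewer cases to inspect. Two small cautions on your write-up: your parenthetical justification that $n_d-n_{d-1}$ ``equals $1$ \dots since the sequence increases up to the middle'' is not a valid general principle --- the subset-sum counts $n_d$ are unimodal but not strictly increasing (e.g.\ $n_0=n_1=n_2=1$ for every $\ell$), so the case-by-case verification you promise is genuinely needed (it does succeed: $n_{r-p+1}-n_{r-p}=1$ for all thirteen pairs). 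Also, your exclusion of $r\equiv-1\bmod p$ is exactly what guarantees both that $d<p$ (so $\mathbf S_{d-1}\subseteq\mathbf S_d$) and that $r-2d=2p-2-r\geq0$, so it is doing real work and should be flagged as such rather than as a passing remark.
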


     \begin{proof}
        We have $V = \Delta(\lambda)$.
         If $G=D_\ell$, then $A\leq B_{\ell-1}<G$ and $V\downarrow B_{\ell-1}$ is the spin module for $B_{\ell-1}$; therefore it suffices to prove the lemma for $G=B_\ell$, where $r=\ell(\ell+1)/2$ and $\dim V = 2^\ell$. If $V\downarrow A$ is MF the dimension of $V$ is at most $B_{K}(r)$, by Lemma~\ref{dimension bound comparison}. This implies that if $\ell\geq 10$, the module $V\downarrow A$ is not MF. 
         
         Now assume $\ell\leq 9$. Since $p>h=2\ell$ we know that $p\nmid r$. Therefore if $V\downarrow A$ is MF the dimension of $V$ is at most $B(r)-\dim(r-2)$, by Lemma~\ref{dimension bound for omega_i}. This then reduces our considerations to the pairs $(n,p)$ in the following list:
         $(5,11)$, $(5,13)$, $(6,13)$, $(6,17)$, $(6,19)$, $(7,17)$, $(7,19)$, $(7,23)$, $(8,31)$. For every $3\leq \ell\leq 8$, by Lemma~\ref{n_d same as p=0 when Weyl module irreducible lemma} we can read the dimension of the $T_A$-weight space $r-2k$ off the table in the proof of \cite[Lemma~5.4]{LSTA1}. In each case we apply part (iii) of Lemma~\ref{sufficient conditions MF lemma} to find that $V\downarrow A$ is not MF. The first repeated composition factors are of highest weight respectively $5,9,9,11,15,14,14,16,24$.
     \end{proof}

     \begin{lemma}\label{exceptionals w_i calssification lemma}
        Assume $G=E_\ell$ or $F_4$ and $\lambda=\omega_i$. Then $V\downarrow A$ is not MF.
     \end{lemma}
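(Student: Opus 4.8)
The plan is to funnel almost every case into the tilting criterion of Lemma~\ref{CD lemma} or into the dimension bounds of Lemma~\ref{dimension bound comparison} and Lemma~\ref{dimension bound for omega_i}, and to handle only a short explicit list of borderline pairs by hand. The first observation is that since $G$ is of type $E_\ell$ or $F_4$ it is neither $B_\ell$ nor $D_\ell$, so the auxiliary hypothesis in Lemma~\ref{CD lemma} is vacuous: that lemma applies as soon as $\Delta(\omega_i)$ is irreducible, $r\geq p$ (true by assumption), and $r\not\equiv -1\mod p$. So the preliminary step is to read off $r=\omega_i\downarrow T_A$ from Table~\ref{tab:r values}, and the dimension and reducibility of $\Delta(\omega_i)$ from \cite{lubeck}, keeping in mind $h\leq p\leq r$.

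The cases split cleanly according to the size of the module. For the \emph{large} fundamental weights — every one except the minimal and adjoint modules — the value $\dim L(\omega_i)$ recorded in \cite{lubeck} already exceeds $B_K(r)$, and hence $B(r)$, so Lemma~\ref{dimension bound comparison} immediately gives that $V\downarrow A$ is not MF; equivalently, when $V\cong\Delta(\omega_i)$ one may transport the characteristic-zero weight counts of \cite{LSTA1} via Proposition~\ref{prop:premet} and Lemma~\ref{n_d same as p=0 when Weyl module irreducible lemma} and conclude with Lemma~\ref{sufficient conditions MF lemma}(iii). For the \emph{small} fundamental weights, namely the minimal and adjoint modules for each type, one has $\dim L(\omega_i)<B_K(r)$, so no dimension bound applies and, crucially, these are exactly the weights that are multiplicity-free in characteristic zero (Table~\ref{tab:p=0 big table}); the arguments of \cite{LSTA1} therefore give no information. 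Here I would instead invoke Lemma~\ref{CD lemma}: the module $\Delta(\omega_i)$ is irreducible since $p\geq h$ exceeds all bad primes, and a finite congruence check shows $r\not\equiv-1\mod p$ for every prime $p\in[h,r]$, so that the summand $T(r)$ of the tilting module $V\downarrow A$ is reducible and $V\downarrow A$ is not MF.

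It remains to treat the pairs with $r\equiv-1\mod p$, where Lemma~\ref{CD lemma} is unavailable. Scanning Table~\ref{tab:r values} for a prime $p\in[h,r]$ dividing $r+1$ leaves exactly three pairs: $E_7$ with $\lambda=\omega_5$ and $p=19$ (so $r=75$), $E_8$ with $\lambda=\omega_1$ and $p=31$ (so $r=92$), and $E_8$ with $\lambda=\omega_3$ and $p=61$ (so $r=182$). In each of these $\lambda=\omega_i$ and $r\equiv-1\not\equiv 0\mod p$, so Lemma~\ref{dimension bound for omega_i} applies, and since the relevant $\dim L(\omega_i)$ from \cite{lubeck} vastly exceeds $B(r)-\dim(r-2)\leq B_K(r)$, the module $V\downarrow A$ is not MF. The same dimension argument absorbs any pair at which \cite{lubeck} records $\Delta(\omega_i)$ as reducible, once one confirms $\dim L(\omega_i)>B_K(r)$.

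The hard part will be the handful of genuinely borderline small modules, where $\dim V$ only narrowly exceeds $B_K(r)$ and the dimension bound has essentially no slack — the representative case being $F_4$ with $\lambda=\omega_3$, where $r=30$ and $\dim L(\omega_3)=273$ against $B_K(30)=256$. There the inequality is delicate because it holds only if $\Delta(\omega_3)$ is irreducible (or at least $L(\omega_3)$ does not drop below $257$) at the specific admissible prime, so one must verify irreducibility from \cite{lubeck} before citing $\dim V=\dim\Delta(\omega_3)$. For such tight cases the safe route is a direct computation of the low weight multiplicities $n_d$ from \cite{lubeckOnline}, followed by Lemma~\ref{sufficient conditions MF lemma}(iii) to exhibit a repeated composition factor; this is routine but must be carried out case by case.
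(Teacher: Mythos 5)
Your proposal is correct and its backbone is the same as the paper's: use the dimension bound of Lemma~\ref{dimension bound comparison} to reduce to the minimal and adjoint modules, then kill those with the tilting criterion of Lemma~\ref{CD lemma} (and your congruence scan confirming $r\not\equiv-1\bmod p$ for every admissible prime in those cases is exactly the unstated verification behind the paper's ``the conditions of Lemma~\ref{CD lemma} are satisfied''). The differences are organizational. The paper dispatches $\omega_4$ for $E_\ell$ by the one-line observation that the trivalent node gives $n_2\geq 3$, and $E_8$, $\omega_5$ by Lemma~\ref{w_i reduction space on both sides}; this avoids having to certify that $\dim L(\omega_i)$ (as opposed to $\dim\Delta(\omega_i)$) exceeds $B_K(r)$ for the very large fundamental weights, which is the one place where your route leans harder on Lübeck's tables than the paper does --- it works, since those tables do record $\dim L(\omega_i)$ for all fundamental weights of exceptional groups at all primes (the paper itself uses this for $E_8$, $\omega_3$ and $\omega_6$), but it is a twenty-odd-case bookkeeping exercise rather than a two-line weight count. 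Your separate treatment of the three pairs with $r\equiv-1\bmod p$ ($E_7$, $\omega_5$, $p=19$; $E_8$, $\omega_1$, $p=31$; $E_8$, $\omega_3$, $p=61$) is correct but redundant: all three are neither minimal nor adjoint, so they already fall to the plain dimension bound, and Lemma~\ref{CD lemma} is never needed for them --- which is precisely why the paper can invoke that lemma only at the final stage without discussing the congruence. Your closing caution about $F_4$, $\omega_3$ ($273$ versus $B_K(30)=256$) is well placed but resolves in your favour, since $\Delta(\omega_3)$ is irreducible for all $p\geq 13$.
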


     \begin{proof}
    If $G=E_\ell$ and $\lambda = \omega_4$, the $T_A$-weight $r-4$ is afforded by $\lambda - 34$, $\lambda-24$, $\lambda - 45$. Therefore $n_2\geq 3$, and by Lemma~\ref{sufficient conditions MF lemma}{(iii)}, the module $V\downarrow A$ is not MF.

    If $G = E_8$ and $\lambda = \omega_3$ or $\omega_6$, then $r = 182 $ respectively $r=168$, giving $B_{K}(r)=8464$ and $7225$ respectively. By \cite{lubeck}, we have $\dim V > B_{K}(r)$ and therefore by Lemma~\ref{dimension bound comparison} the module $V\downarrow A$ is not MF. If $G=E_8$ and $\lambda = \omega_5$, by Lemma~\ref{w_i reduction space on both sides} the module $V\downarrow A$ is not MF.

    In all remaining cases, by \cite{lubeck} we have that $V= \Delta(\lambda)$. Lemma~\ref{dimension bound comparison} then allows us to reduce to the case where $V$ is the minimal module for $G$, or the adjoint module for $E_6,E_7$ or $F_4$. The conditions of Lemma~\ref{CD lemma} are satisfied, implying that $V\downarrow A$ is not MF.
     \end{proof}

     \begin{proposition}\label{rank>=3 single support main proposition}
        Suppose that $G$ has rank at least $3$ and $\lambda = b\omega_i$, with $b\geq 2$ for $G$ classical and $b\geq 1$ for $G$ of exceptional type. Then $V\downarrow A$ is not MF.
\end{proposition}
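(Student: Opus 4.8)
The plan is to establish the proposition by a case analysis on the type of $G$ and the position of the node $i$, routing each case to one of the single-support lemmas already proved. Since $\lambda=b\omega_i$ is supported on a single node, the structural reductions of Proposition~\ref{first reduction lemma} play no role here, and all the quantitative work (the weight counts giving $n_d\ge d+1$, the dimension comparisons against $B(r)$ and $B_K(r)$, and the tilting-module argument of Lemma~\ref{CD lemma}) has already been absorbed into those lemmas. Thus the proof is essentially an assembly, and the standing hypothesis $p\le r$ is inherited from the section.

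First I would dispose of the exceptional groups. If $G=E_\ell$ or $F_4$ and $b\ge 2$, the conclusion is immediate from Lemma~\ref{En case cw_i c>1 lemma} and Lemma~\ref{F4 case cw_i c>1 lemma} respectively, while the remaining exceptional possibility $b=1$, i.e.\ $\lambda=\omega_i$, is precisely Lemma~\ref{exceptionals w_i calssification lemma}. This settles all exceptional types, so that one is reduced to $G$ classical with $b\ge 2$.

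For $G$ classical I would split on $i$. The case $i=1$ is Lemma~\ref{rank >= 3 2w_1 lemma}, valid for $A_\ell,B_\ell,C_\ell$ $(\ell\ge 3)$ and $D_\ell$ $(\ell\ge 4)$. For $i>1$ and $G$ of type $B_\ell$, $C_\ell$ or $D_\ell$, the statement is exactly Lemma~\ref{b omega_i i>1 b>1 lemma} (which internally uses Lemma~\ref{bw_i reduction lemma} to reduce to the end-node $\omega_\ell$). The only configuration not covered verbatim by an existing lemma is $G=A_\ell$ with $i>1$: if $1<i<\ell$, then $\alpha_i$ is not an end-node and Lemma~\ref{bw_i reduction lemma} already gives that $V\downarrow A$ is not MF.

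This leaves the single genuinely new point, $G=A_\ell$ with $i=\ell$, which I would handle by duality rather than by a fresh computation. For type $A_\ell$ one has $-w_0\omega_\ell=\omega_1$, so $V^*\cong L(b\omega_1)$; since every irreducible $\mathcal{k}A$-module $(s)$ is self-dual, the $\mathcal{k}A$-modules $V\downarrow A$ and $(V\downarrow A)^*=V^*\downarrow A$ have identical multisets of composition factors, whence $V\downarrow A$ is MF if and only if $V^*\downarrow A=L(b\omega_1)\downarrow A$ is. Lemma~\ref{rank >= 3 2w_1 lemma}, applied to $L(b\omega_1)$, then finishes this subcase. I do not anticipate a real obstacle: the only step requiring independent thought is precisely this type-$A_\ell$ end-node reduction, for which the self-duality of rank-one irreducibles is the decisive observation; everything else is a direct appeal to the lemmas above.
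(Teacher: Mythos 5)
Your proof is correct and follows essentially the same route as the paper: the proposition is an assembly of Lemmas~\ref{rank >= 3 2w_1 lemma}, \ref{b omega_i i>1 b>1 lemma}, \ref{En case cw_i c>1 lemma}, \ref{F4 case cw_i c>1 lemma} and \ref{exceptionals w_i calssification lemma}. The one place where you go beyond the paper is worth noting: the paper's proof for $G$ classical cites only Lemma~\ref{rank >= 3 2w_1 lemma} and Lemma~\ref{b omega_i i>1 b>1 lemma}, and the latter is stated only for $B_\ell,C_\ell,D_\ell$, so the configuration $G=A_\ell$, $\lambda=b\omega_i$ with $i>1$ is not literally covered by the cited statements. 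You identify this and patch it correctly: for $1<i<\ell$ the non-end-node reduction of Lemma~\ref{bw_i reduction lemma} applies, and for $i=\ell$ the duality $L(b\omega_\ell)^*\cong L(b\omega_1)$ together with the self-duality of the irreducible $\mathcal{k}A_1$-modules (and the fact that $r=\ell b$ is the same for $b\omega_1$ and $b\omega_\ell$, so the hypothesis $p\le r$ transfers) reduces to Lemma~\ref{rank >= 3 2w_1 lemma}. This is a sound and in fact slightly more complete treatment than the paper's own two-line proof.
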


\begin{proof}
    If $G$ is classical, this is a direct consequence of Lemma~\ref{rank >= 3 2w_1 lemma} and Lemma~\ref{b omega_i i>1 b>1 lemma}. 
    If $G$ is exceptional and $b\geq 2$, we similarly conclude by Lemma~\ref{En case cw_i c>1 lemma} and Lemma~\ref{F4 case cw_i c>1 lemma}.

    If $b=1$, where $G$ is exceptional, we reach the same conclusion by Lemmas \ref{omega_i lots of cases classical lemma}, \ref{omega_i i>=3 not spin lemma}, \ref{spin modules MF classification lemma}, \ref{exceptionals w_i calssification lemma}.
\end{proof}

\renewcommand*{\proofname}{Proof of Proposition~$\ref{main prop rank >=3}$.}
\begin{proof}
    If $V$ is MF, then by Proposition~\ref{first reduction lemma}, the weight $\lambda$ is of the form $c_i\omega_i+c_j\omega_j$. If $c_ic_j\neq 0$, then the conclusion follows from Proposition~\ref{proposition rank at least 3 double support}. If $c_ic_j = 0$, then the conclusion follows from Proposition~\ref{rank>=3 single support main proposition}.
\end{proof}
\renewcommand*{\proofname}{Proof.}

\section{Proof of Corollary~\ref{main corollary}}

In this Section we prove Corollary~\ref{main corollary}, thereby extending Theorem~\ref{main theorem} to the case where $\lambda$ is not $p$-restricted. The following lemma serves as an inductive tool.
\begin{lemma}\label{inductive lemma for tensor product}
    Let $\lambda = \sum_{i=0}^t p^i\lambda_i$ where $\lambda_i$ is a $p$-restricted dominant weight for all $0\leq i \leq t$. Assume that for some $s$ with $0\leq s<t$, we have $\left(\sum_{i=0}^s p^i\lambda_i\right)\downarrow T_A < p^{s+1}$. Then $V\downarrow A$ is MF if and only if the following conditions hold.
    \begin{enumerate}[label=\textnormal{(\roman*)}]
        \item $L(\sum_{i=0}^s p^i\lambda_i)\downarrow A$ is MF, and
        \item $L(\sum_{i=s+1}^t p^i\lambda_i)\downarrow A$ is MF.
    \end{enumerate}
\end{lemma}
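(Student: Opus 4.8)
The plan is to reduce the statement to a clean factorisation of the composition factors of a tensor product. Set $\mu=\sum_{i=0}^s p^i\lambda_i$ and $\nu=\sum_{i=s+1}^t p^i\lambda_i$, so that $\lambda=\mu+\nu$ and $\nu=p^{s+1}\nu'$ with $\nu'=\sum_{i=0}^{t-s-1}p^i\lambda_{i+s+1}$. Since each $\lambda_i$ is $p$-restricted, Steinberg's tensor product theorem gives $L(\lambda)\cong L(\mu)\otimes L(\nu)$ and $L(\nu)\cong L(\nu')^{(p^{s+1})}$. Writing $M=L(\mu)\downarrow A$ and $N=L(\nu)\downarrow A$, restriction commutes with tensor products, so $V\downarrow A=M\otimes N$, and conditions (i), (ii) are exactly that $M$, respectively $N$, is MF. First I would record two facts about the highest weights of the composition factors. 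Every composition factor $(\alpha)$ of $M$ has $0\le\alpha\le\mu\downarrow T_A<p^{s+1}$, where the strict inequality is precisely the hypothesis; thus $\alpha$ is $p^{s+1}$-restricted. On the other side, because $L(\nu)\cong L(\nu')^{(p^{s+1})}$, every $T$-weight of $L(\nu)$ lies in $p^{s+1}X(T)$ and hence restricts to a multiple of $p^{s+1}$ on $T_A$; in particular the highest weight $\beta$ of any composition factor of $N$ is divisible by $p^{s+1}$.

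The key step is the identity $(\alpha)\otimes(\beta)\cong(\alpha+\beta)$ whenever $0\le\alpha<p^{s+1}$ and $p^{s+1}\mid\beta$. This is again Steinberg's theorem applied to $A$: writing $\beta=p^{s+1}\gamma$ we have $(\beta)\cong(\gamma)^{(p^{s+1})}$, and since $\alpha$ is $p^{s+1}$-restricted the $p$-adic digits of $\alpha$ and of $p^{s+1}\gamma$ occupy disjoint positions, whence $(\alpha)\otimes(\gamma)^{(p^{s+1})}\cong(\alpha+p^{s+1}\gamma)$, which is irreducible. Combining this with the exactness of $-\otimes N$ and of $M\otimes-$, a composition series of $M$ and one of $N$ refine to a filtration of $M\otimes N$ whose subquotients are the various $(\alpha_i)\otimes(\beta_j)$. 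Each of these is irreducible by the displayed identity, so the multiset of composition factors of $M\otimes N$ is precisely $\{(\alpha_i+\beta_j)\}$, where $(\alpha_i)$ ranges over the composition factors of $M$ and $(\beta_j)$ over those of $N$, each counted with multiplicity.

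It then remains to observe that the assignment $(\alpha,\beta)\mapsto\alpha+\beta$ is injective on pairs with $0\le\alpha<p^{s+1}$ and $p^{s+1}\mid\beta$: one recovers $\alpha$ as the residue of $\alpha+\beta$ modulo $p^{s+1}$ and $\beta$ as the remaining part. Consequently a repetition in the multiset $\{(\alpha_i+\beta_j)\}$ forces a repetition in the multiset of pairs $\{(\alpha_i,\beta_j)\}$, and since the latter is the product of the multisets $\{(\alpha_i)\}$ and $\{(\beta_j)\}$, it has no repeated element if and only if each factor multiset does. Therefore $V\downarrow A=M\otimes N$ is MF if and only if both $M$ and $N$ are MF, which is exactly the conjunction of (i) and (ii); note that both implications are obtained simultaneously from this equivalence.

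The main obstacle is not the bookkeeping but securing the tensor-product identity $(\alpha)\otimes(\beta)\cong(\alpha+\beta)$, and hence the irreducibility of every $(\alpha_i)\otimes(\beta_j)$. This is exactly where the hypothesis $\mu\downarrow T_A<p^{s+1}$ is essential: it forces each composition factor of $M$ to be $p^{s+1}$-restricted, so that its $p$-adic digits do not collide with those of the $p^{s+1}$-divisible highest weights coming from $N$. Once this separation of digits is in place, the injectivity of $(\alpha,\beta)\mapsto\alpha+\beta$ delivers both directions of the equivalence at once.
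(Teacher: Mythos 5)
Your proof is correct and follows essentially the same route as the paper: decompose $V$ via Steinberg's tensor product theorem, observe that the hypothesis forces the composition factors of $L(\mu)\downarrow A$ to be $p^{s+1}$-restricted while those of $L(\nu)\downarrow A$ have highest weights divisible by $p^{s+1}$, and conclude via $(\alpha)\otimes(\beta)\cong(\alpha+\beta)$. Your version is in fact slightly more careful than the paper's on one point — you correctly identify the needed condition as $p^{s+1}\mid\beta$ (coming from the Frobenius twist) rather than merely $\beta\geq p^{s+1}$ — and your injectivity argument cleanly delivers both implications at once.
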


\begin{proof}
    Let $V_1 = L(\sum_{i=0}^s p^i\lambda_i)$ and $V_2 = L(\sum_{i=s+1}^{t} p^{i}\lambda_i)$, so that $V = V_1\otimes V_2$. If $V_2 = L(0)$, the statement is trivial. Thus, assume $V_2\neq L(0)$.

    One direction is clear. If either $V_1\downarrow A$ or $V_2\downarrow A$ is not MF, then $V\downarrow A$ is not MF. Assume now that both $V_1\downarrow A$ and $V_2\downarrow A$ are MF, and let $V_1\downarrow A$ have composition factors \((r_0),(r_1),\dots , (r_m)\), so that by the assumption on $s$, we have $p^{s+1}>r_0>r_1>\dots >r_m$. 
    Similarly let $V_2\downarrow A$ have composition factors $(v_0),(v_1),\dots , (v_n)$ where $v_0>v_1>\dots >v_n \geq p^{s+1}$. Then for all $0\leq i \leq m$ and $0\leq j \leq n$ we have $(r_i)\otimes (v_j) \cong (r_i+v_j)$, since $r_i<p^{s+1}$ and $v_j\geq p^{s+1}$. This implies that the composition factors of $V_1\otimes V_2$ are precisely of the form $(r_i+v_j)$, which are all clearly distinct. Therefore $V\downarrow A$ is MF.
\end{proof}

Let us restate, and prove, Corollary~\ref{main corollary}.
\begin{corollary}
    Let $\lambda = \sum _{i=0}^t p^i \lambda_i$ where each $\lambda_i$ is a $p$-restricted dominant weight and set $r_i = \lambda_i\downarrow T_A$, for $0\leq i\leq t$. Then $V\downarrow A$ is MF if and only if one of the following holds.
    \begin{enumerate}[label=\textnormal{(\roman*)}]
  \item We have $p> r_i$ and $\Delta_K(\lambda_i)\downarrow A_K$ is MF for all $0\leq i\leq t$.
    \item The group $G$ is of type $A_2$, $p=3$ and there exists $0\leq i\leq t$ such that $\lambda_i = \omega_1+\omega_2$. Moreover, for all $0\leq j\leq t$ we have  $\lambda_j \in \{0,\omega_1+\omega_2,\omega_1,\omega_2\}$ and if $\lambda_j = \omega_1+\omega_2$ for some $0\leq j\leq t-1$, then $\lambda_{j+1} = 0$.
      \item The group $G$ is of type $B_2$, $p=5$ and there exists $0\leq i\leq t$ such that $\lambda_i = 2\omega_1$. Moreover, for all $0\leq j\leq t$ we have  $\lambda_j \in \{0,2\omega_1,\omega_1,\omega_2\}$ and if $\lambda_j = 2\omega_1$ for some $0\leq j\leq t-1$, then $\lambda_{j+1}\in \{0,\omega_2\} $.
     \end{enumerate}
\end{corollary}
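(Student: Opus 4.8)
The plan is to combine Steinberg's tensor product theorem with the inductive tool Lemma~\ref{inductive lemma for tensor product} and the Main Theorem (Theorem~\ref{main theorem}), reading the whole problem through the base-$p$ arithmetic of the integers $r_i$. Writing $V=\bigotimes_{i=0}^t L(\lambda_i)^{(p^i)}$, the restriction of the $i$-th factor to $A$ scales all $T_A$-weights by $p^i$, so $(c)^{(p^i)}\downarrow A=(p^i c)$ and $r=\sum_{i=0}^t p^i r_i$. Setting $R_s=\sum_{i=0}^s p^i r_i$, the hypothesis of Lemma~\ref{inductive lemma for tensor product} at position $s$ is exactly $R_s<p^{s+1}$, which I will read as ``no base-$p$ carry escapes position $s$''. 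In particular the gap condition holds at every $s$ precisely when $r_i\le p-1$ for all $i$, and a carry can only be produced by a block with $r_i\ge p$.

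For the ``if'' direction in case (i), the hypothesis $p>r_i$ for all $i$ gives $r_i\le p-1$, so $R_s<p^{s+1}$ for every $0\le s<t$; iterating Lemma~\ref{inductive lemma for tensor product} splits $V\downarrow A$ into the individual twisted blocks, and $L(\lambda_i)^{(p^i)}\downarrow A$ is MF if and only if $L(\lambda_i)\downarrow A$ is, i.e.\ (by Theorem~\ref{main theorem}(i), since $p>r_i$) if and only if $\Delta_K(\lambda_i)\downarrow A_K$ is MF. The repeated application of the lemma simultaneously guarantees that composition factors coming from different blocks stay distinct, so case (i) of the hypothesis is equivalent to $V\downarrow A$ being MF in this regime.

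Cases (ii) and (iii) are handled by the same mechanism at the two places where Theorem~\ref{main theorem} produces MF blocks with $r_i\ge p$: the weight $\omega_1+\omega_2$ in type $A_2$ with $p=3$ (where $r=4=p+1$ and $L(\omega_1+\omega_2)\downarrow A=(4)\oplus(2)$) and the weight $2\omega_1$ in type $B_2$ with $p=5$ (where $r=8=p+3$ and $L(2\omega_1)\downarrow A=(8)\oplus(4)$). Each special block sits at a position $j$ where the gap \emph{fails}, and it injects a carry of exactly $1$ into position $j+1$; the gap is restored at $j+1$ precisely when that position can absorb the carry without overflow, i.e.\ when $r_{j+1}+1\le p-1$. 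For $A_2$ this forces $r_{j+1}=0$, i.e.\ $\lambda_{j+1}=0$, and for $B_2$ it forces $r_{j+1}\in\{0,3\}$, i.e.\ $\lambda_{j+1}\in\{0,\omega_2\}$ -- exactly the listed constraints (at the top position $j=t$ no successor is needed). Under these constraints the blocks partition into super-blocks (isolated ``small'' blocks, or a special block together with its forced successor) separated by genuine gaps, so Lemma~\ref{inductive lemma for tensor product} reduces MF of $V\downarrow A$ to MF of each super-block. The isolated small blocks restrict to single irreducibles, a special block followed by $0$ restricts to the $p^j$-twist of $(4)\oplus(2)$, respectively $(8)\oplus(4)$, and the only genuinely non-formal check is the pair $\{2\omega_1,\omega_2\}$, for which a direct Clebsch--Gordan computation using $8=3+p$ and $4<p$ gives $L(2\omega_1)^{(p^j)}\otimes L(\omega_2)^{(p^{j+1})}\downarrow A=(23p^j)\oplus(19p^j)\oplus(13p^j)$, which is MF.

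For the converse, suppose $V\downarrow A$ is MF and that we are not in case (i). The crux is to show that \emph{every} block $L(\lambda_i)\downarrow A$ is MF; granting this, Theorem~\ref{main theorem} forces each $\lambda_i$ to be either a case-(i) weight with $p>r_i$ or one of the two special weights, and since we are not in case (i) at least one special block occurs, which pins down $(G,p)$ and confines the remaining blocks to the listed sets; the successor constraints then follow by running the carry analysis of the previous paragraph in reverse and exhibiting, for a forbidden successor, an explicit repeated composition factor. To show each block is MF I would argue from the bottom: the complementary factor $L(\sum_{i\ge1}p^i\lambda_i)$ has all its $T_A$-weights divisible by $p$ and contributes only its multiplicity-one highest weight in the range $r-2d$ with $d<p$, so the weight multiplicities $n_d$ of $V\downarrow A$ reproduce those of $L(\lambda_0)\downarrow A$ for $d<p$; since the non-MF verdicts in the proof of Theorem~\ref{main theorem} are almost always obtained through Lemma~\ref{sufficient conditions MF lemma}, whose scope is $d<\min\{\lfloor\frac{r+2}{2}\rfloor,p\}$, a repeated factor of a bad bottom block surfaces inside this window and hence in $V$. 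One then peels off $\lambda_0$ via the gap at $s=0$ when $r_0<p$ and induces on $t$. The main obstacle is precisely this step: ensuring that the non-MF-ness of an individual block is detected within the top $p$-layer window (so that it cannot be masked by the tensor factor), in particular dealing with the residual cases where the Main Theorem rules out a block only through the global dimension bound of Lemma~\ref{dimension bound comparison} rather than a located repeat.
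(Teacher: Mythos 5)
Your overall architecture is the same as the paper's: decompose $V$ by Steinberg's tensor product theorem, use Lemma~\ref{inductive lemma for tensor product} to split off blocks whenever the cumulative weight $\left(\sum_{i\le s}p^i\lambda_i\right)\downarrow T_A$ stays below $p^{s+1}$, and feed each $p$-restricted block into Theorem~\ref{main theorem}. The ``if'' direction is correct: your carry bookkeeping does verify the hypothesis of the lemma at the end of each super-block (e.g.\ for $B_2$ a special pair contributes at most $p^{s-1}(4p+4)<p^{s+1}$), and your explicit computations $L(2\omega_1)^{(p^j)}\otimes L(\omega_2)^{(p^{j+1})}\downarrow A=(23p^j)\oplus(19p^j)\oplus(13p^j)$ and the analogous $A_2$ splittings check out.

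The genuine gap is in the converse, exactly where you flag it: the claim that every block $L(\lambda_i)\downarrow A$ is MF. Your proposed route --- detecting a repeated factor of a bad bottom block through the weight multiplicities $n_d$ of $V$ in the window $d<p$ --- cannot be completed as stated, because (as you note) several non-MF verdicts in the proof of Theorem~\ref{main theorem} come from the global dimension bounds of Lemmas~\ref{dimension bound comparison} and \ref{dimension bound for omega_i} rather than from a repeat located at small $d$; moreover even a located repeat of $L(\lambda_0)\downarrow A$ is a statement about composition factors, not about the weight multiplicities $n_d$ of $V$, so the transfer is not automatic. But the obstacle is illusory and the detour is unnecessary: if $M_1\downarrow A$ has a composition factor $(c)$ of multiplicity at least $2$, refine a composition series of $M_1\otimes M_2$ obtained by tensoring a composition series of $M_1$ with $M_2$; every composition factor of $(c)\otimes M_2\downarrow A$ then occurs with multiplicity at least $2$ in $M_1\otimes M_2\downarrow A$, so the product is not MF. This is the one-line observation the paper invokes (``then certainly $V_1\downarrow A$ and $V_2\downarrow A$ are both MF''), and it applies to every tensor factor $L(\lambda_i)^{(p^i)}$ regardless of how the Main Theorem established non-MF-ness, and regardless of $r$-arithmetic. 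With that substitution, the rest of your converse (pinning down $(G,p)$, confining the $\lambda_j$ to the listed sets, and ruling out forbidden successors by exhibiting an explicit repeated factor such as $(4)$ with multiplicity $2$ in $L(\omega_1+\omega_2)\otimes L(p\omega_1)\downarrow A$ for $A_2$, $p=3$) goes through and coincides with the paper's argument; note only that the failure of the gap condition at a position $s$ does not by itself imply non-MF, so those explicit repeat computations for forbidden successors are genuinely required and should be written out.
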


\begin{proof}
    We use induction on $t$. If $t=0$, then $\lambda$ is $p$-restricted and the statement follows from Theorem~\ref{main theorem}. Suppose now that $t>0$ and that the statement is valid for all $0\leq t \leq N$ for some $N\in \mathbb{N}$. Let $t = N+1$ and $V_1 = L(\lambda_0)$, $V_2 = L(\sum_{i=1}^{t} p^{i}\lambda_i)$. If $V_1$ or $V_2$, is the trivial $\mathcal kG$-module, then we can conclude by the inductive assumption (since the Frobenius twist of a module $M$ is MF if and only if the module $M$ is MF). Therefore we can assume that $V_1$ and $V_2$ are non-trivial.

    Suppose first that $V\downarrow A$ is MF. Then certainly $V_1\downarrow A$ and $V_2\downarrow A$ are both MF.
    If $r_0< p$, by Lemma~\ref{inductive lemma for tensor product} and the inductive assumption, we conclude that $(G,\lambda,p)$ is as in one of the three conclusions of the statement. Therefore assume that $r_0\geq p$. By Theorem~\ref{main theorem} we have $G=A_2$, $p=3$ and $\lambda_0 = 11$, or $G=B_2$, $p=5$ and $\lambda_0 = 20$.

    Consider first the case $G=A_2$. By Theorem~\ref{main theorem} and Table~\ref{tab:p=0 big table}, we must have $\lambda_i\in \{ 0,11,10,01\}$ for all $0\leq i \leq t$. If $\lambda_1 = 0$, we conclude by the inductive assumption combined with Lemma~\ref{inductive lemma for tensor product} for $s=1$. If $\lambda_1\in \{ \omega_1+\omega_2,\omega_1,\omega_2\}$ then $V_1\downarrow A\otimes L(p\lambda_1)\downarrow A$ has $(4)$ as a repeated composition factor and so $V\downarrow A$ is not MF. For $G=B_2$, by Theorem~\ref{main theorem} and 
    Table~\ref{tab:p=0 big table}, we have $\lambda_1\in\{0,\omega_1,2\omega_1, \omega_2\}$ and a straightforward computation shows that $V_1\downarrow A\otimes L(p\lambda_1)\downarrow A$ has a repeated composition factor for $\lambda_1\in\{\omega_1, 2\omega_1\}$. 

    Suppose now that  (i) holds, then $V\downarrow A$ is MF by the inductive assumption combined with Lemma~\ref{inductive lemma for tensor product}.

    If (ii) or (iii) holds, it is easy to verify that the conditions of Lemma~\ref{inductive lemma for tensor product} with $s = 1$ are satisfied, concluding again by the inductive assumption.\end{proof}

\renewcommand*{\proofname}{Proof.}

\printbibliography

\end{document}